\DeclareMathOperator{\Rm}{\textup{Rm}}
\DeclareMathOperator{\R}{\textup{R}}
\DeclareMathOperator{\vol}{\textup{Vol}}
\DeclareMathOperator{\Ric}{\textup{Ric}}
\title{Noncollapsed degeneration of Einstein $4$-manifolds, II}
\author{Tristan Ozuch}
\address{MIT, Dept. of Math., 77 Massachusetts Avenue, Cambridge, MA 02139-4307.}
\email{ozuch@mit.edu}
\urladdr{https://tristanozuch.github.io/}
\newtheorem{thm}{Theorem}[section]    
\newtheorem{lem}[thm]{Lemma}    
\newtheorem{conj}[thm]{Conjecture}  
\theoremstyle{definition}
\newtheorem{defn}[thm]{Definition}    
\newtheorem{rem}{Remark}             
\newtheorem{exmp}{Example}
\newtheorem{prop}{Proposition}
\newtheorem{cor}{Corollary}
\begin{document}

\begin{abstract}
    In this second article, we prove that any desingularization in the Gromov-Hausdorff sense of an Einstein orbifold by smooth Einstein metrics is the result of a gluing-perturbation procedure that we develop. This builds on our first paper where we proved that a Gromov-Hausdorff convergence implied a much stronger convergence in suitable weighted Hölder spaces, in which the analysis of the present paper takes place.
    
     The description of Einstein metrics as the result of a gluing-perturbation procedure sheds light on the local structure of the moduli space of Einstein metrics near its boundary. More importantly here, we extend the obstruction to the desingularization of Einstein orbifolds found by Biquard, and prove that it holds for any desingularization by trees of quotients of gravitational instantons only assuming a mere Gromov-Hausdorff convergence instead of specific weighted Hölder spaces. This is conjecturally the general case, and can at least be ensured by topological assumptions such as a spin structure on the degenerating manifolds. We also identify an obstruction to desingularizing spherical and hyperbolic orbifolds by general Ricci-flat ALE spaces.
\end{abstract}

\maketitle


\tableofcontents

\section*{Introduction}

An Einstein metric, $g$ satisfies, for some real $\Lambda$, the equation
$$\Ric(g)=\Lambda g.$$
In dimension $4$, they are considered optimal for the homogeneity of their Ricci curvature, as critical points of the Einstein-Hilbert functional with fixed volume, $g\mapsto\int_M \R_g dvol_g$, and more importantly as minimizers of the $L^2$-norm of Riemann curvature tensor, $g\mapsto \int_M |\Rm_g|^2dvol_g$. 

From dimension $4$, even under natural assumptions of bounded diameter (compactness) and lower bound on the volume (noncollapsing) Einstein metrics can develop singularities. One major goal for $4$-dimensional geometry is therefore to understand the moduli space of Einstein metrics on a differentiable manifold $M^4$ defined as
\begin{equation}
    \mathbf{E}(M^4) := \left\{(M^4,g)\;|\;\exists \Lambda\in \mathbb{R},\; \Ric(g)=\Lambda g,\; \vol(M^4,g)= 1\right\}\slash\mathcal{D}(M^4).\label{def moduli space}
\end{equation}
and to compactify it with a useful structure. This has been done in an $L^2$ and then Gromov-Hausdorff (GH) sense in \cite{andL2,ct06}. More precisely, if we denote by $\overline{\mathbf{E}(M^4)}_{GH}$ the compactification of the moduli space $\mathbf{E}(M^4)$ for the (pointed) Gromov-Hausdorff distance, $d_{GH}$, we have a decomposition
\begin{equation}
    \overline{\mathbf{E}(M^4)}_{GH} = \mathbf{E}(M^4)\cup\partial_o\mathbf{E}(M^4)\cup\partial_\infty\mathbf{E}(M^4),\label{compactification moduli space}
\end{equation}
where $\partial_\infty\mathbf{E}(M^4)$ consists in limits with infinite diameter, and $\partial_o\mathbf{E}(M^4)$ consists in singular limits with bounded diameter. 

We will focus on local questions and for simplicity assume most of the time that we work on spaces with uniformly bounded diameter and therefore study the Gromov-Hausdorff neighborhood of $\partial_o\mathbf{E}(M^4)$. We therefore work on the $d_{GH}$-\emph{completion} of $\mathbf{E}(M^4)$, which is $\mathbf{E}(M^4)\cup\partial_o\mathbf{E}(M^4)$. The metric spaces in $\partial_o\mathbf{E}(M^4)$ and the associated singularity blow-ups in the Gromov-Hausdorff sense have been understood for a long time in \cite{and,bkn}: they are respectively \emph{Einstein orbifolds} and \emph{Ricci-flat ALE orbifolds}. 

Anderson then asked the converse question for instance in \cite{andsurv}, namely, are all Einstein orbifolds limits of smooth Einstein ? To answer this question one has to understand if the reverse of the degeneration, the \emph{desingularization}, of Einstein orbifolds in $\partial_o\mathbf{E}(M^4)$ is possible. A natural way to desingularize an orbifold is by a gluing-perturbation technique. 

The goal of the present paper is to develop a gluing-perturbation procedure which attains \emph{any} noncollapsed Einstein $4$-manifold which is sufficiently close to an Einstein orbifold in the Gromov-Hausdorff sense. This in particular elucidates the $d_{GH}$-neighborhood of the boundary $\partial_o\mathbf{E}(M^4)$ in $\mathbf{E}(M^4)$, and we will use this description in future works. In this paper, we will use it to prove that not all Einstein orbifolds can be desingularized by Einstein metrics in the Gromov-Hausdorff sense with an expected topology which partially answers the above question of Anderson.

\subsection*{Desingularization of Einstein $4$-orbifolds and obstructions}

A natural technique to desingularize orbifolds is the following procedure: we glue Ricci-flat ALE manifolds to the singularities of the orbifold to obtain an approximate Einstein metrics, and then try to perturb it into an actual Einstein metric. We will call such gluings, naïve desingularizations of the orbifold and often denote them $g^D_t$ (see Definition \ref{def naive desing}), where $t$ is the set of gluing parameters which are small positive real numbers. The main result of \cite{ozu1} is that the Gromov-Hausdorff proximity of an Einstein metric to an Einstein orbifold implies the proximity to a naïve desingularization $g^D_t$ in the sense of a weighted Hölder norm denoted $C^{2,\alpha}_{\beta,*}(g^D_t)$. This norm is bounded on symmetric $2$-tensors decaying in the neck regions where the gluing takes place.

In the present paper, we will propose a partial converse by proving that any naïve desingularization can be perturbed to a metric which is Einstein modulo some \emph{obstructions}, which are elements of an approximate cokernel of the linearization of the Einstein operator. We will call such a metric an \emph{Einstein modulo obstructions} metric.

\begin{thm}[{Theorem \ref{fcts inv einst général}}]\label{gluing pert}
    Let $g^D_t$ be a naïve desingularization of an Einstein orbifold $(M_o,g_o)$ with small enough gluing parameters.
    
    Then, there exists a small $C^{2,\alpha}_{\beta,*}(g^D_t)$-neighborhood of $g^D_t$ in which there exists a unique metric $\hat{g}_t$ which is Einstein \emph{modulo obstructions} while satisfying some gauge conditions with respect to $g^D_t$.
\end{thm}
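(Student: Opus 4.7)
The plan is to read ``Einstein modulo obstructions in a gauge'' as a nonlinear equation for the perturbation $h$ with $g = g^D_t + h$, and to solve it by a contraction mapping in a small $C^{2,\alpha}_{\beta,*}$-ball around $h=0$. I would first fix a DeTurck-type gauge relative to $g^D_t$ so that the Einstein equation becomes an elliptic PDE of the schematic form
$$\Phi_t(h) := \Ric(g^D_t + h) - \Lambda(g^D_t + h) + \delta^*_{g^D_t} B_{g^D_t}(g^D_t + h) = 0,$$
where $B_{g^D_t}$ is the Bianchi operator of $g^D_t$. Expanding $\Phi_t(h) = \Phi_t(0) + L_t h + Q_t(h)$, with $L_t$ the linearization at $g^D_t$ and $Q_t$ collecting the quadratic and higher terms, the problem becomes: find $h$ with $\Phi_t(h) \in O_t$, where $O_t \subset C^{0,\alpha}_{\beta-2,*}$ is the finite-dimensional approximate cokernel of $L_t$.

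\textbf{Obstruction space and fixed-point formulation.} I would construct $O_t$ explicitly as the image, under cutoff-and-transport onto $M_t$, of the cokernel elements of the linearized Einstein operators on the orbifold $(M_o,g_o)$ and on each Ricci-flat ALE bubble glued at a singularity. Requiring ``$\Phi_t(h) \in O_t$ with $h$ in a complement of an approximate kernel of $L_t$'' is equivalent to
$$\Pi_{O_t^\perp}\,\Phi_t(h) = 0,$$
and becomes, through a right inverse $G_t$ of $\Pi_{O_t^\perp} L_t$ on the relevant complement, the fixed-point equation
$$h = -G_t\,\Pi_{O_t^\perp}\bigl(\Phi_t(0) + Q_t(h)\bigr).$$
The residual gauge conditions with respect to $g^D_t$ announced in the theorem are encoded in the DeTurck gauge and the choice of complement to the approximate kernel.

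\textbf{Contraction and main obstacle.} Three estimates make this a contraction on a small ball: (i) $\|\Phi_t(0)\|_{C^{0,\alpha}_{\beta-2,*}}$ is small for small gluing parameters, which is precisely the content of \cite{ozu1} (the na\"ive desingularization is approximately Einstein in the weighted scale); (ii) a quadratic bound $\|Q_t(h)-Q_t(h')\| \leq C(\|h\| + \|h'\|)\,\|h-h'\|$, which follows from the algebraic expression of $Q_t$ in $h$ and its two derivatives, combined with the product properties of $C^{2,\alpha}_{\beta,*}$; and (iii) a bound on the operator norm of $G_t$ between the weighted H\"older spaces that is \emph{uniform} in the gluing parameters $t$. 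The crux of the proof is (iii): since $(M_t, g^D_t)$ degenerates as $t \to 0$, standard elliptic theory yields non-uniform constants. I would establish the uniform bound by a blow-up/contradiction argument carried out entirely in the weighted spaces of \cite{ozu1}: a failure would produce, after appropriate rescalings and passages to the limit on the orbifold base and on each bubble, a nontrivial limiting element of the linearized Einstein kernel that is $L^2$-orthogonal to the contributions defining $O_t$, contradicting the very definition of $O_t$. The weight $\beta$ and the $*$-decoration are precisely tuned in \cite{ozu1} to control the matching in the neck regions where these models are glued. Uniqueness of $\hat{g}_t$ in the small ball follows automatically from the contraction property.
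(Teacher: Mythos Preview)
Your outline is correct and would lead to a valid proof, but it diverges from the paper's argument in two respects worth noting.

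First, the gauge. You use the Bianchi operator $B_{g^D_t}$ in the DeTurck term, whereas the paper uses a \emph{reduced divergence-free} gauge $\tilde{\delta}_{g^D}$: the divergence projected $L^2$-orthogonally to the cut-off Killing fields of the orbifold. The paper's choice is dictated by the broader program: when $\Lambda>0$ the Bianchi gauge cannot always be imposed on a nearby Einstein metric (the relevant operator acquires extra kernel), while the reduced divergence-free gauge can (Proposition~\ref{Mise en Reduced divergence-free}). For the bare statement of Theorem~\ref{gluing pert} either gauge works, but the paper's choice is what makes it possible to later identify \emph{every} nearby Einstein metric with an output of this procedure (Corollary~\ref{mise en Reduced divergence-free Einstein}).

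Second, the uniform bound on the right inverse. You propose a global blow-up/contradiction argument; the paper instead argues constructively (Proposition~\ref{inversion with cste}). Given $h\perp\tilde{\mathbf{O}}(g^D)$, it uses the annulus estimate (Proposition~\ref{inverse P on annulus}) to pick the constant pieces $H_k$ in the decomposition $h=h_*+\sum_k\chi_{\mathcal{A}_k}H_k$, then \emph{extends} $h$ to genuine tensors $h_o$ on $M_o$ and $h_j$ on each $N_j$, applies the model-space inverse bounds (Lemma~\ref{Contrôle P espace modèles}), and controls the extension errors explicitly in $t_{\max}$. A contradiction argument does appear in the paper, but only to prove the annulus estimate on flat $\mathbb{R}^4/\Gamma$, not for the global inverse. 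Your blow-up route is standard and would succeed, provided you handle the case where the supremum concentrates in a neck: the rescaled limit is then a flat cone and the limiting tensor is harmonic of growth at most $r^{\pm\beta}$, hence constant, which is precisely the piece the $*$-decoration strips off. The paper's constructive route makes this bookkeeping automatic and yields the explicit $t_{\max}$-dependence used later in the obstruction estimates of Section~5.
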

The proof relies on an inverse function theorem applied to the Einstein operator in well chosen coordinates.

These Einstein modulo obstructions metrics $\hat{g}_t$ are not interesting for themselves when they aren't Einstein as they are not geometrically motivated. Their purpose is for instance different from the metrics of \cite{gv} which are critical for some geometric functionals obtained by perturbing a connected sum of Einstein metrics.

Let us note that the Ricci flow starting at Einstein modulo obstructions metrics however exhibits interesting behaviors with respect to the Ricci flow. Indeed, in \cite{bk}, an obstruction is identified to a particular desingularization of $\mathbb{T}^4\slash\mathbb{Z}_2$ and an ancient solution to the Ricci flow smoothing out the orbifold $\mathbb{T}^4\slash\mathbb{Z}_2$ is constructed thanks to it.
\\

Our construction however produces \emph{every} smooth Einstein desingularization in a Gromov-Hausdorff sense. Indeed, together with the convergence of \cite{ozu1}, as a direct consequence, we have the following complete description of the Einstein metrics in a Gromov-Hausdorff neighborhood of an Einstein $4$-orbifold. 
\begin{cor}[{Corollary \ref{proximité désing naive}}]\label{prox GH gluing pert}
    Let $(M_o,g_o)$ be an Einstein $4$-orbifold. Then, there exists $\delta >0$ such that if $(M,g^\mathcal{E})$ is an Einstein manifold satisfying
	$$d_{GH}\big((M,g^\mathcal{E}),(M_o,g_o)\big)\leqslant \delta,$$
	then, $(M,g^\mathcal{E})$ is isometric to a result of the gluing-perturbation procedure of Theorem \ref{gluing pert}.
\end{cor}

\subsection*{A premoduli space in the neighborhood of a singular metric}

Classically, studying a moduli space requires understanding its compactification with a useful structure. The compactification \eqref{compactification moduli space} a priori does not carry a useful structure as it comes from the rough Gromov-Hausdorff distance. The moduli space $\mathbf{E}(M^4)$ however admits a real-analytic structure around smooth metrics.
\begin{thm}[{\cite{koi}}]\label{koiso structure}
    Let $(M,g_0)$ be an Einstein manifold. Then, there exists a $d_{GH}$-neighborhood of $g_0$ in $\mathbf{E}(M)$ which is the quotient by the isometry group of $g_0$ of a real-analytic subvariety of a finite dimensional real-analytic submanifold, $W$, of the space of smooth metrics on $M$.
\end{thm}
The finite dimensional real-analytic submanifold, $W$, consists in metrics which are Einstein modulo the cokernel of the linearization of the Einstein equation at $g_0$ as is usually obtained by Lyapunov-Schmidt reduction. The Einstein metrics are exactly the metrics for which these obstructions vanish. Our description extends this local description of $\mathbf{E}(M^4)$ to  the boundary $ \partial_o\mathbf{E}(M^4)$, and the set of Einstein modulo obstructions metrics $\hat{g}_t$ of Theorem \ref{gluing pert} is the analogue of the ambient space $W$ of Theorem \ref{koiso structure}.

Theorem \ref{koiso structure} is an important local result which implies for instance that $\mathbf{E}(M)$ is locally finite. Anderson asked in \cite{andsurv} whether this structure extends to $\mathbf{E}(M^4)\cup \partial_o\mathbf{E}(M^4)$. The new description of the neighborhood of $\partial_o\mathbf{E}(M)$ in $(\overline{\mathbf{E}(M)}_{GH},d_{GH})$ of Corollary \ref{prox GH gluing pert} provides a promising setting in which one can tackle this question. In particular, in Section \ref{section premoduli}, we provide an adaptation to the singular setting of Koiso's premoduli space around metrics of $\partial_o\mathbf{E}(M)$.

\subsection*{Degeneration of Kähler-Einstein manifolds}

Even if our purpose here is to study the \emph{real} Einstein equation and not Kähler-Einstein metrics, our analysis in weighted Hölder spaces extends the analysis leading to the gluing-perturbation theorems of \cite{ban,spo,br15,hv20} in the Kähler setting. Indeed, it allows us to glue and perturb multiple trees of singularities with arbitrary scales and Einstein deformations. It would therefore be interesting to extend the constructions of \cite{spo,br15} to remove the ``generic'' (\cite{spo}) or ``non degenerate'' (\cite{br15}) conditions which correspond to restricting the gluing scales depending on the size of the Einstein deformation. We should also be able to allow general degenerations forming trees of singularities.

For instance, in Section \ref{arbre kahleriens}, we precise the construction of \cite{hv20} in the case of Kronheimer's gravitational instantons and prove that any tree of Kähler Ricci-flat ALE spaces can be glued and perturbed to a single Kähler Ricci-flat ALE metric with uniform controls (in our weighted Hölder norms) only depending on the group at infinity.

\subsection*{Obstructions to the Gromov-Hausdorff desingularization of Einstein orbifolds}

Our main application in this paper is a nonexistence result: there exists Einstein orbifolds which cannot be approached by smooth Einstein metrics with specific topologies in the Gromov-Hausdorff sense. For this, it is enough to prove that the obstructions of Theorem \ref{gluing pert} do not vanish.

The hyperkähler ALE spaces which are called \emph{gravitational instantons} have been classified in \cite{kro} and their Kähler quotients have been classified in \cite{suv}. It is a famous conjecture, \cite{bkn}, that all Ricci-flat ALE spaces are Kähler.

Our first goal here is to prove that an obstruction holds for any Gromov-Hausdorff desingularization by trees of Kähler Ricci-flat ALE orbifolds, which are conjecturally the only possibilities. The obstruction to satisfy is $\det \mathbf{R} =0$ at a singular point of the orbifold metric, where $\mathbf{R}$ is the Riemannian curvature seen as an endomorphism on the space of $2$-forms.

\begin{thm}[{Theorem \ref{obst arbre kahler}}]
	Let $(M_i,g_i)_i$ be a sequence of Einstein manifolds converging in the Gromov-Hausdorff sense to an Einstein orbifold $(M_o,g_o)$, and assume that there exists a subsequence $(M_i,g_i)_i$ whose possible blow-up limits are Kähler Ricci-flat ALE orbifolds glued in the same orientation. 
	
	Then, at every singular point $p$ of $(M_o,g_o)$, we have
	$$\det \mathbf{R}_{g_o}(p) =0. $$
\end{thm}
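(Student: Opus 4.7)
The plan is to combine Corollary \ref{proximité désing naive} with an explicit computation of the obstructions appearing in Theorem \ref{gluing pert}. By Gromov-Hausdorff convergence, Corollary \ref{proximité désing naive} applies for large $i$ along the subsequence given by the hypothesis: there exist gluing parameters $t_i \to 0$ describing a tree of Kähler Ricci-flat ALE orbifolds attached at each singular point of $(M_o, g_o)$, such that $(M_i, g_i)$ is isometric to the perturbation $\hat g_{t_i}$ produced by Theorem \ref{gluing pert}. Since $g_i$ is genuinely Einstein, the obstruction components of $\hat g_{t_i}$ must vanish at $t = t_i$. The problem then reduces to extracting the leading order of those obstructions as $t \to 0$ and showing that its vanishing forces $\det \mathbf{R}_{g_o}(p) = 0$ at every singular point $p$.

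The next step is to describe the approximate cokernel of the linearized Einstein operator at $g^D_t$. To leading order, it is generated by decaying infinitesimal Einstein deformations supported on the glued Ricci-flat ALE orbifolds $(N, g_N)$. On a Kähler Ricci-flat ALE orbifold, the Kähler form produces a canonical such deformation with prescribed asymptotic behavior; in the conjecturally generic case where $(N, g_N)$ is a quotient of a hyperkähler ALE space (the situation covered by the classification of \cite{kro,suv}), the full hyperkähler triple $(\omega_1, \omega_2, \omega_3)$ upstairs yields a three-dimensional family of obstruction elements. Each of them is encoded by an anti-self-dual $2$-form in $\Lambda^2 T^*_p M_o$, modulo the orbifold group action.

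The core of the argument is the expansion of the pairing of the Einstein tensor of $\hat g_t$ against these obstructions. Using that $\hat g_t - g^D_t$ has controlled $C^{2,\alpha}_{\beta,*}$-norm by Theorem \ref{gluing pert}, and that $g_N$ is itself Ricci-flat, the leading contribution arises from cross terms between the ALE correction and the orbifold curvature at $p$. An integration by parts on the neck, together with the explicit decay of the hyperkähler triple on $(N, g_N)$, extracts a leading term of the form
\[
c\, t^{\kappa}\,\scal{\mathbf{R}_{g_o}(p)\,\omega}{\omega} + o(t^{\kappa}),
\]
where $\kappa>0$, $c\neq 0$ is a universal constant, and $\omega$ runs through the subspace of $\Lambda^2 T^*_p M_o$ associated to the hyperkähler triple. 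Vanishing of these pairings for three independent choices of $\omega$ forces $\mathbf{R}_{g_o}(p)$ to have a nontrivial kernel on the corresponding subspace of $\Lambda^2$, that is, $\det \mathbf{R}_{g_o}(p) = 0$. The tree structure is handled by induction on the depth: once the obstruction vanishes on all inner bubbles, the gluing-perturbation at the next level yields a similar obstruction at the parent singular point.

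The hardest step will be the precise extraction of the leading order of the obstruction. Several a priori dominant contributions cancel thanks to the Ricci-flatness of $(N, g_N)$ and the contracted Bianchi identity, so one must expand the Einstein tensor one order deeper than naïvely expected in order to reach the term proportional to $\mathbf{R}_{g_o}(p)$, all while keeping the remainder controlled in the weighted Hölder norm $C^{2,\alpha}_{\beta,*}$. A secondary difficulty is the bookkeeping for the tree of bubbles: one must check that obstructions coming from different levels do not interact in a degenerate way, so that the induction produces, at every singular point $p$, exactly the single condition $\det \mathbf{R}_{g_o}(p) = 0$ stated in the theorem, rather than a weaker consequence.
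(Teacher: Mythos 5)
Your overall strategy — express $g_i$ as the result of the gluing-perturbation of a naïve desingularization, pair the Einstein equation against infinitesimal deformations coming from the hyperkähler triple, and read off a curvature condition at the singular point — is indeed the route the paper takes. However, the proposal has three genuine gaps that together undermine the argument.

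First, you silently set the gauge parameter to zero. Corollary \ref{proximité désing naive} (via Corollary \ref{mise en Reduced divergence-free Einstein} and Theorem \ref{fcts inv einst général}) only gives $\psi^*g_i = \hat g_{\phi_i,t_i,v_i}$ for some a priori nonzero $v_i\in\tilde{\mathbf{O}}(g^D_{t_i})$, not $\hat g_{t_i,0}$. This is not a technicality: if the ALE deformations fail to be integrable, the modulus $v_i$ feeds back into the obstruction at order $\|v_i\|^2$, which can dominate the leading term $\hat\lambda_i t$ you want to extract (this is exactly Remark following Lemma \ref{variations of bar lambdai}, where the paper explains why without integrability one only sees obstructions of size $\mathcal{O}(t)$ when $|v|^2 \ll t$). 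Handling $v\neq 0$ requires the whole machinery of replacing $g^D+v$ by the nonlinear correction $\bar g^D_v$ built from $\bar g_{o,v_o}$ and $\bar g_{b_j,v_j}$ (Proposition \ref{meilleure approx and obst with jauge}), and this is where either integrability or, in the Kähler-tree setting, Bando's theorem is used essentially.

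Second, to get the obstruction at \emph{every} singular point rather than at just one, you must separate the scales $t_p$ across the different singular points. With a naïve global approximation as in your proposal, the residual error $\|\Psi(g^D)\|_{r^{-2}_D C^\alpha_\beta}=\mathcal{O}(t_{\max}^{(2-\beta)/4})$ can swamp the term $\hat\lambda_i\, t_p$ whenever $t_p\ll t_{\max}$. The paper introduces partial desingularizations and a partial norm $C^{k,\alpha}_{\beta}(g^D,S)$ precisely to solve the Einstein modulo obstruction equation on $M_o\setminus\{p\}$ first, then glue only at $p$ and obtain a residual controlled by powers of $t_p$ alone (Proposition \ref{controle approximation desing part}). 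Without this refinement you only get the obstruction at the point realizing $t_{\max}$.

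Third, the inductive handling of the tree is not developed and, as the paper itself warns in a remark after Theorem \ref{obst arbre kahler}, the depth-by-depth obstructions could a priori cancel against each other. The key tool that prevents this in the Kähler case is Bando's gluing theorem (Lemma \ref{Recollement arbre de'ALE khaleriens}): a tree of Kähler Ricci-flat ALE orbifolds can be perturbed, by solving a Monge--Ampère problem, to a \emph{single} Kähler Ricci-flat ALE metric $\bar g_{b,t}$ with no residual obstruction inside the tree. Only after this collapse does the pairing against the three hyperkähler deformations $\mathbf{o}_1,\mathbf{o}_2,\mathbf{o}_3$ (parametrized uniformly in $t$ by Lemma \ref{obstr o1o2o3 sur arbre}) produce the clean conditions that translate, via the integration by parts in Proposition \ref{terme quadratique with obst} and the explicit identification in \cite[Theorem 4.1]{biq1}, into $\det\mathbf{R}_+(g_o)(p)=0$. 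Your ``induction on the depth'' is a plausible-sounding placeholder, but it does not address why the cross-level obstructions cancel; in the non-Kähler case they need not, and this is why the theorem is stated with the Kähler hypothesis.
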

This answers positively a question from \cite{biq1} and extends it to the case of several singularities and allows the formation of trees of singularities. It more precisely states that the obstruction of \cite{biq1} holds for any known possible configuration of singularity models, and that it holds even assuming the weakest possible convergence instead of a convergence in particular weighted Hölder spaces.

\begin{rem}
    Note that this obstruction is very different in nature from the result of \cite{oss} which shows that most compact Kähler-Einstein $4$-orbifolds with positive Ricci-curvature cannot be limits of Kähler-Einstein manifolds. Indeed, our obstruction $\det \mathbf{R} =0$ is always satisfied in this situation, and it remains unknown if these metrics can be desingularized by \emph{real} Einstein metrics. It is also different from the obstruction found in \cite{bk} where the obstruction $\det \mathbf{R}=0$ is also satisfied by the orbifold $\mathbb{T}^4\slash\mathbb{Z}_2$.
\end{rem}

Under topological assumptions, it is known that the singularity models appearing are Kähler and glued in the same orientation, in particular we have the following illustration of our obstruction.
\begin{exmp}[Corollary \ref{pinching}]\label{ex S4 quotient}
    Consider $\mathbb{S}^4\subset \mathbb{R}^5$ and the quotient by $\mathbb{Z}_2$ given by $(x_1,x_2,x_3,x_4,x_5) \sim (x_1,-x_2,-x_3,-x_4,-x_5)$. We will denote this space $\mathbb{S}^4\slash\mathbb{Z}_2$ which is an Einstein orbifold with two $\mathbb{R}^4\slash\mathbb{Z}_2$ singularities. The minimal resolutions of the two singularities $\mathbb{R}^4\slash\mathbb{Z}_2\approx\mathbb{C}^2\slash\mathbb{Z}_2$ has the topology $M:= \mathbb{S}^4\slash\mathbb{Z}_2\#T^*\mathbb{S}^2\#T^*\mathbb{S}^2$, where $\#$ denotes the gluing of an ALE space to an orbifold along their asymptotic cone.\\
    
\begin{minipage}{0.68\textwidth}
Then, for any $1\leqslant p<\infty$, there exists a sequence of metrics $(M,g_i)_i$ with both $$\|\Ric(g_i)-3g_i\|_{L^p(g_i)}\to 0 \text{ and }\Ric(g_i)\geqslant 3g_i$$ while $$(M,g_i)\xrightarrow[]{GH} (\mathbb{S}^4\slash\mathbb{Z}_2,g_{\mathbb{S}^4\slash\mathbb{Z}_2}),$$
    but there \emph{does not} exist any sequence of Einstein metrics satisfying $$\Ric(g_i)=3g_i,$$ and $$(M,g_i)\xrightarrow[]{GH} (\mathbb{S}^4\slash\mathbb{Z}_2,g_{\mathbb{S}^4\slash\mathbb{Z}_2}).$$
\end{minipage}
    \begin{minipage}{0.29\textwidth}
\includegraphics[width=\textwidth]{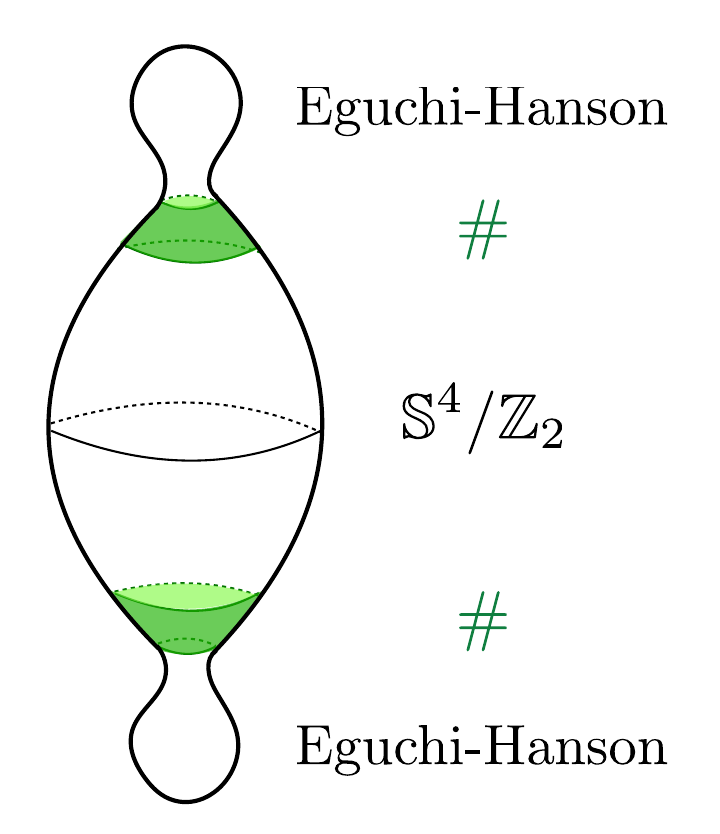}
\end{minipage}

\vspace{.5pt}
    In the same fashion, a conjecture of Anderson states that there is no sequence of asymptotically hyperbolic Einstein metrics on $T^*\mathbb{S}^2$ desingularizing the hyperbolic orbifold $\mathbb{H}^4\slash\mathbb{Z}_2$ obtained by antipodal identification in a global geodesic chart. It was proven in \cite{biq1} assuming among other things a convergence speed in weighted spaces towards the orbifold depending on the maximum of the curvature. We can prove it assuming a pointed Gromov-Hausdorff convergence together with a suitable control in weighted spaces at infinity, this time independent on the maximum of the curvature. It is again possible to desingularize $\mathbb{H}^4\slash\mathbb{Z}_2$ with Ricci pinched in any $L^p$, space for $1\leqslant p<\infty$ or with $\Ric$ bounded above or below by $-3$.
\end{exmp}

\subsubsection*{Hitchin-Thorpe inequality and degeneration of Einstein manifolds}

The Hitchin-Thorpe inequality provides a topological obstruction to the existence of Einstein metrics on a given $4$-dimensional differentiable manifold $M$,
$$2\chi(M)\geqslant 3|\tau(M)|,$$
where $\chi$ is the Euler characteristic, and $\tau$ the signature. These topological invariants have definitions adapted to orbifolds and ALE spaces which we will denote $\tilde{\chi}$ and $\tilde{\tau}$, and any orbifold $M_o$ admitting an Einstein metric satisfies
$$2\tilde{\chi}(M_o)\geqslant 3|\tilde{\tau}(M_o)|.$$
Any Gromov-Hausdorff desingularization damages this inequality, and the equality case implies the obstruction.

\begin{thm}[Theorem \ref{obst hitchin thorpe}]
    Let $(M_o,g_o)$ be an Einstein orbifold, and assume that $(M,g_i)_i$ is a sequence of Einstein metrics converging to $(M_o,g_o)$ in the Gromov-Hausdorff sense.
    
    Then, we have the following inequality,
    $$2\chi(M)-3|\tau(M)|\geqslant 2\tilde{\chi}(M_o)-3|\tilde{\tau}(M_o)|.$$
    Moreover, there is equality if and only if $M$ is a desingularization of $M_o$ by gluing of trees of Kähler Ricci-flat ALE orbifolds in the same orientation (with the same sign for $\tilde{\tau}$). 
    In this equality case, we have the condition $$\det\mathbf{R}(g_o)=0$$ at every singular point.
\end{thm}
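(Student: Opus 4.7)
The plan is to first apply Corollary \ref{proximité désing naive} to identify $(M,g_i)$, for $i$ large, with the result of the gluing-perturbation procedure applied to a naïve desingularization of $(M_o,g_o)$. Hence $M$ is diffeomorphic to the gluing of trees of Ricci-flat ALE orbifolds $(N_{k,j},h_{k,j})$ at the singular points $\{p_k\}$ of $M_o$. The key auxiliary step is to establish the additivity of the orbifold/ALE characteristic numbers $\tilde{\chi}$ and $\tilde{\tau}$. Writing Chern-Gauss-Bonnet and Hirzebruch signature with their boundary corrections at orbifold singularities and at ALE infinity, the corrections at a paired singular point/asymptotic cone cancel because they are local invariants of the same cone group $\Gamma$. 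Iterating through each tree gives
\begin{align*}
\chi(M) &= \tilde{\chi}(M_o) + \sum_{k,j}\tilde{\chi}(N_{k,j}), \\
\tau(M) &= \tilde{\tau}(M_o) + \sum_{k,j}\tilde{\tau}(N_{k,j}).
\end{align*}

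Combining these identities with the triangle inequality $|\tau(M)|\leqslant |\tilde{\tau}(M_o)|+\sum_{k,j}|\tilde{\tau}(N_{k,j})|$ yields
\[2\chi(M)-3|\tau(M)| \geqslant 2\tilde{\chi}(M_o)-3|\tilde{\tau}(M_o)| + \sum_{k,j}\bigl(2\tilde{\chi}(N_{k,j})-3|\tilde{\tau}(N_{k,j})|\bigr).\]
Since each $N_{k,j}$ is Ricci-flat, the ALE Gauss-Bonnet/signature formula gives $2\tilde{\chi}(N_{k,j})\pm 3\tilde{\tau}(N_{k,j}) = \frac{1}{4\pi^2}\int_{N_{k,j}}2|W^{\pm}(h_{k,j})|^2 \geqslant 0$, hence the ALE Hitchin-Thorpe inequality $2\tilde{\chi}(N_{k,j}) \geqslant 3|\tilde{\tau}(N_{k,j})|$. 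Dropping the nonnegative bubble contributions produces the claimed inequality.

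For the equality statement, saturation of the triangle inequality forces $\operatorname{sign}\tilde{\tau}(N_{k,j}) = \operatorname{sign}\tilde{\tau}(M_o)$ for every bubble with nonzero signature, and saturation of each ALE Hitchin-Thorpe forces $W^{\mp}(h_{k,j}) \equiv 0$ for the sign opposite to $\operatorname{sign}\tilde{\tau}(M_o)$. A Ricci-flat Einstein $4$-orbifold with half-vanishing Weyl tensor is hyperkähler on a cover, hence Kähler Ricci-flat in the orientation consistent with $\tilde{\tau}(M_o)$. Conversely, any tree of Kähler Ricci-flat ALE pieces with matching orientations saturates the inequality, directly by additivity and the Kähler Ricci-flat equality $2\tilde{\chi}(N_{k,j}) = 3|\tilde{\tau}(N_{k,j})|$. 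Finally, in the equality case the previous theorem (the Gromov-Hausdorff obstruction for desingularization by trees of Kähler Ricci-flat ALE orbifolds) applies to yield $\det\mathbf{R}_{g_o}(p)=0$ at every singular point.

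The main obstacle I anticipate is the rigorous bookkeeping of the boundary corrections of $\chi$ and $\tau$ along trees of ALE gluings: the correction attached to a singular point of an intermediate orbifold node must cancel with that at the corresponding end of the next ALE, uniformly and with consistent orientations, along trees of arbitrary depth. A secondary subtlety is ensuring that the diffeomorphism provided by Corollary \ref{proximité désing naive} is orientation-preserving on each piece, so that the signature contributions add as stated; this should follow from how the gluing-perturbation procedure of Theorem \ref{gluing pert} respects the orientations of the glued ALE building blocks.
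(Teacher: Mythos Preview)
Your proposal is correct and follows essentially the same route as the paper: additivity of $\tilde{\chi}$ and $\tilde{\tau}$ under gluing, the triangle inequality for $|\tau|$, Nakajima's ALE Hitchin--Thorpe inequality (Lemma~\ref{HT Nakajima}) for each bubble, and then Theorem~\ref{obst arbre kahler} in the equality case. The only cosmetic difference is that the paper invokes Nakajima's result (including its equality characterization) as a black box, whereas you sketch its proof via the half-vanishing Weyl tensor.
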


\subsubsection*{Degeneration of Einstein metrics on spin manifolds}

Another large class of manifolds on which we can prove our obstruction is the class of $4$-manifolds admitting a spin structure.

\begin{thm}[Theorem \ref{obst spin}]
    Let $(M_i,g_i)_i$ be a sequence of \emph{spin} Einstein $4$-manifolds  converging to an Einstein orbifold $(M_o,g_o)$ in the Gromov-Hausdorff sense. Then, $(M_o,g_o)$ is spin, and at any of its singular points whose group is in $SU(2)$, we have the obstruction $$\det \mathbf{R}_{g_o} = 0.$$
\end{thm}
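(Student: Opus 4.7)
The plan is to reduce to the previously stated obstruction theorem in the Kähler case. I need to verify two things: that $(M_o, g_o)$ inherits a spin structure, and that at any singular point $p$ whose orbifold group lies in $SU(2)$, the entire tree of Ricci-flat ALE orbifolds attached at $p$ in the gluing-perturbation construction consists of Kähler Ricci-flat ALE orbifolds.

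First, by Corollary~\ref{proximité désing naive}, for $i$ large each $(M_i,g_i)$ is isometric to a gluing-perturbation applied to $(M_o,g_o)$ with trees of Ricci-flat ALE orbifolds glued at the singular points. The weighted Hölder convergence established in \cite{ozu1} is smooth away from the orbifold and bubble singularities and, after rescaling, on the smooth part of each bubble. Consequently the spin structure on $M_i$ restricts to a well-defined spin structure on the complement of the singular set of $M_o$, and on the smooth part of every bubble in every tree. Compatibility across the long neck regions that connect the orbifold to the tops of the bubbles (and the bubbles to their sub-bubbles) forces the orbifold groups at each level to lift consistently to $Spin(4)$; this gives $(M_o,g_o)$ a spin structure and equips every bubble in every tree with a spin structure.

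Next, fix a singular point $p$ with group $\Gamma \subset SU(2)$ and let $(N,h)$ be the top-level bubble of the tree attached at $p$. Then $(N,h)$ is a spin Ricci-flat ALE $4$-orbifold whose asymptotic cone is $\mathbb{R}^4/\Gamma$ with $\Gamma\subset SU(2)$. The key step is that such a bubble must be hyperkähler: since $\Gamma \subset SU(2)\subset Spin(4)$ preserves the constant spinors on $\mathbb{R}^4$, parallel spinors are available at infinity, and by the positive mass theorem for ALE spin $4$-manifolds (in the orbifold-compatible form that applies to our bubble trees) together with its rigidity statement, one obtains a globally parallel spinor on $(N,h)$; this is equivalent to $(N,h)$ being hyperkähler. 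Now, the isotropy group at any orbifold point of a hyperkähler ALE $4$-orbifold must preserve the hyperkähler structure, hence lies in $Sp(1)=SU(2)$, so every sub-bubble in the tree again has asymptotic group in $SU(2)$ and is spin. By induction on the depth of the tree, every bubble is hyperkähler, in particular Kähler. The Kähler case of the previous theorem then applies and yields $\det\mathbf{R}_{g_o}(p)=0$.

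The main obstacle is the orbifold rigidity statement used in the inductive step: producing a parallel spinor on a spin Ricci-flat ALE $4$-orbifold with $SU(2)$-asymptotic group in the presence of intermediate orbifold singularities, and showing that its existence forces hyperkählerness compatibly with the orbifold stratification. Once this is in hand, the remainder of the argument is a bookkeeping exercise in tracking spin structures through the gluing-perturbation machinery of Theorem~\ref{gluing pert} and their rescaled limits in the bubble regions.
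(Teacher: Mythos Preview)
Your strategy is the same as the paper's: reduce to the Kähler tree obstruction (Theorem~\ref{obst arbre kahler}) by showing that the spin hypothesis forces every bubble in every tree at an $SU(2)$-singular point to be hyperkähler. The paper carries this out by invoking \cite{kl} (which packages the passage of spin structures to the orbifold and all bubbles, and the hyperkählerness of every bubble via Nakajima's rigidity) and then applying Lemma~\ref{ALE spin}; your inductive sketch through the tree, together with the observation that orbifold isotropy groups of a hyperkähler ALE $4$-orbifold lie in $SU(2)$, is precisely the content of that citation. The ``main obstacle'' you flag---extending Nakajima's Witten-spinor/positive-mass rigidity to the orbifold case---is not a genuine gap: the argument works verbatim on orbifolds (the Lichnerowicz formula and the integration by parts are local and compatible with finite covers near the singular set), and this is exactly what \cite{kl} uses. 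So your proof is correct; it simply reproduces by hand what the paper outsources to \cite{kl}.
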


\paragraph{General obstructions for spherical and hyperbolic orbifolds.}

Our Theorem \ref{proximité désing naive} holds for any singularity model which might be non-Kähler. We will use it lastly to identify an obstruction to desingularizing spherical or hyperbolic orbifolds by \emph{any} Ricci-flat ALE manifold in Theorem \ref{obst generale bulle}. This provides an obstruction to any standard gluing-perturbation technique but will only imply an actual obstruction to the Gromov-Hausdorff desingularization by Ricci-flat ALE manifolds whose deformations are \emph{integrable} (this is the case of all known examples).
	\begin{thm}[{Corollary \ref{obst integrable}}]
	Spherical and hyperbolic orbifolds cannot be desingularized in the Gromov-Hausdorff sense by Ricci-flat ALE spaces which are integrable (see Definition \ref{definition integrable}).
	\end{thm}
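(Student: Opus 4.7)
The plan is to argue by contradiction. Suppose $(M_i,g_i)\xrightarrow{GH}(M_o,g_o)$ is a Gromov-Hausdorff desingularization of a spherical or hyperbolic orbifold $(M_o,g_o)$ by Einstein manifolds whose singularity models are integrable Ricci-flat ALE spaces. By Corollary \ref{proximité désing naive}, for $i$ large enough each $g_i$ is isometric to an ``Einstein modulo obstructions'' metric $\hat{g}_{t_i}$ produced by Theorem \ref{gluing pert}, with gluing parameters $t_i\to 0$. Since $g_i$ is genuinely Einstein, all the obstructions (elements of the approximate cokernel of the linearized Einstein operator) must vanish, yielding a finite-dimensional system of scalar equations on $t_i$ that has to admit a non-trivial solution.

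Next I would identify this obstruction system explicitly. The approximate cokernel decomposes into (i) infinitesimal Ricci-flat ALE deformations of the glued model, decaying at infinity, and (ii) contributions coming from varying the gluing data on $(M_o,g_o)$. The integrability hypothesis (Definition \ref{definition integrable}) is precisely what ensures that every element of (i) corresponds to a genuine Ricci-flat deformation of the ALE piece, so one can use an integration-by-parts argument in the spirit of \cite{biq1} to write the obstruction as a pairing, evaluated over the ALE, between such a deformation $h$ and a quadratic expression in $h$ contracted against the pullback of $\mathbf{R}_{g_o}$ at the singular point. Schematically, after rescaling by the gluing parameter, the leading obstruction takes the form of a universal quadratic form $Q_{g_o}(h)$ whose coefficients are entries of $\mathbf{R}_{g_o}$ at the orbifold point.

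For a spherical or hyperbolic orbifold, however, $g_o$ has constant sectional curvature, so $\mathbf{R}_{g_o}$ is a non-zero multiple of the identity on $\Lambda^2$ at every point. This makes $Q_{g_o}$ proportional to $\Lambda$ times a non-degenerate, sign-definite quadratic form in $h$. Hence the leading obstruction cannot be cancelled by any choice of integrable deformation, and any cancellation with the contribution from (ii) would force $t_i\to 0$ at a rate incompatible with the normalisation of the ALE piece. This would produce the contradiction. In the rigid case, where the ALE carries no non-trivial deformations of type (i), the relevant obstruction is instead the scalar arising from the mismatch between $\Ric(g^D_{t_i})=0$ on the ALE and $\Ric(g_o)=\Lambda g_o$ on the orbifold, which is non-zero for the same reason.

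The main obstacle will be to make this leading-order expansion quantitative. Using Theorem \ref{gluing pert} and the weighted Hölder estimates from \cite{ozu1}, I would control the error terms of higher order in $t_i$ in the $C^{2,\alpha}_{\beta,*}$ norm and show they are genuinely subleading compared to the $\Lambda$-proportional term, uniformly in the integrable deformation parameter. A subtle point is that, because integrability removes any hidden higher-order cancellation in the Kuranishi map for the ALE, the leading obstruction is indeed visible at first order — this is the precise place where the integrability assumption is used, and where the argument would fail for a general Ricci-flat ALE with non-integrable deformations.
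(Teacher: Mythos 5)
Your high-level framework is right: invoke Theorem \ref{obstruction intégrable tout point} (via Corollary \ref{proximité désing naive}) to reduce to the vanishing of a finite list of obstruction pairings, and then show that constant curvature forces at least one to be non-zero. But the mechanism you propose for the second step is wrong, and this is where the real content of the paper's argument lives.

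You describe the obstruction as a \emph{quadratic} form $Q_{g_o}(h)$ in the ALE deformation $h$, with coefficients built from $\mathbf{R}_{g_o}$, and argue sign-definiteness from the fact that $\mathbf{R}_{g_o}$ is a multiple of the identity. This mischaracterizes the obstruction: it is a \emph{linear} pairing $\int_{\mathbb{S}^3}\big(3\langle H_2,O_j^4\rangle + O_j^4(B_eH_2,\partial_\rho)\big)dS$ between the quadratic jet $H_2$ of $g_o$ at the singular point and the $r_b^{-4}$ asymptotic coefficient $O_j^4$ of an $L^2$ infinitesimal deformation $\mathbf{o}_j$ of the ALE piece. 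Linear in $O_j^4$, not quadratic in anything. To conclude, one does not need this pairing to be definite as a form; one needs to exhibit a \emph{single} deformation whose pairing cannot vanish. Your argument also branches into a "rigid case" where the ALE carries no $L^2$ deformations, but that case does not exist: a non-flat Ricci-flat ALE space always carries a distinguished infinitesimal deformation, and that is the idea you are missing.

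The paper's key ingredient (Proposition \ref{def by scaling} and Theorem \ref{obst generale bulle}) is the universal scaling deformation $\mathbf{o}_1 := (\mathcal{L}_{\nabla_b u}g_b)^\circ = 2\textup{Hess}_{g_b}u - 4g_b$, where $u$ is the unique solution of $-\nabla^*\nabla u = 8$ asymptotic to $r_b^2$. This is an $L^2$ element of $\mathbf{O}(g_b)$ for \emph{every} Ricci-flat ALE space, Kähler or not, and it is trivially integrable since it only encodes rescaling and a change of coordinates. Crucially, by the Biquard--Hein renormalized volume computation (\cite{bh}), its $r_b^{-4}$ term has $O_1^4(\partial_\rho,\partial_\rho) = 8b\rho^{-4}$ with $b = -4\mathcal{V}/|\partial B(0,1)/\Gamma|$, and $\mathcal{V}<0$ strictly unless $g_b$ is flat; so $O_1^4(\partial_\rho,\partial_\rho)>0$. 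One then computes directly that for $H_2 = \pm\frac{\rho^4}{3}(\alpha_1^2+\alpha_2^2+\alpha_3^2)$ (the spherical/hyperbolic jet), both terms in the pairing reduce to a fixed non-zero multiple of $\int_{\mathbb{S}^3} O_1^4(\partial_\rho,\partial_\rho)dS \neq 0$. This is a global positivity statement about Ricci-flat ALE spaces, not a pointwise statement about $\mathbf{R}_{g_o}$; your sketch would at best recover the $\det\mathbf{R}=0$ obstruction restricted to Kähler ALE models and does not close the argument in the generality claimed.

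Finally, the role of integrability in the theorem is also different from what you describe. It does not "remove hidden cancellations in the Kuranishi map"; rather, it is the technical hypothesis that makes Theorem \ref{obstruction intégrable tout point} applicable at all, because the partial-desingularization analysis there requires replacing $g_{b_j}+v_j$ by an honest Ricci-flat metric $\bar g_{b_j,v_j}$ in divergence-free gauge. Without that, the gauge parameter $v$ contributes an error of the same order $\mathcal{O}(t)$ as the obstruction one is trying to detect, and the limiting argument $\hat\lambda(v_i)\to\hat\lambda(0)$ breaks down.
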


\subsection*{Outline of the paper}
In Section 1, we give the principal definitions, and in Section 2, we introduce and motivate the function spaces we will use throughout the paper, and moreover restate the results of \cite{ozu1} thanks to them.

In Section 3, we prove that we can always pull-back an Einstein metric which is Gromov-Hausdorff close to an orbifold by a small diffeomorphism to ensure that it satisfies some gauge condition with respect to a naïve desingularization. The proof consists in a Lyapunov-Schmidt reduction in our weighted norms where the relevant operators are proven to be Fredholm.

In Section 4, we prove that any naïve desingularization can be perturbed to a metric which is Einstein modulo some obstruction, that is, an approximate cokernel of the linearization of the gauged Einstein operator. The point is that every possible Einstein metric is produced this way, and that whenever the obstructions do not vanish, it is impossible to perturb the naïve desingularization to an Einstein metric. The proof again relies on a Lyapunov-Schmidt reduction in our weighted Hölder spaces. We then extend Koiso's definition of a premoduli space in the neighborhood of a singular metric.

In Section 5, we estimate the obstructions to the above Einstein desingularization modulo obstructions. To obtain such an obstruction at \emph{all} singular points, we need to use an analysis on partial desingularizations and produce better approximations of Einstein modulo obstructions metrics.

In Section 6, we test the above obstructions on degenerations of Einstein manifolds forming trees of Kähler Ricci-flat ALE orbifolds. By developing our analysis on trees of singularities, we prove that the obstruction of \cite{biq1} for the Eguchi-Hanson metric extends to any tree of quotients of gravitational instantons and holds under a mere Gromov-Hausdorff convergence. An important step is to prove that a gluing of gravitational instantons in the same orientation can be uniformly perturbed to an Einstein metric in our norms.

In Section 7, we investigate topological conditions which ensure that a sequence of Einstein manifolds degenerating will only produce trees of Kähler Ricci-flat ALE spaces. We mainly use the result of \cite{nak} and consider the behavior of the Hitchin-Thorpe inequality as well as the degeneration of Einstein metrics on a spin manifold.

In Section 8, building on the notion of maximal volume for Ricci-flat ALE spaces of \cite{bh}, we prove that even without assuming that the trees of singularities are Kähler, there is a non vanishing obstruction to the desingularization of spherical and hyperbolic orbifolds. We can however only prove that this is a Gromov-Hausdorff obstruction under the technical assumption that the Ricci-flat ALE spaces have integrable deformations.

\subsection*{Acknowledgements}

I would like to thank my PhD advisor, Olivier Biquard, for his constant support as well as his mathematical and writing advice. I would also like to thank Aaron Naber for inviting me to Northwestern University in Spring 2017, and for pointing out and discussing the issue addressed in Section 8. I would also like to thank Gilles Carron and Michael Singer for their comments and suggestions when reviewing this work as a part of my PhD thesis. I would finally like to thank Aude Genevay for her help with the graphics.

\section{Orbifolds, ALE spaces and naïve desingularizations}

Let us start by defining the objects we will use throughout this article.

\subsection{Einstein orbifolds and ALE spaces}

For $\Gamma$ a finite subgroup of $SO(4)$ acting freely on $\mathbb{S}^3$, let us denote $(\mathbb{R}^4\slash\Gamma,g_e)$ the flat orbifold obtained by the quotient by the action of $\Gamma$, and $r_e:= d_e(.,0)$.

\begin{defn}[Orbifold (with isolated singularities)]\label{orb Ein}
    We will say that a metric space $(M_o,g_o)$ is an orbifold of dimension $n\in\mathbb{N}$ if there exists $\epsilon_0>0$ and a finite number of points $(p_k)_k$ of $M_o$ called \emph{singular} such that we have the following properties:
    \begin{enumerate}
        \item the space $(M_o\backslash\{p_k\}_k,g_o)$ is a manifold of dimension $n$,
        \item for each singular point $p_k$ of $M_o$, there exists a neighborhood of $p_k$, $ U_k\subset M_o$, a finite subgroup acting freely on $\mathbb{S}^{n-1}$, $\Gamma_k\subset SO(n)$, and a diffeomorphism $ \Phi_k: B_e(0,\epsilon_0)\subset\mathbb{R}^n\slash\Gamma_k \to U_k\subset M_o $ for which, the pull-back of $\Phi_k^*g_o$  on the covering $\mathbb{R}^n$ is smooth.
    \end{enumerate}
\end{defn}
\begin{rem}\label{analysis orbifold}
    Consequently, the analysis on an orbifold is exactly the same as the analysis on a manifold up to using finite local coverings at the singular points.
\end{rem}

\begin{defn}[The function $r_o$ on an orbifold]\label{ro}
    We define $r_o$, a smooth function on $M_o$ satisfying $r_o:= (\Phi_k)_* r_e$ on each $U_k$, and such that on $M_o\backslash U_k$, we have $\epsilon_0\leqslant r_o<1$ (the different choices will be equivalent for our applications).
    
    We will denote, for $0<\epsilon\leqslant\epsilon_0$, $$M_o(\epsilon):= \{r_o>\epsilon\} = M_o\backslash  \Big(\bigcup_k \Phi_k\big(\overline{B_e(0,\epsilon)}\big) \Big).$$
\end{defn}

Let us now turn to ALE Ricci-flat metrics.

\begin{defn}[ALE orbifold (with isolated singularities)]\label{def orb ale}
    An ALE orbifold of dimension $n\in\mathbb{N}$, $(N,g_{b})$ is a metric space for which there exists $\epsilon_0>0$, singular points $(p_k)_k$ and a compact $K\subset N$ for which we have:
    \begin{enumerate}
        \item $(N,g_b)$ is a orbifold of dimension $n$,
        \item there exists a compact subset $K\subset N$ and a diffeomorphism $\Psi_\infty: (\mathbb{R}^n\slash\Gamma_\infty)\backslash \overline{B_e(0,\epsilon_0^{-1})} \to N\backslash K$ such that we have $$r_e^l|\nabla^l(\Psi_\infty^* g_b - g_e)|_{g_e}\leqslant C_l r_e^{-n}.$$
    \end{enumerate}
\end{defn}

\begin{defn}[The function $r_{b}$ on an ALE orbifold]\label{rb}
We define $r_{b}$ a smooth function on $N$ satisfying $r_{b}:= (\Psi_k)_* r_e$ on each $U_k$, and $r_{b}:= (\Psi_\infty)_* r_e$ on $U_\infty$, and such that $\epsilon_0\leqslant  r_{b}\leqslant \epsilon_0^{-1}$ on the rest of $N$ (the different choices are equivalent for our applications).

For $0<\epsilon\leqslant\epsilon_0$, we will denote $$N(\epsilon):= \{\epsilon<r_b<\epsilon^{-1}\} = N\backslash  \Big(\bigcup_k \Psi_k\big(\overline{B_e(0,\epsilon)}\big) \cup \Psi_\infty \big((\mathbb{R}^4\slash\Gamma_\infty)\backslash B_e(0,\epsilon^{-1})\big)\Big).$$
\end{defn}
Now, consider a subset $S_o$ of the singular points of $M_o$ (respectively $S$ of $N$).
\begin{defn}[Functionals $r_{o,S_o}$ and $r_{b,S}$]\label{def roSo rbS}
    We define the functional $r_{o,S_o}$ (respectively $r_{b,S}$) exactly like in Definitions \ref{ro} (respectively \ref{rb}) by only considering the sets $U_k$ containing points of $S_o$ (respectively $S$).
\end{defn}

\subsection{Naïve desingularizations}\label{reecriture controle}

Let us now recall the definition of a naïve desingularization of an orbifold from \cite{ozu1}.

\paragraph{Gluing of ALE spaces to orbifold singularities.}

    Let $0<2\epsilon<\epsilon_0$ be a fixed constant, $t>0$, $(M_o,g_o)$ an orbifold and $\Phi: B_e(0,\epsilon_0)\subset\mathbb{R}^4\slash\Gamma \to U$ a local chart of Definition \ref{orb Ein} around a singular point $p\in M_o$. Let also $(N,g_b)$ be an ALE orbifold asymptotic to $\mathbb{R}^4\slash\Gamma$, and $\Psi_\infty: (\mathbb{R}^4\slash\Gamma)\backslash B_e(0,\epsilon_0^{-1}) \to N\backslash K$ a chart at infinity of Definition \ref{def orb ale}.
    
    Define $s>0$, $\phi_s: x\in \mathbb{R}^4\slash\Gamma\to sx \in \mathbb{R}^4\slash\Gamma$. For $t<\epsilon_0^4$, we define $M_o\#N$ as $N$ glued to $M_o$ thanks to the diffeomorphism $$ \Phi\circ\phi_{\sqrt{t}}\circ\Psi^{-1} : \Psi(A_e(\epsilon_0^{-1},\epsilon_0t^{-1/2}))\to \Phi(A_e(\epsilon_0^{-1}\sqrt{t},\epsilon_0)).$$ Consider moreover $\chi:\mathbb{R}^+\to\mathbb{R}^+$, a $C^\infty$ cut-off function supported on $[0,2]$ and equal to $1$ on $[0,1]$.
    
\begin{defn}[Naïve gluing of an ALE space to an orbifold]\label{def naive desing}
    We define a \emph{naïve gluing} of $(N,g_b)$ at scale $0<t<\epsilon^4$ to $(M_o,g_o)$ at the singular point $p$, which we will denote $(M_o\#N,g_o\#_{p,t}g_b)$ by putting $g_o\#_{p,t}g_b=g_o$ on $M\backslash U$, $g_o\#_{p,t}g_b=tg_b$ on $K$, and 
    $$g_o\#_{p,t}g_b =  \chi(t^{-\frac{1}{4}}r_e)\Psi_\infty^*g_b + \big(1-\chi(t^{-\frac{1}{4}}r_e)\big)\Phi^*g_o$$
    on $\mathcal{A}(t,\epsilon):=A_e(\epsilon^{-1}\sqrt{t},2\epsilon)$.
\end{defn}
\begin{figure}[h]
\centering\includegraphics[width=8cm]{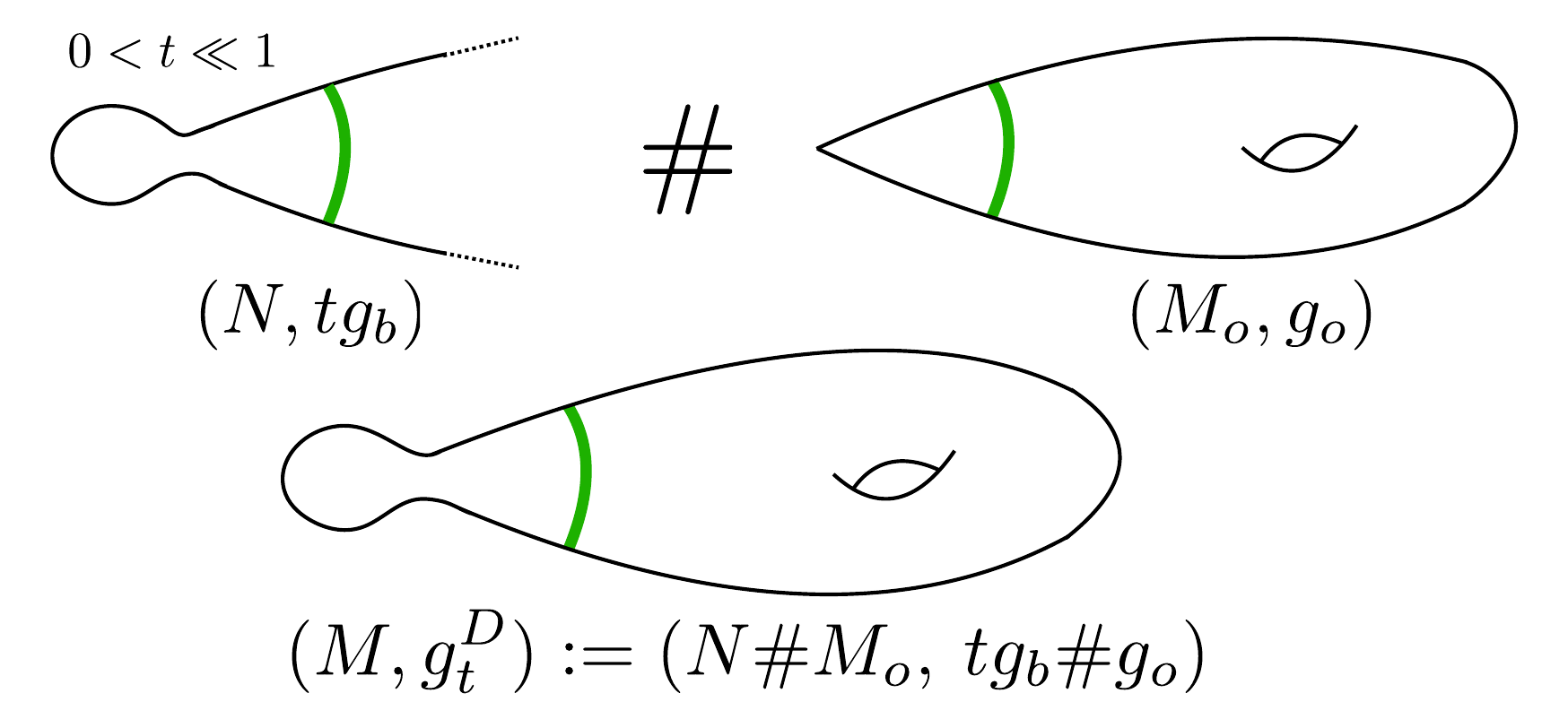}
\end{figure}
\begin{rem}\label{jauge recollement}
    It is possible to compose $\phi_{\sqrt{t}}$ with any isometry of $\mathbb{R}^4\slash\Gamma$. This is equivalent to gluing a different Ricci-flat ALE metric.
\end{rem}

More generally, it is possible to desingularize iteratively by trees of Ricci-flat ALE orbifolds. Consider $(M_o,g_o)$ an Einstein orbifold (the index $o$ stands for orbifold), and $S_o$ a subset of its singular points and $(N_j,g_{b_j})_j$ (the index $b_j$ stands for $j$-th bubble) a family of Ricci-flat ALE spaces asymptotic at infinity to $\mathbb{R}^4\slash\Gamma_j$ and $(S_{b_j})_j$ a subset of their singular points. Let us finally assume that there is a one to one map  $p:j\mapsto p_j\in S_o\cup \bigcup_k S_{b_k}$, where the singularity at $p_j$ is $\mathbb{R}^4\slash\Gamma_j$. We will call $D:= \big((M_o,g_o,S_o),(N_j,g_{b_j},S_{b_j})_j, p\big)$ a \emph{desingularization pattern}.

\begin{defn}[Naïve desingularization by a tree of singularities]\label{def total desing}
    Let $0<2\epsilon<\epsilon_0$, $D$ be a desingularization pattern for $(M_o,g_o)$, and let $0<t_j<\epsilon^4$ be relative gluing scales. The metric $g^D_t$ is then the result of the following finite iteration: 
    \begin{enumerate}
        \item start with a deepest bubble $(N_j,g_{b_j})$, that is, $j$ such that $S_j= \emptyset$,
        \item  if $p_j\in N_k$, replace $(N_k,g_{b_k},S_j)$ and $(N_j,g_{b_j},\emptyset)$ in $D$ by $(N_k\#N_j,g_{b_k}\#_{p_j,t_j}g_{b_j},S_k\backslash\{p_j\})$ and restrict $p$ as $l\to p_l$ for $l\neq j$ in $D$ and consider another deepest bubble. The same works if $p_j\in M_o$.
        \item choose another deepest bubble and do the same.
    \end{enumerate}
    
    For $t = (t_j)_j$, if $N_j$ is glued to $p_j\in N_{j_1}$, and $N_{j_1}$ is glued to $p_{j_1}\in N_{j_2}$, ..., $N_{j_{k-1}}$ is glued to $N_{j_k}$, which is glued to $M_o$, we define $T_j:= t_{j_1}t_{j_2}...t_{j_k}$. This way, on each $N_j^{16t}$, the metric is $T_jg_{b_j}$.
\end{defn}
\begin{figure}[h]
\centering\includegraphics[width=8cm]{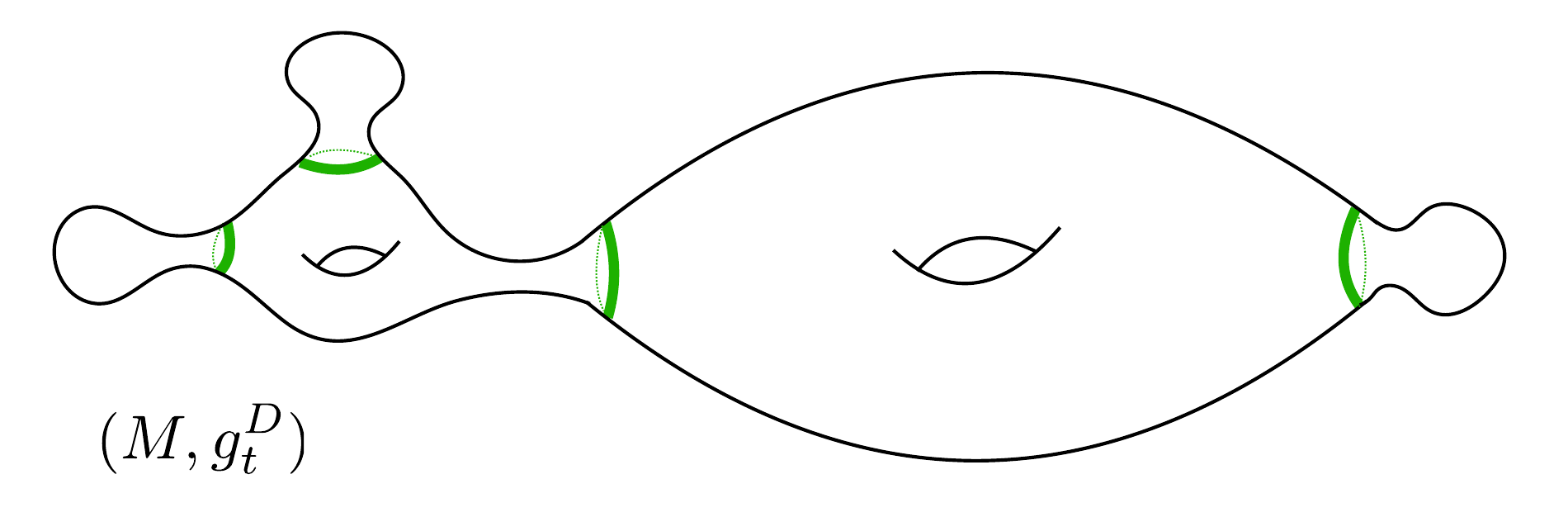}
\end{figure}
 Let $(M_o,g_o)$ be an Einstein orbifold, and $(M,g^D)$ a naïve desingularization of $(M_o,g_o)$ by a tree of ALE Ricci-flat orbifolds $(N_j,g_{b_j})$ glued at scales $T_j>0$. 

Here, the manifold $M$ is also covered as $M = M_o^t\cup \bigcup_jN_j^t$, where
\begin{equation}
    M_o^t: = M_o\backslash  \big(\bigcup_k \Phi_k(B_e(0,t_k^\frac{1}{4})) \big),\label{def Mot}
\end{equation}
 where $t_k>0$ is the relative gluing scale of $N_k$ at the singular point $p_k\in M_o$, and where 
 \begin{equation}
     N_j^t:= \big(N_j\backslash \Psi_\infty \big((\mathbb{R}^4\slash \Gamma_\infty)\backslash B_e(0, 2t_j^{-\frac{1}{4}}) \big)\big) \backslash \big(\bigcup_k \Psi_k(B_e(0,t_k^\frac{1}{4}) \big).\label{def Njt}
 \end{equation}
On $M_o^{16t}\subset M_o^t$, we have $g^D = g_o$ and on each $N_j^{16t}\subset N_j^t$, we have $g^D = T_j g_{b_j}$. We also define $t_{\max}:= \max_j t_j$. By Definition \ref{def naive desing}, on the intersection $N_j^t\cap M_o^t$ we then have $\sqrt{T_j}r_{b_j} = r_o $, and on the intersection $N_j^t\cap N_k^t$, we have $\sqrt{T_j}r_{b_j} = \sqrt{T_k}r_{b_k}$.

\begin{defn}[Function $r_D$ on a naïve desingularization]
    On a naïve desingularization $(M,g^D)$, we define a function $r_D$ in the following way:
    \begin{enumerate}
        \item $r_D = r_o$ on $M_o^t$,
        \item $r_D = \sqrt{T_j}r_{b_j}$ on each $N_j^t$.
    \end{enumerate}
    The function $r_D$ is smooth on $M$.
\end{defn}

\begin{defn}[{Neck regions, $\mathcal{A}_k(t,\epsilon)$}]\label{def neck region}
    Let $(N_k,g_{b_k})$ be a Ricci-flat ALE orbifold of the above tree of singularities. We define $\mathcal{A}_k(t,\epsilon)$ as the connected region with $\epsilon^{-1}\sqrt{T_k}<\sqrt{T_k}r_{b_k}=r_D<\epsilon t_k^{-\frac{1}{2}}\sqrt{T_k}$ with a nonempty intersection with $N_k^t$.
\end{defn}

\begin{figure}[h]
\centering\includegraphics[width=8cm]{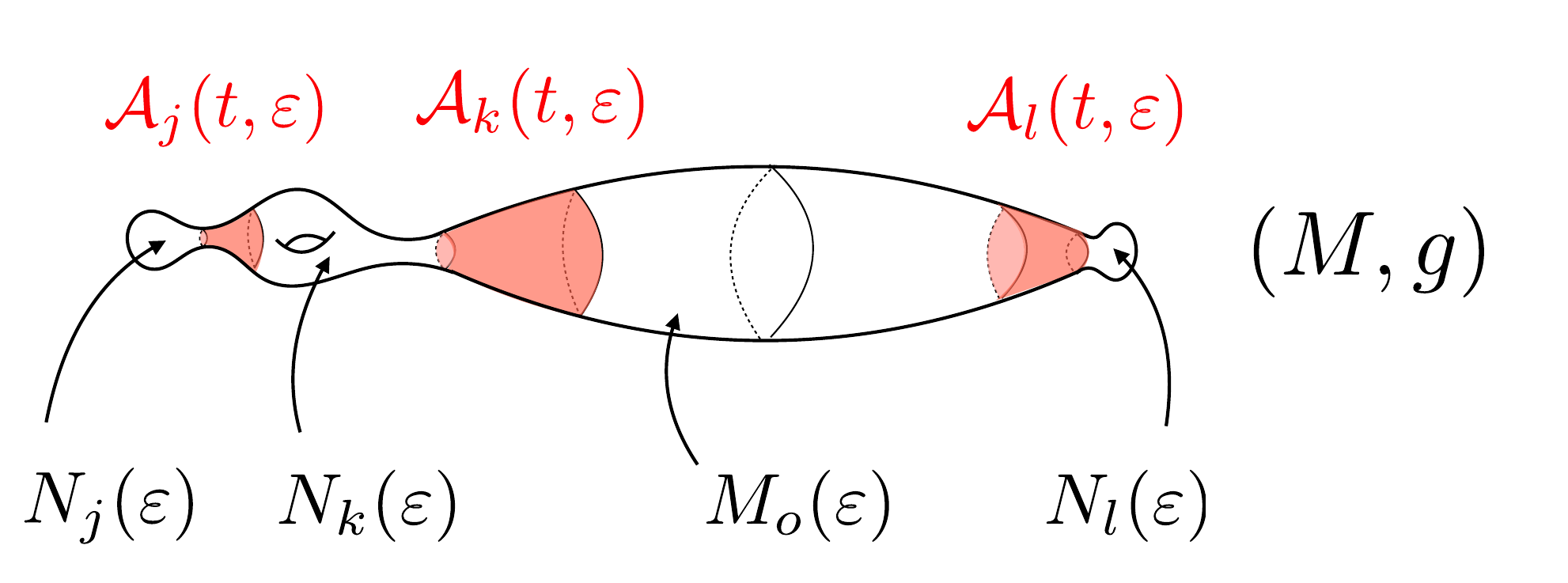}
\end{figure}

\begin{defn}[Cut-off functions $\chi_{M_o^t}$, $\chi_{N_j^t}$, $\chi_{\mathcal{A}_k(t,\epsilon)}$ and $\chi_{B(p_k,\epsilon)}$]\label{def cutoffs all}
    We define the following cut-off functions thanks to the cut-off function $\chi$ used in Definition \ref{def naive desing}.
    \begin{itemize}
        \item $\chi_{M_o^t}$, equal to $1$ on $M_o^{16t}$ and equal to $1-\chi(t_k^{-\frac{1}{4}}r_o)$ on each annulus $\mathcal{A}_k(t,\epsilon)$. It is supported on $M_o^t$.
        \item $\chi_{N_j^t}$, equal to $1$ on $N_j^{16t}$ and equal to $1-\chi(t_k^{-\frac{1}{4}}r_{b_j})$ on each annulus $\mathcal{A}_k(t,\epsilon)$ at its singular points and $\chi(t_j^{\frac{1}{4}}r_{b_j})$ in a neighborhood of infinity. It is supported on $N_j^t$.
        \item $\chi_{\mathcal{A}_k(t,\epsilon)}$ equal to $1$ on $\mathcal{A}_k(t,\frac{1}{2}\epsilon)$, and equal to $\chi(\epsilon^{-1}t_k^{\frac{1}{2}}r_{b_k}) - \chi(\epsilon r_{b_k})$. It is supported on $\mathcal{A}_k(t,\epsilon)$.
        \item $\chi_{B(p_k,\epsilon)}$ for $p_k\in M_o$ equal to $1$ on $r_o<\epsilon$ equal to $\chi(\epsilon^{-1}r_o)$ around $p_k$. It is supported in  supported in $r_o<2\epsilon$ around $p_k$.
    \end{itemize}
    Note that since $t_k^{\frac{1}{2}}r_{b_k} = r_o$ on the gluing region, we have $\chi_{B(p_k,\epsilon)}-\chi_{\mathcal{A}_k(t,\epsilon)} = \chi(\epsilon r_{b_k}) = \chi(\epsilon t_k^{-\frac{1}{2}} r_{o})$.
    
    The definition extends to deeper Ricci-flat ALE orbifolds thanks to the iteration of Definition \ref{def total desing}.
\end{defn}
This in particular yields a partition of unity,
\begin{equation}
    1 = \chi_{M_o^t} + \sum_j \chi_{N_{j}^t}.\label{cut off Mot Njt}
\end{equation}

\section{Weighted Hölder spaces and decoupling norms}

We now present the spaces in which the analysis of the rest of the article takes place.

\subsection{Weighted Hölder spaces}

Let us construct weighted Hölder spaces adapted to our situation. Let $(M,g^D_{t})$ be a naïve desingularization of an orbifold $(M_o,g_o)$ by Ricci-flat ALE orbifolds $(N_j,g_{b_j})$ at scales $T_j>0$. 

\subsubsection{Weighted Hölder spaces on orbifolds and ALE spaces}

Let us first define weighted spaces on manifolds asymptotic to cones or with conical singularities. For a tensor $s$, a point $x$, $\alpha>0$ and a metric $g$, if we denote $\exp_x$ the exponential map at $x$ whose injectivity radius is $\textup{inj}_g(x)$, we define the Hölder seminorm of $s$ on $M$ as
$$ [s]_{C^\alpha(g)}(x):= \sup_{\{y\in T_xM,|y|< \textup{inj}_g(x)\}} \Big| \frac{\exp_x^*s(0)-\exp_x^*s(y)}{|y|^\alpha} \Big|_{\exp_x^*g}.$$

For orbifolds, we will consider a norm which is bounded for tensors decaying at the singular points.
\begin{defn}[Weighted Hölder norms on an orbifold]\label{norm orbifold}
    Let $\beta\in \mathbb{R}$, $k\in\mathbb{N}$, $0<\alpha<1$ and $(M_o,g_o)$ an orbifold. Then, for all tensor $s$ on $M_o$, we define
    \begin{align*}
        \| s \|_{C^{k,\alpha}_{\beta}(g_o)} &:= \sup_{M_o}r_o^{-\beta}\Big(\sum_{i=0}^k r_o^{i}|\nabla_{g_o}^i s|_{g_o} + r_o^{k+\alpha}[\nabla_{g_o}^ks]_{C^\alpha(g_o)}\Big).
    \end{align*}
\end{defn}
\begin{rem}
    The injectivity radius at a point $x\in M_o$ is equivalent to $r_{o}$.
\end{rem}

For ALE orbifolds, we will consider a norm which is bounded for tensors decaying at infinity and at the singular points.
\begin{defn}[Weighted norm on ALE orbifolds]\label{norm ALE}
For $\beta\in \mathbb{R}$, $k\in\mathbb{N}$ and $0<\alpha<1$ on an orbifold ALE $(N,g_b)$, we define
   \begin{align*}
       \| s \|_{C^{k,\alpha}_{\beta}(g_b)}:= \sup_{N}\Big\{\max(r_b^\beta,r_b^{-\beta})\Big( \sum_{i=0}^kr_b^{i}|\nabla_{g_b}^i s|_{g_b} + r_b^{k+\alpha}[\nabla_{g_b}^ks]_{C^\alpha({g_b})}\Big)\Big\}.
   \end{align*}
\end{defn}
\begin{rem}
    The injectivity radius at a point $x\in N$ is equivalent to $r_{b}$.
\end{rem}

\begin{rem}\label{rem norm partial orb ALE}
    We similarly define the norms for $C^{k,\alpha}_{\beta}(g_o,S_o)$ and $C^{k,\alpha}_{\beta}(g_b,S)$ by replacing $r_o$ and $r_b$ by $r_{o,S_o}$ and $r_{b,S}$ of Definition \ref{def roSo rbS}.
\end{rem}

\subsubsection{Weighted Hölder spaces on trees of singularities}

Let us assume that $(M,g^D_t)$ is a naïve desingularization of $(M_o,g_o)$ by a tree of singularities $(N_j,g_{b_j})$. For $t_{\max}:= \max_j t_j<\epsilon_0^4$, for $\epsilon_0>0$ the constant of Subsection \ref{reecriture controle} only depending on $g_o$ and the $g_{b_j}$, we define the global weighted norm in the following way.

\begin{defn}[Weighted Hölder norm on a naïve desingularization]\label{norme a poids M}
		Let $\beta\in \mathbb{R}$ and $k\in\mathbb{N}$, $0<\alpha<1$. We define for $s\in TM^{\otimes l_+}\otimes T^*M^{\otimes l_-}$ a tensor of type $(l_+,l_-)\in \mathbb{N}^2$, with $l:= l_+-l_-$ the associated conformal weight.
		\begin{align}
		    \|s\|_{C^{k,\alpha}_{\beta}(g^D)}:=\| \chi_{M_o^t} s \|_{C^{k,\alpha}_{\beta}(g_o)} + \sum_j T_j^\frac{l}{2}\|\chi_{N_{j}^t}s\|_{C^{k,\alpha}_{\beta}(g_{b_j})}.\label{def norme poids avec g_o et g_b}
		\end{align}
\end{defn}

\begin{rem}
    The factor $T_j^\frac{l}{2}$ in \eqref{def norme poids avec g_o et g_b} comes from the fact that on $N_j^t$, the metric $g^D$ is close to $T_j g_{b_j}$. For a tensor $s$ of conformal weight $l$, we have $|s|_{T_jg_{b_j}} = T_j^{\frac{l}{2}}|s|_{g_{b_j}},$ and therefore $T_j^\frac{l}{2}\|\chi_{N_{j}^t}s\|_{C^{k,\alpha}_{0}(g_{b_j})}=\|\chi_{N_{j}^t}s\|_{C^{k,\alpha}_{0}(T_jg_{b_j})}$.
\end{rem}

\begin{rem}
For a function, being bounded for this norm means being bounded in $C^0_\beta$-norm means being bounded everywhere and having a particular decay in the neck regions.
\begin{figure}[h]
\centering\includegraphics[width=8cm]{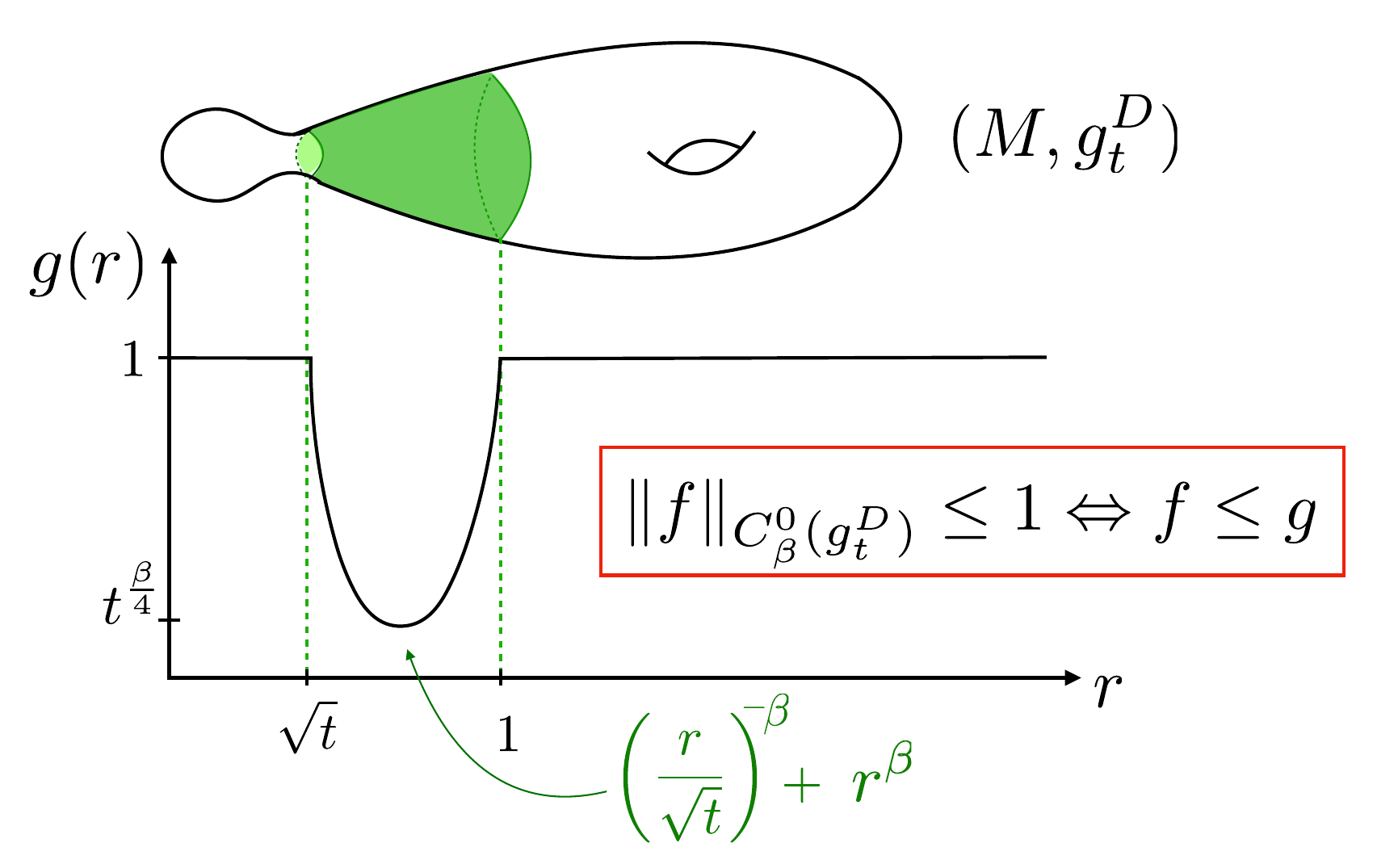}
\end{figure}
\\

Its main advantage is that it is totally adapted to trees of singularities.
\begin{figure}[h]
\centering\includegraphics[width=8cm]{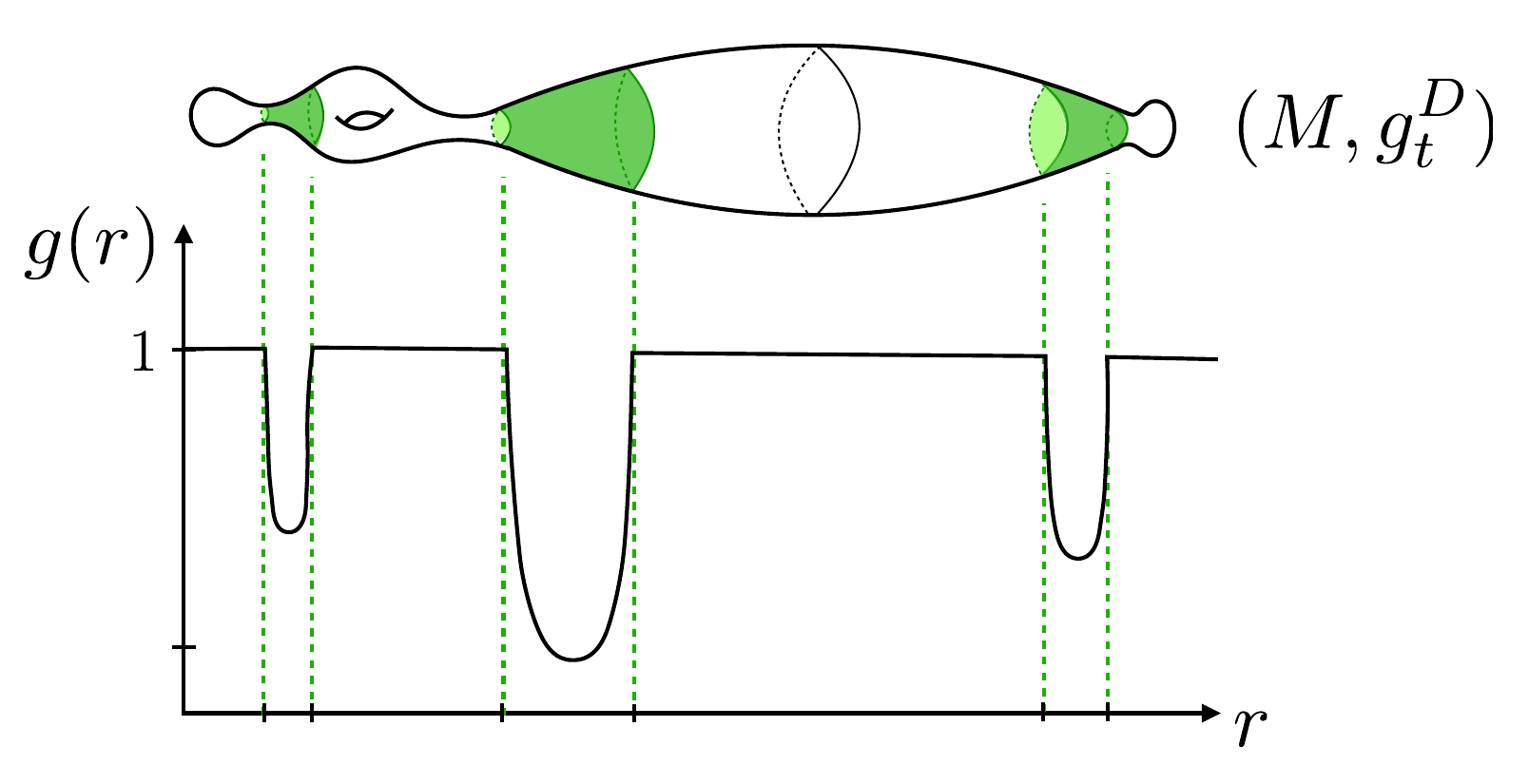}
\end{figure}

\end{rem}

\begin{rem}\label{rem partial desing}
    If $(M,g^D)$ is a partial desingularization at the singular points of $S_o$ and $S_j$, we define the $C^{k,\alpha}_{\beta}(g^D)$-norm similarly thanks to $C^{k,\alpha}_{\beta}(g_o,S_o)$ and $C^{k,\alpha}_{\beta}(g_b,S)$ of Remark \ref{rem norm partial orb ALE}.
\end{rem}

Thanks to this norm, we can for example rewrite and extend the statement of \cite[Theorem 6.4]{ozu1}.

\begin{cor}\label{GH to C3}
	Let $D_0,v_0>0$, $l\in \mathbb{N}$, and $\beta= \beta(v_0,D_0)>0$ obtained in \cite[Theorem 6.4]{ozu1}. Then, for all $\epsilon>0$, there exists $\delta = \delta(\epsilon,D_0,v_0,l) >0$ such that if $(M,g^\mathcal{E})$ is an Einstein manifold satisfying
	\begin{itemize}
		\item the volume is bounded below by $v_0>0$,
		\item the diameter is bounded above by $D_0$,
		\item the Ricci curvature is bounded $|\Ric|\leq 3$.
	\end{itemize}
	and for an Einstein orbifold $(M_o,g_o)$,
	$$d_{GH}\big((M,g^\mathcal{E}),(M_o,g_o)\big)\leqslant \delta,$$
	then, there exists a naïve desingularization $(M,g^D)$ of $(M_o,g_o)$ by a tree of singularities and a diffeomorphism $\phi: M\to M$ such that $$\big\|\phi^*g^\mathcal{E}-g^D\big\|_{C^l_{\beta}(g^D)}\leqslant \epsilon.$$
\end{cor}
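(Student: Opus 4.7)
The plan is to combine \cite[Theorem 6.4]{ozu1} with an elliptic bootstrapping argument on the Einstein equation, and to verify that the weighted norm of Definition \ref{norme a poids M} is comparable to the decoupling norm $C^{2,\alpha}_{\beta,*}$ used in \cite{ozu1} on naïve desingularizations. First, the hypotheses of bounded volume, bounded diameter, and bounded Ricci curvature together with $\delta$-Gromov--Hausdorff closeness to $(M_o,g_o)$ place us within the Anderson--Bando--Kasue--Nakajima noncollapsed compactness theory. Iterating pointed rescaling at the concentration points of the curvature produces a bubble tree of Ricci-flat ALE orbifolds $(N_j,g_{b_j})$ and gluing scales $T_j$ tending to zero as $\delta\to 0$, which assemble into a naïve desingularization $(M,g^D)$ in the sense of Section \ref{reecriture controle}.

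Next, \cite[Theorem 6.4]{ozu1} provides, for $\delta$ small enough, a diffeomorphism $\phi:M\to M$ and a quantitative closeness of $\phi^*g^\mathcal{E}$ to $g^D$ in the decoupling norm. I would then check that on the core pieces $M_o^{16t}$ and $N_j^{16t}$ the partition of unity makes the norm of Definition \ref{norme a poids M} equivalent to the standard weighted Hölder norms on $(M_o,g_o)$ and on $(N_j,g_{b_j})$, the factors $T_j^{l/2}$ accounting precisely for the rescaling $g^D\approx T_j g_{b_j}$ on each bubble piece; on the neck regions the various powers of $r_D$ match those already used in the decoupling norm of \cite{ozu1}, and the cut-off functions $\chi_{M_o^t}$, $\chi_{N_j^t}$ have derivatives controlled by powers of $r_D^{-1}$ as required. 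This already yields the corollary for $l \leqslant 2$.

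To upgrade to arbitrary $l\in\mathbb{N}$, I would run a rescaling-and-bootstrapping argument. At any point $x\in M$, setting $\lambda:=r_D(x)$ and rescaling both $\phi^*g^\mathcal{E}$ and $g^D$ by $\lambda^{-2}$, the ball $B(x,\lambda/2)$ becomes a unit-size domain of uniformly bounded geometry on which both metrics are small perturbations of a fixed smooth model (either the orbifold chart, the bubble chart, or a neck interpolant given by the analytic power series expansions appearing in Definitions \ref{orb Ein} and \ref{def orb ale}), and $\phi^*g^\mathcal{E}$ is Einstein with $|\Lambda|\leqslant 3$. Applying Schauder estimates to the nonlinear elliptic equation satisfied by $h:=\phi^*g^\mathcal{E}-g^D$ in the Bianchi gauge of $g^D$ iteratively promotes the $C^{2,\alpha}$-smallness into $C^l$-smallness at unit scale, and rescaling back by $\lambda^2$ and collecting the pointwise estimates yields the desired bound in $C^l_\beta(g^D)$.

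The main obstacle is ensuring uniformity in the gluing parameters $t$ during the bootstrap: one must verify that the Schauder constants and the $C^l$ bounds on the model metrics used to approximate $g^D$ on each scale $r_D(x)$ do not degenerate as $t\to 0$. This is exactly what the scale-invariant structure of the weighted norm of Definition \ref{norme a poids M} is designed to encode, and it follows from the construction of naïve desingularizations, since at every scale $g^D$ is either the fixed orbifold metric, a fixed ALE bubble metric, or an explicit convex combination of the two governed by analytic power series in $r_e$; the corresponding elliptic operators have uniformly bounded smooth coefficients at unit scale after rescaling, independently of $t$.
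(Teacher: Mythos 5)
The paper's proof is in fact much shorter: a compactness/contradiction argument. One supposes a sequence of Einstein counterexamples $(M_i,g_i)$ with the stated volume, diameter, and Ricci bounds, all $\frac{1}{i}$-Gromov--Hausdorff close to $(M_o,g_o)$ but farther than $\epsilon$ from every naïve desingularization in $C^l_\beta$; after passing to a subsequence with fixed topology, \cite[Theorem 6.4]{ozu1} produces naïve desingularizations $(M,g^D_i)$ and diffeomorphisms realizing precisely such $C^l_\beta$-closeness (by definition of the weighted norm of Definition \ref{norme a poids M}, which is built from the orbifold and ALE norms on which Theorem 6.4 is formulated), contradicting the assumption. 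No bootstrap is performed; the result is essentially a rewriting of Theorem 6.4 in the new weighted norm.

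Your proposal takes a genuinely different, constructive route: re-derive the bubble-tree structure from the ABKN theory, invoke Theorem 6.4 only for low-regularity ($C^{2,\alpha}_{\beta,*}$) closeness, and then bootstrap through the gauged Einstein equation. The main benefit of the paper's approach is economy: since Theorem 6.4 already delivers $C^l$ (indeed $C^\infty$-on-compacts) control piece by piece, all that remains is to assemble those estimates into the norm of Definition \ref{norme a poids M}; a contradiction argument packages this cleanly without having to quantify $\delta$ in terms of $\epsilon$ explicitly. Your bootstrap, by contrast, requires more care than the text provides: the paper's Schauder estimate (Proposition \ref{Estimations elliptic P}) is only stated at $C^{2,\alpha}$, so the higher-order version you need (uniform $C^{l,\alpha}$ Schauder constants at every rescaling scale $r_D(x)$, independently of $t$) would have to be established first, and you would also need to check that the bootstrap preserves the weight $\beta$ rather than degrading it. Moreover you invoke the Bianchi gauge of $g^D$, whereas the paper is careful to use the reduced divergence-free gauge precisely because the Bianchi gauge can fail for positive scalar curvature (see Section 3). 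None of these are fatal --- the scale-invariant coefficient structure you point to does make the bootstrap plausible --- but your route rebuilds machinery the paper obtains for free from Theorem 6.4.
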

\begin{proof}
    Let $l\in \mathbb{N}$. Let us give a proof by contradiction and consider a sequence of counter examples, that is a sequence of Einstein manifolds $(M_i,g_i)_i$ such that
    \begin{itemize}
        \item $\vol(g_i)\geqslant v_0>0$,
        \item $diam(g_i)<D_0$ and
        \item $|\Ric(g_i)|_{g_i} \leqslant 3$
    \end{itemize}
    converging in the Gromov-Hausdorff sense to an Einstein orbifold $(M_o,g_o)$, but such that there exists $\epsilon>0$ for which, for all $i\in \mathbb{N}$ and any naïve desingularization $(M_i,g^D_i)$ of $(M_o,g_o)$, and all diffeomorphism $\Phi_i:M_i \to M_i$, we have $\|\Phi_i^*g_i-g^D_i\|_{C^l_\beta(g^D)}>\epsilon$.
    
    According to \cite[Theorem 6.4]{ozu1}, this implies that there exists a subsequence $(M,g_i)_i$ with fixed topology, and a sequence $(M,g^D_i)_i$ contradicting the assumption for $i$ large enough by definition of the weighted norm.
\end{proof}

On the annuli of low curvature $\mathcal{A}_k(t,\epsilon_0)$ pulled back on flat annuli $A_e(\rho_1,\rho_2)\subset \mathbb{R}^4\slash\Gamma$, the weighted norm on $(M,g^D)$ is equivalent is equivalent to a particular norm which allows us to control \emph{independently of the radii} the sum of tensors decaying at the center of the annulus and of tensors decaying at infinity.

	\begin{defn}[Weighted norm adapted to an annulus]\label{def poids euclidien}
Let $0<\rho_1<\rho_2$, $\beta\in \mathbb{R}$, $k\in\mathbb{N}$, $0<\alpha<1$ and a tensor $s$ on $(A_e(\rho_1,\rho_2),g_e)$. We define
\begin{equation}
    \eta(r_e):=\max\Big(\big(\frac{\rho_1}{r_e}\big)^{\beta},\big(\frac{r_e}{\rho_2}\big)^{\beta}\Big)\leqslant 1,\label{def eta}
\end{equation}
 and the norm,
\begin{align*}
	 \|s\|_{C^{k,\alpha}_{\beta}(A_e(\rho_1,\rho_2))}:&= \sup_{A_e(\rho_1,\rho_2)}\Big[ \eta^{-1}(r_e)\Big( \sum_{i=0}^k r_e^{i}|\nabla_{g_e}^is|_{g_e} + r_e^{k+\alpha}[\nabla_{g_e}^ks]_{C^\alpha(g_e)}\Big)\Big].
		\end{align*}
\end{defn}

In the rest of this article, we will often use spaces denoted $fC^{k,\alpha}_{\beta}$ for a positive function $f$. They will always be equipped with the following norm
$$\|s\|_{fC^{k,\alpha}_\beta}:= \Big\|\frac{s}{f}\Big\|_{C^{k,\alpha}_\beta}.$$
\begin{rem}
    By definition of $r_D$, for all $m$, there exists a constant $C>0$ only depending on the cut-off functions of Definition \ref{def cutoffs all} such that
\begin{equation}
    \frac{1}{C}\|s\|_{r^m_DC^{k,\alpha}_{\beta}(g^D)} \leqslant \| \chi_{M_o^t} s \|_{r^m_oC^{k,\alpha}_{\beta}(g_o)} + \sum_j T_j^\frac{l-m}{2}\|\chi_{N_{j}^t}s\|_{r^m_{b_j}C^{k,\alpha}_{\beta}(g_{b_j})}\leqslant C\|s\|_{r^m_DC^{k,\alpha}_{\beta}(g^D)}.\label{Estimation rD ro rbj}
\end{equation}
\end{rem}

\begin{rem}
    The metric $g^D$ is equal to $g_o$ on $\{r_D>\epsilon\}\cap M_o^t$, and to $T_jg_{b_j}$ on $\{\sqrt{T_j}\epsilon<r_D<2\sqrt{T_j}\epsilon^{-1}\}\cap N_j^t$. Since on the $\mathcal{A}_k(t,\epsilon)$ between $N_k$ and $N_j$ (resp. $M_o$) identified with $A_e(\epsilon^{-1}\sqrt{t_kT_j},\epsilon\sqrt{T_j})$ (resp. $A_e(\epsilon^{-1}\sqrt{t_k},\epsilon)$), $g^D$ is arbitrarily close to $g_e$, we see that defining $\tilde{\eta} : M \mapsto \mathbb{R}^+$ a function equal to 
    \begin{itemize}
        \item $1$ on $\{r_D>\epsilon\}\cap M_o^t$ and on $\{\sqrt{T_j}\epsilon<r_D<2\sqrt{T_j}\epsilon^{-1}\}\cap N_j^t$ and
        \item equal to the function $\eta\leqslant 1$ defined in \eqref{def eta} on the associated euclidean annulus $A_e(\epsilon^{-1}\sqrt{t_kT_j},\epsilon\sqrt{T_j})$ (or $A_e(\epsilon^{-1}\sqrt{t_k},\epsilon)$).
    \end{itemize}
    Then, the norm $r_D^mC^{k,\alpha}_\beta(g^D)$ is equivalent (independently of $t$) to the norm which to a tensor $s$ associates
    \begin{equation}
        \|s\|=\sup_{M}r_D^{-m}\tilde{\eta}(r_D)^{-1}\Big(\sum_{i=0}^k r_D^{i}|\nabla_{g^D}^i s|_{g^D} + r_D^{k+\alpha}[\nabla_{g^D}^ks]_{C^\alpha(g^D)}\Big).\label{expression poids equivalent 1}
    \end{equation}
\end{rem}

\begin{rem}\label{comportement norme}
    Let $\beta\leqslant \beta'$, $k+\alpha\leqslant k'+\alpha'$, and $m,m'\in \mathbb{Z}$. 

    For all the previously mentioned weighted Hölder spaces generically denoted $r^m C^{k,\alpha}_\beta$, we have the following properties: for any tensors $s$ and $s'$
    \begin{itemize}
        \item $\|s\|_{r^mC^{k,\alpha}_\beta}\leqslant\|s\|_{r^mC^{k',\alpha'}_{\beta'}}$,
        \item $\|\nabla^ks\|_{r^mC^{k'-k,\alpha}_\beta}\leqslant \|s\|_{r^{m+k}C^{k',\alpha}_\beta}$
        \item if $*$ is a composition, a product of a contraction of tensors, there exists $C= C(*, k , \alpha)>0$ such that 
        \begin{equation}
            \| s*s' \|_{r^{m+m'}C^{k,\alpha}_{\beta+\beta'}}\leqslant C \|s\|_{r^{m}C^{k,\alpha}_{\beta}}\|s'\|_{r^{m'}C^{k,\alpha}_{\beta'}}.\label{behavior norm quadratic}
        \end{equation}
    \end{itemize}
    Let us give an explanation for that last inequality \eqref{behavior norm quadratic} for bilinear operations and assume first that $k=0,$ $\alpha=0$ and consider depending on the situation 
    \begin{enumerate}
        \item $(w,w') = (r_o^{-\beta},r_o^{-\beta'})$ on $(M_o,g_o)$,
        \item $(w,w') = \big(\max(r_b^\beta,r_b^{-\beta}),\max(r_b^{\beta'},r_b^{-\beta'})\big)$ on $(N,g_b)$
        \item or $(w,w')=({\tilde{\eta}}^{-1},\left.\tilde{\eta}'\right.^{-1})$ with the weights $\tilde{\eta}$ and $ {\tilde{\eta}'}$ used in \eqref{expression poids equivalent 1} above respectively associated to $\beta$ and $\beta'$ on $(M,g^D)$.
    \end{enumerate}
    The goal is to bound 
    $w.w'r^{-m-m'}|s*s'|$ uniformly and this is done using the definitions of the norms which yield
    $w r^{-m} |s|\leqslant \|s\|_{r^mC^0_\beta}$ and
    ${w'}r^{-m'}| s'|\leqslant \|s'\|_{r^{m'}C^0_{\beta'}}$ for any of the above spaces. The derivatives are treated thanks to the second above inequality and Leibniz rule.
\end{rem}

\subsubsection{Weighted Schauder estimates}

Weighted Schauder estimates hold in these norms for the operator $P:= \frac{1}{2}\nabla^*\nabla-\mathring{\R}$. 

\begin{prop}\label{Estimations elliptic P}
    For all $\beta>0$ and $0<\alpha<1$, there exists $C>0$ and $\epsilon>0$  such that if $h$ is a symmetric $2$-tensor on $(M,g^D)$, and $g$ a metric on $M$ satisfying $$\|g-g^D\|_{C^{2,\alpha}_\beta(g^D)}\leqslant \epsilon,$$
    then, we have
    $$\|h\|_{C^{2,\alpha}_{\beta}(g^D)}\leqslant C \big(\|P_gh\|_{r^{-2}_DC^{\alpha}_\beta(g^D)}+\|h\|_{C^0_{\beta}(g^D)}\big).$$
\end{prop}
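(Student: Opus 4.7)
Plan. The approach is a standard perturbation-plus-localization bootstrap adapted to the weighted setting: (i) reduce to the model case $g=g^D$ by absorbing the small perturbation; (ii) localize via the partition of unity $\chi_{M_o^t}+\sum_j\chi_{N_j^t}=1$ from Definition \ref{norme a poids M}; and (iii) apply standard weighted Schauder estimates for $P$ on the fixed model pieces $(M_o,g_o)$ and $(N_j,g_{b_j})$, handling the commutator terms by interpolation.

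For the reduction, the tensor $P_g-P_{g^D}$ is linear in $h,\nabla^{g^D}h,\nabla^{g^D,2}h$ with coefficients that depend polynomially on $(g^D)^{-1}$, $g-g^D$, $\nabla^{g^D}(g-g^D)$ and $\nabla^{g^D,2}(g-g^D)$. The multiplicative estimates of Remark \ref{comportement norme} then give
\begin{equation*}
\|(P_g-P_{g^D})h\|_{r_D^{-2}C^\alpha_\beta(g^D)} \leqslant C\|g-g^D\|_{C^{2,\alpha}_\beta(g^D)}\|h\|_{C^{2,\alpha}_\beta(g^D)} \leqslant C\epsilon\|h\|_{C^{2,\alpha}_\beta(g^D)},
\end{equation*}
which, for $\epsilon$ small enough, is absorbed into the left-hand side of the desired inequality. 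It therefore suffices to prove the estimate for $P_{g^D}$.

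Decomposing $h$ via the partition of unity and using the definition of $\|\cdot\|_{C^{2,\alpha}_\beta(g^D)}$, we reduce to establishing on each piece the standard weighted Schauder estimate
\begin{equation*}
\|u\|_{C^{2,\alpha}_\beta} \leqslant C\bigl(\|Pu\|_{r^{-2}C^\alpha_\beta}+\|u\|_{C^0_\beta}\bigr),
\end{equation*}
for $P=\frac{1}{2}\nabla^*\nabla-\mathring{\R}$ on the orbifold $(M_o,g_o)$ with weight $r_o$ and on each ALE orbifold $(N_j,g_{b_j})$ with weight $r_{b_j}$. These follow from the scale-invariant local Schauder estimate applied on balls of radius comparable to $r_o$, respectively $r_{b_j}$, and combined across dyadic scales; the hypothesis $\beta>0$ is sufficient because the $C^0_\beta$ term on the right side makes the usual indicial-root analysis unnecessary. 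The commutators of $P_{g^D}$ with the cutoffs $\chi$ are first-order operators whose coefficients, by $|\nabla^k\chi|\leqslant Cr_D^{-k}$, are bounded by $Cr_D^{-2}$ and supported on the gluing annuli; the resulting $\|h\|_{C^1_\beta(g^D)}$ contributions are absorbed via interpolation into a small multiple of $\|h\|_{C^{2,\alpha}_\beta(g^D)}$ plus a large multiple of $\|h\|_{C^0_\beta(g^D)}$.

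The main technical point is the uniformity of the constant $C$ with respect to the gluing parameters $t=(t_j)_j$. This works precisely because the weighted norm $\|\cdot\|_{C^{2,\alpha}_\beta(g^D)}$, with its conformal-weight factors $T_j^{l/2}$ in Definition \ref{norme a poids M}, is designed so that the Schauder estimate on each rescaled piece $(N_j,T_jg_{b_j})$ reduces, after unrescaling, to the fixed $t$-independent estimate on $(N_j,g_{b_j})$. On the transitional annuli, pulled back to flat annuli $A_e(\rho_1,\rho_2)\subset\mathbb{R}^4\slash\Gamma_j$, the decoupling norm equivalence recalled in Section 2 provides bounds independent of $\rho_1,\rho_2$, translating into constants uniform in $t$ for the commutator and cross-terms. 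This uniformity is the only delicate ingredient: all other steps are essentially term-by-term applications of elliptic regularity on a fixed geometry.
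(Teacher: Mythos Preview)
Your argument is correct, but the paper takes a more direct route that avoids the partition-of-unity/commutator/interpolation machinery entirely. Instead of splitting $h$ via $\chi_{M_o^t}+\sum_j\chi_{N_j^t}$ and invoking global weighted Schauder on each full model space, the paper simply covers $M$ by the three fixed regions $M_o(\epsilon_0)$, $N_j(\epsilon_0)$ (compact, $t$-independent pieces where $r_D$ is bounded above and below) and the neck annuli $\mathcal{A}_k(\epsilon_0)$, and establishes a \emph{local} Schauder estimate at every point: on the compact pieces this is ordinary unweighted Schauder for $P_{g_o}$ or $P_{g_{b_j}}$, while on each dyadic subannulus $\{\rho\leqslant r_D\leqslant 2\rho\}$ one rescales by $\rho^{-2}$ to a metric $C^{1,\alpha}$-close to $g_e$ on $A_e(1,2)$ and uses flat Schauder there. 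Multiplying the resulting pointwise inequalities by the weight and taking the supremum yields the global estimate directly, with uniformity in $t$ manifest since every local estimate lives on a fixed (unit-scale) model. The paper also works directly with $P_g$ rather than first reducing to $P_{g^D}$.

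What your approach buys is modularity: you treat the weighted Schauder on $(M_o,g_o)$ and $(N_j,g_{b_j})$ as a black box. The cost is that this black box is itself usually proved by exactly the dyadic-rescaling argument the paper carries out, so your route is a bit circular and you then have to deal with commutator terms and a weighted interpolation inequality whose uniformity in $t$ requires the same scale-invariance check. One terminological slip: your reference to the ``decoupling norm equivalence'' is off --- the decoupling norms $C^{k,\alpha}_{\beta,*}$ appear only later and are not used in this proposition; what is relevant here is just the annulus norm $C^{k,\alpha}_\beta(A_e(\rho_1,\rho_2))$ with weight $\eta(r_e)=\max\big((\rho_1/r_e)^\beta,(r_e/\rho_2)^\beta\big)$.
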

\begin{proof}
    Let $g$ be a metric on $M$ satisfying $\|g-g^D\|_{C^{2,\alpha}_\beta(g^D)}\leqslant \epsilon$, for $\epsilon>0$ which we will choose small enough along the proof.
    On the compacts $M_o(\epsilon_0)$ of the orbifold and $N_j(\epsilon_0)$ of the ALE orbifolds minus their singular points, we have an elliptic estimate for the operators $P_{g_o}$ and $P_{g_{b_j}}$: there exists $C_1>0$ such that for any symmetric $2$-tensors $h_o$ on $M_o(\epsilon_0)$ and $h_j$ on $N_j(\epsilon_0)$, we have
    $$\|(h_o)_{|M_o(2\epsilon_0)}\|_{C^{2,\alpha}(g_o)}\leqslant C_1 \big(\|(P_{g_o}h_o)_{|M_o(\epsilon_0)}\|_{C^{\alpha}(g_o)}+\|(h_o)_{|M_o(\epsilon_0)}\|_{C^0(g_o)}\big),$$
    and
    $$\|(h_j)_{|N_j(2\epsilon_0)}\|_{C^{2,\alpha}(g_{b_j})}\leqslant C_1 \big(\|(P_{g_{b_j}}h_j)_{|N_j(\epsilon_0)}\|_{C^{\alpha}(g_{b_j})}+\|(h_j)_{|N_j(\epsilon_0)}\|_{C^0(g_{b_j})}\big).$$
    
    By assumption, there exists $C>0$ only depending on $g_o$ and $g_{b_j}$ such that $\|g-g_o\|_{C^{2,\alpha}(g_o)}\leqslant C\epsilon$ on $M_o(\epsilon_0)$ and $\big\|\frac{g}{T_j}-g_{b_j}\big\|_{C^{2,\alpha}(g_{b_j})}\leqslant C\epsilon$ on $N_j(\epsilon_0)$. We conclude that for $\epsilon$ small enough, the operators $P_g$ and $P_{\frac{g}{T_j}}$, which are close to the operators $P_{g_o}$ and $P_{g_{b_j}}$, satisfy for all $h_o$ on $M_o(\epsilon_0)$ and $h_j$ on $N_j(\epsilon_0)$,
    $$\|(h_o)_{|M_o(2\epsilon_0)}\|_{C^{2,\alpha}(g_o)}\leqslant 2C_1 \big(\|(P_{g}h_o)_{|M_o(\epsilon_0)}\|_{C^{\alpha}(g_o)}+\|(h_o)_{|M_o(\epsilon_0)}\|_{C^0(g_o)}\big),$$
    and
    $$\|(h_j)_{|N_j(2\epsilon_0)}\|_{C^{2,\alpha}(g_{b_j})}\leqslant 2C_1 \big(\|(P_{\frac{g}{T_j}}h_j)_{|N_j(\epsilon_0)}\|_{C^{\alpha}(g_{b_j})}+\|(h_j)_{|N_j(\epsilon_0)}\|_{C^0(g_{b_j})}\big).$$
    
    On each almost flat annulus $\mathcal{A}_k(t,\epsilon_0)$, let us denote $\mathcal{A}(\rho,\rho'):=\{\rho\leqslant r_D \leqslant \rho'\}$. There exists a diffeomorphism $\phi_\rho: A_e(1/2,4) \to \mathcal{A}(\rho/2,4\rho)$ such that 
    $$\Big\|\frac{\phi_\rho^*g}{\rho^2}-g_e\Big\|_{C^{1,\alpha}(A_e(1/2,4))}\leqslant C\eta(\rho)\epsilon.$$
    then, by ellipticity, for $\epsilon$ small enough, there exists $C_2>0$, such that for all symmetric $2$-tensor $h$ on $A_e(1/2,4)$ we have,
    $$ \|h\|_{C^{2,\alpha}(A_e(1,2))} \leqslant  2C_2\Big( \|h\|_{C^{0}(A_e(1/2,4))} + \big\|P_{\frac{\phi_\rho^*g}{\rho^2}} h\big\|_{C^{\alpha}(A_e(1/2,4))}\Big). $$
    Coming back to $(M,g)$, this implies that for $\epsilon$ small enough, we have for $h$ a symmetric $2$-tensor on $\mathcal{A}(\rho/2,4\rho)$,
    $$ \|h\|_{C^{2,\alpha}(\mathcal{A}(\rho,2\rho),\frac{g}{\rho^2})} \leqslant  4C_2\left( \|h\|_{C^{0}(\mathcal{A}(\rho/2,4\rho),\frac{g}{\rho^2})} + \big\|P_{\frac{g}{\rho^2}} h\big\|_{C^{\alpha}(\mathcal{A}(\rho/2,4\rho),\frac{g}{\rho^2})}\right). $$
    The norm of a symmetric $2$-tensor $s$ behaves in the following way by rescaling, for $t>0$
    $$|s|_{\frac{g}{t}} = t|s|_{g},$$
    and the operator $P$ behaves in the following way by rescaling, for $t>0$:
    $P_{\frac{g}{t}} = tP_g$. Multiplying both sides of the equality by $r_D^{-2}$, we get
    $$ \|h\|_{C^{2,\alpha}((\rho,2\rho),g)} \leqslant  4C_2\left( \|h\|_{C^{0}((\rho/2,4\rho),g)} + r_D^2\|P_g h\|_{C^{\alpha}((\rho/2,4\rho),g)}\right). $$
    Given the controls on the derivatives of $r_D$, we deduce the stated result by definition of the weighted norms by multiplying both sides of the inequality by the weight of the norm.
\end{proof}

Analogous estimates also hold for the elliptic operator $\delta\delta^*$ with the same proof.

\begin{prop}\label{Estimations elliptic delta delta}
    For all $\beta>0$ and $0<\alpha<1$ there exists $C>0$ and $\epsilon>0$ such that if $X$ is a vector field on $(M,g^D)$, and $g$ a metric on $M$ satisfying $$\|g-g^D\|_{C^{2,\alpha}_\beta(g^D)}\leqslant \epsilon,$$
    then, we have
    $$\|X\|_{r_DC^{3,\alpha}_{\beta}(g^D)}\leqslant C \big(\|\delta_g\delta^*_gX\|_{r^{-1}_DC^{1,\alpha}_{\beta}(g^D)}+\|X\|_{r_DC^0_{\beta}(g^D)}\big).$$
\end{prop}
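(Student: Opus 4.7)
The plan is to mimic the proof of Proposition \ref{Estimations elliptic P} verbatim, the only substantive changes being the conformal weight of the object (vector fields have weight $l=1$ instead of $l=0$), which produces the extra $r_D$ factor on the left-hand side and the $T_j^{1/2}$ factor on the ALE pieces of the global norm. First, I would note that $\delta_g\delta_g^*$ is a genuinely elliptic second-order operator on vector fields: the principal symbol of $\delta^*$ is the symmetrised tensor product with $\xi$, and $\delta$ contracts it, so $\sigma_2(\delta\delta^*)(\xi)X = \tfrac{1}{2}|\xi|^2 X + \tfrac{1}{2}\xi\langle X,\xi\rangle$ is positive definite. Hence it falls under the standard Schauder theory exactly as $P$ did.

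Next, on the fixed backgrounds $(M_o,g_o)$ and $(N_j,g_{b_j})$, standard interior elliptic estimates on the compact pieces $M_o(\epsilon_0)$ and $N_j(\epsilon_0)$ give
$$\|X\|_{C^{3,\alpha}(g_o)}\leqslant C_1\bigl(\|\delta_{g_o}\delta_{g_o}^*X\|_{C^{1,\alpha}(g_o)}+\|X\|_{C^0(g_o)}\bigr),$$
and similarly on each $N_j(\epsilon_0)$. Because $\|g-g^D\|_{C^{2,\alpha}_\beta(g^D)}\leqslant\epsilon$ forces the coefficients of $\delta_g\delta_g^*$ to be $C^{1,\alpha}$-close to those of $\delta_{g_o}\delta_{g_o}^*$ (respectively $\delta_{g_{b_j}}\delta_{g_{b_j}}^*$ after rescaling by $T_j$), the perturbation argument used in Proposition \ref{Estimations elliptic P} immediately yields the same estimate with constant $2C_1$ for the operator $\delta_g\delta_g^*$ (and for $\delta_{g/T_j}\delta_{g/T_j}^*$) on these compact pieces, provided $\epsilon$ is small.

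On each almost flat annulus $\mathcal{A}_\rho=\{\rho\leqslant r_D\leqslant 2\rho\}$ inside $\mathcal{A}_k(\epsilon_0)$, I would reproduce the rescaling step: pull back along the diffeomorphism $\phi_\rho\colon A_e(1,2)\to\mathcal{A}_\rho$ and rescale the metric by $\rho^{-2}$, so that $\phi_\rho^*g/\rho^2$ is $C^{2,\alpha}$-close to $g_e$ on the fixed flat annulus with smallness controlled by $\eta(\rho)\epsilon$. Standard Schauder estimates for $\delta_{g_e}\delta_{g_e}^*$ on $A_e(1,2)$ then give, after another perturbation argument,
$$\|X\|_{C^{3,\alpha}(\mathcal{A}_\rho,g/\rho^2)}\leqslant 4C_2\bigl(\|X\|_{C^0(\mathcal{A}_\rho,g/\rho^2)}+\|\delta_{g/\rho^2}\delta_{g/\rho^2}^*X\|_{C^{1,\alpha}(\mathcal{A}_\rho,g/\rho^2)}\bigr).$$

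The last step, which is where the conformal weight enters, is to unwind the rescaling. For a vector field, $|X|_{g/t}=t^{1/2}|X|_g$, and $\delta_{g/t}\delta_{g/t}^*=t\,\delta_g\delta_g^*$ on vector fields; each covariant derivative in $g/t$ contributes an extra factor $t^{1/2}$ relative to $g$. Multiplying the rescaled estimate on $\mathcal{A}_\rho$ by $\rho^{1/2}\cdot\rho^{-1}$ (i.e.\ absorbing the conformal factor and one power of $r_D\sim\rho$) converts it into
$$r_D^{-1}\Bigl(\sum_{i=0}^3 r_D^i|\nabla^iX|_g+r_D^{3+\alpha}[\nabla^3X]_{C^\alpha(g)}\Bigr)\leqslant C\Bigl(r_D^{-1}|X|_g+r_D\bigl(|\delta_g\delta_g^*X|_g+r_D|\nabla\delta_g\delta_g^*X|_g+r_D^{1+\alpha}[\nabla\delta_g\delta_g^*X]_{C^\alpha(g)}\bigr)\Bigr).$$
Multiplying by the weight $\eta(r_D)^{-1}r_o^{-\beta}$ (respectively by $T_j^{1/2}$ times the analogous weight on each $N_j^t$) and summing over the covering with the partition of unity of Definition \ref{norme a poids M} yields the claimed estimate.

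The main obstacle is not analytical but bookkeeping: one must verify that the conformal rescaling of vector fields matches the $r_D$-weighted norm used in the statement, and that on each ALE piece the factor $T_j^{(l-m)/2}=T_j^{(1-(-1))/2}=T_j$ appearing in \eqref{Estimation rD ro rbj} is compatible with the fact that $\delta_{g/T_j}\delta_{g/T_j}^*=T_j\,\delta_g\delta_g^*$ on vector fields. Once this compatibility is checked once and for all, the rest of the argument is purely mechanical.
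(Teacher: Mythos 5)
Your plan is exactly the paper's intent: the paper dispatches Proposition~\ref{Estimations elliptic delta delta} with the single sentence ``Analogous estimates also hold for the elliptic operator $\delta\delta^*$ with the same proof,'' and you are simply filling in the bookkeeping. The ellipticity computation, the compact-piece Schauder estimates with perturbation, the annulus rescaling, and the final displayed pointwise inequality are all correct. One small correction, however: your conformal-weight statement $|X|_{g/t}=t^{1/2}|X|_g$ has the wrong sign. Since a vector field has conformal weight $l=1$, the paper's convention $|s|_{Tg}=T^{l/2}|s|_g$ gives $|X|_{g/t}=t^{-1/2}|X|_g$; applying this with $t=\rho^2$ yields $|\nabla^iX|_{g/\rho^2}=\rho^{\,i-1}|\nabla^iX|_g$, and combining it with $\delta_{g/\rho^2}\delta^*_{g/\rho^2}X=\rho^2\,\delta_g\delta^*_gX$, the rescaled Schauder estimate on $A_e(1,2)$ converts \emph{directly} into the inequality you display --- no extra multiplication by ``$\rho^{1/2}\cdot\rho^{-1}$'' is needed, and that phrase doesn't correspond to a coherent operation. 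The final compatibility check on the ALE pieces (factor $T_j^{(l-m)/2}=T_j$ for $\delta\delta^*X\in r_D^{-1}C^{1,\alpha}_\beta$ matching $\delta_{g/T_j}\delta^*_{g/T_j}=T_j\delta_g\delta^*_g$) is right and is the useful sanity check.
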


\subsection{Decoupling norms}

We will see here that to expect good controls for the operators $P$ and $\delta\delta^*$ in the annular regions of our manifold, we need to consider separately the influence of traceless constant $2$-tensors for $P$ and linear vector fields of the kernel of $\delta_e\delta^*_e$ for $\delta\delta^*$.

\subsubsection{Estimates on annuli $A_e(\epsilon,\epsilon^{-1})$ of $(\mathbb{R}^4, g_e)$.}
Let us start by studying the situation on flat annuli to motivate our new norms.

\begin{prop}\label{inverse P on annulus}
    Let $0<\beta<1$, $0<\alpha<1$, and $P = \frac{1}{2}\nabla^*\nabla - \mathring{\R}$. There exists $C_e>0$, and $\epsilon_e>0$ such that for any symmetric $2$-tensor $h$ on an annulus of radii $0<\epsilon<\epsilon_e$ and $\frac{1}{\epsilon}$, there exists a constant symmetric $2$-tensor $H_0$ and a symmetric $2$-tensor $H_*$ satisfying 
         $$\nabla^*_e\nabla_e H_*=0,$$         
         \begin{equation}
        \|H_*\|_{C^{2,\alpha}_1(A_e(2\epsilon,(1/2)\epsilon^{-1}))} \leqslant C_e \|h-H_0\|_{C^{0}_{\beta}(A_e(\epsilon,\epsilon^{-1}))},\label{est reste harmonique}
    \end{equation}
    (notice the norm $C^{2,\alpha}_{\mathbf{1}}$ for the left hand side) and,
    \begin{equation}
        \|h-H_0-H_*\|_{C^{2,\alpha}_{\beta}(A_e(2\epsilon,(1/2)\epsilon^{-1}))}\leqslant C_e\|P_{g_e} h\|_{r^{-2}_eC^\alpha_\beta(A_e(\epsilon,\epsilon^{-1}))}.\label{proj harmonique annulus}
    \end{equation}
    This implies in particular the following control: for all $x \in A_e(1/2,2)$,
    \begin{align}
        |h-H_0(x)|_{g_e}+|&\nabla h(x)|_{g_e} +|\nabla^2 h(x)|_{g_e} + [\nabla^2 h]_{C^\alpha(g)}(x) \nonumber \\
        \leqslant& \; C_e \big( (2\epsilon)^\beta\|P_{g_e} h\|_{r^{-2}_eC^\alpha_\beta(A_e(\epsilon,\epsilon^{-1}))}+ 2\epsilon\|h-H_0\|_{C^{2,\alpha}_\beta(A_e(\epsilon,\epsilon^{-1}))}\big).\label{estimation inverse annulus h}
    \end{align}
\end{prop}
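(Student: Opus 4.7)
The plan is to exploit the flatness of $(\mathbb{R}^4,g_e)$. Since $\mathring{\R}=0$, the operator $P_{g_e}$ reduces to $\tfrac{1}{2}\nabla^*\nabla$, which acts componentwise in Cartesian coordinates as the scalar Laplacian. A $2$-tensor $H_*$ with $\nabla^*\nabla H_*=0$ then has each of its six components harmonic on the annulus, and admits the standard separation-of-variables expansion into homogeneous modes $r_e^k Y_k(\omega)$, $k\geqslant 0$, and $r_e^{-2-k} Y_k(\omega)$, $k\geqslant 0$, with $Y_k$ a spherical harmonic on $S^3$. The crucial dimension-$4$ fact is that the indicial rates adjacent to $0$ are $+1$ and $-2$, leaving a spectral gap of width $1$ on each side of the constant mode; this is what makes the weight $1$ (rather than $\beta$) appear in (\ref{est reste harmonique}).

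The first step is to kill the inhomogeneity. I would fix a cut-off $\chi$ equal to $1$ on $A_e(2\epsilon,(1/2)\epsilon^{-1})$ and supported in $A_e(\epsilon,\epsilon^{-1})$, and define $v$ componentwise as the convolution of $\chi P_{g_e} h$ with the Newtonian kernel $c_4 r_e^{-2}$ of $\mathbb{R}^4$. Then $P_{g_e} v = P_{g_e} h$ on the smaller annulus, and a dyadic-scaling argument on sub-annuli using Proposition \ref{Estimations elliptic P} and the $r_e^{-2}$-homogeneity of the kernel produces
\begin{equation*}
\|v\|_{C^{2,\alpha}_\beta(A_e(2\epsilon,(1/2)\epsilon^{-1}))} \leqslant C \|P_{g_e} h\|_{r_e^{-2} C^\alpha_\beta(A_e(\epsilon,\epsilon^{-1}))}.
\end{equation*}
The residual $H:=h-v$ is then harmonic on the smaller annulus. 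I set $H_0$ to be its zeroth spherical-mean mode (a single constant $2$-tensor, radius-independent by harmonicity) and $H_*:=H-H_0$. The identity $h-H_0-H_*=v$ immediately delivers (\ref{proj harmonique annulus}).

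For (\ref{est reste harmonique}), I expand each Cartesian component of $H_*$ in spherical harmonics on $S^3$: after subtracting $H_0$, every surviving mode has homogeneity $r_e^k$ with $k\geqslant 1$ or $r_e^{-2-k}$ with $k\geqslant 0$. Since $H_0$ is constant and $P_{g_e}H_0=0$, one has $P_{g_e}h=P_{g_e}(h-H_0)$, whose weighted norm is controlled by $\|h-H_0\|_{C^{2,\alpha}_\beta}$; combined with the bound on $v$, this yields $\|H_*\|_{C^0_\beta(A_e(\epsilon,\epsilon^{-1}))} \leqslant C \|h-H_0\|_{C^{2,\alpha}_\beta}$. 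A direct per-mode comparison then shows that each surviving mode has its $C^{2,\alpha}_1$-norm on the smaller annulus bounded, uniformly in $\epsilon$, by its $C^0_\beta$-norm on the larger annulus: the growing mode $r_e^k$ with $k\geqslant 1$ saturates both norms near the outer radius where $\eta$ is of order $1$, and symmetrically $r_e^{-2-k}$ near the inner radius. Recovering each coefficient by integrating $H_*$ against $Y_k$ on a well-chosen intermediate sphere and summing the resulting series yields (\ref{est reste harmonique}). Finally, (\ref{estimation inverse annulus h}) follows from (\ref{proj harmonique annulus}) and (\ref{est reste harmonique}) evaluated at a point of $A_e(1/2,2)$, where the weight $\eta$ is of order $(2\epsilon)^\beta$ for the exponent $\beta$ and of order $2\epsilon$ for the exponent $1$.

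The main obstacle is precisely this indicial-gap analysis: justifying uniformly in $\epsilon$ that a single constant tensor subtraction moves the harmonic remainder from $C^{2,\alpha}_\beta$ into $C^{2,\alpha}_1$ on a slightly smaller annulus. It rests on the dimension-$4$ spectral gap between the indicial rate $0$ of the scalar Laplacian on $\mathbb{R}^4\setminus\{0\}$ and its neighboring rates $+1,-2$; in any other dimension or without the flat background, an additional $r^{-1}$ or other low-order mode would also have to be extracted, and both the per-mode decomposition and the convergence of the series would become substantially more delicate.
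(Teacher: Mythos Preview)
Your approach has a genuine gap in the first step. The displayed estimate
\[
\|v\|_{C^{2,\alpha}_\beta(A_e(2\epsilon,(1/2)\epsilon^{-1}))} \leqslant C \|P_{g_e} h\|_{r_e^{-2} C^\alpha_\beta(A_e(\epsilon,\epsilon^{-1}))}
\]
for the Newtonian potential $v = G * (\chi P_{g_e} h)$ is \emph{false} uniformly in $\epsilon$, and this is precisely the obstruction the proposition is designed to isolate. Take $f := \chi P_{g_e} h$ radial, nonnegative, equal to $\epsilon^\beta r_e^{\beta-2}$ on $\{1 \leqslant r_e \leqslant \epsilon^{-1}\}$: then $\|f\|_{r_e^{-2}C^0_\beta} \sim 1$, but $v$ is harmonic and bounded on $\{r_e<1\}$, hence radially constant there, with
\[
v(0) = c_4 \int |y|^{-2} f(y)\,dy \;\sim\; \epsilon^\beta \int_1^{\epsilon^{-1}} s^{\beta-1}\,ds \;\sim\; \frac{1}{\beta}.
\]
Thus $|v|\sim 1$ at $r_e=1$ while $\eta(1)=\epsilon^\beta$, so $\|v\|_{C^0_\beta}\gtrsim \epsilon^{-\beta}\to\infty$. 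Invoking Proposition~\ref{Estimations elliptic P} cannot help: that is a local Schauder estimate which always carries the lower-order term $\|v\|_{C^0_\beta}$ on the right, and that is exactly the quantity that blows up. In short, the constant that must be subtracted lives in $v$, not in the harmonic remainder $H=h-v$; extracting $H_0$ from $H$ therefore does nothing to tame $h-H_0-H_*=v$.

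The paper proceeds quite differently. It builds $H$ as the harmonic Dirichlet extension of $h$ from the two boundary spheres, extracts $H_0$ and $H_*$ from an explicit spherical-harmonic expansion of $H$ (obtaining \eqref{est reste harmonique} from direct $L^2$ estimates on the coefficients plus interior elliptic regularity), and then proves \eqref{proj harmonique annulus} by a \emph{contradiction/blow-up} argument: assuming a sequence $\epsilon_i\to 0$ along which the estimate fails, one rescales so that the weighted supremum is attained at $r_e=1$, passes to a limit on $\mathbb{R}^4\setminus\{0\}$ or on a one-sided domain according to where the supremum concentrates, and derives a contradiction from Liouville-type statements for harmonic $2$-tensors with growth $r_e^{\pm\beta}$ together with the normalisations built into $H$ (vanishing at a fixed point, vanishing on the inner sphere, constant on the outer sphere). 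Your direct-construction route could in principle be repaired by showing that $v$ minus a suitable constant (for instance its value at the origin) lies in $C^{2,\alpha}_\beta$ uniformly in $\epsilon$; but that statement is essentially \eqref{proj harmonique annulus} itself and requires a real argument, not a dyadic-scaling invocation of the kernel.
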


\begin{rem}
    This is a strictly better estimate than the elliptic estimates of Proposition \ref{Estimations elliptic delta delta} which would only have given
    \begin{align}
        |h-H_0(x)|_{g_e}+|&\nabla h(x)|_{g_e} +|\nabla^2 h(x)|_{g_e} + [\nabla^2 h]_{C^\alpha(g_e)}(x) \nonumber \\
        \leqslant& \; C_e \big( (2\epsilon)^\beta\|P_{g_e} h\|_{r^{-2}_eC^\alpha_\beta(A_e(\epsilon,\epsilon^{-1}))}+ (2\epsilon)^\beta\|h-H_0\|_{C^{2,\alpha}_\beta(A_e(\epsilon,\epsilon^{-1}))}\big).
    \end{align}
    The difference will be crucial in the proof of Proposition \ref{inversion with cste}.
\end{rem}
\begin{proof}
    Let us start by noting that \eqref{estimation inverse annulus h} is a consequence of \eqref{est reste harmonique} and \eqref{proj harmonique annulus}. Indeed, $h-H_0 = (h-H_0-H_*)+H_*$, and we have therefore, denoting $$\|s\|_{C^{2,\alpha}(1/2,2)}:= \sup_{x\in A_e(1/2,2)}|s(x)|_{g_e}+|\nabla s(x)|_{g_e} +|\nabla^2 s(x)|_{g_e} + [\nabla^2 s]_{C^\alpha(g_e)}(x), $$
    \begin{align*}
        \|h-H_0\|_{C^{2,\alpha}(1/2,2)} \leqslant& \;\|h-H_0-H_*\|_{C^{2,\alpha}(1/2,2)} + \|H_*\|_{C^{2,\alpha}(1/2,2)}\\
         \leqslant &\;(2\epsilon)^\beta \|h-H_0-H_*\|_{C^{2,\alpha}_{\beta}(A_e(\epsilon,\epsilon^{-1}))} + 2\epsilon \|H_*\|_{C^{2,\alpha}_1(A_e(\epsilon,\epsilon^{-1}))}\\
        \leqslant &\;C_e\big((2\epsilon)^\beta\|P_{g_e} h\|_{r^{-2}_eC^\alpha_\beta(A_e(\epsilon,\epsilon^{-1}))} + 2\epsilon \|h-H_0\|_{C^{2,\alpha}_{\beta}(A_e(\epsilon,\epsilon^{-1}))}\big),
    \end{align*}
    by definition of the weighted norms and assuming, for $C_e>0$, the inequalities \eqref{proj harmonique annulus} and \eqref{est reste harmonique}.

    On $\mathbb{R}^4\backslash \{0\}$, the harmonic symmetric $2$-tensors are sum of homogeneous harmonic symmetric $2$-tensors whose coefficients in the canonical basis of $\mathbb{R}^4$ are proportional to $r^j$ for $j\in \mathbb{Z}\backslash \{-1\}$.  These harmonic symmetric $2$-tensors are more precisely of the form $ r_e^k H_k $ or $r^{-2-k}_eH_k$ for $k\in \mathbb{N}$, where $H_k$ is a homogeneous symmetric $2$-tensor with $|H_k|_{g_e}\sim r^0$ whose coefficients, once restricted to the unit sphere are eigenfunctions of the spherical Laplacian with eigenvalue $-k(k+2)$.
    \\
    
    For any symmetric $2$-tensor $h$ on $A_e(\epsilon,\epsilon^{-1})$, let us define $ \tilde{H}$ the solution on $A_e(\epsilon,\epsilon^{-1})$ of the following Dirichlet problem, denoting for $r>0$, $S_e(r):= \{r_e = r\}$,
    \begin{equation*}
  \left\{
      \begin{aligned}
        &\nabla^*_e\nabla_e \tilde{H} =0, \\
        &\tilde{H} = h \text{ on } S_e(\epsilon)\cup S_e(\epsilon^{-1}).
      \end{aligned}
    \right.
\end{equation*}
    More precisely, $\tilde{H} = \sum_{k\geqslant 0}(\epsilon r_e)^k \tilde{H}_{k}^+    + (\epsilon^{-1} r_e)^{-2-k} \tilde{H}_{k}^-$ where the $\tilde{H}_{k}^\pm$ are homogeneous with $|\tilde{H}_{k}^+|_{g_e} \sim r_e^0$ and which, once restricted to the sphere are eigenvectors associated to $-k(k+2)$. If we decompose in spherical harmonics $ h_{|S_e(\epsilon)} =: \sum_{k} H_{k}(\epsilon)$ and $ h_{|S_e(\epsilon^{-1})} =: \sum_{k} H_{k}(\epsilon^{-1})$, we have the system
    \begin{equation}
  \left\{
      \begin{aligned}
        &H_{k}(\epsilon^{-1}) = \tilde{H}_{k}^+ + \epsilon^{4+2k} \tilde{H}_{k}^-, \\
        &H_{k}(\epsilon) = \epsilon^{2k}\tilde{H}_{k}^+ + \tilde{H}_{k}^-,
      \end{aligned}
    \right.\label{dvp h}
\end{equation}
and therefore,
\begin{equation}
  \left\{
      \begin{aligned}
        &\tilde{H}_{k}^+ = \frac{1}{1-\epsilon^{4+4k}}\big(H_{k}(\epsilon^{-1}) -\epsilon^{4+2k}H_{k}(\epsilon) \big), \\
        &\tilde{H}_{k}^- = \frac{1}{1-\epsilon^{4+4k}}\big(H_{k}(\epsilon) - \epsilon^{2k} H_{k}(\epsilon^{-1})\big),
      \end{aligned}
    \right.\label{système}
\end{equation}

Denote $\tilde{H}_*:= \tilde{H}-\tilde{H}_0^+$. Since $\nabla^*_e\nabla_e \tilde{H}_* = 0$, by elliptic regularity on the annulus $A(\rho/2,4\rho)\subset A_e(\epsilon,\epsilon^{-1})$, there exists a constant $C>0$ independent of $h$ such that we have,
\begin{equation}
    \|\tilde{H}_*\|_{C^0(A(\rho,2\rho))}\leqslant \frac{C}{\rho^2}\|\tilde{H}_*\|_{L^2(A_e(\rho/2,4\rho))},\label{est ellipt H*}
\end{equation}
so to control the norm $C^{0}_1(A_e(\epsilon,\epsilon^{-1}))$ of $\tilde{H}_*$, we just have to control the $L^2$-norm of $\tilde{H}_*$ on the different annuli $A_e(\rho,2\rho)\subset A(\epsilon,\epsilon^{-1})$. Since the harmonic decompositions are $L^2(S_e(1))$-orthogonal, we have for a constant $C>0$ which may change from line to line
\begin{align}
    \int_{A_e(\rho,2\rho)}|\tilde{H}_*|_{g_e}^2 dv_{g_e} =&\;\epsilon^4\int_\rho^{2\rho} \int_{S_e(1)}|\tilde{H}_0^-|_{g_e}^2r^{-4}dv_{S_e(1)}r^3dr\nonumber\\
    &+ \sum_{k\geqslant 1}  \int_\rho^{2\rho} \int_{S_e(1)} |\epsilon^kr^k\tilde{H}_k^{+}
+\epsilon^{2+k}r^{-2-k}\tilde{H}_k^{-}|_{g_e}^2 dv_{S_e(1)}r^3dr\nonumber\\
 \leqslant&\; C \epsilon^4 \int_{S_e(1)}|\tilde{H}_0^-|_{g_e}^2 dv_{S_e(1)} \nonumber\\
 &+C\sum_{k\geqslant 1}  \epsilon^{2k}\int_\rho^{2\rho} \int_{S_e(1)} \big|\tilde{H}_k^{+}\big|_{g_e}^2dv_{S_e(1)}r^{2k+3}dr\nonumber\\
&+\epsilon^{4+2k}\int_\rho^{2\rho} \int_{S_e(1)} \big|\tilde{H}_k^{-}\big|_{g_e}^2dv_{S_e(1)}r^{-1-2k}dr \nonumber\\
\leqslant&\; C \epsilon^4 \int_{S_e(1)}|\tilde{H}_0^-|_{g_e}^2 dv_{S_e(1)}\nonumber\\
&+C\sum_{k\geqslant 1}  \epsilon^{2k}\rho^{2k+4}
 \int_{S_e(1)}|\tilde{H}_k^{+}|_{g_e}^2 dv_{S_e(1)}+\epsilon^{4+2k}\rho^{-2k} \int_{S_e(1)}|\tilde{H}_k^{-}|_{g_e}^2 dv_{S_e(1)}\nonumber\\
\leqslant&\; C \epsilon^4\epsilon^{-3} \int_{S_e(\epsilon)}|\tilde{H}_0^-|_{g_e}^2 dv_{S_e(\epsilon)}\nonumber\\
&+C\sum_{k\geqslant 1}  \epsilon^{2k}\rho^{2k+4}
 \epsilon^{3}\int_{S_e(\epsilon^{-1})}|\tilde{H}_k^{+}|_{g_e}^2 dv_{S_e(\epsilon^{-1})}\nonumber\\
 &+\epsilon^{4+2k}\rho^{-2k} \epsilon^{-3}\int_{S_e(\epsilon)}|\tilde{H}_k^{-}|_{g_e}^2 dv_{S_e(\epsilon)}\label{calcul norme L2 H*}
\end{align}
Now, the equalities \eqref{système} and the fact that the decompositions in spherical harmonics are orthogonal imply that for a constant $C>0$ which may change from line to line we have 
\begin{align}
    \sum_{k\geqslant 0}\int_{S_e(\epsilon)}|\tilde{H}_k^{-}|_{g_e}^2 dv_{S_e(\epsilon)} &\leqslant C\sum_k 
    \int_{S_e(\epsilon)}|H_k(\epsilon)|_{g_e}^2 dv_{S_e(\epsilon)} + C\epsilon^{6+2k} \int_{S_e(\epsilon^{-1})}|H_k(\epsilon^{-1})|^2_{g_e} dv_{S_e(\epsilon^{-1})}\nonumber\\
    &\leqslant C 
    \int_{S_e(\epsilon)}|h-\tilde{H}_0^+|_{g_e}^2 dv_{S_e(\epsilon)} + C\epsilon^{6} \int_{S_e(\epsilon^{-1})}|h-\tilde{H}_0^+|_{g_e}^2 dv_{S_e(\epsilon^{-1})}\nonumber\\
    &\leqslant C \epsilon^{3} \|(h-\tilde{H}_0^+)_{|S_e(\epsilon)}\|^2_{C^0(g_e)}  + C\epsilon^3\|(h-\tilde{H}_0^+)_{|S_e(\epsilon^{-1})}\|^2_{C^0(g_e)}\nonumber\\
    &\leqslant C \epsilon^3 \|h-\tilde{H}_0^+\|^2_{C^0_\beta(A_e(\epsilon,\epsilon^{-1}))}\label{controle L2 C0beta -}
\end{align}
because $|(h-\tilde{H}_0^+)_{|S_e(\epsilon)}|_{g_e}\leqslant \|h-\tilde{H}_0^+\|_{C^0_\beta(A_e(\epsilon,\epsilon^{-1}))}$ and $|(h-\tilde{H}_0^+)_{|S_e(\epsilon^{-1})}|_{g_e}\leqslant \|h-\tilde{H}_0^+\|_{C^0_\beta(A_e(\epsilon,\epsilon^{-1}))}$ by definition of the norm, and similarly
\begin{align}
    \sum_{k\geqslant 1}\int_{S_e(\epsilon^{-1})}|\tilde{H}_k^{+}|_{g_e}^2 dv_{S_e(\epsilon^{-1})} \leqslant C \epsilon^{-3}\|h-\tilde{H}_0^+\|^2_{C^0_\beta(A_e(\epsilon,\epsilon^{-1}))}.\label{controle L2 C0beta +}
\end{align}
Together with \eqref{calcul norme L2 H*}, \eqref{controle L2 C0beta -} and \eqref{controle L2 C0beta +}, and since on $A_e(\epsilon,\frac{1}{\epsilon})$ for any $k\geqslant 1$, $ \epsilon(\rho+\rho^{-1})\geqslant \epsilon^k\rho^{\pm k}$, this yields the following estimate for $\epsilon$ small enough and a constant $C>0$,
\begin{align}
    \|\tilde{H}_*\|_{L^2(A_e(\rho,2\rho))}^2\leqslant&\; C \rho^4\epsilon^{2}\big(\rho^{1} +\rho^{-1}\big)^2\|h-\tilde{H}_0^+\|_{C^0_\beta(A_e(\epsilon,\epsilon^{-1}))}^2.\label{est L2 H*}
\end{align}
 Combining \eqref{est ellipt H*} and \eqref{est L2 H*}, we get
\begin{equation}
    \|\tilde{H}_*\|_{C^0_1(A_e(\epsilon,\epsilon^{-1}))} \leqslant C \|h-\tilde{H}_0^+\|_{C^0_\beta(A_e(\epsilon,\epsilon^{-1}))}.\label{est H* h-H0}
\end{equation}

Let us fix $x_0\in S_e(1)$ and modify our symmetric $2$-tensor $\tilde{H}$ to get a symmetric $2$-tensor $H$ such that $h-H$ vanishes at $x_0$ and on $S_e(\epsilon)$ while being constant on $S_e(\epsilon^{-1})$. The only possible choice with harmonic symmetric $2$-tensors is
$$H:= \Big(\tilde{H}_0^+-\frac{c_0}{1-\epsilon^2}\Big)+ \Big(\tilde{H}_{*}+\frac{\epsilon^2}{1-\epsilon^2}\frac{c_0}{r_e^2}\Big),$$
where $c_0 = (\tilde{H}-h)(x_0)$.
We will show that $h-H$ satisfies the estimate \eqref{proj harmonique annulus}, but let us start by proving the control \eqref{est reste harmonique} stated. For this, denote $H_0:=\tilde{H}_0^+-\frac{c_0}{1-\epsilon^2}$ the constant part of $H$, and $H_*:=\tilde{H}_{*}+\frac{1}{1-\epsilon^2}\frac{c_0}{(\epsilon^{-1}r_e)^2}$ its varying part.

According to \eqref{est L2 H*}, the part $\sum_{k\geqslant 1}(\epsilon r_e)^k \tilde{H}_{k}^+ + (\epsilon^{-1} r_e)^{-2-k} \tilde{H}_{k}^-$ is well controlled in $L^2$-norm by the varying parts of $h-H_0$ on $S_e(\epsilon)$ and $S_e(\epsilon^{-1})$: they are the same as the varying parts of $h-\tilde{H}_0$. There remains to control the part in $r_e^{-2}$, that is
$$ \frac{1}{1-\epsilon^2}\frac{(1-\epsilon^2)\tilde{H}_{0}^-+c_0}{\epsilon^{-2}r_e^2}.$$
In order to control this part, let us look at the mean values of $h-H_0$ on $S_e(\epsilon)$ and $S_e(\epsilon^{-1})$. On $S_e(\epsilon)$, we have $(h-H)_{|S_e(\epsilon)} = 0$, and therefore $$(h-H_0)_{|S_e(\epsilon)} = (h-H) + \tilde{H}_* + \frac{c_0}{1-\epsilon^2}\frac{\epsilon^2}{r_e^2}$$ and its mean value on $S_e(\epsilon)$ is then
\begin{equation}
    \tilde{H}_0^- + \frac{c_0}{1-\epsilon^2}.\label{mean value epsilon}
\end{equation}
Similarly, since $h-\tilde{H} = 0$ on $S_e(\epsilon^{-1})$ and
$$(h-H_0)_{|S_e(\epsilon^{-1})} = (h-\tilde{H}) + \tilde{H}_* + \frac{c_0}{1-\epsilon^2},$$
and its mean value is therefore
\begin{equation}
     \epsilon^4\tilde{H}_0^-+\frac{c_0}{1-\epsilon^2}.\label{mean value epsilon-1}
\end{equation}
By considering linear combinations of \eqref{mean value epsilon} and \eqref{mean value epsilon-1}, we control both $|c_0|_{g_e}$ and $|\tilde{H}_0|_{g_e}$ thanks to the mean values of $h-H_0$ on $S_e(\epsilon)$ and $S_e(\epsilon^{-1})$, and we consequently have for some constant $C>0$,
$$|c_0|_{g_e} + |\tilde{H}_0^-|_{g_e} \leqslant C\|h-H_0\|_{C^0_\beta(A_e(\epsilon,\epsilon^{-1}))}.$$

Hence we finally have the existence of a constant $C>0$ such that we have, going from $L^2$-controls to $C^0$-controls thanks to \eqref{est ellipt H*} applied to $H_*$,
$$\|H_*\|_{C^0_1}\leqslant C \|h-H_0\|_{C^0_\beta},$$
and therefore the stated inequality \eqref{est reste harmonique}.
\\

    Let us prove the estimate \eqref{proj harmonique annulus}, and assume towards a contradiction that there exists a sequence of positive numbers $\epsilon_i\to 0$, and a sequence of symmetric $2$-tensors $h_i$ on annuli $A_e(\epsilon_i,\epsilon_i^{-1})$ satisfying, $\|h_i-H_i\|_{C^{0}_{\beta}(A_e(\epsilon,\epsilon^{-1}))} = 1$, and
    $\|P_e h_i\|_{r^{-2}_eC^0_\beta(A_e(\epsilon_i,\epsilon_i^{-1}))}\leqslant \frac{1}{i}$.
    \begin{rem}
        The failure of these properties will indeed yield the estimate \eqref{proj harmonique annulus} since by elliptic regularity we will get higher order estimates on the smaller domain $A_e(2\epsilon,(2\epsilon)^{-1}))$.
    \end{rem}
    Let then $(x_i)_i$ be a sequence of points of $A_e(\epsilon_i,\epsilon_i^{-1})$ where the $C^{0}_\beta(A_e(\epsilon_i,\epsilon_i^{-1}))$-norm of $\bar{h}_i:=h_i-H_i$ is reached. We can extract a subsequence with one of the following behaviors:
    \begin{enumerate}
        \item $r_e(x_i)\to +\infty$, and $\epsilon_i r_e(x_i)\to 0$,
        \item $r_e(x_i)\to +\infty$, and $\epsilon_i r_e(x_i)\to c>0$,
        \item $r_e(x_i)\to 0$ and $\epsilon_i^{-1}r_e(x_i)\to +\infty$,
        \item $r_e(x_i)\to 0$ and $\epsilon_i^{-1}r_e(x_i)\to c>0$,
        \item $r_e(x_i)\to c>0$.
    \end{enumerate}
    In all cases, we rescale to fix $r_e(x_i)=1$ by defining, for all $x$,
    $$h_i'(x):= \frac{\bar{h}_i\big(r_e(x_i)x\big)}{\epsilon_i^{\beta}\big(r_e(x_i)^\beta+ r_e(x_i)^{-\beta}\big)},$$
    which satisfies
    $$ \big(\nabla_e^*\nabla_eh_i'\big)(x) = r_e(x_i)^2\big( \nabla_e^*\nabla_e\bar{h}_i\big)(r_e(x_i)x).$$
    Since we had by assumption the controls
    $$\bar{h}_i(x)\leqslant \epsilon_i^{\beta}\big(r_e(x)^\beta+ r_e(x)^{-\beta}\big),$$
    and
    $$|\nabla_e^*\nabla_e\bar{h}_i|_{g_e}(x)\leqslant \frac{1}{i}r_e(x)^{-2}\epsilon_i^\beta\big(r_e(x)^{\beta}+r_e(x)^{-\beta}\big),$$
    our new symmetric $2$-tensor $h'_i$ vanishes at $\frac{x_0}{r_e(x_i)}$ and on $S_e(\epsilon_i r_e(x_i)^{-1})$, and is constant on $S_e(\epsilon_i^{-1} r_e(x_i)^{-1})$. It moreover satisfies
    $$ |h_i'|_{g_e}(x)\leqslant \frac{\big((r_e(x_i)r_e(x))^{\beta}+(r_e(x_i)r_e(x))^{-\beta}\big)}{r_e(x_i)^\beta+ r_e(x_i)^{-\beta}} $$
    with equality at $x_i$ and
    $$|\nabla_e^*\nabla_eh_i'|_{g_e}(x)\leqslant \frac{1}{i}r_e(x)^{-2}\frac{\big((r_e(x_i)r_e(x))^{\beta}+(r_e(x_i)r_e(x))^{-\beta}\big)}{r_e(x_i)^\beta+ r_e(x_i)^{-\beta}}.$$
    
    In the different situations, up to extracting a subsequence, we finally get one of the following limits
        \begin{enumerate}
        \item on  $\mathbb{R}^4 \backslash \{0\}$, a solution $h'_\infty$ of $P_e h'_\infty = \frac{1}{2}\nabla_e^*\nabla_eh'_\infty =0$, and $\sup r^{-\beta}h'_\infty = 1$, but there does not exist such a solution because the harmonic symmetric $2$-tensors decay at least as $\mathcal{O}(r)$ at $0$ if they vanish at $0$ and must therefore grow at this rate at infinity. This is a contradiction.
        \item on  $B_e(1/c) \backslash \{0\}$, a solution $h'_\infty$ of $P_e h'_\infty =0$, and $\sup r^{-\beta}h'_\infty = 1$, and such that $(h_\infty')_{|S_e(1/c)}$ is constant. The unique solution to the Dirichlet problem with the zero condition at $0$ and a constant condition on $ S_e(1/c)$ is $h_\infty'=0$. This is a contradiction.
        \item on  $\mathbb{R}^4 \backslash \{0\}$, a solution $h'_\infty$ of $P_e h'_\infty =0$, and $\sup r^{\beta}h'_\infty = 1$, but there does not exist such a solution because the harmonic symmetric $2$-tensors decaying at infinity decay at least like $\mathcal{O}(r^{-2})$, and therefore blow up at least at this rate at $0$, and finally, $h_\infty'=0$. This is a contradiction.
        \item on  $\mathbb{R}^4 \backslash B_e(1/c)$, a solution $h'_\infty$ of $P_e h'_\infty =0$, and $\sup r^{\beta}h'_\infty = 1$ and $(h'_\infty)_{|S_e(1/c)}=0$. The unique solution to the Dirichlet problem on  $\mathbb{R}^4 \backslash B_e(1/c)$, decaying at infinity and vanishing on $S_e(1/c)$ being zero, we have $h_\infty =0$. This is a contradiction.
        \item on  $\mathbb{R}^4 \backslash \{0\}$, a solution $h'_\infty$ of $P_e h'_\infty =0$, and $\sup (r^\beta+r^{-\beta})h'_\infty = 1$ satisfying $h'_\infty\big(\frac{x_0}{c}\big)=0 $. The conditions $P_e h'_\infty =0$, and $\sup (r^\beta+r^{-\beta})h'_\infty =1 $ imply that $h'_\infty$ is constant, since $h'_\infty$ vanishes at $\frac{x_0}{c}$, we have $h'_\infty =0$. This is a contradiction.
    \end{enumerate}
    We therefore deduce that there exists $\epsilon_e>0$ and $C_e>0$ such that for all $0<\epsilon<\epsilon_e$ and all symmetric $2$-tensor $h$ on the annulus $A_e(\epsilon,\epsilon^{-1})$, we have
    $$\|h-H_0\|_{C^{0}_{\beta}(A_e(\epsilon,\epsilon^{-1}))}\leqslant C_e\|P_g h\|_{r^{-2}_eC^\alpha_\beta(A_e(\epsilon,\epsilon^{-1}))}.$$
    
    In order to prove the estimate \eqref{proj harmonique annulus} and go from a $C^0_\beta(A_e(\epsilon,\epsilon^{-1}))$-controls to $C^{2,\alpha}_\beta(A_e(2\epsilon,(2\epsilon)^{-1}))$-controls, we use elliptic estimates which are satisfied on the flat annuli according to the end of the proof of Proposition \ref{Estimations elliptic P}.
\end{proof}

With a completely analogous proof using  the harmonic decomposition of $1$-forms on a cone of \cite[(2.16)-(2.19)]{ct} (see also Section \ref{kernel of la linéarisation} for particular case of flat cones), we have the same result for vector fields, but this time, we treat the linear kernel of $\delta\delta^*$ on $\mathbb{R}^4\slash\Gamma$ separately. On $\mathbb{R}^4$, the elliptic operator
$$ \delta_e\delta_e^* = \nabla_e^* \nabla_e-\frac{1}{2} d_e^*d = dd_e^*+\frac{1}{2}d_e^*d, $$
has its kernel equal to the linear vector fields of the kernel of $\delta_e\delta^*_e$ among the vector fields of order $\mathcal{O}(r_e^{1-\beta}+r_e^{1+\beta})$ for $0<\beta<1$, see Lemma \ref{poids crit deltadelta} for a proof of this and Section \ref{kernel of la linéarisation} for a description of the kernel.

\begin{prop}\label{inverse delta delta on annulus}
Let $0<\beta<1$. There exists $C_e>0$, and $\epsilon_e>0$ such that for any vector field $X$ on an annulus of radii $0<\epsilon<\epsilon_e$ and $\frac{1}{\epsilon}$, there exists $Y_0$, a linear vector field of the kernel of $\delta_e\delta^*_e$, and an element $Y_*$ of the kernel of $\delta_e\delta_e^*$ satisfying 
$$ \|Y_*\|_{r_eC^{3,\alpha}_1(A_e(2\epsilon,(2\epsilon)^{-1}))} \leqslant C_e \|X -Y_0\|_{r_eC^{3,\alpha}_\beta(A_e(\epsilon,\epsilon^{-1}))}, $$ 
    $$ \|X-Y_0-Y_*\|_{r_eC^{3,\alpha}_\beta(A_e(2\epsilon,(2\epsilon)^{-1}))}\leqslant C_e \|\delta_{g_e}\delta^*_{g_e} X\|_{r^{-1}_eC^{1,\alpha}_\beta(A_e(\epsilon,\epsilon^{-1}))}.$$
    
    In particular, this implies the following control, for all $x \in A_e(1/2,2)$,
    \begin{align}
        |(X-Y_0)(x)|_{g_e}+|&\nabla (X-Y_0)(x)|_{g_e} +|\nabla^2 (X-Y_0)(x)|_{g_e} + [\nabla^2 (X-Y_0)]_{C^\alpha(g)}(x) \nonumber \\
        \leqslant& \;C_e \big( (2\epsilon)^\beta\|\delta_e\delta_e^*X\|_{r^{-1}_eC^{1,\alpha}_\beta(A_e(\epsilon,\epsilon^{-1}))}+ 2\epsilon\|X-Y_0\|_{r_eC^{3,\alpha}_\beta(A_e(\epsilon,\epsilon^{-1}))}\big).\label{estimation inverse annulus X}
    \end{align}
\end{prop}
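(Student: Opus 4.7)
The plan is to follow the proof of Proposition \ref{inverse P on annulus} verbatim, substituting $\delta_{g_e}\delta^*_{g_e}$ for $P_{g_e}$ and vector fields for symmetric $2$-tensors. The essential structural shift is that the resonance weight rises from $r_e^0$ to $r_e^1$: by the harmonic decomposition of $1$-forms on a cone \cite[(2.16)-(2.19)]{ct}, an element of $\ker \delta_e\delta^*_e$ on $\mathbb{R}^4\backslash\{0\}$ decomposes into radial modes of degree $r_e^{1+k}$ or $r_e^{-1-k}$, $k\in\mathbb{N}$, and for $0<\beta<1$ the only modes of growth strictly between $r_e^{1-\beta}$ and $r_e^{1+\beta}$ form the finite-dimensional space $\mathcal{K}$ of linear vector fields annihilated by $\delta_e\delta^*_e$ (Lemma \ref{poids crit deltadelta}). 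This space plays the role of the space of constants in the proof of Proposition \ref{inverse P on annulus}, and the $\mathcal{K}$-component of $X$ is what we will extract as $Y_0$.

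The reduction of \eqref{estimation inverse annulus X} to the two weighted bounds on $A_e(2\epsilon,(1/2)\epsilon^{-1})$ is the same triangle-inequality computation as at the start of Proposition \ref{inverse P on annulus}, with the weights $r_eC^{3,\alpha}_\beta$ and $r_eC^{3,\alpha}_1$ restricted to the fixed sub-annulus $A_e(1/2,2)$ producing the factors $(2\epsilon)^\beta$ and $2\epsilon$ respectively. For the construction I would solve the Dirichlet problem $\delta_e\delta^*_e \tilde Y = 0$ with $\tilde Y = X$ on $S_e(\epsilon) \cup S_e(\epsilon^{-1})$, expand $\tilde Y$ in modes analogous to \eqref{système}, define $Y_0$ as the projection of $\tilde Y$ onto $\mathcal{K}$ chosen so that $X - Y_0$ is additionally $L^2$-orthogonal to $\mathcal{K}$ on the fixed sphere $S_e(1)$, and then define $Y_*$ so that $X - Y_0 - Y_*$ vanishes on $S_e(\epsilon)$ and is constant on $S_e(\epsilon^{-1})$. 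The orthogonal-mode estimate $\|Y_*\|_{r_eC^0_1} \leq C\|X-Y_0\|_{r_eC^0_\beta}$ then follows from the explicit mode coefficients and interior elliptic regularity on dyadic flat annuli, exactly as in \eqref{est L2 H*}-\eqref{est H* h-H0}.

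For the main estimate $\|X-Y_0-Y_*\|_{r_eC^0_\beta} \leq C_e\|\delta_{g_e}\delta^*_{g_e} X\|_{r_e^{-1}C^0_\beta}$, I would run the blow-up contradiction argument in parallel: given sequences $\epsilon_i \to 0$ and $X_i$ violating the bound, pick a maximizer $x_i$ of the weighted norm of $\bar X_i := X_i - Y_0^i - Y_*^i$ and rescale
\[
X_i'(x) := \frac{\bar X_i\bigl(r_e(x_i)x\bigr)}{r_e(x_i)\,\epsilon_i^{\beta}\bigl(r_e(x_i)^\beta + r_e(x_i)^{-\beta}\bigr)},
\]
the extra factor $r_e(x_i)$ reflecting the conformal weight of vector fields. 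Extracting subsequences according to the five asymptotic regimes (1)--(5) of Proposition \ref{inverse P on annulus} produces in each case a limiting $X'_\infty \in \ker \delta_e\delta^*_e$ of growth $\mathcal{O}(r_e^{1-\beta}+r_e^{1+\beta})$, satisfying boundary or decay constraints inherited from the construction of $Y_0^i,Y_*^i$. In regimes (1)--(4) these constraints, combined with Lemma \ref{poids crit deltadelta}, force $X'_\infty \equiv 0$ as in the proof of Proposition \ref{inverse P on annulus}.

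The main obstacle will be regime (5), where the limit lives on all of $\mathbb{R}^4\backslash\{0\}$ and the growth restriction places $X'_\infty$ in the finite-dimensional space $\mathcal{K}$; in contrast to the $1$-dimensional space of constants appearing in the $P$ case, $\mathcal{K}$ is not killed by vanishing at a single nonzero point, so the pointwise cancellation at $x_0/c$ no longer suffices. This is precisely why the $L^2$-orthogonality against $\mathcal{K}$ on $S_e(1)$ was built into the definition of $Y_0^i$: it passes to the limit as an $L^2$-orthogonality of $X'_\infty$ against $\mathcal{K}$ on the fixed sphere $S_e(1/c)$ with $c = \lim r_e(x_i)>0$, and since the restriction map $\mathcal{K} \to L^2(S_e(1/c))$ is injective, this forces $X'_\infty \equiv 0$, the desired contradiction. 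Once the $C^0_\beta$ estimate is secured, bootstrapping to the full $r_eC^{3,\alpha}_\beta$ and $r_eC^{3,\alpha}_1$ estimates is immediate from the weighted Schauder estimate of Proposition \ref{Estimations elliptic delta delta}, rescaled on flat dyadic sub-annuli exactly as at the end of the proof of Proposition \ref{inverse P on annulus}.
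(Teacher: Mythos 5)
Your proposal correctly fills in the ``completely analogous proof'' that the paper leaves implicit, and in doing so it pins down the one place where the analogy with Proposition \ref{inverse P on annulus} is not literal: the pointwise normalization at a single $x_0\in S_e(1)$, which suffices in the $2$-tensor case because a constant tensor is determined by its value at one point, cannot kill the regime-(5) limit $X'_\infty$ here, since the finite-dimensional space $\mathcal{K}$ of linear fields in $\ker\delta_e\delta^*_e$ (the $\Gamma$-invariant rotations and the dilation) contains nonzero elements vanishing at isolated nonzero points. Your replacement by $L^2(S_e(1))$-orthogonality against $\mathcal{K}$ is the right fix: it is scale-covariant under $x\mapsto r_e(x_i)x$ (since $\mathcal{K}$ is homogeneous of degree one it becomes orthogonality against $\mathcal{K}$ on $S_e(1/r_e(x_i))$), so it passes to the blow-up limit in regime (5) and, together with injectivity of the restriction $\mathcal{K}\to L^2(S_e(1/c))$ and the fact that the growth bound forces $X'_\infty\in\mathcal{K}$ by Lemma \ref{poids crit deltadelta}, gives $X'_\infty=0$. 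Regimes (1)--(4), the dyadic $L^2$ mode estimates analogous to \eqref{est L2 H*}--\eqref{est H* h-H0}, and the Schauder bootstrap carry over verbatim once the conformal weight of vector fields is tracked through the rescaling, which you do; this is precisely what the paper's terse remark ``we treat the linear kernel of $\delta\delta^*$ separately'' gestures at.
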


\subsubsection{Approximate kernels}

Let $(M,g^D)$ be a naïve desingularization of an Einstein orbifold. For each annulus $\mathcal{A}_k(t,\epsilon)$ (see Definition \ref{def neck region}) between $N_k$ and $N_j$ or $N_k$ and $M_o$, by construction there exists a diffeomorphism
$$\Phi_k: A_e\big(\epsilon^{-1}\sqrt{T_j}\sqrt{t_k},\epsilon\sqrt{T_j}\big)\subset \mathbb{R}^4\slash\Gamma_k\to \mathcal{A}_k(t,\epsilon)\subset M,$$
such that there exists $C>0$ for which, for all $0<\beta<1$,
\begin{equation}
    \big\|\Phi^*_kg^D-g_e\big\|_{C^{2,\alpha}_\beta(A_e(\epsilon^{-1}\sqrt{T_j}\sqrt{t_k},\epsilon\sqrt{T_j}))}\leqslant C \epsilon^{2-\beta}.\label{controle gD-ge}
\end{equation}

Because of the above constant symmetric $2$-tensors and the linear vector fields, we cannot expect estimates independent of the gluing scales in the definition of $(M,g^D)$ of the type $\|h\|_{C^{2,\alpha}_\beta(g^D)} \leqslant C\|P_{g^D}h\|_{r_D^{-2}C^\alpha_\beta(g^D)}$ which are needed to apply an inverse function theorem. Indeed, we have the following estimates according to Proposition \ref{contrôles with perturbations triviales} (which is proven below). Recall that the cut-off functions are in Definition \ref{def cutoffs all}.

\begin{rem}
    In most of the rest of this article, we will often abusively forget the diffeomorphism $\Phi_k$ to simplify the notations. For instance, a symmetric $2$-tensor $\Phi_k^*\big(\chi_{\mathcal{A}_k(t,\epsilon)}H\big)$ will be denoted $\chi_{\mathcal{A}_k(t,\epsilon)}H$ on $M$.
\end{rem}

\begin{prop}\label{pas inverse P on annulus}
    On a naïve desingularization $(M,g^D_t)$, for all $0<\beta<1$, there exists $C>0$ such that for $H_k$ a constant symmetric $2$-tensor, and $\chi_{\mathcal{A}_k(t,\epsilon)}$ the cut-off function defined in Definition \ref{def cutoffs all},
    $$\|P_{g^D}\big(\chi_{\mathcal{A}_k(t,\epsilon)}H_k\big)\|_{r_D^{-2}C^\alpha_\beta(g^D)}\leqslant C |H_k|_{g_e},$$
    but $$ \|\chi_{\mathcal{A}_k(t,\epsilon)}H_k\|_{C^{0}_\beta(g^D)} \geqslant \frac{1}{2} t_{\max}^{-\frac{\beta}{4}}|H_k|_{g_e}.$$
\end{prop}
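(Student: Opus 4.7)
I would prove both inequalities by direct pointwise computation in the chart $\Phi_k$ covering the annulus $\mathcal{A}_k(\epsilon)$, exploiting the fact that $H_k$ is constant in this chart and that $g^D$ is close to $g_e$ there by \eqref{controle gD-ge}.

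For the upper bound, the key observation is that $P_{g_e}(H_k)=0$ since $g_e$ is flat and $H_k$ is constant. Decomposing
\[
P_{g^D}(\chi_{\mathcal{A}_k(\epsilon)}H_k) = \chi_{\mathcal{A}_k(\epsilon)}\,P_{g^D}(H_k) \;+\; [P_{g^D},\chi_{\mathcal{A}_k(\epsilon)}]\,H_k,
\]
the first term rewrites as $\chi_{\mathcal{A}_k(\epsilon)}(P_{g^D}-P_{g_e})H_k$ and so reduces to expressions involving the differences of connections and curvatures of $g^D$ and $g_e$; using \eqref{controle gD-ge}, these are controlled pointwise by $C\epsilon^{2-\beta}\eta(r_e)|H_k|_{g_e}/r_e^2$. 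Since $\mathring{\R}$ commutes with the scalar $\chi_{\mathcal{A}_k(\epsilon)}$, the commutator reduces to $\tfrac12[\nabla^*\nabla_{g^D},\chi_{\mathcal{A}_k(\epsilon)}]H_k$, and the cut-off estimates $|\nabla^i\chi|\leq C/r_D^i$ together with $|\nabla_{g^D}H_k|\leq C|\Gamma_{g^D}-\Gamma_{g_e}||H_k|$ give a pointwise bound by $C|H_k|_{g_e}/r_D^2$ on the support of $\nabla\chi_{\mathcal{A}_k(\epsilon)}$. In both cases the $r_D^2$ factor in the target norm $r_D^{-2}C^\alpha_\beta(g^D)$ exactly cancels the singular $r_D^{-2}$ factor, and the residual $r_D^{-\beta}$ weight yields bounds uniform in $t$ once one uses the right partition-of-unity piece: on each bubble $N_j^t$, the scaling factor $T_j^{l/2}=T_j^{-1}$ for $l=-2$ compensates the rescaling $|H_k|_{g_{b_j}}\sim T_j|H_k|_{g_e}$ against the weight $r_{b_j}^\beta$ evaluated at the fixed scale $r_{b_j}\sim(32\epsilon)^{-1}$ of the inner transition region of $\chi_{\mathcal{A}_k(\epsilon)}$.

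For the lower bound, it suffices to exhibit one point where the global norm is large. Choose $x\in \mathcal{A}_k(\epsilon)\cap M_o^{16t}$ with $r_o(x)=(16 t_k)^{1/4}$; there both $\chi_{M_o^t}(x)=1$ and $\chi_{\mathcal{A}_k(\epsilon)}(x)=1$, and $|H_k|_{g_o}(x)\geq(1-o(1))|H_k|_{g_e}$ by smoothness of $g_o$ in the orbifold chart. From the orbifold contribution to the partition-defined norm,
\[
\|\chi_{\mathcal{A}_k(\epsilon)}H_k\|_{C^0_\beta(g^D)} \;\geq\; \|\chi_{M_o^t}\chi_{\mathcal{A}_k(\epsilon)}H_k\|_{C^0_\beta(g_o)} \;\geq\; r_o(x)^{-\beta}|H_k|_{g_o}(x) \;\geq\; (16 t_k)^{-\beta/4}|H_k|_{g_e}(1-o(1)),
\]
and since $t_k\leq t_{\max}$ and $16^{-\beta/4}\geq 1/2$ for $0<\beta<1$, this gives $\tfrac{1}{2}t_{\max}^{-\beta/4}|H_k|_{g_e}$.

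The main difficulty lies in the upper bound, more precisely in the bookkeeping of the weighted norm contributions near the \emph{inner} boundary of the annulus, where $r_D$ reaches the $t$-dependent scale $\sim\sqrt{T_k}/\epsilon$. The estimate succeeds only because the singular behaviour of both $\chi_{\mathcal{A}_k(\epsilon)}\,P_{g^D}(H_k)$ and $[P_{g^D},\chi_{\mathcal{A}_k(\epsilon)}]H_k$ is exactly $r_D^{-2}$ (no worse), which is precisely the order absorbed by the $r_D^2$ in the target norm, while the partition of unity of the global norm chooses the bubble side $\chi_{N_k^t}$ at the inner transition — on that side the tensor-weight factor $T_k^{-1}$ cancels the $T_k$ arising from the scaling $|H_k|_{g_{b_k}}\sim T_k|H_k|_{g_e}$, and the weight $r_{b_k}^\beta$ is evaluated at a fixed scale independent of $t$.
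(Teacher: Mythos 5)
Your lower-bound argument picks $x\in\mathcal{A}_k(\epsilon)\cap M_o^{16t}$ and evaluates the orbifold piece $\|\chi_{M_o^t}\,\cdot\,\|_{C^0_\beta(g_o)}$ of the global norm there. This works only when $N_k$ is glued directly to $M_o$. The statement, however, is for every $k$ in the desingularization tree, and for a deeper bubble $N_k$ glued to a parent $N_j$ the annulus $\mathcal{A}_k(\epsilon)$ is disjoint from $M_o^t$: the set you choose from is empty, $\chi_{M_o^t}\chi_{\mathcal{A}_k(\epsilon)}\equiv 0$, the orbifold term of the norm vanishes identically, and your chain of inequalities degenerates to $\geqslant 0$. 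That is a genuine gap, though the repair is short and parallel: take $x$ on the parent bubble with $r_{b_j}(x)=(16t_k)^{1/4}$, where $\chi_{N_j^t}(x)=\chi_{\mathcal{A}_k(\epsilon)}(x)=1$ for $t_{\max}$ small; the ALE weight there is $\max(r_{b_j}^\beta,r_{b_j}^{-\beta})=r_{b_j}^{-\beta}=(16t_k)^{-\beta/4}$ and the prefactor $T_j^{l/2}=T_j^{-1}$ (for $l=-2$) exactly cancels $|H_k|_{g_{b_j}}\approx T_j|H_k|_{g_e}$, giving again $\geqslant 2^{-\beta}\,t_k^{-\beta/4}|H_k|_{g_e}\geqslant \tfrac12\,t_{\max}^{-\beta/4}|H_k|_{g_e}$.

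The upper bound is correct, and in substance it is the computation the paper packages into Proposition~\ref{contrôles with perturbations triviales}: the paper's justification of Proposition~\ref{pas inverse P on annulus} is simply that $\|\chi_{\mathcal{A}_k(\epsilon)}H_k\|_{C^{2,\alpha}_{\beta,*}(g^D)}\leqslant|H_k|_{g_e}$ by definition of the decoupling norm, together with $\|P_{g^D}h\|_{r_D^{-2}C^\alpha_\beta(g^D)}\leqslant C\|h\|_{C^{2,\alpha}_{\beta,*}(g^D)}$. Two points on your write-up of that part: the ``inner transition region'' of $\chi_{\mathcal{A}_k(\epsilon)}$ sits at fixed scale $r_{b_k}\sim\epsilon^{-1}$ on the $N_k$ side (the index you use, $r_{b_j}$, is the parent's, where that transition lies outside $N_j^t$); and for the term $\chi_{\mathcal{A}_k(\epsilon)}(P_{g^D}-P_{g_e})H_k$ the essential mechanism is that the annulus weight $\eta(r_e)$ inherited from \eqref{controle gD-ge} offsets the global weight ($r_o^{-\beta}$ on the orbifold side, $\max(r_b^\beta,r_b^{-\beta})$ on the bubble side) throughout the whole annulus, not only at the transitions -- without this compensation that term would appear to blow up near $r_D\sim t_k^{1/4}$, so it should be made explicit.
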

\begin{proof}[  ]
    
\end{proof}
Linear vector fields in the kernel of $\delta\delta^*$ also rule out the existence of estimates independent of $t$ for the operator $\delta \delta^*$ according to Proposition \ref{contrôles with perturbations triviales} proven below.

\begin{prop}\label{pas inverse delta delta on annulus}
    On a naïve desingularization $(M,g^D_t)$, for all $0<\beta<1$, there exists $C>0$ such that for $X_k$ a linear vector field in the kernel of $\delta_e\delta^*_e$, 
    $$\|\delta_{g^D}\delta_{g^D}^*\big(\chi_{\mathcal{A}_k(t,\epsilon)}X_k\big)\|_{r_D^{-1}C^{1,\alpha}_\beta(g^D)}\leqslant C \|X_k\|_{r_eC^0_0(g_e)},$$
    but $$ \|\chi_{\mathcal{A}_k(t,\epsilon)}X_k\|_{C^{0}_\beta(g^D)} \geqslant \frac{1}{2} t_{\max}^{-\frac{\beta}{4}}\|X_k\|_{r_eC^0_0(g_e)}.$$
\end{prop}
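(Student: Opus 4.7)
The plan is to proceed in close analogy with Proposition \ref{pas inverse P on annulus}, replacing the constant $2$-tensor $H_k$ by the linear kernel vector field $X_k$, the operator $P$ by $\delta\delta^*$, and the fact that $P_{g_e}H_k=0$ by the fact that $\delta_{g_e}\delta_{g_e}^* X_k=0$. Indeed this statement will eventually follow as a specialization of the forthcoming Proposition \ref{contrôles with perturbations triviales}.

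For the first (upper bound) estimate, I would split
$$\delta_{g^D}\delta_{g^D}^*\bigl(\chi_{\mathcal{A}_k(\epsilon)} X_k\bigr) = \chi_{\mathcal{A}_k(\epsilon)}\,\delta_{g^D}\delta_{g^D}^* X_k + \bigl[\delta_{g^D}\delta_{g^D}^*,\,\chi_{\mathcal{A}_k(\epsilon)}\bigr] X_k.$$
The first piece is small because $\delta_{g_e}\delta_{g_e}^* X_k = 0$ on the flat annulus chart and $g^D$ is $C^{2,\alpha}_\beta$-close to $g_e$ there. Writing $\delta_{g^D}\delta_{g^D}^* - \delta_{g_e}\delta_{g_e}^*$ as a second-order operator whose coefficients depend linearly on $g^D - g_e$ and its first two derivatives, and using that $\nabla_{g_e} X_k$ is constant and $\nabla_{g_e}^2 X_k = 0$, one obtains the pointwise bound
$$|\delta_{g^D}\delta_{g^D}^* X_k|_{g^D} \;\leqslant\; C\bigl(|\nabla(g^D-g_e)|_{g_e} + r_e|\nabla^2(g^D-g_e)|_{g_e}\bigr)\,\|X_k\|_{r_e C^0_0(g_e)}.$$
Combined with the bound \eqref{controle gD-ge}, multiplying by $r_D\sim r_e$ yields the required $r_D^{-1} C^{1,\alpha}_\beta(g^D)$ control. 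The commutator term is supported where $\nabla\chi_{\mathcal{A}_k(\epsilon)}\neq 0$, where $|\nabla^i\chi_{\mathcal{A}_k(\epsilon)}|\leqslant C r_D^{-i}$ and $|X_k|_{g^D}\leqslant C r_D\|X_k\|_{r_e C^0_0}$, so it contributes a bounded quantity to the weighted norm.

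For the second (lower bound) estimate, I would evaluate $\|\chi_{\mathcal{A}_k(\epsilon)} X_k\|_{C^0_\beta(g^D)}$ using its splitting via the partition of unity into an $M_o^t$-piece and an $N_k^t$-piece (only $j=k$ contributes, since the support of $X_k$ touches only $\mathcal{A}_k$). Since $X_k$ grows linearly, the orbifold contribution $r_o^{-\beta}|X_k|_{g_o}\sim r_e^{1-\beta}\|X_k\|_{r_e C^0_0}$ does not blow up near the singular point, so the dominant term comes from the $N_k^t$-side. Using the identification $r_e=\sqrt{t_k}\,r_{b_k}$ and the conformal rescaling $|X_k|_{g_{b_k}} = |X_k|_{g^D}/\sqrt{t_k} \approx r_{b_k}\|X_k\|_{r_e C^0_0}$, I would evaluate the ALE-weighted contribution $T_k^{1/2}\cdot r_{b_k}^\beta |X_k|_{g_{b_k}}$ at a point where $r_{b_k}$ is near the outer boundary of $\chi_{N_k^t}=1$, with $\chi_{\mathcal{A}_k(\epsilon)}=1$, to recover the claimed lower bound of order $t_{\max}^{-\beta/4}\|X_k\|_{r_e C^0_0}$.

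The main technical subtlety, compared with the constant $2$-tensor case, is that the linear growth $|X_k|\sim r_e$ partially cancels the weight $r^{-\beta}$ on the orbifold side, so one must carefully balance the conformal rescaling factor $T_k^{l/2}=t_k^{1/2}$ (coming from the weight $l=1$ of vector fields) against the ALE weight $r_{b_k}^\beta$ and the intrinsic scaling of $|X_k|$ across the two sides of the annulus to isolate the $t_{\max}^{-\beta/4}$ blow-up. This mirrors the role played, in the $2$-tensor case, by the fact that $|H_k|_{g_e}$ is constant and hence directly feels the $r_o^{-\beta}$ weight on the orbifold side.
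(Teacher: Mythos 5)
Your argument for the upper bound is fine and follows the paper's intended route: it is the specialization of Proposition \ref{contrôles with perturbations triviales} to the decomposition $X_*=0$ plus a single summand $\chi_{\mathcal{A}_k(\epsilon)}X_k$, using the cancellations $\delta_{g_e}\delta_{g_e}^*X_k=0$ and $\nabla_e^2X_k=0$ together with the cut-off bounds $|\nabla^l\chi_{\mathcal{A}_k(\epsilon)}|\leqslant C_lr_D^{-l}$ and the estimate \eqref{controle gD-ge}. The gap is in the lower bound, and it is in the exponent arithmetic that your final paragraph flags but does not actually carry out. With $T_k=t_k$ (a bubble glued directly to $M_o$), $|X_k|_{g_{b_k}}\approx r_{b_k}\|X_k\|_{r_eC^0_0(g_e)}$, and $r_{b_k}\approx t_k^{-1/4}$, the ALE-side contribution to $\|\chi_{\mathcal{A}_k(\epsilon)}X_k\|_{C^0_\beta(g^D)}$ is
\begin{equation*}
T_k^{1/2}\,r_{b_k}^{\beta}\,|X_k|_{g_{b_k}}\ \approx\ t_k^{\frac{1}{2}}\cdot t_k^{-\frac{1+\beta}{4}}\,\|X_k\|_{r_eC^0_0(g_e)}\ =\ t_k^{\frac{1-\beta}{4}}\,\|X_k\|_{r_eC^0_0(g_e)},
\end{equation*}
which tends to $0$, not to $\infty$, since $0<\beta<1$; and the orbifold-side contribution $\sup r_o^{-\beta}|X_k|_{g_o}\approx\sup r_o^{1-\beta}\|X_k\|_{r_eC^0_0(g_e)}$ is bounded independently of $t$. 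So your computation, taken at face value, shows that the left-hand side is bounded --- the opposite of the claimed blow-up. The linear growth of $X_k$ does not merely ``partially cancel'' the weight, as you suggest at the end; it cancels it entirely, so the supremum sits at the fixed-size outer edge of the annulus and no $t_{\max}^{-\beta/4}$ appears.

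The resolution is that for vector fields the norm on the left should carry an extra $r_D^{-1}$, i.e.\ the second inequality should be read as $\|\chi_{\mathcal{A}_k(\epsilon)}X_k\|_{r_DC^0_\beta(g^D)}\geqslant\frac{1}{2}t_{\max}^{-\beta/4}\|X_k\|_{r_eC^0_0(g_e)}$, consistent with the $r_\bullet C^{k,\alpha}_\beta$ convention used for vector fields throughout this section (Propositions \ref{Estimations elliptic delta delta} and \ref{controle deltadelta sur les espaces modèles}, Lemma \ref{Mise en Reduced divergence-free linear}, and the decoupling norm $\|X\|_{rC^{k,\alpha}_{\beta,*}}=\inf\big(\|X_*\|_{rC^{k,\alpha}_\beta}+\sum_k\|X_k\|_{rC^0_0(g_e)}\big)$). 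After dividing by $r_D$, the pointwise norm $|X_k/r_D|_{g^D}\approx\|X_k\|_{r_eC^0_0(g_e)}$ is essentially constant across the annulus, and the computation becomes identical to the constant $2$-tensor case of Proposition \ref{pas inverse P on annulus}: on the orbifold piece $\sup_{r_o\geqslant t_k^{1/4}}r_o^{-\beta}|X_k/r_o|_{g_o}\approx t_k^{-\beta/4}\|X_k\|_{r_eC^0_0(g_e)}$, and on the ALE piece, where by \eqref{Estimation rD ro rbj} with $m=l=1$ the conformal factor $T_k^{(l-m)/2}$ equals $1$, $\sup_{r_{b_k}\leqslant 2t_k^{-1/4}}r_{b_k}^{\beta}|X_k/r_{b_k}|_{g_{b_k}}\approx t_k^{-\beta/4}\|X_k\|_{r_eC^0_0(g_e)}$. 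This is precisely the blow-up that forces the passage to the decoupled norm $r_DC^{k,\alpha}_{\beta,*}$ in Lemma \ref{Mise en Reduced divergence-free linear}.
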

\begin{proof}[  ]
    
\end{proof}
    
\paragraph{Weighted decoupling norms.}
Propositions \ref{inverse P on annulus} and \ref{inverse delta delta on annulus} actually show that we can control the inverses of our operators once we solve our equations modulo constant symmetric $2$-tensors and the linear vector fields of the kernel of $\delta_e\delta^*_e$ on $\mathbb{R}^4$ and Propositions \ref{pas inverse P on annulus} and \ref{pas inverse delta delta on annulus} show that we cannot expect better. We therefore introduce new norms to reflect this. They are similar to the norms introduced in \cite{bam} for similar reasons.

\begin{defn}[Norm $\|.\|_{C^{k,\alpha}_{\beta,*}}$ on symmetric $2$-tensors]
    Let $ h $ be a symmetric $2$-tensor on $(M,g^D)$, (respectively $(M_o,g_o)$ or $(N,g_b)$). We define its $C^{k,\alpha}_{\beta,*}$-norm by
    $$\|h\|_{C^{k,\alpha}_{\beta,*}}:= \inf_{h_*,H_k} \|h_*\|_{C^{k,\alpha}_{\beta}} + \sum_k |H_k|_{g_e},$$
    where the infimum is taken among the couples $(h_*,H_k)$ satisfying $h= h_*+\sum_k \chi_{\mathcal{A}_k(t,\epsilon)}H_k$ (respectively $h= h_*+\sum_k \chi_{B_o(\epsilon)}H_k$ or $h= h_*+\sum_k \chi_{B_b(\epsilon)}H_k$), for $H_k$ a constant traceless symmetric $2$-tensor on $\mathbb{R}^4\slash\Gamma_k$.
\end{defn}

\begin{defn}[Norm $\|.\|_{rC^{k,\alpha}_{\beta,*}}$ on vector fields]
    Let $ X $ a vector field on $(M,g^D)$ (respectively $(M_o,g_o)$ or $(N,g_b)$). We define its $rC^{k,\alpha}_{\beta,*}$-norm, where $r$ is the function $r_D$ (respectively $r_o$ or $r_b$) by
    $$\|X\|_{rC^{k,\alpha}_{\beta,*}}:= \inf_{X_*,X_k}\|X_*\|_{rC^{k,\alpha}_{\beta}} + \sum_k \|X_k\|_{rC^0_0(g_e)},$$
    where the infimum is taken among the couples $(X_*,X_k)$ satisfying $X = X_* + \sum_k\chi_{\mathcal{A}_k(t,\epsilon)}X_k$ (respectively $X = X_* + \sum_k\chi_{B_o(\epsilon)}X_k$ or $X = X_* + \chi_{B_b(\epsilon)}X_k$).
\end{defn}

\begin{rem}\label{unicité decompositions}
    By definition of the weighted norms, on an orbifold or orbifold ALE, the decompositions $h = h_*+ \sum_k \chi_{B(\epsilon)}H_k$ and $X = X_* + \sum_k\chi_{B(\epsilon)}X_k$ are unique and determined respectively by the limits of $h$ and of $\frac{X}{r}$ when $r\to 0$ (where $r = r_o$ or $r=r_b$). Indeed, in other cases, the expression we minimize is infinite.
\end{rem}

\begin{rem}
    By definition, we have 
    $$\|.\|_{C^{k,\alpha}_{\beta,*}}\leqslant \|.\|_{C^{k,\alpha}_{\beta}}\text{, and }\; \|.\|_{rC^{k,\alpha}_{\beta,*}}\leqslant\|.\|_{rC^{k,\alpha}_{\beta}},$$
    and the spaces $(C^{k,\alpha}_{\beta,*},\|.\|_{C^{k,\alpha}_{\beta,*}})$ and $(rC^{k,\alpha}_{\beta,*},\|.\|_{rC^{k,\alpha}_{\beta,*}})$ are clearly Banach spaces.
\end{rem}

\subsubsection{Estimates in the decoupling norms}
Let us show that it is possible to control thanks to the $r^mC^{k,\alpha}_\beta$-norm the images by the operators $P$ and $\delta\delta^*$ of elements of $r^{m+2}C^{k+2,\alpha}_{\beta,*}$.
\begin{prop}\label{contrôles with perturbations triviales}  Let $0<\beta<1$, and $(M_\bullet,g_\bullet)$ one of the spaces $(M_o,g_o)$, $(N_j,g_{b_j})$ or $(M,g^D)$, $g$ a metric, $h$ a symmetric $2$-tensor, and $X$ a vector field on $M_\bullet$. We then have, the following controls:
    $$\|P_{g_\bullet}h\|_{r^{-2}_\bullet C^{\alpha}_\beta(g_\bullet)} \leqslant C\|h\|_{C^{2,\alpha}_{\beta,*}(g_\bullet)},$$
    $$\|\delta_{g_\bullet}\delta^*_{g_\bullet} X \|_{r^{-1}_\bullet C^{1,\alpha}_\beta(g_\bullet)}  \leqslant C\|X\|_{r_\bullet C^{3,\alpha}_{\beta,*}(g_\bullet)},$$
    $$\|P_g(h)-P_{g_\bullet}(h)\|_{r^{-2}_\bullet C^{\alpha}_\beta(g_\bullet)} \leqslant C\|g-g_\bullet\|_{C^{2,\alpha}_{\beta,*}(g_\bullet)}\|h\|_{C^{2,\alpha}_{\beta,*}(g_\bullet)},$$
    and
    $$\| \delta_g\delta^*_g (X) - \delta_{g_\bullet}\delta^*_{g_\bullet} (X) \|_{r^{-1}_\bullet C^{1,\alpha}_\beta(g_\bullet)}  \leqslant C\|g-g_\bullet\|_{C^{2,\alpha}_{\beta,*}(g_\bullet)}\|X\|_{r_\bullet C^{3,\alpha}_{\beta,*}(g_\bullet)}.$$
\end{prop}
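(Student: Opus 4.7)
The plan is to establish each of the four inequalities separately, by decomposing the arguments according to the decoupling norm and exploiting that the flat operators $P_{g_e}$ and $\delta_{g_e}\delta^*_{g_e}$ annihilate, respectively, the constant $2$-tensors and the linear kernel vector fields that appear as the ``special'' pieces.

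For the first inequality, I would fix a decomposition $h=h_*+\sum_k\chi_k H_k$ nearly attaining the infimum in $\|h\|_{C^{2,\alpha}_{\beta,*}}$. The bound on $P_{g_\bullet}h_*$ follows directly from the multiplicative properties of Remark \ref{comportement norme} together with the fact that $P$ is a second-order operator with curvature coefficients that are bounded (with bounded covariant derivatives) on each of the three spaces $(M_o,g_o)$, $(N_j,g_{b_j})$ and $(M,g^D)$. For each constant piece $\chi_k H_k$, I would use the identity $P_{g_e}H_k=0$ (holding because $H_k$ is $g_e$-parallel and $\mathrm{Rm}(g_e)=0$) to write
\[
P_{g_\bullet}(\chi_k H_k)=[P_{g_\bullet},\chi_k]H_k+\chi_k\bigl(P_{g_\bullet}-P_{g_e}\bigr)H_k.
\]
The commutator term is supported on the fixed collar where $\nabla\chi_k$ and $\nabla^2\chi_k$ live, a region on which $r_\bullet$ is comparable to a constant depending only on $\epsilon$, so its $r^{-2}_\bullet C^\alpha_\beta$-norm is controlled by a constant times $|H_k|_{g_e}$. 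The perturbation term is handled by the explicit smallness of $g_\bullet-g_e$ (of order $r_e^2$ on the orbifold and ALE charts, of order $\epsilon^{2-\beta}$ on the gluing annuli by \eqref{controle gD-ge}) applied to the parallel tensor $H_k$, giving again a bound by $|H_k|_{g_e}$ in the target weighted space. Summing and taking the infimum over admissible decompositions yields the first inequality.

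The second inequality is obtained by the same template. I decompose $X=X_*+\sum_k\chi_k X_k$ and use that $\delta_{g_e}\delta^*_{g_e}X_k=0$ together with the fact that a linear vector field has $\nabla_eX_k$ constant in a flat basis, so its $g_\bullet$-covariant derivatives are controlled pointwise by $\|X_k\|_{r_eC^0_0(g_e)}$ and the Christoffel symbols of $g_\bullet-g_e$. For the two difference estimates, I would expand $P_g-P_{g_\bullet}$ and $\delta_g\delta^*_g-\delta_{g_\bullet}\delta^*_{g_\bullet}$ as polynomial expressions, each term being the product of a smooth nonlinear expression in $g-g_\bullet$ and its first two $g_\bullet$-derivatives with a derivative of $h$ of order at most $2$ (resp.\ of $X$ of order at most $3$). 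Bounding each factor using the first two estimates applied to $g-g_\bullet$ and to $h$ or $X$ and combining via the tensor product estimate of Remark \ref{comportement norme} delivers the bilinear bound. Cross products between two constant pieces $\chi_k G_k$ and $\chi_l H_l$ with $k\neq l$ vanish by disjointness of supports, and for $k=l$ stay localized on a single annulus and are absorbed together with the regular contribution.

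The main obstacle I anticipate is the uniformity in the gluing parameters $t$ on the naïve desingularization $(M,g^D)$: the annuli $\mathcal{A}_k(\epsilon)$ have inner and outer radii scaling with $\sqrt{T_k}$, and I must verify that every constant is independent of $t$. This is precisely what \eqref{controle gD-ge} provides, controlling $\|\Phi_k^*g^D-g_e\|_{C^{2,\alpha}_\beta}$ by $C\epsilon^{2-\beta}$ uniformly in $t$, and combined with the scale invariance of the weighted norms and the rescaling behavior of $P$ and $\delta\delta^*$ under $g\mapsto g/\rho^2$ (already used in the proof of Proposition \ref{Estimations elliptic P}), it allows one to pull each annular estimate back to a unit-scale flat annulus. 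Once this uniformity is in hand, all remaining bookkeeping reduces to the Leibniz rule in weighted Hölder spaces and the multiplicative properties of Remark \ref{comportement norme}.
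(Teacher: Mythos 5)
Your treatment of the first two inequalities is essentially the paper's argument: decompose into $h_*$ (resp.\ $X_*$) and cutoff constants, use Remark \ref{comportement norme} for the decaying piece, and exploit $P_{g_e}H_k=0$, $\delta_{g_e}\delta^*_{g_e}X_k=0$ for the annular constants. The paper expands the product rule directly rather than writing a commutator, and uses \eqref{est chik} to absorb the cut-off derivatives (this is what makes the bound uniform in $t$; your parenthetical ``$r_\bullet$ comparable to a constant depending only on $\epsilon$'' is false on $(M,g^D)$, where the collars scale with $T_j$, but the $|\nabla^l\chi_k|\leqslant C_lr_D^{-l}$ estimate is exactly the $t$-uniform replacement, so no harm done).

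For the bilinear estimates the proposal has a genuine gap. You invoke the tensor product estimate of Remark \ref{comportement norme}, but that estimate is stated for the \emph{non-starred} spaces $r^mC^{k,\alpha}_\beta$: it says $\|s*s'\|_{r^{m+m'}C^{k,\alpha}_{\beta+\beta'}}\leqslant C\|s\|_{r^mC^{k,\alpha}_\beta}\|s'\|_{r^{m'}C^{k,\alpha}_{\beta'}}$. A constant annular piece $\chi_kH_k$ has infinite $C^{2,\alpha}_\beta$-norm for $\beta>0$ (only its $C^{2,\alpha}_{0}$-norm, i.e.\ $|H_k|_{g_e}$, is finite), so you cannot feed $\|g-g_\bullet\|_{C^{2,\alpha}_{\beta,*}}$ and $\|h\|_{C^{2,\alpha}_{\beta,*}}$ directly into that product rule. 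Your line ``for $k=l$ stay localized on a single annulus and are absorbed together with the regular contribution'' is exactly where the problem hides: the naive product of two constant pieces, one from $h$ and one from $g-g_\bullet$, is a constant with no decay and is therefore \emph{not} an element of $r^{-2}_\bullet C^\alpha_\beta$. Nothing ``absorbs'' it; the point is that it never appears.

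The missing observation — and it is the content, not a detail, of the paper's proof of the last two estimates — is structural: reading off \eqref{Expression en coordonnées P}, every term of $P_g(h)-P_{g_\bullet}(h)$ carries \emph{at least one} covariant derivative of $h$ or \emph{at least one} covariant derivative of $g-g_\bullet$; analogously for \eqref{Expression en coordonnées deltadelta}, every term carries at least two derivatives of $X$ or one of $g$ and one of $X$. This allows one to distribute weights asymmetrically, as the paper does explicitly: the undifferentiated factor is measured in $C^{m,\alpha}_{0}$ (finite on the constant pieces, bounded by $|H_k|_{g_e}$ or $\|X_k\|_{r_eC^0_0}$), and the differentiated factor lands in a genuinely decaying weighted space, because $\nabla_eH_k=0$ and $\nabla_e^2X_k=0$ ensure that $\nabla^i(\chi_kH_k)$ and $\nabla^i(\chi_kX_k)$ inherit the decay of $\nabla^i\chi_k$ given by \eqref{est chik}. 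The product estimate then applies with $\beta'=0$ on the undifferentiated side and gives the claimed bilinear bound. Without singling out the derivative structure, the argument does not close.
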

\begin{proof}
    Let us show the result for $g^D$, the proof for other spaces is very similar. For the two first inequalities, consider $h$ a symmetric $2$-tensor and $X$ a vector field on $M$, and some decompositions $h = h_*+\sum_k\chi_{\mathcal{A}_k(t,\epsilon)}H_k$ and $X = X_*+\sum_k\chi_{\mathcal{A}_k(t,\epsilon)}X_k$. Remark \ref{comportement norme} implies that we have the following controls for $h_*$ and $X_*$,
    \begin{equation}
        \|P_{g^D}h_*\|_{r^{-2}_D C^{\alpha}_\beta(g^D)} \leqslant C\|h_*\|_{C^{2,\alpha}_{\beta}(g^D)},\label{est PgD h*}
    \end{equation}
    and
    \begin{equation}
        \|\delta_{g^D}\delta^*_{g^D} X_* \|_{r^{-1}_D C^{1,\alpha}_\beta(g^D)}  \leqslant C\|X_*\|_{r_D C^{3,\alpha}_{\beta}(g^D)}.\label{est deltadelta X*}
    \end{equation}
    On $\mathbb{R}^4\slash\Gamma$, we have $P_eH_k = 0$ and $\delta_e\delta_e^*X_k=0$, hence, since for all $l\in \mathbb{N}$, we have 
    \begin{equation}
        |\nabla^l\chi_{k}|_{g^D} \leqslant C_lr_D^{-l},\label{est chik}
    \end{equation}
    and thanks to the control \eqref{controle gD-ge}, we have \begin{equation}
        \|P_{g^D}(\chi_{\mathcal{A}_k(t,\epsilon)}H_k)\|_{C^\alpha_\beta(g^D)}\leqslant C |H_k|_{g_e},\label{est PgD Hk}
    \end{equation}
    and
    \begin{equation}
        \|\delta_{g^D}\delta_{g^D}^*(\chi_{\mathcal{A}_k(t,\epsilon)}X_k)\|_{r_D^{-1}C^{1,\alpha}_\beta(g^D)}\leqslant C \|X_k\|_{r_eC^0_0(g_e)},\label{est deltadelta Xk}
    \end{equation}
    where we pulled-back thanks to the diffeomorphism $$\Phi_k: A_e(\epsilon^{-1}\sqrt{T_j}\sqrt{t_k},\epsilon\sqrt{T_j})\subset \mathbb{R}^4\slash\Gamma_k\to \mathcal{A}_k(t,\epsilon)\subset M.$$ Summing the controls \eqref{est PgD h*} and \eqref{est PgD Hk} on the one hand, and the controls \eqref{est deltadelta X*} and \eqref{est deltadelta Xk} on the other hand, yields the two fist inequalities stated.
    \\
    
    Let us now focus on the two last inequalities, which are more difficult to obtain. The control we want is local, let us therefore write down the expressions of our operators in local coordinates in an orthonormal basis $(e_i)$. For a symmetric $2$-tensor $h$, denoting $h_{ij} = h(e_i,e_j)$ and $R_{ijkl}$ the Riemannian curvature in coordinates, we have 
    \begin{equation}
        P_g(h)_{ij}= \frac{1}{2}\big(\nabla_g^*\nabla_g h\big)_{ij}-g^{kp}g^{lq} R_{ikjl} h_{pq}, \label{Expression en coordonnées P}
    \end{equation}
    where $\nabla_i$ is the covariant derivative for $g$ in the direction $e_i$.
    We directly see thanks to the estimates of Remark \ref{comportement norme} that we have the controls:
    $$\|P_g(h)-P_{g^D}(h)\|_{r^{-2}_DC^{\alpha}_\beta(g^D)} \leqslant C\|g-g^D\|_{C^{2,\alpha}_{\beta}(g^D)}\|h\|_{C^{2,\alpha}_{\beta}(g^D)}.$$
    
    Let us now consider $\sum_k \chi_{\mathcal{A}_k(t,\epsilon)}H_k $ and $\sum_k \chi_{\mathcal{A}_k(t,\epsilon)}H_k' $. These tensors being all supported in the annuli $\mathcal{A}_k(t,\epsilon)$, we just need to restrict our attention to them. The crucial remark is that in \eqref{Expression en coordonnées P}, every term involves at least a derivative of $h$ or of $g-g^D$. Hence, we have a more precise control on $h$ a symmetric $2$-tensor supported in $\mathcal{A}_k(t,\epsilon_0)$
    \begin{align}
        \big\|P_g(h)-P_{g^D}(h)\big\|_{r^{-2}_DC^{\alpha}_\beta(g^D)} \leqslant& C\Big(\|g-g^D\|_{C^{2,\alpha}_{0}(g^D)}\|\nabla^2 h\|_{r^{-2}_DC^{\alpha}_{\beta}(g^D)}\nonumber\\
        &+\|\nabla(g-g^D)\|_{r^{-1}_DC^{1,\alpha}_{\beta}(g^D)}\|\nabla h\|_{r^{-1}_DC^{1,\alpha}_{\beta}(g^D)}\nonumber\\ &+\|\nabla^2(g-g^D)\|_{r^{-2}_DC^{\alpha}_{\beta}(g^D)}\| h\|_{C^{2,\alpha}_{0}(g^D)} \Big),\label{control PgPgD}
    \end{align}
    (notice the norms $C^{2,\alpha}_{0}(g^D)$ in which we have $\|\chi_{\mathcal{A}_k(t,\epsilon)}H_k\|_{C^{2,\alpha}_{0}(g^D)}\leqslant C|H_k|_{g_e}$ and $\|\chi_{\mathcal{A}_k(t,\epsilon)}H'_k\|_{C^{2,\alpha}_{0}(g^D)}\leqslant C|H_k'|_{g_e}$). There remains to control the derivatives of the tensors $ \chi_{\mathcal{A}_k(t,\epsilon)}H_k $ and $\chi_{\mathcal{A}_k(t,\epsilon)}H_k'$. Since the $H_k$ and $H'_k$ are constant on $\mathbb{R}^4$, and since the cut off functions are bounded in $C^2_0(g^D)$ by \eqref{est chik}, for $i \in \{1,2\}$, we have
    $$\|\nabla^i\big(\chi_{\mathcal{A}_k(t,\epsilon)}H_k\big)\|_{r^{-i}_DC^{2-i,\alpha}_\beta(g^D)}\leqslant C |H_k|_{g_e},$$
    and
    $$\|\nabla^i\big(\chi_{\mathcal{A}_k(t,\epsilon)}H_k'\big)\|_{r^{-i}_DC^{2-i,\alpha}_\beta(g^D)}\leqslant C |H_k'|_{g_e},$$ 
    which together with \eqref{control PgPgD} let us conclude that the third estimate holds.
    \\
    
    For the vector fields, we have the following rewriting for $X$ a vector field supported in $ \mathcal{A}_{k}(t,\epsilon_0)$
    \begin{align*}
        \big\| \delta_g\delta^*_g (X) - \delta_{g^D}\delta^*_{g^D} (X) \big\|_{r^{-1}_DC^{1,\alpha}_\beta(g^D)}  \leqslant&\; C\big(\big\|(\delta_{g}-\delta_{g^D})(\mathcal{L}_Xg) \big\|_{r^{-1}_DC^{1,\alpha}_\beta(g^D)} \\ &+\big\|\delta_{g^D}(\mathcal{L}_{X}(g^D-g)) \big\|_{r^{-1}_DC^{1,\alpha}_\beta(g^D)} \big).
    \end{align*}
    We moreover know that for $X_k$ a linear vector field in the kernel of $\delta_e\delta_e^*$, then the symmetric $2$-tensor $\mathcal{L}_{X_k}g_e$ is constant, and more generally, for $H_k$ a constant symmetric $2$-tensor, we have $\delta_{g_e}(\mathcal{L}_{X_k}H_k)=0$ on $\mathbb{R}^4$. Using these two facts and the controls of the cut-off functions, we conclude that the last estimate of the statement holds by an argument similar to the above one for $2$-tensors and $P$.
\end{proof}

\subsubsection{Elliptic estimates for the decoupling norms}
Some elliptic estimates are still satisfied in these norms.

\begin{prop}\label{est ellipt with cstes}
    Let $0<\beta<1$, $g$ a metric, $h$ a symmetric $2$-tensor and $X$ a vector field  on $M_o$ (respectively $N_j$, or $M$). Then, there exists $\epsilon_*=\epsilon_*(g_o,g_{b_j},g^D, \beta)>0$ and $C>0$ such that if we have $\|g-g_\bullet\|_{C^{2,\alpha}_{\beta,*}(g_\bullet)}\leqslant \epsilon_*$, where $g_\bullet$ is one of the norms $g_o$, $g_{b_j}$ or $g^D$, then,
    $$\|h\|_{C^{2,\alpha}_{\beta,*}(g_\bullet)}\leqslant C \big(\|P_gh\|_{r^{-2}_\bullet C^{\alpha}_\beta(g_\bullet)}+\|h\|_{C^0_{\beta,*}(g_\bullet)}\big),$$
    and
    $$\|X\|_{r_\bullet C^{3,\alpha}_{\beta,*}(g_\bullet)}\leqslant C \big(\|\delta_g\delta^*_gX\|_{r^{-1}\bullet C^{1,\alpha}_{\beta}(g_\bullet)}+\|X\|_{r_\bullet C^0_{\beta,*}(g_\bullet)}\big).$$
\end{prop}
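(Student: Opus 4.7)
The proposition asks for an elliptic estimate in the decoupling norm $C^{k,\alpha}_{\beta,*}$, so my plan is to reduce it to the standard elliptic estimates of Propositions \ref{Estimations elliptic P} and \ref{Estimations elliptic delta delta} by carefully stripping off the constant and linear ``approximate kernel'' pieces from both the argument tensor and the perturbation of the metric.

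Fix a decomposition realizing (up to a factor of $2$) the infimum defining $\|h\|_{C^0_{\beta,*}(g_\bullet)}$, namely $h = h_* + \sum_k \chi_{\mathcal{A}_k(\epsilon)} H_k$ with
$\|h_*\|_{C^0_\beta(g_\bullet)} + \sum_k |H_k|_{g_e} \leqslant 2\|h\|_{C^0_{\beta,*}(g_\bullet)}$.
Apply the same splitting to the metric: write $g - g_\bullet = \hat h_* + \sum_k \chi_{\mathcal{A}_k(\epsilon)} \hat H_k$ as a near-minimizer of the $C^{2,\alpha}_{\beta,*}$-norm, and set $\tilde g := g_\bullet + \hat h_*$. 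Then $\|\tilde g - g_\bullet\|_{C^{2,\alpha}_\beta(g_\bullet)} \leqslant 2\epsilon_*$, so for $\epsilon_*$ small enough, Proposition \ref{Estimations elliptic P} applies to the pair $(\tilde g, h_*)$:
$$\|h_*\|_{C^{2,\alpha}_\beta(g_\bullet)} \leqslant C\bigl(\|P_{\tilde g} h_*\|_{r^{-2}_\bullet C^\alpha_\beta(g_\bullet)} + \|h_*\|_{C^0_\beta(g_\bullet)}\bigr).$$
Since $g - \tilde g = \sum_k \chi \hat H_k$, the perturbation estimate of Proposition \ref{contrôles with perturbations triviales} yields
$\|P_g h_* - P_{\tilde g} h_*\|_{r^{-2}_\bullet C^\alpha_\beta} \leqslant C\epsilon_* \|h_*\|_{C^{2,\alpha}_{\beta,*}(g_\bullet)} \leqslant C\epsilon_* \|h_*\|_{C^{2,\alpha}_\beta(g_\bullet)}$,
and absorbing this term into the left side for $\epsilon_*$ small gives the same estimate with $P_g$ in place of $P_{\tilde g}$.

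To conclude, use $P_g h_* = P_g h - \sum_k P_g(\chi H_k)$ and apply Proposition \ref{contrôles with perturbations triviales} once more: each $\chi H_k$ has $C^{2,\alpha}_{\beta,*}$-norm at most $|H_k|_{g_e}$, so
$$\sum_k \|P_g(\chi H_k)\|_{r^{-2}_\bullet C^\alpha_\beta(g_\bullet)} \leqslant C\sum_k |H_k|_{g_e} \leqslant C\|h\|_{C^0_{\beta,*}(g_\bullet)}.$$
Combining with $\|h_*\|_{C^0_\beta(g_\bullet)} \leqslant 2\|h\|_{C^0_{\beta,*}(g_\bullet)}$ and the definition $\|h\|_{C^{2,\alpha}_{\beta,*}} \leqslant \|h_*\|_{C^{2,\alpha}_\beta} + \sum_k |H_k|_{g_e}$ yields the claimed inequality. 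The vector-field case is identical in structure: take a near-minimizing decomposition $X = X_* + \sum_k \chi X_k$ for the $r_\bullet C^0_{\beta,*}$-norm, replace $P$ by $\delta\delta^*$, invoke Proposition \ref{Estimations elliptic delta delta} on the modified metric $\tilde g$, and use Proposition \ref{contrôles with perturbations triviales} to control $\delta_g\delta_g^*(\chi X_k)$ by $\|X_k\|_{r_e C^0_0(g_e)}$.

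The main obstacle I anticipate is the gap between the norm controlling the metric perturbation ($C^{2,\alpha}_{\beta,*}$) and the norm required by the standard elliptic estimates ($C^{2,\alpha}_\beta$): the constant pieces $\chi \hat H_k$ in $g - g_\bullet$ have infinite $C^0_\beta$-norm at singularities and therefore cannot be handled directly. The device that resolves this is to peel off these pieces into a separate metric $\tilde g$, use Proposition \ref{Estimations elliptic delta delta} or \ref{Estimations elliptic P} with $\tilde g$, and convert the error $P_g - P_{\tilde g}$ (whose coefficients involve only the \emph{derivatives} of the constant pieces, hence only the cut-off functions $\chi$) into a small perturbation via Proposition \ref{contrôles with perturbations triviales}. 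Everything else is bookkeeping on the triangle inequality and the definition of the infimum norm.
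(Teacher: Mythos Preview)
Your proof is correct and follows essentially the same skeleton as the paper's: strip off the constant pieces $\chi H_k$ from $h$, apply the standard weighted Schauder estimate to the remainder $h_*$, and bound $P_g(\chi H_k)$ via Proposition~\ref{contrôles with perturbations triviales}.

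The one genuine difference is in how you handle the perturbed metric $g$. You anticipate that Proposition~\ref{Estimations elliptic P} requires closeness in the ordinary $C^{2,\alpha}_\beta$ norm, so you introduce the intermediate metric $\tilde g = g_\bullet + \hat h_*$ and apply the Schauder estimate there, then correct by $P_g - P_{\tilde g}$. This works, but the paper avoids the detour entirely: it first proves the estimate for $g = g_\bullet$ (where Proposition~\ref{Estimations elliptic P} applies trivially, no hypothesis on a perturbation needed), obtaining
\[
\|h\|_{C^{2,\alpha}_{\beta,*}(g_\bullet)} \leqslant C\big(\|P_{g_\bullet}h\|_{r^{-2}_\bullet C^\alpha_\beta} + \|h\|_{C^0_{\beta,*}}\big),
\]
and only then invokes the third inequality of Proposition~\ref{contrôles with perturbations triviales}, $\|P_g h - P_{g_\bullet} h\|_{r^{-2}_\bullet C^\alpha_\beta} \leqslant C\|g-g_\bullet\|_{C^{2,\alpha}_{\beta,*}}\|h\|_{C^{2,\alpha}_{\beta,*}}$, to absorb the difference for $\epsilon_*$ small. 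The ``obstacle'' you flag is thus not really present: the whole point of Proposition~\ref{contrôles with perturbations triviales} is that the perturbation is already controlled in the decoupling norm, so one never needs Proposition~\ref{Estimations elliptic P} at a metric other than $g_\bullet$ itself. Your $\tilde g$ device is a valid workaround but is unnecessary machinery.
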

\begin{proof}
    Let $g_\bullet$ be one of the metrics $g_o$, $g_{b_j}$ or $g^D$, and for all $k$, $H_k$ a traceless constant symmetric $2$-tensor on $\mathbb{R}^4\slash\Gamma_k$, and $X_k$ a Killing vector field on $\mathbb{R}^4\slash\Gamma_k$. Let moreover $h_*$ be a symmetric $2$-tensor of $C^{2,\alpha}_\beta(g_\bullet)$ and $X_*$ be a vector field of $r_\bullet C^{3,\alpha}_{\beta}(g_\bullet)$, and define $h = h_*+ \sum_k \chi_\bullet X_k$ and $X= X_*+\sum_k \chi_\bullet X_k$, where $\chi_\bullet$ is $\chi_{\mathcal{A}_k(t,\epsilon)}$ or $\chi_{B_{g_\bullet}(\epsilon)}$ (of Definition \ref{def cutoffs all}) depending on the metric.
    
    We then have the following controls:
    $$\|P_{g_\bullet}\chi_\bullet H_k\|_{r^{-2}_\bullet C^{\alpha}_\beta(g_\bullet)}\leqslant C |H_k|_{g_e},$$
    and
    $$\|\delta_{g_\bullet}\delta_{g_\bullet}^*\chi_\bullet X_k\|_{r^{-1}_\bullet C^{1,\alpha}_\beta(g_\bullet)}\leqslant C \|X_k\|_{r_e C^0_0(g_e)}.$$
    Hence, for $h_*$, we have
    $$\|P_{g_\bullet}h_*\|_{r^{-2}_\bullet C^{\alpha}_\beta}\leqslant C\big(\|P_{g_\bullet}h\|_{r^{-2}_\bullet C^{\alpha}_\beta(g_\bullet)} + \sum_k|H_k|_{g_e}\big),$$
    and the expected estimate for $g = g_\bullet$ is then a consequence of the elliptic estimates in the weighted spaces of Lemma \ref{Estimations elliptic P} which give 
    $$ \|h_*\|_{C^{2,\alpha}_{\beta}(g_\bullet)}\leqslant C\big(\|P_{g_\bullet}h_*\|_{r^{-2}_\bullet C^{\alpha}_\beta(g_\bullet)}+\|h_*\|_{C^0_{\beta}(g_\bullet)} \big),$$
    and imply therefore that
    $$\|h\|_{C^{2,\alpha}_{\beta,*}(g_\bullet)}\leqslant 2C^2 \big(\|P_gh\|_{r^{-2}_\bullet C^{\alpha}_\beta(g_\bullet)}+\|h\|_{C^0_{\beta,*}(g_\bullet)}\big).$$
    The same argument works for the operator $\delta\delta^*$ on the vector fields thanks to the elliptic estimates of Lemma \ref{Estimations elliptic delta delta}.

    Proposition \ref{contrôles with perturbations triviales} finally lets us go from the metric $g_\bullet$ to a metric $g$ satisfying $\|g-g_\bullet\|_{C^{2,\alpha}_{\beta,*}(g_\bullet)} \leqslant \epsilon_*$. 
\end{proof}

\section{Reduced divergence-free gauge}

When the Einstein orbifold which we approximate has nonpositive scalar curvature, we can always put our Einstein metrics in Bianchi gauge with respect to a naïve desingularization (see \cite[Lemme 8.2]{biq1} adapted to our norms). When the Ricci curvature of our Einstein manifolds is positive, this is not necessarily true, but we can still use the divergence-free gauge. This is the goal of this section whose main result is Proposition \ref{Mise en Reduced divergence-free}. To show this, we will use a Banach fixed point theorem approach which necessitates the study of the linearized equation:
$$ \delta\delta^*X = - \delta h, $$
where $X$ is a vector field, and $h$ a symmetric $2$-tensor.

In our degenerating situation, we want to obtain estimates in our weighted norms which are independent of the gluing scales. A difficulty is that our limit orbifold might have more symmetries than the Ricci-flat ALE spaces (for example, $\mathbb{S}^4\slash\mathbb{Z}_2$ desingularized by Eguchi-Hanson metrics). The associated Killing vector fields would give an approximate kernel for $\delta\delta^*$ which would not be an actual kernel or cokernel. We will need to define a reduced divergence-free gauge to obtain uniform estimates as the gluing scales go to zero.

\begin{rem}
    All along this section, if nothing is precised, an Einstein orbifold $(M_o,g_o)$ will be either compact or ALE.
\end{rem}

\subsection{Kernel of the linearization}\label{kernel of la linéarisation}
Let us focus on the operator $\delta\delta^*$  on a flat cone $(\mathbb{R}^4\slash\Gamma,g_e)$, on an orbifold $(M_o,g_o)$, and on Ricci-flat ALE orbifolds $(N_j,g_{b_j})$.

\paragraph{On a flat cone.}
On the flat cone $(\mathbb{R}^4\slash\Gamma,g_e)= (\mathbb{R}^+\times \mathbb{S}^3\slash\Gamma, dr^2+r^2 g_{\mathbb{S}^3\slash\Gamma})$, according to \cite[Section 2]{ct}, any $1$-form on $\mathbb{R}^4\slash\Gamma$ is a countable sum of $1$-forms of one of the following types which are preserved by $\delta\delta^*$:
\begin{enumerate}
    \item $p(r)\psi$, where $\delta_{\mathbb{S}^3\slash\Gamma}\psi = 0$, and $\psi$ is eigenvector of the Hodge Laplacian of $\mathbb{S}^3\slash\Gamma$,
    \item $r^{-1}l(r)\phi dr + u(r)r d_{\mathbb{S}^3\slash\Gamma} \phi$, and $\phi$ is eigenfunction of the Hodge Laplacian of $\mathbb{S}^3\slash\Gamma$,
\end{enumerate}
where $p,l,u: \mathbb{R}^+\to \mathbb{R}$ and $\phi: \mathbb{S}^3\slash\Gamma\to \mathbb{R}$ are functions, and where $\psi$ is a $1$-form on $\mathbb{S}^3\slash\Gamma$.

According to \cite[Section 4.1]{av}, thanks to the computation of the eigenvalues of the Laplacian and of the Hodge Laplacian on the $1$-forms of the sphere \cite[Theorem C]{fol}, the solutions to $\delta_e\delta_e^* \omega = 0$ are countable sums of $1$-forms of the following types
\begin{enumerate}
    \item $r^{a^\pm_j}\psi$ with $a^\pm_j:= \pm (1+j)$, $j\in \mathbb{N}^*$, where $\psi$ is an eigenvector the Hodge Laplacian,
    \item $r^{b^{\pm}_j}d_{\mathbb{S}^3\slash\Gamma}\phi \;+\; b^\pm_j r^{b^\pm_j-1}\phi dr$, or $2r^{b^{\pm}_j+2}d_{\mathbb{S}^3\slash\Gamma}\phi \;+\; b^\mp_j r^{b^\pm_j+1}\phi dr$, with $b_j^\pm = -1 \pm (1+j)$, $j\in \mathbb{N}$ and where $\phi$ is an eigenfunction the Hodge Laplacian.
\end{enumerate}

Since we are interested in solving an equation 
$$\delta\delta^*X = - \delta h, $$
where $X$ is a vector field, and $h$ a symmetric $2$-tensor is in $C^{2,\alpha}_{\beta,*}$, we are naturally looking for $X$ in $ r_DC^{3,\alpha}_{\beta,*}$. The \emph{exceptional values} of $\delta_e \delta_e^*$ are the values $\gamma\in \mathbb{R}$ such that there exists a homogeneous $1$-form whose norm is proportional to $r_e^\gamma$ in the kernel of $\delta_e \delta_e^*$. We are interested in the exceptional values around the exceptional value $1$ associated to the linear vector fields of the kernel of $\delta_e\delta^*_e$.

\begin{lem}\label{poids crit deltadelta}
    On $(\mathbb{R}^4\slash\Gamma)\backslash \{0\}$ for $\Gamma \neq \{e\}$, $1$ is the only exceptional value between $-3$ and $2$.
\end{lem}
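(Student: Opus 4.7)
The plan is to read off candidate exceptional values directly from the explicit list of radial-spherical solutions to $\delta_e\delta_e^*\omega=0$ given just before the statement, and then to eliminate every candidate in $(-3,2)\setminus\{1\}$ by using that $\Gamma\neq\{e\}$ acts freely on $S^3$.

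First I would compute, for each family, the $g_e$-norm exponent of a homogeneous kernel element as a function of $j$. Using $g_e=dr^2+r^2 g_S$, the norm of $r^a\psi$ (with $\psi$ a sphere $1$-form) is proportional to $r^{a-1}$, while the norms of the two Type~$2$ families are proportional respectively to $r^{b_j^\pm-1}$ and $r^{b_j^\pm+1}$. Running over all $j$ and both signs and keeping only the exponents lying strictly inside $(-3,2)$ narrows the possible exceptional values down to $\{-2,-1,0,1\}$, together with the precise $j$ and family producing each one.

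Next I would identify the angular factor required for each of these. The value $-2$ arises only from the second Type~$2$ family with $j=1$, whose angular factor $\phi$ is a degree-$1$ spherical harmonic on $S^3$; the value $0$ arises only from the first Type~$2$ family with $j=1$, again with $\phi$ a degree-$1$ spherical harmonic; the value $-1$ only appears at $j=0$ entries, where $\phi$ is forced to be constant and the formulas collapse to $0$, so this value is vacuous regardless of $\Gamma$; finally, the value $1$ has nontrivial $\Gamma$-invariant sources that persist on any quotient, for instance the second Type~$2$ entry at $j=0$ which yields $-2r\phi\,dr\propto r\,dr$, i.e.\ the $1$-form dual to the dilation $r\partial_r$, and also the Type~$1^+$ entry at $j=1$ when $S^3/\Gamma$ carries an invariant coclosed $1$-form.

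The only genuine content is then to eliminate the degree-$1$ candidates giving exponents $-2$ and $0$, and here I would use the hypothesis that $\Gamma$ acts freely on $S^3$: any nonzero $\Gamma$-invariant degree-$1$ spherical harmonic would be the restriction to $S^3$ of a nonzero linear form on $\mathbb{R}^4$ whose dual vector $v\in\mathbb{R}^4$ is fixed by every $\gamma\in\Gamma$, and then $v/|v|\in S^3$ would be a common fixed point of all of $\Gamma$, contradicting the freeness since $\Gamma\neq\{e\}$. Hence no such $\phi$ descends to $S^3/\Gamma$, and the exponents $-2$ and $0$ are not exceptional. The main risk in the write-up is combinatorial rather than analytic: one has to trace each exponent in $(-3,2)$ back to its unique $j$ and family within the classification so that no source is overlooked, after which the geometric obstruction to degree-$1$ invariants finishes the argument in one line.
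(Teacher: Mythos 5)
Your argument matches the paper's own proof step by step: list the candidate exponents $\{-2,-1,0,1\}$ arising in $(-3,2)$ from the explicit harmonic decomposition, observe that the $j=0$ sources of $-1$ degenerate (constant $\phi$ kills $d_S\phi$ and $b^+_0\phi\,dr$), and rule out $-2$ and $0$ because they both require a $\Gamma$-invariant degree-$1$ spherical harmonic on $S^3$, which a free action of $\Gamma\neq\{e\}$ forbids. Your bookkeeping of which family gives which exponent is in fact slightly more careful than the paper's prose (which momentarily swaps $0$ and $-2$ between the $b^\pm_j\pm1$ families before analyzing the correct cases), and your explicit fixed-point argument for the nonexistence of an invariant linear form fills in a one-line assertion the paper makes without proof; neither changes the substance, so this is the same proof.
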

\begin{proof}
According to the above discussion, the exceptional values are a priori of the form $ a^\pm_j-1 = -1\pm(1+j) $ for $j\in \mathbb{N}^*$, $b^\pm_j-1 = -2\pm(1+j)$ with $j\in \mathbb{N}$, or $ b^\pm_j+1=\pm(1+j) $ with $j\in \mathbb{N}$. Let us first note that $a^\pm_j-1\in (-3,2)$ for $j\in \mathbb{N}^*$ implies that $a^\pm_j-1 = 1$, and therefore that no other exceptional value between $-3$ and $2$ come from the first type of $1$-form. 

For $b^\pm_j+1$, the values $0$ and $-1$ are a priori possible, and for $b^\pm_j-1$, $-1$ and $-2$ are a priori possible. However, these values cannot appear on a flat cone $\mathbb{R}^4\slash\Gamma$ for $\Gamma\neq \{e\}$. Indeed, the values $b^\pm_j-1 =0$ and $b^\pm+1 = -2$ only appear if $-3$ is an eigenvalue of the Laplacian on the link of the cone, but this is not the case for $\mathbb{S}^3\slash\Gamma$ because there does not exist any non zero $\Gamma$-invariant linear function on $\mathbb{R}^4$. 

For the values $b^\pm_j-1=-1$ and $b^\pm_j+1=-1$, we use the form of the solutions. In the first case, $b^\pm_j=0$ gives
$$r^{b^{\pm}_j}d_{\mathbb{S}^3\slash\Gamma}\phi \;+\; b^\pm_j r^{b^\pm_j-1}\phi dr = d_{\mathbb{S}^3\slash\Gamma}\phi,$$
for $\Delta_{\mathbb{S}^3\slash\Gamma} \phi=0$, therefore $\phi$ is constant and finally $d_{\mathbb{S}^3\slash\Gamma}\phi =0$. In the second case, the equality $b^\pm_j=-2$, that is $b^\mp_j=0$, gives
$$2r^{b^{\pm}_j+2}d_{\mathbb{S}^3\slash\Gamma}\phi \;+\; b^\mp_j r^{b^\pm_j+1}\phi dr = d_{\mathbb{S}^3\slash\Gamma}\phi,$$
for $\Delta_{\mathbb{S}^3\slash\Gamma} \phi=0$, therefore $\phi$ is constant and finally $d_{\mathbb{S}^3\slash\Gamma}\phi =0$.
\end{proof}

The $1$-forms associated to the exceptional value $1$ are sum of $1$-forms of the three following types:
\begin{enumerate}
    \item $r^2\psi$, where $\psi$ is the dual of a Killing vector field of $\mathbb{S}^{3}$,
    \item $r dr$,
    \item $2r\phi d r + r^2d_{\mathbb{S}^3\slash\Gamma} \phi$.
\end{enumerate}

\paragraph{On an orbifold or an ALE space.}
Since there is no exceptional value other than $1$ in $(-3,2)$, we have the following result on an orbifold ALE. 

\begin{prop}\label{delta inj fredholm}\label{controle deltadelta sur les espaces modèles}
Let $(N_j,g_{b_j})$ be a Ricci-flat ALE orbifold. For $0<\beta<1$, the operator $$\delta_{g_{b_j}}\delta_{g_{b_j}}^*: r_{b_j}C^{3,\alpha}_{\beta,*}\to r_{b_j}^{-1}C^{1,\alpha}_\beta$$
is bijective.

Let $(M_o,g_o)$ be a compact Einstein orbifold. For $0<\beta<1$, the operator $$\delta_{g_o}\delta_{g_o}^*: r_oC^{3,\alpha}_{\beta,*}\to r_o^{-1}C^{1,\alpha}_\beta$$ is Fredholm and both its kernel and its cokernel are equal to $\mathbf{K}_o$, the set of Killing vector fields of $(M_o,g_o)$.

As a consequence, there exist $C_o>0$ and $\epsilon_o>0$ depending on $g_o$ 
    such that if $\|g-g_o\|_{C^{2,\alpha}_{\beta,*}(M_o)}\leqslant \epsilon_o$, then we have for any vector field $X \in \mathbf{K}_o^\perp$ on $M_o$
    $$\|X\|_{r_oC^{3,\alpha}_{\beta,*}(g_o)}\leqslant C_o\|\delta_g\delta_g^*X\|_{r_o^{-1}C^{1,\alpha}_{\beta}(g_o)}.$$
    
    There also exists $C_j>0$ and $\epsilon_{j}>0$ depending on $g_{b_j}$
    such that if $\|g-g_{b_j}\|_{C^{2,\alpha}_{\beta,*}(N_j)}\leqslant \epsilon_{j}$ then we have for any vector field $X$ on $N_j$,
    $$\|X\|_{r_{b_j}C^{3,\alpha}_{\beta,*}(g_{b_j})}\leqslant C_j\|\delta_g\delta_g^*X\|_{r_{b_j}^{-1}C^{1,\alpha}_{\beta}(g_{b_j})}.$$
\end{prop}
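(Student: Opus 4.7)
The plan is to proceed in three steps: first establish Fredholmness of $\delta\delta^*$ on each model geometry in the decoupled weighted spaces, then identify its kernel and cokernel, and finally transfer the resulting a priori estimate to nearby metrics via Proposition \ref{contrôles with perturbations triviales}.

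\textbf{Fredholmness.} I would combine the global weighted elliptic estimate of Proposition \ref{est ellipt with cstes} with the annular inverse estimate of Proposition \ref{inverse delta delta on annulus}, applied on flat annuli of fixed ratio shrinking towards each singular point (and, in the ALE case, towards infinity). The decisive input is Lemma \ref{poids crit deltadelta}: on a flat cone $\mathbb{R}^4/\Gamma$ with $\Gamma\neq\{e\}$, the unique exceptional value of $\delta_e\delta_e^*$ in $(-3,2)$ is $1$, and this value corresponds precisely to the linear vector fields which are explicitly quotiented out in the norm $\|\cdot\|_{rC^{3,\alpha}_{\beta,*}}$. A parametrix-plus-small-error construction then yields an estimate of the form
\[
\|X\|_{r_\bullet C^{3,\alpha}_{\beta,*}(g_\bullet)} \leqslant C\bigl(\|\delta_\bullet\delta_\bullet^* X\|_{r_\bullet^{-1}C^{1,\alpha}_\beta(g_\bullet)} + \|X\|_{L^2(K)}\bigr)
\]
for a fixed compact $K$, which by Rellich-Kondrachov yields a finite-dimensional kernel and a closed range; formal $L^2$-self-adjointness of $\delta\delta^*$ then implies that the cokernel is also finite-dimensional.

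\textbf{Kernel and cokernel.} For $X$ in the kernel, pairing against $\chi_R X$ for a radial cut-off $\chi_R$ at large scale and integrating by parts yields $\int \chi_R|\delta_\bullet^* X|^2 = -\int \langle \delta_\bullet^* X, d\chi_R\odot X^\flat\rangle$; the right-hand side goes to zero because the decoupling has removed the only critical-weight piece of $X$ in the range $(-3,2)$ (namely the linear vector field piece), so the remainder decays strictly faster than the critical rate. Hence $X$ is a Killing field of $g_\bullet$. On the compact orbifold this gives $\ker\delta_{g_o}\delta_{g_o}^* = \mathbf{K}_o$, and the same self-adjointness with respect to the $L^2$ pairing identifies the cokernel with $\mathbf{K}_o$ as well. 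On the Ricci-flat ALE orbifold $(N_j,g_{b_j})$, any Killing field must asymptote at infinity to a Killing field of the flat model $\mathbb{R}^4/\Gamma_\infty$; since $\Gamma_\infty\neq\{e\}$ acts freely on the sphere, these are all infinitesimal rotations with linear growth, whereas the space $r_{b_j}C^{3,\alpha}_{\beta,*}$ only admits decay strictly faster than linear at infinity. Consequently the kernel is trivial; the symmetric argument gives triviality of the cokernel, hence bijectivity.

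\textbf{Perturbation.} For a metric $g$ with $\|g - g_\bullet\|_{C^{2,\alpha}_{\beta,*}(g_\bullet)}\leqslant\epsilon$, Proposition \ref{contrôles with perturbations triviales} produces
\[
\|\delta_g\delta_g^* X - \delta_{g_\bullet}\delta_{g_\bullet}^* X\|_{r_\bullet^{-1}C^{1,\alpha}_\beta(g_\bullet)} \leqslant C\epsilon\,\|X\|_{r_\bullet C^{3,\alpha}_{\beta,*}(g_\bullet)}.
\]
Choosing $\epsilon_\bullet$ small enough absorbs this error term into the inverse estimate for $\delta_{g_\bullet}\delta_{g_\bullet}^*$, yielding the stated bound $\|X\|_{r_\bullet C^{3,\alpha}_{\beta,*}(g_\bullet)}\leqslant C_\bullet\|\delta_g\delta_g^* X\|_{r_\bullet^{-1}C^{1,\alpha}_\beta(g_\bullet)}$ (restricted to $\mathbf{K}_o^\perp$ in the orbifold case). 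The delicate part of the argument is the ALE kernel computation, where the slow decay at infinity makes the boundary integral in the integration by parts a priori divergent; I would resolve this by expanding $X$ at infinity in the basis of homogeneous $\delta_e\delta_e^*$-harmonic $1$-forms recalled before Lemma \ref{poids crit deltadelta} and invoking that lemma to conclude that every mode not subtracted by the decoupling has strictly subcritical growth, so the boundary terms actually vanish in the limit $R\to\infty$.
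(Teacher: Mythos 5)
The overall structure of your argument is sensible and invokes the right ambient machinery (the annulus estimate, the absence of exceptional values in $(-3,2)$ except $1$, formal self-adjointness), but there is a genuine gap in the ALE kernel computation. You argue: any Killing field of $(N_j,g_{b_j})$ asymptotes to a Killing field of the flat cone $\mathbb{R}^4/\Gamma_\infty$; these are rotations with linear growth; the norm excludes linear growth; hence the kernel is trivial. This chain of reasoning does not exclude the possibility of a Killing field that asymptotes to the \emph{zero} flat Killing field, i.e.\ one that decays at infinity (decay like $r_b^{1-\beta}$ is permitted by the space $r_{b_j}C^{3,\alpha}_{\beta,*}$). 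To close the argument one needs the Bochner-type step the paper uses: once $\delta_{g_b}^*X = 0$, Ricci-flatness gives $\bigl(\delta_b + \tfrac{1}{2}d\,\textup{tr}_b\bigr)\delta_{g_b}^*X = \nabla_b^*\nabla_b X = 0$; pairing with $X$ and integrating by parts yields $\nabla_b X = 0$, so $X$ is parallel; since $X$ decays at infinity, $X = 0$. Without this, "Killing fields are linear, hence excluded" leaves open precisely the case that has to be ruled out, and the kernel identification on the ALE side is not actually established.

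A smaller but related point concerns the boundary term at infinity. You observe correctly that at face value the integral $\int_{\{r_b=R\}}\delta_{g_b}^*X(n,X)$ need not converge, and you propose expanding in homogeneous $\delta_e\delta_e^*$-harmonic $1$-forms and invoking Lemma \ref{poids crit deltadelta}. That is the right tool, but "every mode $\ldots$ has strictly subcritical growth" is not quite the conclusion you need: decay $O(r_b^{1-\beta})$ alone gives a boundary term of order $R^{4-2\beta}$, which is unbounded. What the lemma actually buys, once the $r^1$ mode is excluded and $\delta\delta^*X = 0$ near infinity, is that the next exceptional value is $-3$, so $X = O(r_b^{-3})$ and $\nabla X = O(r_b^{-4})$, at which point the boundary integral is $O(R^{-4})\to 0$. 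The paper is explicit about this jump (it first works in $r_b^{1-\beta}C^{3,\alpha}_0$ and immediately upgrades to $X = O(r_b^{-3})$ at infinity and $X = O(r_b)$ at singular points before integrating by parts); you should make this quantitative rather than qualitative. Aside from these two issues, your Fredholmness route via a parametrix plus a compact $L^2(K)$ remainder is a legitimate alternative to the paper's approach (which instead Fredholm-izes in the unstarred weighted spaces $r C^{3,\alpha}_{-\beta}$ and $r^{1-\beta}C^{3,\alpha}_0$ first and then characterizes the image of $r C^{3,\alpha}_{\beta,*}$ among solutions), and the final perturbation step via Proposition \ref{contrôles with perturbations triviales} matches the paper's.
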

\begin{proof}
    For orbifold singularities, we will first authorize our tensors to behave like $r^{1-\beta}$ for $0<\beta<1$ at the singularities, instead of being in $rC^{3,\alpha}_{\beta,*}$ to use the theory of elliptic operators in weighted Hölder spaces, see for instance \cite[Chapter 2]{pr} of \cite{lm}. Let us start by considering an Einstein orbifold $(M_o,g_o)$ and the operator $\delta_{g_o}\delta_{g_o}^*: r_oC^{3,\alpha}_{-\beta}\to r_o^{-1}C^{1,\alpha}_{-\beta}$ (notice the $-\beta$). Its kernel is composed of Killing vector fields of $g_o$. Indeed, if for $X\in r_oC^{3,\alpha}_{-\beta}$ we have  $\delta_{g_o}\delta_{g_o}^* X=0$, integrating by parts yields,
    \begin{align*}
        0=&\;\int_{M_o} \langle\delta_{g_o}\delta_{g_o}^*X,X\rangle dv_o\\
        &=\int_{M_o} |\delta_{g_o}^*X|^2_{g_o}dv_o + \lim_{r\to 0}\int_{\{r_o = r\}} \delta^*_{g_o}X(n,X)\\
        &=\int_{M_o} |\delta_{g_o}^*X|^2_{g_o}dv_o,
    \end{align*}
    where $n=\frac{\nabla r_o}{|\nabla r_o|}$, because the boundary term which is schematically $\lim_{r\to 0}(\mathcal{O}(r^{-\beta+1-\beta+3}))$ vanishes. Similarly, its cokernel is equal to the kernel of $\delta_{g_o}\delta_{g_o}^*$ on $r_o^{-3}C^{1,\alpha}_{\beta}$ which is also reduced to $\mathbf{K}_o$ because there is no exceptional value between $-3$ and $1$. 
    
    On an ALE orbifold $(N,g_b)$, let us assume that a vector field $X\in r_b^{1-\beta}C^{3,\alpha}_{0}$ satisfies $\delta_{g_b}\delta_{g_b}^*X = 0$. Since there is no exceptional value between $-3$ and $1$, we actually have $X = \mathcal{O}(r_b)$ when $r_b\to 0$ at the singular points of $(N,g_b)$, and $X = \mathcal{O}(r_b^{-3})$ at infinity. Let us then consider the following integration by parts,
    \begin{align*}
        0=&\;\int_N \langle\delta_{g_b}\delta_{g_b}^*X,X\rangle dv_b\\
        &=\int_N |\delta_{g_b}^*X|^2_{g_b}dv_b - \lim_{\rho\to \infty}\int_{\{r_b = \rho\}} \delta^*_{g_b}X(n,X)+ \lim_{r\to 0}\int_{\{r_b = r\}} \delta^*_{g_b}X(n,X)\\
        &=\int_N |\delta_{g_b}^*X|^2_{g_b}dv_b,
    \end{align*}
    where the boundary term vanishes because it is the sum of the limit for $r_b\to \infty$ of a $\mathcal{O}(|X|_{g_b}|\nabla X|_{g_b}r_b^3) = \mathcal{O}(r_b^{-4})$ and of the limit when $r_b\to 0$ of a $\mathcal{O}(|X|_{g_b}|\nabla X|_{g_b}r_b^3) = \mathcal{O}(r_b^{4})$.    Hence, we have $\delta_{g_b}^*X = 0$, and since $g_b$ is Ricci-flat, $\big(\delta_b+\frac{1}{2}d \textup{tr}_b\big)\delta_{g_b}^*X = \nabla_b^*\nabla_b X =0$, which implies that $\nabla_b X=0$ by integration by parts against $X$, and finally, that $X$ is parallel on $N$. Since $X$ tends to $0$ at infinity, we have $X=0$. The operator $\delta_{g_b}\delta_{g_b}^*: r_b^{1-\beta}C^{3,\alpha}_0\to r_b^{-1-\beta}C^{1,\alpha}_0$ is therefore injective. 
    
    The cokernel of the self adjoint operator $\delta_{g_b}\delta_{g_b}^*: r_b^{1-\beta}C^{3,\alpha}_0\to r_b^{-1-\beta}C^{1,\alpha}_0$ is equal to the kernel of $\delta_{g_b}\delta_{g_b}^*$ on $r_b^{-3+\beta}C^{1,\alpha}_{0}$ which is also reduced to $\{0\}$ because there is no exceptional values between $-3+\beta$ and $1-\beta$. The operator $\delta_{g_b}\delta_{g_b}^*: r_b^{1-\beta}C^{3,\alpha}_0\to r_b^{-1-\beta}C^{1,\alpha}_0$ is therefore bijective.
    \\
    
    Let us finally work in the norms we are interested in and study the operators $\delta_{g_o}\delta_{g_o}^*: r_oC^{3,\alpha}_{\beta,*}\to r_o^{-1}C^{1,\alpha}_\beta$ and $\delta_{g_b}\delta_{g_b}^*: r_bC^{3,\alpha}_{\beta,*}\to r_b^{-1}C^{1,\alpha}_\beta$.
    Since the spaces $r_bC^{3,\alpha}_{\beta,*}$ and $r_oC^{3,\alpha}_{\beta,*}$ are respectively only the direct sum of $r_bC^{3,\alpha}_{\beta}$ and $r_oC^{3,\alpha}_{\beta}$ with a space of finite dimension composed of cut-off of linear vector fields, the image remains closed and of finite codimension. We can be more precise by noticing that
    $$\delta_{g_o}\delta_{g_o}^* \big(r_oC^{3,\alpha}_{\beta,*}\big) = \delta_{g_o}\delta_{g_o}^* \big(r_oC^{3,\alpha}_{-\beta}\big)\cap r_o^{-1}C^{1,\alpha}_\beta.$$
    Indeed, we have $\delta_{g_o}\delta_{g_o}^* \big(r_oC^{3,\alpha}_{\beta,*}\big) \subset \delta_{g_o}\delta_{g_o}^* \big(r_oC^{3,\alpha}_{-\beta}\big)\cap r_o^{-1}C^{1,\alpha}_\beta$ because $r_oC^{3,\alpha}_{\beta,*}\subset r_oC^{3,\alpha}_{-\beta}$ and thanks to Proposition \ref{contrôles with perturbations triviales}. Conversely, if for $X\in r_oC^{3,\alpha}_{-\beta}$ we have $\delta_{g_o}\delta_{g_o}^*X\in r_o^{-1}C^{1,\alpha}_\beta$, then, since the only exceptional value between $1-\beta$ and $1+\beta$ is $1$ and corresponds to the linear kernel of $\delta_e\delta_e^*$, we have $X\in r_oC^{3,\alpha}_{\beta,*}$.
    Similarly, we conclude that
    $$\delta_{g_b}\delta_{g_b}^* \big(r_bC^{3,\alpha}_{\beta,*}\big) = \delta_{g_b}\delta_{g_b}^* \big(r_b^{1-\beta}C^{3,\alpha}_{0}\big)\cap r_b^{-1}C^{1,\alpha}_\beta,$$
    and finally, 
    $\delta_{g_o}\delta_{g_o}^*: r_oC^{3,\alpha}_{\beta,*}\to r_o^{-1}C^{1,\alpha}_\beta$ is Fredholm with $\mathbf{K}_o$ as kernel and cokernel, and $\delta_{g_b}\delta_{g_b}^*: r_bC^{3,\alpha}_{\beta,*}\to r_b^{-1}C^{1,\alpha}_\beta$ is bijective. We finally conclude by the open mapping theorem between Banach spaces which is stable by small perturbation of the operator.
\end{proof}

\subsection{Controls on the inverse of the linearization}

These controls will help us treat the case of trees of singularities with small enough gluing parameters.

For this, we approximate the kernel $\mathbf{K}_o$ on our naïve desingularization $(M,g^D)$ in the following way. Note that $\mathbf{K}_o=0$ for an ALE Ricci-flat orbifold $(M_o,g_o)$ by Proposition \ref{controle deltadelta sur les espaces modèles}. For all $\mathbf{X}_o\in \mathbf{K}_o$, according to Remark \ref{unicité decompositions}, on an orbifold, there exists a unique decomposition
$$\mathbf{X}_o = \mathbf{X}_{o,*} + \sum_k\chi_{B(p_k,\epsilon_0)}\mathbf{X}_{o,k},$$
such that $\|\mathbf{X}_o\|_{r_oC^{3,\alpha}_{\beta,*}} = \|\mathbf{X}_{o,*}\|_{r_oC^{3,\alpha}_\beta} + \sum_k\|\mathbf{X}_{o,k}\|_{r_eC^{0}_0}$ (other decompositions make the value infinite). We then define $\tilde{\mathbf{K}}_o$ as the space of the following vector fields on $M$
$$\tilde{\mathbf{X}}_{o,t}:= \chi_{M_o^{t}}\mathbf{X}_{o,*} + \sum_k\chi_{\mathcal{A}_k(t,\epsilon_ 0)}\mathbf{X}_{o,k},$$ for $\mathbf{X}_o\in \mathbf{K}_o$. Note that we therefore have $\tilde{\mathbf{X}}_{o,t} = \mathbf{X}_o$ on $M_o^{16t}$.

\begin{rem}\label{eq L2 Ck Ko}
    By elliptic regularity on $M_o$, the norms $L^2(g_o)$, $r_DC^{3,\alpha}_{\beta,*}(g_o)$ and $r_D^{-1}C^{1,\alpha}_\beta(g_o)$ are equivalent on the finite-dimensional space $\mathbf{K}_o$. Since the $C^{4}_0$-norms of the cut-off functions are bounded, we conclude that for $\epsilon$ and $t_{\max}$ small enough, the norms $L^2(g^D)$, $r_DC^{3,\alpha}_{\beta,*}(g^D)$ and $r_D^{-1}C^{1,\alpha}_\beta(g^D)$ are equivalent on $\tilde{\mathbf{K}}_o$.
\end{rem}

\begin{defn}[Reduced divergence-free gauge]
    We define the \emph{reduced divergence} operator, $\tilde{\delta}_g:=\pi_{\tilde{\mathbf{K}}_o^\perp} \delta_g$, where $\pi_{\tilde{\mathbf{K}}_o^\perp}$ is the  $L^2(g^D)$-orthogonal projection on $\tilde{\mathbf{K}}_o^\perp$. We will say that a metric $g_1$ is in \emph{reduced divergence-free} gauge with respect to a metric $g_2$ if $\tilde{\delta}_{g_2}g_1 = 0$.
\end{defn}
 Let us start by noticing that the operator $\tilde{\delta}_{g^D}$ is actually very close to $\delta_{g^D}$ for a naïve desingularization $g^D$ with small enough gluing parameters.

\begin{lem}
    There exists $C>0$ such that for any symmetric $2$-tensor $h\in C^{2,\alpha}_{\beta,*}(g^D)$, we have,
    \begin{equation}
        \big\|(\tilde{\delta}_{g^D}-\delta_{g^D}) h \big\|_{r_D^{-1}C^{1,\alpha}_{\beta}(g^D)}\leqslant C t_{\max} \|h\|_{C^{2,\alpha}_{\beta,*}(g^D)}.\label{controle delta tilde delta}
    \end{equation}
\end{lem}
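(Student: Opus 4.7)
The approach is to rewrite $\tilde\delta_{g^D}-\delta_{g^D}$ as minus the $L^2(g^D)$-orthogonal projection onto the finite-dimensional space $\tilde{\mathbf{K}}_o$ and bound that projection via integration by parts, using that each $\tilde{\mathbf{X}}\in\tilde{\mathbf{K}}_o$ is exactly Killing for $g^D$ outside a small neck region of $g^D$-volume $O(t_{\max})$.

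Since $\tilde\delta_{g^D}=\pi_{\tilde{\mathbf{K}}_o^\perp}\delta_{g^D}$ by definition, we have $(\tilde\delta_{g^D}-\delta_{g^D})h=-\pi_{\tilde{\mathbf{K}}_o}(\delta_{g^D}h)$. Fix a basis $(\mathbf{X}_i)_i$ of $\mathbf{K}_o$ that is $L^2(g_o)$-orthonormal, with associated basis $(\tilde{\mathbf{X}}_i)_i$ of $\tilde{\mathbf{K}}_o$; by construction, each $\tilde{\mathbf{X}}_i$ satisfies $\|\tilde{\mathbf{X}}_i\|_{r_D^{-1}C^{1,\alpha}_\beta(g^D)}\leqslant C$ uniformly in $t$, and the $L^2(g^D)$-Gram matrix of $(\tilde{\mathbf{X}}_i)_i$ converges to the identity as $t_{\max}\to 0$, hence has a uniformly bounded inverse. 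It therefore suffices to estimate, for each $i$,
\[
\bigl|\langle\delta_{g^D}h,\tilde{\mathbf{X}}_i\rangle_{L^2(g^D)}\bigr| = \bigl|\langle h,\delta_{g^D}^*\tilde{\mathbf{X}}_i\rangle_{L^2(g^D)}\bigr|,
\]
the identity coming from integration by parts on the compact manifold $M$.

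Because $\tilde{\mathbf{X}}_i=\mathbf{X}_i$ is Killing for $g_o=g^D$ on $M_o^{16t}$, the tensor $\delta_{g^D}^*\tilde{\mathbf{X}}_i$ is supported in $M\setminus M_o^{16t}$, whose $g^D$-volume is $O\bigl(\sum_j T_j^2\bigr)=O(t_{\max})$ since $T_j\leqslant t_{\max}\leqslant 1$. Splitting $\tilde{\mathbf{X}}_i$ through the cutoffs in its definition, the term $\delta_{g^D}^*\bigl(\chi_{\mathcal{A}_k(\epsilon_0)}\mathbf{X}_{o,k}\bigr)$ becomes $\chi\cdot\tfrac{1}{2}\mathcal{L}_{\mathbf{X}_{o,k}}(g^D-g_e)+(d\chi\otimes\mathbf{X}_{o,k})_{\mathrm{sym}}$; here we exploit that $\mathbf{X}_{o,k}$ is Killing for $g_e$ (and for every constant rescaling $T_kg_e$), so the Lie-derivative term is controlled by \eqref{controle gD-ge}, while the $d\chi$ term is handled via $|d\chi|_{g^D}\lesssim r_D^{-1}$. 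The companion term $\delta_{g^D}^*\bigl(\chi_{M_o^{16t}}\mathbf{X}_{o,*}\bigr)$ is controlled using $|\mathbf{X}_{o,*}|_{g^D}\lesssim r_D^{1+\beta}$, which absorbs the cutoff derivatives. Combining the resulting bound $\|\delta_{g^D}^*\tilde{\mathbf{X}}_i\|_{L^1(g^D)}\leqslant Ct_{\max}$ with $|h|_{g^D}\leqslant C\|h\|_{C^{2,\alpha}_{\beta,*}(g^D)}$ yields the claim.

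The main obstacle is the $L^1$-bound $\|\delta_{g^D}^*\tilde{\mathbf{X}}_i\|_{L^1(g^D)}\leqslant Ct_{\max}$: on the orbifold side of each neck, $|\delta_{g^D}^*\tilde{\mathbf{X}}_i|_{g^D}$ is pointwise bounded, but on the bubble-side transition collar of $\chi_{\mathcal{A}_k(\epsilon_0)}$ it can grow like $T_k^{-1/2}$; only the smallness of the $g^D$-volume of its support, together with the Killing property of $\mathbf{X}_{o,k}$ for every rescaling $T_kg_e$, tames the integral down to $O(t_{\max})$.
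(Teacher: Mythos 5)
Your proposal is correct and follows essentially the same strategy as the paper: interpret $\tilde\delta_{g^D}-\delta_{g^D}$ as minus the $L^2(g^D)$-projection onto $\tilde{\mathbf K}_o$, integrate by parts to reach $\langle h,\delta^*_{g^D}\tilde{\mathbf X}_i\rangle_{L^2(g^D)}$, use that $\tilde{\mathbf X}_i$ agrees with a genuine Killing field on $M_o^{16t}$ to confine the support of $\delta^*_{g^D}\tilde{\mathbf X}_i$ to the neck regions of total $g^D$-volume $O(t_{\max})$, and pair the resulting $L^1$-bound with the $L^\infty$-control of $h$. The only minor imprecisions are harmless: the pointwise bound on $\delta^*_{g^D}\tilde{\mathbf X}_i$ is in fact uniformly $O(1)$ (since $|d\chi|_{g^D}\lesssim r_D^{-1}$ and $|\mathbf X_{o,k}|_{g^D}\lesssim r_D$ already cancel, so the claimed $T_k^{-1/2}$ growth never materializes), and the $g^D$-volume of the excised region near $p_k$ is of order $t_k$ rather than $T_k^2$, but both refinements still yield $O(t_{\max})$.
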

\begin{proof}
    If $(M_o,g_o)$ is ALE, then, one has $\mathbf{K}_o = \{0\}$ and therefore $ \tilde{\delta}_{g^D}=\delta_{g^D} $. Let us focus on the case when $M_o$ is compact.
    
    Thanks to the equivalence of the different norms, see Remark \ref{eq L2 Ck Ko}, it is enough to show that the $L^2(g^D)$-projection on $\tilde{\mathbf{K}}_o$ of $\delta_{g^D} h$ is small to show the result. We naturally proceed by integration by parts. Let $\tilde{\mathbf{X}}_{o,t}\in \tilde{\mathbf{K}}_o$ for $\mathbf{X}_o\in \mathbf{K}_o$ be an approximate Killing vector field as above. We have,
    \begin{align*}
        \Big|\int_M(\delta_{g^D}h)\big(\tilde{\mathbf{X}}_{o,t}\big) dv_{g^D}\Big| =&\; \Big|\int_M\langle h, \delta_{g^D}^*(\tilde{\mathbf{X}}_{o,t}) \rangle_{g^D} dv_{g^D}\Big|,
    \end{align*}
    and,
    \begin{align*}
        \delta_{g^D}^*(\tilde{\mathbf{X}}_{o,t}) =&\; \delta^*_{g_o}\mathbf{X}_o +\delta^*_{g_o}((\chi_{M_o^t}-1)\mathbf{X}_{o,*}) + \delta^*_{g_o}\Big((\chi_{\mathcal{A}_k(t,\epsilon_0)}- \chi_{B_o(p_k,\epsilon_0)})\sum_k\mathbf{X}_{o,k}\Big)\\
        &+ \big(\delta_{g_o}^*-\delta_{g^D}^*\big)\Big(\chi_{\mathcal{A}_k(t,\epsilon_0)} \sum_k\mathbf{X}_{o,k}\Big),
    \end{align*}
    where by definition $\delta^*_{g_o}\mathbf{X}_o =0$.
    Thanks to Definition \ref{def cutoffs all}, on $M_o^t,$ $\chi_{M^{t}_o}\mathbf{X}_{o,*}$ is equal to $\mathbf{X}_{o,*}$ except on the annuli of radii $t_k^\frac{1}{4}$ and $2t_k^\frac{1}{4}$, and $\chi_{\mathcal{A}_k(t,\epsilon_0)}- \chi_{B_o(p_k,\epsilon_0)}$ is supported in $\epsilon_0^{-1}\sqrt{t}_k<r_D<2\epsilon_0^{-1}\sqrt{t}_k$ and well-defined on $M$. For any $l\in \mathbb{N}$, the cut off functions are moreover uniformly bounded in $C^l_0$. If we denote $\mathbb{1}_A$ the indicator function of $A$,
    for the vector fields, we therefore have
    $$r_D^{l}\Big(|\nabla^l\mathbf{X}_{o,*}|_{g_o}+\Big|\nabla^l\sum_k\mathbf{X}_{o,k}\Big|_{g_o}\Big)\leqslant C_l \|\mathbf{X}_o\|_{r_oC^l_{\beta,*}} r_D,$$
    and
    $$r_D^{l}\big|\nabla^l\big(g^D-g_o\big)\big|_{g_o}\leqslant C_l \mathbb{1}_{\{r_D<t_k^\frac{1}{4}\}} \big(r_D^2+t_k^2r_D^{-4}\big).$$
    
    As a consequence, because of the properties of the norms detailed in Remark \ref{comportement norme}, we have
    $$|\delta^*_{g_o}((\chi_{M_o^t}-1)\mathbf{X}_{o,*})|_{g_o}
    \leqslant \; C\mathbb{1}_{\{t_k^\frac{1}{4}<r_D<2t_k^\frac{1}{4}\}}\|\mathbf{X}_o\|_{r_oC^1_0(g_o)},$$
    on the annulus of radii $\epsilon_0^{-1}\sqrt{t}_k<r_D<2\epsilon_0^{-1}\sqrt{t}_k$, we have
    $$\Big|\delta^*_{g_o}\Big(\big(\chi_{\mathcal{A}_k(t,\epsilon_0)}- \chi_{B_o(p_k,\epsilon_0)}\big)\sum_k\mathbf{X}_{o,k}\Big)\Big|\leqslant C\mathbb{1}_{\{\epsilon_0^{-1}\sqrt{t}_k<r_D<2\epsilon_0^{-1}\sqrt{t}_k\}}\|\mathbf{X}_o\|_{r_oC^1_0(g_o)},$$
    and
    $$ \Big|\big(\delta_{g_o}^*-\delta_{g^D}^*\big)\Big(\chi_{\mathcal{A}_k(t,\epsilon_0)} \sum_k\mathbf{X}_{o,k}\Big)\Big|\leqslant C \mathbb{1}_{\{\epsilon_0^{-1}\sqrt{t}_k<r_D<t_k^\frac{1}{4}\}}\big(r_D^2+t_k^2r_D^{-4}\big)\|\mathbf{X}_o\|_{r_oC^1_0(g_o)}.$$
    Finally, since we have $\vol\big(A_e\big(t_k^\frac{1}{4},2t_k^\frac{1}{4}\big)\big)\approx t_{k},$ $\vol\big(A_e\big(\epsilon_0^{-1}\sqrt{t_k},2\epsilon_0^{-1}\sqrt{t_k}\big)\big)\approx t_k^2$, and also $\int_{\{\epsilon_0^{-1}\sqrt{t_k}<r_D<t_k^\frac{1}{4}\}}\big(r_D^2+t_k^2r_D^{-4}\big) dv_{g^D}\approx t_k^{\frac{3}{2}}+t_k^2|\log t_k|$, we have
    $$\Big|\int_M\langle \delta_{g^D}h, \tilde{\mathbf{X}}_{o,t} \rangle_{g^D} dv_{g^D}\Big|\leqslant C t_{\max} \|h\|_{C^0_0(g^D)}\|\mathbf{X}_o\|_{r_oC^1_0(g_o)}.$$
    Finally, let us denote $\tilde{\mathbf{Y}}_{o,t} = \pi_{\mathbf{K}_o}\delta_{g^D}h$, we have $\tilde{\delta}_{g^D}h = \pi_{\tilde{\mathbf{K}}_o^\perp}\delta_{g^D}h= \delta_{g^D}h - \tilde{\mathbf{Y}}_{o,t}$ with $$\|\tilde{\mathbf{Y}}_{o,t}\|_{r_D^{-1}C^{1,\alpha}_\beta(g^D)}\leqslant C t_{\max} \|h\|_{C^0_0(g^D)},$$
    by the equivalence of the norms of Remark \ref{eq L2 Ck Ko}.
\end{proof}

\begin{lem}\label{Mise en Reduced divergence-free linear}
    Let $0<\beta<1$, $0<\alpha<1$ and $(M,g^D)$ a naïve desingularization of a compact or ALE Einstein orbifold by a tree of singularities. Then, there exists $\tau_D >0$ and $\epsilon_D>0$ and $C_D>0$, only depending on $\beta$ and the constants of Proposition \ref{controle deltadelta sur les espaces modèles}, such that for $t_{\max}<\tau_D$, and any metric $g$ satisfying $\|g-g^D_{t}\|_{C^{2,\alpha}_{\beta,*}(g^D)}\leqslant \epsilon_D$, the operator $$\tilde{\delta}_g\delta_g^*: \tilde{\mathbf{K}}_o^\perp\cap r_DC^{3,\alpha}_{\beta,*}(g^D) \to \tilde{\mathbf{K}}_o^\perp\cap r^{-1}_DC^{1,\alpha}_{\beta}(g^D)$$ is invertible and we have for any vector field $X\perp \tilde{\mathbf{K}}_o$ on $M$,
    $$\|X\|_{r_DC^{3,\alpha}_{\beta,*}(g^D)}\leqslant C_D\|\tilde{\delta}_g\delta_g^*X\|_{r^{-1}_DC^{1,\alpha}_{\beta}(g^D)}.$$
\end{lem}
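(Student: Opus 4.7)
The plan is to prove the uniform estimate by contradiction and a blow-up argument, and then deduce invertibility from Fredholm stability and injectivity.

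First, I would combine the available tools into a single useful elliptic estimate. By Proposition \ref{est ellipt with cstes}, for $\epsilon_D$ small enough, any $X \perp \tilde{\mathbf{K}}_o$ satisfies
$$\|X\|_{r_D C^{3,\alpha}_{\beta,*}(g^D)} \leqslant C\bigl(\|\delta_g\delta_g^*X\|_{r_D^{-1}C^{1,\alpha}_\beta(g^D)} + \|X\|_{r_D C^0_{\beta,*}(g^D)}\bigr).$$
Writing $\delta_g = \tilde{\delta}_{g^D} + (\delta_{g^D}-\tilde{\delta}_{g^D}) + (\delta_g - \delta_{g^D})$, estimate \eqref{controle delta tilde delta} together with Proposition \ref{contrôles with perturbations triviales} applied to $h = \delta_g^*X$ bound the difference $\|(\delta_g-\tilde{\delta}_g)\delta_g^*X\|_{r_D^{-1}C^{1,\alpha}_\beta}$ by $C(t_{\max} + \epsilon_D)\|X\|_{r_D C^{3,\alpha}_{\beta,*}}$. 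For $\tau_D$ and $\epsilon_D$ small enough, this term can be absorbed into the left-hand side, leaving the reduced inequality
$$\|X\|_{r_D C^{3,\alpha}_{\beta,*}(g^D)} \leqslant C\bigl(\|\tilde{\delta}_g\delta_g^*X\|_{r_D^{-1}C^{1,\alpha}_\beta(g^D)} + \|X\|_{r_D C^0_{\beta,*}(g^D)}\bigr).$$
It then suffices to prove that the $C^0_{\beta,*}$-term is itself controlled by the right-hand side.

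I would do this by contradiction: assume sequences of desingularizations $(M_n, g^{D_n})$ with $t_{\max,n} \to 0$, metrics $g_n$ with $\|g_n - g^{D_n}\|_{C^{2,\alpha}_{\beta,*}} \to 0$, and vector fields $X_n \perp \tilde{\mathbf{K}}_o$ with $\|X_n\|_{r_D C^{3,\alpha}_{\beta,*}} = 1$ but $\|\tilde{\delta}_{g_n}\delta_{g_n}^* X_n\|_{r_D^{-1}C^{1,\alpha}_\beta} \to 0$. The reduced estimate forces $\|X_n\|_{r_D C^0_{\beta,*}} \geqslant c > 0$, so by definition of the $*$-norm one can choose decompositions $X_n = X_{n,*} + \sum_k \chi_{\mathcal{A}_k(\epsilon)} X_{n,k}$ and, up to extraction, points $x_n$ realizing the norm. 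Depending on the concentration behaviour of $x_n$, I would rescale and extract a limit living on one of three model spaces: the orbifold $(M_o, g_o)$, one of the ALE bubbles $(N_j, g_{b_j})$, or a flat cone $(\mathbb{R}^4/\Gamma_k, g_e)$.

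On each model, the limit $X_\infty$ satisfies $\delta\delta^* X_\infty = 0$ in the corresponding weighted space with $\|X_\infty\| > 0$. Proposition \ref{delta inj fredholm} handles the orbifold and ALE cases: on $N_j$ the operator is injective on $r_{b_j}C^{3,\alpha}_{\beta,*}$ so $X_\infty = 0$, while on $M_o$ we get $X_\infty \in \mathbf{K}_o$ — and the crucial point is that the orthogonality $X_n \perp \tilde{\mathbf{K}}_o$ passes to $X_\infty \perp \mathbf{K}_o$ in the limit since $\tilde{\mathbf{X}}_{o,t} \to \mathbf{X}_o$ on every compact of $M_o \setminus \{p_k\}$ and the tails on $\mathcal{A}_k$ contribute only an $O(t_{\max})$ integral (same kind of estimate as in the proof of \eqref{controle delta tilde delta}). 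On a flat cone, Lemma \ref{poids crit deltadelta} says the only exceptional value in $(-3,2)$ is $1$, and the $*$-norm precisely factors out the linear kernel of $\delta_e\delta_e^*$, so again $X_\infty = 0$. Each case yields a contradiction, establishing the desired estimate.

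The main obstacle I anticipate is the blow-up analysis in the transition annuli: one needs to choose the right rescaling so that the limiting operator is $\delta_e \delta_e^*$ on a (possibly half-infinite) flat cone and so that the $*$-decomposition of $X_n$ passes coherently to the limit. The sharp annular estimate of Proposition \ref{inverse delta delta on annulus}, combined with the equivalence of norms in \eqref{Estimation rD ro rbj}, is exactly what ensures the limit procedure does not lose information on the linear part and that the remainder $X_{n,*}$ has a well-defined limit. Once the uniform injectivity is established, Fredholm stability under the small perturbation from $g^D$ to $g$ (using Proposition \ref{delta inj fredholm} on the pieces and the fact that $\tilde{\delta}_{g^D}\delta_{g^D}^* - \delta_{g^D}\delta_{g^D}^* = O(t_{\max})$) yields that $\tilde{\delta}_g \delta_g^* : \tilde{\mathbf{K}}_o^\perp \cap r_D C^{3,\alpha}_{\beta,*} \to \tilde{\mathbf{K}}_o^\perp \cap r_D^{-1} C^{1,\alpha}_\beta$ is Fredholm of index zero and injective, hence invertible, with the uniform bound $C_D$.
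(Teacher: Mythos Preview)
Your contradiction/blow-up strategy is a legitimate alternative to the paper's argument, but the two routes are genuinely different and it is worth seeing where they diverge.

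The paper does \emph{not} argue by contradiction. It proves the estimate directly: on each annulus $\mathcal{A}_k$ it invokes Proposition~\ref{inverse delta delta on annulus} to pick the linear part $X_k$ and obtain the pointwise bound \eqref{controle annulus}; it then \emph{extends} $X$ to a vector field $X_o$ on $M_o$ by cutting off $X_*$ and replacing $\chi_{\mathcal{A}_k}X_k$ by $\chi_{B_o(p_k,\epsilon)}X_k$, estimates $\|\delta_{g_o}\delta_{g_o}^*X_o\|$ against $\|\delta_{g^D}\delta_{g^D}^*X\|$ plus controlled errors, applies Proposition~\ref{controle deltadelta sur les espaces modèles} on $M_o$, and then \emph{iterates through the tree}: it subtracts the already-controlled $X_k$'s, extends the remainder to $N_j$, and repeats. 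The orthogonality $X\perp\tilde{\mathbf{K}}_o$ enters by ensuring that $\|\pi_{\mathbf{K}_o^\perp}X_o\|/\|X_o\|\to 1$ as $t_{\max}\to 0$, so that the model-space estimate applies with constant $2C_o$. Surjectivity is a one-liner: $\tilde{\delta}_{g^D}\delta_{g^D}^*$ is self-adjoint on $\tilde{\mathbf{K}}_o^\perp$ (compact manifold), so injective $\Rightarrow$ surjective.

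Your approach packages the same ingredients into a compactness argument, which is cleaner to state but hides the same work. The point to be careful about is that the $\|\cdot\|_{r_DC^0_{\beta,*}}$-norm is an \emph{infimum} over decompositions, so it is not ``achieved at a point'': the contribution of the linear parts $\|X_{n,k}\|_{r_eC^0_0}$ has no location and cannot be captured by a pointwise blow-up on a flat cone. Concretely, if some $\|X_{n,k}\|\geqslant c>0$ along the sequence, your cone limit will not see it; you must instead extend $X_n$ to $M_o$ (or to the adjacent $N_j$) exactly as the paper does, control the cutoff error via Proposition~\ref{inverse delta delta on annulus}, and then invoke Proposition~\ref{controle deltadelta sur les espaces modèles} on that fixed model space to force $X_{n,k}\to 0$. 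So in practice your ``limit on $M_o$ or $N_j$'' cases already contain the paper's extension machinery, and the flat-cone case only handles the residual situation where all $X_{n,k}\to 0$ and $X_{n,*}$ concentrates strictly in a neck. What your framing buys is a uniform statement without tracking constants through the tree; what the paper's buys is an explicit iteration that makes the dependence on the tree structure and on $t_{\max}$ transparent, which is reused verbatim in the proof of Proposition~\ref{inversion with cste}.
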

\begin{proof}
    Let $0<\epsilon<\epsilon_D^{\frac{1}{2-\beta}}<\epsilon_0$ for $\epsilon_D$ and $\epsilon$ which we will choose small enough along the proof, and assume that $t_{\max}< \epsilon^4$. Therefore, by construction, on each annulus $\mathcal{A}_k:=\mathcal{A}_k(t,\epsilon)$ between $N_k$ and $N_j$ or $N_k$ and $M_o$ (in which case, we will fix $T_o=1$), we have a diffeomorphism $$\Phi_k: A_e\big(\epsilon^{-1}\sqrt{T_j}\sqrt{t_k},\epsilon\sqrt{T_j}\big)\subset \mathbb{R}^4\slash\Gamma_k\to \mathcal{A}_k(t,\epsilon)\subset M,$$
    such that for all $0<\beta<1$, there exists $C>0$, for which we have
    \begin{equation}
        \big\|\Phi^*_kg^D-g_e\big\|_{C^{2,\alpha}_\beta(A_e(\epsilon^{-1}\sqrt{T_j}\sqrt{t_k},\epsilon\sqrt{T_j}))}\leqslant C \epsilon^{2-\beta}<C\epsilon_D.\label{controle coord annulus gD}
    \end{equation}
   Until the end of the proof, we will use the notation
    $$A_k:= A_e(\epsilon^{-1}\sqrt{T_j}\sqrt{t_k},\epsilon\sqrt{T_j}).$$
    
    According to the estimate \eqref{controle delta tilde delta}, for $t_{\max}$ small enough, it is enough to have
    $$\|X\|_{r_DC^{3,\alpha}_{\beta,*}(g^D)}\leqslant \frac{C_D}{2}\|\delta_{g^D}\delta_{g^D}^*X\|_{r^{-1}_DC^{1,\alpha}_{\beta}(g^D)}$$
    to obtain the stated result.
    
    The diffeomorphisms $\Phi_k: A_k\to \mathcal{A}_k$ allow us to pull the situation back on $\mathbb{R}^4$, where the ratio of the annuli $A_k$ is $\epsilon^2t_k^{-1/2}$ which is arbitrarily large for $t_{\max}$ arbitrarily small. According to the estimate \eqref{estimation inverse annulus X} of Proposition \ref{inverse delta delta on annulus} and thanks to the controls of Proposition \ref{contrôles with perturbations triviales}, for $t_{\max}$ and $\epsilon_D$ small enough, then, there exist linear vector fields $X_k$ of the kernel of $\delta_e\delta^*_e$ such that the vector fields $\chi_{\mathcal{A}_k}X_k$ in the annuli $\mathcal{A}_k$ satisfy
    \begin{align}
        \|\Phi_k^*X-X_k\|_{r_eC^{3,\alpha}_0(A(T_j^{1/2}t_k^{1/4}))} \leqslant& \; C_e T_j^{\frac{1}{2}}t_k^{\frac{1}{4}}\Big(\epsilon^{-\beta}t_k^{\frac{\beta}{4}}\big\|\delta_{g_e}\delta_{g_e}^*\Phi_k^*X\big\|_{r^{-1}_eC^{1,\alpha}_{\beta}(A_k)} \nonumber\\
        &+ 2 \epsilon^{-1}t_k^{\frac{1}{4}} \|\Phi_k^*X-X_k\|_{r_eC^{3,\alpha}_{\beta}(A_k)} \Big)\nonumber  \\
        \leqslant& \; 2 C_eT_j^{\frac{1}{2}}t_k^{\frac{1}{4}}\Big(\epsilon^{-\beta}t_k^{\frac{\beta}{4}}\big\|\big(\delta_{g^D}\delta_{g^D}^*X\big)_{|\mathcal{A}_k}\big\|_{r^{-1}_DC^{1,\alpha}_{\beta}(g^D)}\nonumber\\
        &+ 2 \epsilon^{-1}t_k^{\frac{1}{4}} \|(X-\chi_{\mathcal{A}_k}X_k)_{|\mathcal{A}_k}\|_{r_DC^{3,\alpha}_\beta(g^D)}\Big)\label{controle annulus} 
    \end{align}
     on $ A(\sqrt{T_j}t_k^{1/4}):= A_e((1/2)\sqrt{T_j}t_k^{1/4},4\sqrt{T_j}t_k^{1/4})$. Let us then consider the decomposition 
    \begin{equation}
        X = X_*+\sum_k\chi_{\mathcal{A}_k}X_k\label{decomposition chp vect}
    \end{equation}
    with the above linear vector fields $X_k$ in the kernel of $\delta_e\delta_e^*$ for the rest of the proof.
    
    The objective is now to show that there exists a constant $C_D>0$ such that $$\|X_*\|_{r_DC^{3,\alpha}_{\beta}(g^D)}+ \sum_k \|X_k\|_{r_eC^{0}_0(g_e)}\leqslant C_D \|\delta_{g^D}\delta_{g^D}^*X\|_{r^{-1}_DC^{1,\alpha}_{\beta}(g^D)}.$$
    In order to do this, we will reduce our situation to $M_o$ and to the $N_j$ where such controls have been shown in Proposition \ref{controle deltadelta sur les espaces modèles}. 
    \\
    
    On $M_o^{t/16}$, $g^D-g_o$ is supported in $M_o^{t/16}\backslash M_o^{16t}$, that is where $\frac{1}{2} t_k^{1/4}\leqslant r_D< 2 t_k^{1/4}$ on each annulus $\mathcal{A}_k$, and for all $l\in \mathbb{N}$, there exists $C_l>0$ such that in these regions, we have
    \begin{equation}
        t_k^{\frac{l}{4}}|\nabla^l (g^D-g_o)|_{g_o}\leqslant C_l t_k^\frac{1}{2}.\label{gD go recollement}
    \end{equation}
    
    Consider the cut-off function $\chi_{M_o^{t/16}}$ of Definition \ref{def cutoffs all} supported in $M_o^{t/16}$ such that $\chi_{M_o^{t/16}}\equiv 1$ on $M_o^{t}$ and such that for all $l\in \mathbb{N}$, there exists $C_l>0$ for which in each $\mathcal{A}_k$, 
    \begin{equation}
        t_k^\frac{l}{4}|\nabla^l\chi_{M_o^{t/16}}|_{g^D}\leqslant C_l. \label{est chiMo}
    \end{equation}
    We then define a vector field $X_o$ on $M_o$ by
    $$X_o:= \chi_{M_o^{t/16}}X_* + \sum_{k\in K_o} \chi_{B_o(p_k,\epsilon)}X_k,$$
    where $K_o$ is the set of $k$ such that the annulus $\mathcal{A}_k$ has a nonempty intersection with $M_o^t$. 
    
    By construction, $X_*=X_{o,*}$ on $M_o^t$ and we therefore have the following obvious control. Denoting $X_{o,*}:=  \chi_{M_o^{t/16}}X_*$, we have
    \begin{equation}
        \|X_{o,*}\|_{r_oC^{3,\alpha}_{\beta}(g_o)}\geqslant \|X_{o,*}\|_{r_oC^{0}_{\beta}(g_o)}\geqslant \|(X_*)_{|M_o^t}\|_{r_DC^{0}_{\beta}(g^D)}.\label{contrôle Xo*}
    \end{equation}
    
    On $M_o^{\frac{t}{16}}$, we have,
    \begin{align}
        \delta_{g^D}\delta_{g^D}^*X= \delta_{g_o}\delta_{g_o}^*X_o+\delta_{g_o}\delta_{g_o}^*(X-X_o)+\big(\delta_{g^D}\delta_{g^D}^*-\delta_{g_o}\delta_{g_o}^*\big)X.\label{decomposition deltadeltagD X}
    \end{align}
    Since the cut off functions are bounded in norm $C^{3,\alpha}_0(g^D)$ and $C^{3,\alpha}_0(g_o)$ by \eqref{est chiMo}, and since their derivatives are supported in $M_o^{t/16}\backslash M_o^t$, we have the following bound on the last two terms of \eqref{decomposition deltadeltagD X}: for $C>0$ depending on the cut off function, we have
    \begin{itemize}
        \item $\|\delta_{g_o}\delta_{g_o}^*(X-X_o)\|_{r^{-1}_DC^{1,\alpha}_{\beta}(g^D)}\leqslant C\big\|(X_*)_{|M_o^{t/16}\backslash M_o^t}\big\|_{r_DC^{3,\alpha}_{\beta}(g^D)} $ since the difference between $X$ and $X_o$ on $M_o^{16t}$ only comes from the cut-off on $X_*$, and
        \item $\big\|\big(\big(\delta_{g^D}\delta_{g^D}^*-\delta_{g_o}\delta_{g_o}^*\big)X\big)_{|M_o^{\frac{t}{16}}}\big\|_{r^{-1}_DC^{1,\alpha}_{\beta}(g^D)}\leqslant C\sum_{k\in K_o} t_k^\frac{1}{2} \|X\|_{r_DC^{3,\alpha}_{\beta,*}} $ thanks to \eqref{gD go recollement}.
    \end{itemize}
    Consequently, by \eqref{decomposition deltadeltagD X}, and using \eqref{controle annulus}, for $C>0$ depending on the above constants, we have
    \begin{align}
        \big\|\big(\delta_{g^D}\delta_{g^D}^*X\big)_{|M_o^{t/16}}\big\|_{r^{-1}_DC^{1,\alpha}_{\beta}(g^D)}\geqslant &\; \|\delta_{g_o}\delta_{g_o}^*X_o\|_{r^{-1}_oC^{1,\alpha}_{\beta}(g_o)}\nonumber\\
        &- C\big\|(X_*)_{|M_o^{t/16}\backslash M_o^t}\big\|_{r_DC^{3,\alpha}_{\beta}(g^D)}\nonumber\\
        &-C\sum_{k\in K_o} t_k^\frac{1}{2} \|X\|_{r_DC^{3,\alpha}_{\beta,*}}\nonumber\\
        \geqslant &\; \|\delta_{g_o}\delta_{g_o}^*X_o\|_{r^{-1}_oC^{1,\alpha}_{\beta}(g_o)}\nonumber\\
        &- 2C\Big( \big\|\big(\delta_{g^D}\delta_{g^D}^*X\big)_{|\mathcal{A}_k}\big\|_{r^{-1}_DC^{1,\alpha}_{\beta}(g^D)}\nonumber\\
        &+\sum_{k\in K_o} \epsilon^{\beta-1}t_k^{\frac{1-\beta}{4}}\|(X-\chi_{\mathcal{A}_k(t,\epsilon)}X_k)_{\mathcal{A}_k}\|_{r_DC^{3,\alpha}_\beta(g^D)}\Big)\nonumber
        \\
        &-C\sum_{k\in K_o} t_k^\frac{1}{2} \|X\|_{r_DC^{3,\alpha}_{\beta,*}}.\label{contrôle deltadelta X Mo}
    \end{align}
    
    Now, when $t_{\max}\to 0$, we have $$\frac{\|\pi_{\mathbf{K}_o^\perp}X_o\|_{r_oC^{3,\alpha}_{\beta,*}(g_o)}}{\|X_o\|_{r_oC^{3,\alpha}_{\beta,*}(g_o)}} \to 0$$
    because $X\perp \tilde{\mathbf{K}}_o$. Proposition \ref{controle deltadelta sur les espaces modèles} therefore yields, for $t_{\max}$ small enough,
    \begin{equation}
        \|X_o\|_{r_oC^{3,\alpha}_{\beta,*}(g_o)}\leqslant 2C_o\|\delta_{g_o}\delta_{g_o}^*X_o\|_{r^{-1}_oC^{1,\alpha}_{\beta}(g_o)}.\label{contr Xo}
    \end{equation}
    Therefore, thanks to \eqref{contrôle Xo*} and \eqref{contrôle deltadelta X Mo}, for $t_{\max}$ small enough, and denoting by $C'>0$ a constant that may change from line to line but only depending on the previous ones of this proof, and therefore only on $g_o$ and the $g_{b_j}$ and $\gamma(t_{\max}):= \sum_{k}t_{\max}^\frac{1}{2} + t_{\max}^\frac{1-\beta}{4} $, we have
    \begin{align}
        \|(X_*)_{|M_o^t}\|_{r_DC^{0}_{\beta}(g^D)} + \sum_{k\in K_o}& \|X_k\|_{r_eC^0_0(g_e)} -C' \gamma(t_{\max})\|X\|_{r_DC^{0}_{\beta,*}(g^D)}\nonumber \\
        \leqslant & \|X_o\|_{r_oC^{3,\alpha}_{\beta,*}(g_o)}- C'\gamma(t_{\max})\|X\|_{r_DC^{0}_{\beta,*}(g^D)} \nonumber\\
        \leqslant& 2C_o\|\delta_{g_o}\delta_{g_o}^*X_o\|_{r^{-1}_oC^{1,\alpha}_{\beta}(g_o)}- C'\gamma(t_{\max})\|X\|_{r_DC^{0}_{\beta,*}(g^D)} \nonumber\\
        \leqslant& C'\big\|\delta_{g^D}\delta_{g^D}^*X\big\|_{r^{-1}_DC^{1,\alpha}_{\beta}(g^D)},\label{control csts}
    \end{align}
    where we successively used \eqref{contrôle Xo*}, \eqref{contr Xo} and \eqref{contrôle deltadelta X Mo}.
    Indeed, on an orbifold $(M_o,g_o)$, the vector fields $X_k$ of the decomposition \eqref{decomposition chp vect} reaching the infimum of the definition of the norm $\|.\|_{r_oC^{3,\alpha}_{\beta,*}}$ are determined by the limit of $r_o^{-1}X_o$ at each singular point according to Remark \ref{unicité decompositions}. Here, the infimum is therefore reached with the $X_k$ of the decomposition \eqref{decomposition chp vect}.
    \\
    
    We next consider the vector field $X_1:= X- \sum_{k\in K_o} \chi_{\mathcal{A}_k}X_k$ which satisfies for a constant $C>0$,
    \begin{align}
        \| \delta_{g^D}\delta_{g^D}^*X_1 \|_{r_D^{-1}C^{1,\alpha}_\beta(g^D)}\leqslant&\; C\big( \| \delta_{g^D}\delta_{g^D}^*X \|_{r_D^{-1}C^{1,\alpha}_\beta(g^D)}+\gamma(t_{\max})\|X\|_{r_DC^{0}_{\beta,*}(g^D)}\big)\label{est X1}
    \end{align}
    thanks to the control \eqref{control csts} of $\sum_{k\in K_o} \|X_k\|_{r_eC^0_0(g_e)}$.
    
    Given $j\in K_o$, the Ricci-flat ALE orbifold $(N_j,g_{b_j})$ is glued to $M_o$ and we can extend the vector field $X_1 = X_{*} + \sum_{k \notin K_o} \chi_{\mathcal{A}_l}X_l$ to $N_j$ by 
    $$X_{j}:= \chi_{N_j^{t/16}}X_* + \sum_{l\in K_j} \chi_{B_j(p_l,\epsilon)}X_l,$$
    where $K_j$ is the set of $k\neq j$ such that $\mathcal{A}_k$ has a nonempty intersection with $N_j^t$. 
    
    \begin{rem}
        By considering $X_1$ instead of $X$, we do not have a linear vector field of the kernel of $\delta_e\delta^*_e$ to extend at at infinity of $N_j$. The vector field $X_j$ is therefore well controlled in $r_{b_j}C^{3,\alpha}_{\beta,*}(g_{b_j})$.
    \end{rem}
    
    The difference $\frac{g^D}{T_j}-g_{b_j}$ is supported in $N_j^{t/16}\backslash N_j^{16t}$ and there exists for all $l\in \mathbb{N}$, $C_l>0$ such that we have the following controls. Around the singular points where $$\frac{1}{2}\sqrt{T_j}t_k^{\frac{1}{4}}<r_D = \sqrt{T_j}r_{b_j}<2\sqrt{T_j}t_k^{\frac{1}{4}},$$ 
    we have
    \begin{equation}
        t_k^{\frac{l}{4}}\Big|\nabla^l \Big(\frac{g^D}{T_j}-g_{b_j}\Big)\Big|_{g_{b_j}}\leqslant C_l t_k^\frac{1}{2}.\label{gDgbj tk}
    \end{equation}
    and at infinity, where $$\frac{1}{2}\sqrt{T_j}t_j^{-\frac{1}{4}}<r_D=\sqrt{T_j}r_{b_j}<2\sqrt{T_j}t_j^{-\frac{1}{4}}$$ we have
    \begin{equation}
        t_j^{-\frac{l}{4}}\Big|\nabla^l \Big(\frac{g^D}{T_j}-g_{b_j}\Big)\Big|_{g_{b_j}}\leqslant C_l t_j^\frac{1}{2}.\label{gDgbj tj}
    \end{equation}
    
    Denoting $X_{j,*}:= \chi_{N_j^{t/16}}X_*$, we have
    \begin{equation}
        \|X_{j,*}\|_{r_{b_j}C^{3,\alpha}_{\beta}(g_{b_j})}\geqslant \|X_{j,*}\|_{r_{b_j}C^{0}_{\beta}(g_{b_j})}\geqslant \|(X_*)_{N^t_j}\|_{r_DC^{0}_{\beta}(g^D)},\label{contrôle Xj*}
    \end{equation}
    and thanks to \eqref{controle annulus} and the inequalities \eqref{gDgbj tk} and \eqref{gDgbj tj}, we have
    \begin{align*}
        \delta_{g^D}\delta_{g^D}^*X_1:= \delta_{g_{b_j}}\delta_{g_{b_j}}^*X_j+\delta_{g_{b_j}}\delta_{g_{b_j}}^*(X_1-X_j)+\big(\delta_{g^D}\delta_{g^D}^*-\delta_{g_{b_j}}\delta_{g_{b_j}}^*\big)X_1,
    \end{align*}
    analogously to \eqref{contrôle deltadelta X Mo}, we find for $C>0$ depending on the above constants such that
    \begin{align}
        \big\|\big(\delta_{g^D}\delta_{g^D}^*X_1\big)_{|N_j^{t/16}}\big\|_{r^{-1}_DC^{1,\alpha}_{\beta}(g^D)}\geqslant &\; \|\delta_{g_{b_j}}\delta_{g_{b_j}}^*X_{j}\|_{r^{-1}_{b_j}C^{1,\alpha}_{\beta}(g_{b_j})}\nonumber\\
        &- C\Big( \|\big(\delta_{g^D}\delta_{g^D}^*X_1\big)_{|\mathcal{A}_k}\|_{r^{-1}_DC^{1,\alpha}_{\beta}(g^D)}\nonumber\\
        &+ \sum_{k\in K_j} \epsilon^{\beta-1}t_k^{\frac{1-\beta}{4}}\|(X_*)_{\mathcal{A}_k}\|_{r_DC^{3,\alpha}_\beta(g^D)} \Big)\nonumber\\
        &-C t_k^\frac{1}{2} \|X_1\|_{r_DC^{3,\alpha}_{\beta,*}}- C t_j^\frac{1}{2}\|X_1\|_{r_DC^{3,\alpha}_{\beta,*}},\label{contrôle deltadelta Xo}
    \end{align}
    where we remark that $\|X_1\|_{r_DC^{3,\alpha}_{\beta,*}}\leqslant \|X_*\|_{r_DC^{3,\alpha}_{\beta}} + \sum_{k\in K_j} \|X_k\|_{r_eC^0_0(g_e)}$.
    
    Proposition \ref{controle deltadelta sur les espaces modèles} then yields
    $$ \|X_j\|_{r_jC^{3,\alpha}_{\beta,*}(g_{b_j})}\leqslant C_j\|\delta_{g_{b_j}}\delta_{g_{b_j}}^*X_j\|_{r^{-1}_jC^{1,\alpha}_{\beta}(g_{b_j})}, $$
    and thanks to the control \eqref{est X1}, we then have for a constant $C>0$ only depending on the constants of Propositions \ref{controle deltadelta sur les espaces modèles} and  \ref{inverse delta delta on annulus}, the control
    \begin{align}
        \|(X_*)_{|N_j^t}\|_{r_DC^{0}_{\beta}(g^D)} + \sum_{k\in K_j} \|X_k\|_{r_eC^0_0(g_e)} - C\gamma&(t_{\max})\|X\|_{r_DC^0_{\beta,*}(g^D)}\nonumber\\
        \leqslant &\; C\big\|\delta_{g^D}\delta_{g^D}^*X\big\|_{r^{-1}_DC^{1,\alpha}_{\beta}(g^D)},\label{control csts j}
    \end{align}
    similar to \eqref{control csts}.
    
    Iterating this to the other Ricci-flat ALE orbifolds of the tree of singularities, we get controls similar to \eqref{control csts j} on all the $N_j$ which, with \eqref{control csts} on $M_o$, give the following control on the whole manifold only depending on $g_o$ and the $g_{b_j}$ 
    \begin{align*}
        \|X_*\|_{r_DC^{0}_{\beta}(g^D)} + \sum_{k} &\|X_k\|_{r_eC^0_0(g_e)} - C\gamma(t_{\max}) \|X\|_{r_DC^{0}_{\beta,*}(g^D)}\\
        &\leqslant  C\big\|\delta_{g^D}\delta_{g^D}^*X\big\|_{r^{-1}_DC^{1,\alpha}_{\beta}(g^D)}
    \end{align*}
    and for $t_{\max}$ small enough. Together with the elliptic estimates of Proposition \ref{est ellipt with cstes}, this shows the stated result for $g=g^D$ because $M = M_0^t\cup \bigcup_jN_j^t$.
    \\
    
    To get the estimate for another metric $g$ close to $g^D$, we just use Proposition \ref{contrôles with perturbations triviales} to ensure that for $\|g-g^D\|_{C^{2,\alpha}_{\beta,*}(g^D)}$ arbitrarily small, $\tilde{\delta}_g\delta^*_g$ is arbitrarily close to $\delta_{g^D}\delta^*_{g^D}$ for the operator norm on $r_DC^{3,\alpha}_{\beta,*}(g^D)$.
    
    Finally, notice that $(\tilde{\delta}_{g^D})^* = \delta^*_{g^D} \pi_{\tilde{\mathbf{K}}_o^\perp}$, and therefore that $\tilde{\delta}_{g^D}\delta^*_{g^D}$ is self adjoint on $\tilde{\mathbf{K}}_o^\perp$. Its injectivity implies its surjectivity by integration by parts on the compact manifold $(M,g^D)$.  
\end{proof}

We can finally prove the main result of the section by fixed point theorem.

\begin{prop}\label{Mise en Reduced divergence-free}
    Let $0<\beta<\frac{1}{2}$, and $(M,g^D)=(M,g^D_{t})$ be a naïve desingularization of a compact Einstein orbifold, $(M_o,g_o)$. Then, there exist $\epsilon_D,\tau_D,C_D>0$ which only depends on the constants of Lemma \ref{Contrôle P espace modèles} such that for $t_{\max}\leqslant\tau_D$ and for any metric $g$ satisfying $\|g-g^D\|_{C^{2,\alpha}_{\beta,*}(g^D)}\leqslant \epsilon_D$, there exists a unique vector field $X\perp \tilde{\mathbf{K}}_o$ on $M$ for which, 
    $$\tilde{\delta}_{g^D}(\exp_X^*g)=0,$$
    where $\exp_X$ is the diffeomorphism $\exp_X: x\in M \mapsto \exp_x^{g^D}(X(x))$.
    
    We moreover have, $\|X\|_{r_DC^{3,\alpha}_{\beta,*}(g^D)}\leqslant C_D \|\tilde{\delta}_{g^D}(g-g^D)\|_{r^{-1}_DC^{1,\alpha}_{\beta}(g^D)}$, and therefore, there exists $\eta: \mathbb{R}^+\to \mathbb{R}^+$ with $\lim_0\eta=0$ such that we have 
    $$ \|\exp_X^*g-g\|_{C^{2,\alpha}_{\beta,*}(g^D)}\leqslant \eta\big(\|g-g^D\|_{C^{2,\alpha}_{\beta,*}(g^D)}\big).$$
\end{prop}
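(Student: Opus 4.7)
The approach is a Banach fixed-point argument whose linearization is exactly the operator inverted in Lemma \ref{Mise en Reduced divergence-free linear}. Writing $h := g - g^D$ and using the first-order Taylor expansion of the pullback by the $g^D$-exponential map,
$$\exp_X^* g = g + 2\delta^*_{g^D} X + Q(X,h),$$
where $Q$ collects all terms that are at least quadratic in $X$ as well as cross terms involving $h$ (coming from the difference between $\delta^*_g$ and $\delta^*_{g^D}$ and from the quadratic Taylor remainder of $\exp^*_X$). Setting $L := \tilde{\delta}_{g^D}\delta^*_{g^D}$, the gauge equation $\tilde{\delta}_{g^D}(\exp_X^* g) = 0$ becomes
$$2 L X = -\tilde{\delta}_{g^D} h - \tilde{\delta}_{g^D} Q(X,h).$$
Both sides lie in $\tilde{\mathbf{K}}_o^\perp$ by definition of $\tilde{\delta}_{g^D}$, and Lemma \ref{Mise en Reduced divergence-free linear} provides, for $t_{\max} \leq \tau_D$ and $\|h\|_{C^{2,\alpha}_{\beta,*}(g^D)} \leq \epsilon_D$ small, a bounded inverse $L^{-1} : \tilde{\mathbf{K}}_o^\perp \cap r_D^{-1}C^{1,\alpha}_{\beta}(g^D) \to \tilde{\mathbf{K}}_o^\perp \cap r_D C^{3,\alpha}_{\beta,*}(g^D)$ of norm at most $C_D$. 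The problem is therefore equivalent to finding a fixed point of
$$T(X) := -\tfrac{1}{2}\, L^{-1}\bigl(\tilde{\delta}_{g^D} h + \tilde{\delta}_{g^D} Q(X,h)\bigr).$$

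The two estimates needed are, first, the linear bound $\|\tilde{\delta}_{g^D} h\|_{r_D^{-1}C^{1,\alpha}_\beta(g^D)} \leq C \|h\|_{C^{2,\alpha}_{\beta,*}(g^D)}$, which follows from Proposition \ref{contrôles with perturbations triviales} together with the boundedness of $\pi_{\tilde{\mathbf{K}}_o^\perp}$; and second, the quadratic bounds
$$\|Q(X,h)\|_{C^{2,\alpha}_{\beta,*}(g^D)} \leq C\bigl(\|X\|^2_{r_D C^{3,\alpha}_{\beta,*}(g^D)} + \|X\|_{r_D C^{3,\alpha}_{\beta,*}(g^D)}\|h\|_{C^{2,\alpha}_{\beta,*}(g^D)}\bigr),$$
and a matching Lipschitz control $\|Q(X_1,h) - Q(X_2,h)\| \leq C(\|X_1\| + \|X_2\| + \|h\|)\|X_1 - X_2\|$. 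These follow by writing $\exp_X^* g$ in $g^D$-normal coordinates, expanding to second order, and applying the tensor-product and differentiation properties of the weighted decoupling norms listed in Remark \ref{comportement norme}; each $\nabla X$ appearing in $Q$ contributes an extra power of $r_D^{-1}$, exactly compensated by one power of $r_D$ in the weight of $X \in r_D C^{3,\alpha}_{\beta,*}$.

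Combining these with $\|L^{-1}\| \leq C_D$, one checks that for $\epsilon_D,\tau_D$ small enough, $T$ preserves the closed ball
$$\mathcal{B}_r := \bigl\{X \in \tilde{\mathbf{K}}_o^\perp \cap r_D C^{3,\alpha}_{\beta,*}(g^D) \;:\; \|X\|_{r_D C^{3,\alpha}_{\beta,*}(g^D)} \leq r\bigr\}$$
for $r := 2C_D \|\tilde{\delta}_{g^D} h\|_{r_D^{-1}C^{1,\alpha}_\beta(g^D)}$, and is a contraction there with ratio at most $1/2$. Banach's theorem yields a unique $X \in \tilde{\mathbf{K}}_o^\perp$ solving the gauge equation, together with the stated bound $\|X\|_{r_D C^{3,\alpha}_{\beta,*}(g^D)} \leq C_D \|\tilde{\delta}_{g^D}(g-g^D)\|_{r_D^{-1}C^{1,\alpha}_\beta(g^D)}$. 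The final estimate on $\exp_X^* g - g$ is a direct consequence: writing $\exp_X^* g - g = 2\delta^*_{g^D} X + Q(X,h) - h + h$ and bounding each piece in $C^{2,\alpha}_{\beta,*}(g^D)$ gives $\|\exp_X^* g - g\|_{C^{2,\alpha}_{\beta,*}(g^D)} \leq \delta(\|h\|_{C^{2,\alpha}_{\beta,*}(g^D)})$ with $\delta(\eta) = C(\eta + \eta^2) \to 0$ as $\eta \to 0$. The main delicate point is the quadratic estimate on $Q$: one has to verify that the Christoffel symbols of $g^D$ and the metric $g$ appearing in the coordinate expression of $\exp_X^* g$ are controlled in the decoupling norms uniformly in the gluing scales, so that the constants in the contraction argument do not blow up with $t_{\max}$; this is exactly the content of Proposition \ref{contrôles with perturbations triviales}, which is the reason the $C^{k,\alpha}_{\beta,*}$-norms were designed as they were.
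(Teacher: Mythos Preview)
Your proof is correct and follows essentially the same approach as the paper: both set up a fixed-point argument whose linearization is $\tilde{\delta}_{g^D}\delta^*_{g^D}$, invoke Lemma~\ref{Mise en Reduced divergence-free linear} for the uniform bound on the inverse, control the quadratic remainder via the multiplicative properties of the decoupling norms (Remark~\ref{comportement norme} and Proposition~\ref{contrôles with perturbations triviales}), and conclude by the quantitative inverse function theorem (Lemma~\ref{fonctions inverses}). The only cosmetic difference is that the paper rewrites the equation as $F_g(X):=\tilde{\delta}_{(\exp_X)_*g^D}g=0$ using the identity $\phi_*(\delta_{g^D}\phi^*g)=\delta_{\phi_*g^D}g$, so that the diffeomorphism acts on the smooth metric $g^D$ rather than on $g\in C^{2,\alpha}$; this sidesteps any worry about differentiability of $X\mapsto \exp_X^*g$ as a map between Banach spaces, but your direct expansion works as well since $X\in r_DC^{3,\alpha}_{\beta,*}$.
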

\begin{proof}
    Let us fix $g$ a metric on $M$, such that $\|g-g^D\|_{C^{2,\alpha}_{\beta,*}}\leqslant \epsilon$ for $\epsilon>0$ which we will choose small enough along the proof and define the operator $F_g: r_DC^{3,\alpha}_{\beta,*}(g^D)\to r^{-1}_DC^{1,\alpha}_{\beta}(g^D)$ which to a vector field $X$ associates $$F_g(X):=\tilde{\delta}_{(\exp_{g^D}X)_*g^D}g.$$
    
    The objective is therefore to find $X$ such that $F_g(X)=0$, which will imply that $\tilde{\delta}_{g^D}(\exp_{g^D}X)^*g = 0$ because for any diffeomorphism $\phi$, $\phi_*(\delta_{g^D}\phi^* g) = \delta_{\phi_*g^D}g$ (by applying $\phi_*$ to $g^D$ which is $C^\infty$, we do not loose regularity). The map $g\mapsto F_g$ is linear, and the linearization of the operator $F_{g^D}$ around zero is $\tilde{\delta}_{g^D}\delta^*_{g^D}$ which is invertible between the orthogonals of $\tilde{\mathbf{K}}_o$ according to Lemma \ref{Mise en Reduced divergence-free linear}. 
    
    There remains to control the nonlinear terms in our norms. Let us denote them $Q = F_{g^D}-\tilde{\delta}_{g^D}\delta^*_{g^D}$. We schematically have that
    $Q(X)$ is a converging sum of terms of the form $(\nabla_{g^D}^2X*X)*X*...*X$, $(\nabla_{g^D}X*\nabla_{g^D}X)*X*...*X$, $(\Rm(g^D)* X*X)*...*X $ which are at least quadratic in $X,$ where $*$ denotes various multilinear operations or contractions. 
    
    Using \eqref{behavior norm quadratic} of Remark \ref{comportement norme}, since on a compact manifold $\|r_D\|_{C^{3,\alpha}_0}$ is bounded, there exists $C>0$ such that 
    \begin{equation}
        \|.\|_{r_D^{-1}C^{1,\alpha}_\beta(g^D)} \leqslant C\|.\|_{C^{1,\alpha}_\beta(g^D)},\label{control rD-1 r0}
    \end{equation}
    and for any vector field $X$, assuming $\beta<\frac{1}{2}$, we have
    \begin{equation}
        \|\nabla_{g^D} X\|_{r_D^{-1/2}C^{2,\alpha}_0(g^D)}\leqslant\|\nabla_{g^D} X\|_{r_D^{-1/2}C^{2,\alpha}_\beta(g^D)} \leqslant \| X\|_{r_D^{1/2}C^{3,\alpha}_\beta(g^D)} \leqslant C\|X\|_{r_DC^{3,\alpha}_{\beta,*}(g^D)}.\label{control rD-1/2 r0}
    \end{equation}
    We therefore find
    \begin{align*}
        \|Q(X)- &Q(X')\|_{r_D^{-1}C^{1,\alpha}_\beta(g^D)}\\
        \leqslant&\; C\Big(\|X-X'\|_{r_DC^{1,\alpha}_{0}(g^D)}\big(\|\nabla^2 X\|_{r^{-1}_DC^{1,\alpha}_\beta(g^D)}+\|\nabla^2 X'\|_{r^{-1}_DC^{1,\alpha}_\beta(g^D)}\big)\\
        &+\big(\|X\|_{r_DC^{1,\alpha}_{0}(g^D)}+\|X'\|_{r_DC^{1,\alpha}_{0}(g^D)}\big)\|\nabla^2 (X-X')\|_{r^{-1}_DC^{1,\alpha}_\beta(g^D)}\\
        &+\big(\|X\|_{r_DC^{1,\alpha}_{0}(g^D)}+\|X'\|_{r_DC^{1,\alpha}_{0}(g^D)}\big)\big(\|X-X'\|_{r_DC^{1,\alpha}_{0}(g^D)}\big)\|\Rm(g^D)\|_{r^{-2}_DC^{1,\alpha}_\beta(g^D)}\\
        &+\|\nabla (X-X')\|_{r_D^{-1/2}C^{2,\alpha}_0(g^D)}\big(\|\nabla X\|_{r_D^{-1/2}C^{2,\alpha}_\beta(g^D)}+\|\nabla X'\|_{r_D^{-1/2}C^{2,\alpha}_\beta(g^D)}\big)\\
        \leqslant&\; 3C \big(\|X\|_{r_DC^{3,\alpha}_{\beta,*}(g^D)}+\|X'\|_{r_DC^{3,\alpha}_{\beta,*}(g^D)}\big)\|X-X'\|_{r_DC^{3,\alpha}_{\beta,*}(g^D)},
    \end{align*}
    notice the different norms with $\beta$ and $0$ for the weight power. We controlled the
    $C^{1,\alpha}_\beta(g^D)$-norm (which is larger than the $r_D^{-1}C^{1,\alpha}_\beta(g^D)$-norm by \eqref{control rD-1 r0}) in the first three lines and the $r_D^{-1}C^{1,\alpha}_\beta(g^D)$-norm in the last one (notice the $r_D^{-1/2}$-norms controlled by \eqref{control rD-1/2 r0}).
    \begin{rem}
        Using $r_D^{-1/2}$-norms was necessary because the first derivatives of the linear element of the kernel of $\delta_{g_e}\delta^*_{g_e}$ do not decay in the neck regions, that is, $\nabla_{g^D}\chi_{\mathcal{A}_k(t,\epsilon)}X_k\notin C^{2,\alpha}_\beta(g^D)$.
    \end{rem}
    
    The crucial reason for such a control of the nonlinear terms, already noted in \cite[Proof of Lemma 8.2]{biq1}, is that our norm is equivalent to a norm $C^{k,\alpha}(g^D)$ weighted by a function uniformly bounded below by $1$ independently on $t$, see \eqref{expression poids equivalent 1}. We can therefore finally put our metrics in gauge with respect to each other thanks to a fixed point theorem with explicit constant below, Lemma \ref{fonctions inverses}.
\begin{lem}\label{fonctions inverses}
				Let $\Phi: E\to F$, be a smooth map between Banach spaces and let $Q:= \Phi - \Phi(0)- d_0\Phi$.
				
				Assume that there exist $q>0$, $r_0>0$ and $c>0$ such that:
				\begin{enumerate}
				\item for all $x$ and $y$ in $B(0,r_0)$, we have the following control on the nonlinear terms
				$$\|Q(x)-Q(y)\|\leqslant q (\|x\|+\|y\|)\|x-y\|.$$
				\item the linearization $d_0\Phi$ is an isomorphism, and more precisely, we have
				$$\|(d_0\Phi)^{-1}\|\leq c.$$
				\end{enumerate}
				
				If $r\leqslant \min\Big(r_0,\frac{1}{2qc}\Big)$ and $\|\Phi(0)\| \leqslant \frac{r}{2c}$, then, the equation $\Phi(x) =0$ admits a unique solution in $B(0,r)$.
\end{lem}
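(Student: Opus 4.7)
The strategy is to reformulate $\Phi(x)=0$ as a fixed-point problem and apply the Banach contraction mapping theorem. Since $d_0\Phi$ is invertible and $\Phi(x) = \Phi(0) + d_0\Phi\cdot x + Q(x)$, the equation $\Phi(x)=0$ is equivalent to $x = T(x)$, where
$$ T(x) := -(d_0\Phi)^{-1}\bigl(\Phi(0) + Q(x)\bigr). $$
I will prove $T$ admits a unique fixed point in $\overline{B(0,r)} \subset E$ by checking the two hypotheses of the contraction principle.

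For the self-mapping property, I first note that $Q(0) = \Phi(0) - \Phi(0) - d_0\Phi(0) = 0$, so taking $y=0$ in the nonlinear hypothesis gives $\|Q(x)\| \leqslant q\|x\|^2$ on $\overline{B(0,r_0)}$. Combined with $\|(d_0\Phi)^{-1}\| \leqslant c$, this yields for $x \in \overline{B(0,r)}$
$$ \|T(x)\| \leqslant c\|\Phi(0)\| + cq\|x\|^2 \leqslant \frac{r}{2} + \frac{r}{2} = r, $$
by the two numerical hypotheses $\|\Phi(0)\| \leqslant r/(2c)$ and $r \leqslant 1/(2qc)$. For the contraction estimate, the assumption on $Q$ gives directly, for $x,y \in \overline{B(0,r)}$,
$$ \|T(x) - T(y)\| \leqslant c\|Q(x)-Q(y)\| \leqslant cq(\|x\|+\|y\|)\|x-y\| \leqslant 2cqr\,\|x-y\|, $$
so $T$ is Lipschitz with constant at most $2cqr \leqslant 1$.

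The only real subtlety is that this Lipschitz constant is not strictly less than $1$ in the saturating case $r = 1/(2qc)$. I will handle this by running the argument on any slightly smaller closed ball $\overline{B(0,r')}$ with $2c\|\Phi(0)\| \leqslant r' < r$: both hypotheses of the statement still hold for $r'$, but now $2cqr' < 1$, so Banach's theorem produces a unique fixed point $x_\infty \in \overline{B(0,r')} \subset \overline{B(0,r)}$, which solves $\Phi(x_\infty)=0$. For uniqueness in the full ball, any fixed point $x \in \overline{B(0,r)}$ satisfies $\|x\| \leqslant c\|\Phi(0)\| + cqr\|x\|$, hence $\|x\|(1-cqr) \leqslant c\|\Phi(0)\|$ and so $\|x\| \leqslant 2c\|\Phi(0)\| \leqslant r'$, forcing $x \in \overline{B(0,r')}$, where strict contraction already gives $x=x_\infty$. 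Beyond this minor boundary matter, the proof is a direct application of the contraction principle.
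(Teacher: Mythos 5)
The paper states Lemma \ref{fonctions inverses} without proof: it is the standard quantitative fixed-point lemma invoked inside the proofs of Proposition \ref{Mise en Reduced divergence-free} and Theorem \ref{fcts inv einst général}, so there is no authorial proof to compare against. Your contraction-mapping argument is exactly the natural one: rewrite $\Phi(x)=0$ as $x=T(x)$ for $T(x)=-(d_0\Phi)^{-1}\bigl(\Phi(0)+Q(x)\bigr)$, verify that $T$ maps $\overline{B(0,r)}$ into itself with Lipschitz constant $2cqr$, and bootstrap uniqueness in the full ball from the quadratic bound $\|Q(x)\|\leqslant q\|x\|^2$. All of those estimates are correct.

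The one place your proof does not quite cover the statement as written is the doubly-critical configuration $r = \frac{1}{2qc}$ \emph{and} $\|\Phi(0)\| = \frac{r}{2c}$: the shrinking trick requires a radius $r'$ with $2c\|\Phi(0)\| \leqslant r' < r$, and no such $r'$ exists then. This corner is not vacuous: for $\Phi(x) = -\frac12 + x - \frac12 x^2$ on $\mathbb{R}$ one has $q=\frac12$, $c=1$, forcing $r=1$ and $\|\Phi(0)\|=\frac12 = r/(2c)$, and the unique zero $x=1$ sits on the sphere of radius $r$, outside every strictly smaller ball, so Banach on $\overline{B(0,r')}$ cannot reach it. There the conclusion still holds (reading $B(0,r)$ as the closed ball), but one must check directly that the Picard iterates $x_{n+1}=T(x_n)$ remain Cauchy even though the Lipschitz constant saturates $1$ (the gaps $\|x_{n+1}-x_n\|$ decay like $n^{-2}$ once one tracks $\|x_n\|$ via $\|x_{n+1}\|\leqslant \frac{r}{2}+\frac{\|x_n\|^2}{2r}$). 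In the paper's applications the inequalities are strict, so this is cosmetic; it would be cleanest either to assume one strict inequality in the hypotheses or to add that one extra estimate.
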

    
    Let us finally remark that for a linear vector field $X_k$ in the kernel of $\delta_e\delta_e^*$, the symmetric $2$-tensor $\delta_e^* X_k$ is constant. This lets us define for any metric $g$ on $M$, a continuous map $\psi_g : r_DC^{3,\alpha}_{\beta,*}\to C^{2,\alpha}_{\beta,*}$ by
    $$ X\mapsto \psi_g(X):=\exp^*_X g.$$
    It is indeed continuous since for any diffeomorphism $\phi : M \to M$, we have
    $$ (\phi^* g)(x) _{kl} = g(\phi(x))_{ij}\frac{\partial_i\phi}{\partial x^k}\frac{\partial_j\phi}{\partial x^l}$$
    in local coordinates and therefore, for any vector field $X_*\in r_DC^{3,\alpha}_{\beta}$, the symmetric $2$-tensor $\exp^*_X g-g$ is arbitrarily small for the $C^{2,\alpha}_{\beta,*}$-norm. For the constant part, it is enough to note that for a linear vector field $X_k$ in the kernel of $\delta_e\delta^*_e$, and for a constant symmetric $2$-tensor $H_k$, the symmetric $2$-tensor $\exp^*_{X_k}H_k$ is also constant and controlled $|\exp^*_{X_k}H_k|_{g_e} \leqslant C\|X_k\|_{r_eC^0}|H_k|_{g_e}$.
\end{proof}

\subsection{Einstein metrics in gauge}\label{Métriques d'Einstein en jauge}

Let us now come back to Einstein metrics which can be characterized in dimension $n$ thanks to the Bianchi identity as the zero set of
$$E(g):= \Ric(g)-\frac{\textup{R}(g)}{2}g+\frac{n-2}{2n}\overline{R}(g)g$$
on a compact manifold $M$, where $\overline{R}:= \frac{1}{\vol(M,g)}\int_M \R(g)dv_g$. Notice that $\delta_gE(g)=0$, again by the Bianchi identity.

The equation $E(g)=0$ is invariant by the action of diffeomorphisms and by scaling, we will therefore restrict our attention to deformations which are transverse to these actions in order to obtain an operator whose linearization is elliptic. More precisely, we will fix the volume and fix a gauge thanks to the reduced divergence-free condition.

It turns out that we can characterize the zeros of $E$ in reduced divergence-free gauge as the zeros of a single operator $\mathbf{\Phi}_{g^D}$ defined by
$$\mathbf{\Phi}_{g^D}(g):=E(g) + \delta_g^*\tilde{\delta}_{g^D}g.$$
Indeed, if we have $E(g)=0$ and $\tilde{\delta}_{g^D} g=0$, then we have $\mathbf{\Phi}_{g^D}(g) = 0$. And conversely, if $\mathbf{\Phi}_{g^D}(g) = 0$, then since $E(g)$ is divergence-free (for $g$) by the Bianchi identity, by taking the reduced divergence of $\mathbf{\Phi}_{g^D}(g) = 0$, we get $$\tilde{\delta}_g\mathbf{\Phi}_{g^D}(g) = (\tilde{\delta}_g\delta_g^*)\tilde{\delta}_{g^D}g.$$
Since for $g$ close enough to $g^D$, $(\tilde{\delta}_g\delta_g^*)$ is invertible on the image of $\tilde{\delta}_{g^D}$ by Lemma \ref{Mise en Reduced divergence-free linear}, we finally have $\tilde{\delta}_{g^D}g = 0$ and $E(g)=0$. In a $C^{2,\alpha}_{\beta,*}(g^D)$-neighborhood of $g^D$ the zero set of $\mathbf{\Phi}_{g^D}$ is exactly the set of Einstein metrics in reduced divergence-free gauge with respect to $g^D$.

\begin{cor}\label{mise en Reduced divergence-free Einstein}
Let $D_0,v_0>0$, $l\in \mathbb{N}$, and $\beta= \beta(v_0,D_0)>0$ obtained in Corollary \ref{GH to C3}. Then, for all $\epsilon>0$, there exists $\delta = \delta(\epsilon,D_0,v_0,l) >0$ such that if $(M,g^\mathcal{E})$ is an Einstein manifold satisfying
	\begin{itemize}
		\item the volume is bounded below by $v_0>0$,
		\item the diameter is bounded above by $D_0$,
		\item the Ricci curvature is bounded $|\Ric|\leq 3$.
	\end{itemize}
	and such that for an Einstein orbifold $(M_o,g_o)$,
	$$d_{GH}\big((M,g^\mathcal{E}),(M_o,g_o)\big)\leqslant \delta,$$
	then, there exists a naïve desingularization $(M,g^D)$ of $(M_o,g_o)$ by a tree of singularities, and a diffeomorphism $\psi: M\to M$ such that $$\big\|\psi^*g^\mathcal{E}-g^D\big\|_{C^l_{\beta,*}(g^D)}\leqslant \epsilon,$$
	and
	$$ \tilde{\delta}_{g^D}(\psi^*g^\mathcal{E})=0. $$
	In particular, we have $$\mathbf{\Phi}_{g^D}(\psi^*g^\mathcal{E})=0.$$
\end{cor}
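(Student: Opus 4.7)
The plan is to compose the diffeomorphism produced by Corollary \ref{GH to C3} with the gauge-fixing map of Proposition \ref{Mise en Reduced divergence-free}, and then bootstrap to the required regularity.

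First, given $\epsilon>0$ and $l\in\mathbb{N}$, I would fix $\epsilon_1\leqslant\min(\epsilon/2,\epsilon_D)$ and an auxiliary regularity order $l'\geqslant l+2$. Applying Corollary \ref{GH to C3} with parameters $(\epsilon_1,D_0,v_0,l')$ produces $\delta_1>0$ such that, whenever the hypotheses hold and $d_{GH}\bigl((M,g^\mathcal{E}),(M_o,g_o)\bigr)\leqslant\delta_1$, there exist a naïve desingularization $(M,g^D)$ and a diffeomorphism $\phi_1:M\to M$ with $\|\phi_1^*g^\mathcal{E}-g^D\|_{C^{l'}_\beta(g^D)}\leqslant\epsilon_1$. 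Since $\|\cdot\|_{C^{k,\alpha}_{\beta,*}}\leqslant\|\cdot\|_{C^{k,\alpha}_\beta}$, the metric $g:=\phi_1^*g^\mathcal{E}$ is $\epsilon_D$-close to $g^D$ in $C^{2,\alpha}_{\beta,*}(g^D)$. Moreover, the bubble scales of $g^D$ obtained in \cite{ozu1} tend to $0$ with $\delta_1$, so after shrinking $\delta_1$ further I may also assume $t_{\max}\leqslant\tau_D$.

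Proposition \ref{Mise en Reduced divergence-free} then provides a unique vector field $X\perp\tilde{\mathbf{K}}_o$ with $\tilde{\delta}_{g^D}(\exp_X^*g)=0$, together with
$$\|X\|_{r_DC^{3,\alpha}_{\beta,*}(g^D)}\leqslant C_D\|\tilde{\delta}_{g^D}(g-g^D)\|_{r_D^{-1}C^{1,\alpha}_\beta(g^D)}\leqslant C'\epsilon_1.$$
Setting $\psi:=\phi_1\circ\exp_X$ gives $\psi^*g^\mathcal{E}=\exp_X^*g$, which lies in reduced divergence-free gauge with respect to $g^D$, so the two qualitative conclusions of the corollary are already in hand.

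The main technical point I expect to be delicate is upgrading the $C^{2,\alpha}_{\beta,*}$-smallness of $\psi^*g^\mathcal{E}-g^D$ provided by the last estimate of Proposition \ref{Mise en Reduced divergence-free} into $C^l_{\beta,*}$-smallness. I would argue by bootstrap on the equation $F_g(X)=0$ introduced in the proof of Proposition \ref{Mise en Reduced divergence-free}: since $g-g^D$ is small in $C^{l'}_\beta$ by construction and the linearisation of $F_g$ at $0$ is the elliptic operator $\tilde{\delta}_{g^D}\delta^*_{g^D}$, the weighted Schauder estimates of Proposition \ref{est ellipt with cstes}, applied iteratively to successive derivatives of the gauge equation, promote $X$ to be small in $r_DC^{l+1,\alpha}_{\beta,*}(g^D)$. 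The composition estimates summarised in Remark \ref{comportement norme} then yield $\|\exp_X^*g-g\|_{C^l_{\beta,*}(g^D)}\leqslant C\|X\|_{r_DC^{l+1,\alpha}_{\beta,*}(g^D)}$, and combining this with $\|g-g^D\|_{C^l_{\beta,*}(g^D)}\leqslant\|g-g^D\|_{C^{l'}_\beta(g^D)}\leqslant\epsilon_1$ gives the stated $\epsilon$-closeness once $\epsilon_1$ is chosen small enough.

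Finally, the concluding identity $\mathbf{\Phi}(\psi^*g^\mathcal{E})=0$ is immediate from the definition of $\mathbf{\Phi}$ recalled in Section \ref{Métriques d'Einstein en jauge}: the pullback $\psi^*g^\mathcal{E}$ remains Einstein, so $E(\psi^*g^\mathcal{E})=0$, while the gauge condition $\tilde{\delta}_{g^D}(\psi^*g^\mathcal{E})=0$ annihilates the term $\delta^*_g\tilde{\delta}_{g^D}g$.
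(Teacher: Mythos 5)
Your overall strategy — apply Corollary \ref{GH to C3} at a high regularity level $l'$, then invoke Proposition \ref{Mise en Reduced divergence-free} for the gauge, set $\psi=\phi_1\circ\exp_X$, and bootstrap — is the natural one, and since the paper states this corollary without a written proof (it is implicitly a concatenation of Corollary \ref{GH to C3}, Proposition \ref{Mise en Reduced divergence-free}, and the observation in Section \ref{Métriques d'Einstein en jauge} that $E(g)=0$ and $\tilde{\delta}_{g^D}g=0$ together give $\mathbf{\Phi}(g)=0$), you have essentially reconstructed the intended argument.

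Two points deserve a bit more care, though neither invalidates the scheme. First, you invoke Proposition \ref{Mise en Reduced divergence-free} after ``shrinking $\delta_1$ so that $t_{\max}\leqslant\tau_D$''; but the threshold $\tau_D$ and the admissible size $\epsilon_D$ depend on the desingularization pattern $D$, which is an \emph{output} of Corollary \ref{GH to C3} and may change as $\delta_1$ shrinks. To make the ``for all $\epsilon$ exists $\delta$'' quantifier honest you need uniformity of $\tau_D,\epsilon_D,C_D$ over the family of patterns that can arise under the constraints $(v_0,D_0,|\Ric|\leqslant 3)$; this follows from the compactness of the relevant moduli of orbifolds and Ricci-flat ALE bubbles with bounded energy, or, more in the spirit of the proof of Corollary \ref{GH to C3}, from running the whole statement by contradiction along a sequence $\delta_i\to 0$ and extracting convergent subsequences of patterns. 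You should say one or the other explicitly. Second, your bootstrap runs through the gauge equation $F_g(X)=0$ and then transfers regularity to $\exp_X^*g$ via composition estimates; that works, but it is cleaner and requires less fuss about $\exp_X$ to bootstrap the elliptic system $\mathbf{\Phi}(\psi^*g^\mathcal{E})=0$ directly once the gauge is in place: the linearisation $\bar P_{g^D}$ is elliptic, the coefficients are controlled in $C^{l'}_\beta(g^D)$ by the first step, and interior weighted Schauder estimates on $M_o^t$, on the $N_j^t$, and on the flat annuli promote $\|\psi^*g^\mathcal{E}-g^D\|_{C^{2,\alpha}_{\beta,*}}$ to $\|\psi^*g^\mathcal{E}-g^D\|_{C^{l}_{\beta,*}}$ without separately estimating $\|X\|_{r_DC^{l+1,\alpha}_{\beta,*}}$ and then composing. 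Either route, with these points filled in, completes the proof.
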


\section{Resolution of the Einstein equation modulo obstructions}

We will now show that it is always possible to produce metrics which are Einstein modulo some obstructions (which are elements of the cokernel of the linearization of the Einstein operator) in our weighted Hölder spaces. The main result of the section is Theorem \ref{fcts inv einst général} which allows us to perturb any naïve desingularization $g^D$ to an Einstein modulo obstructions metric and in particular according to \cite{ozu1} we produce all Einstein metrics close to an Einstein orbifold in the Gromov-Hausdorff sense by this procedure. 

We have seen in Corollary \ref{mise en Reduced divergence-free Einstein} that up to a diffeomorphism, any Einstein metric $g$ close to $(M_o,g_o)$ in the Gromov-Hausdorff sense is a solution of
$$\mathbf{\Phi}_{g^D}(g):= E(g) + \delta_g^*\tilde{\delta}_{g^D}g =0.$$ 
To study this equation, we will naturally start by studying its linearization on volume preserving deformations at $g^D$, that is, on symmetric $2$-tensors $h$ satisfying $\int_M\textup{tr}_{g^D} h dv_{g^D} = 0$, for which we have the formula
\begin{align}
        \Bar{P}_{{g^D}}(h):= \;d_{g^D}\mathbf{\Phi}_{g^D}(h) =& \frac{1}{2}\Big(\nabla^*_{g^D}\nabla_{g^D} h-2\mathring{\R}_{g^D}(h)\nonumber\\
        &-2 \delta_{g^D}^*\delta_{g^D}h+2\delta^*_{{g^D}}\tilde{\delta}_{g^D} h-(\delta_{g^D}\delta_{g^D} h) {g^D} \nonumber\\ 
        &+ (\Delta_{g^D} \textup{tr}_{g^D}h) {g^D} - \nabla_{g^D}^2 \textup{tr}_{g^D} h\nonumber\\
    &+\Ric_{g^D}\circ h+h\circ \Ric_{g^D}-\R_{g^D} h+ \langle \Ric_{g^D}, h \rangle_{g^D} {g^D} \nonumber\\
    &+\frac{1}{2}\overline{\R}(g) h - \frac{1}{2\vol(g)}\int_M\Big\langle\Ric({g^D})-\frac{\R({g^D})}{2}, h\Big\rangle_{g^D} \textup{d}v_{g^D} \Big),\label{expression barP}
\end{align}
in dimension $4$. If $g^D$ were an Einstein metric and $h$ a divergence-free symmetric $2$-tensor, then the operator $\bar{P}_{g^D}$ would reduce to $$P_{g^D}:= \frac{1}{2}\nabla_{g^D}^*\nabla_{g^D} - \mathring{\R}_{g^D},$$
which is simpler to study. Since $g^D$ is almost Einstein and $h$ will be almost divergence-free, we will mostly study the operator $P_{g^D}$, and we will obtain results for $\bar{P}_{g^D}$ by approximation.

\subsection{Kernel and cokernel of the linearization on model spaces}

\paragraph{Exceptional values for $P_e:= \frac{1}{2}\nabla_e^*\nabla_e$ on $(\mathbb{R}^4\slash\Gamma,g_e)$ and gauge constraints.}

As described in the proof of Proposition \ref{inverse P on annulus}, the elements of the kernel of $P_e$ on $\mathbb{R}^4\slash\Gamma$ are sums of homogeneous symmetric $2$-tensors whose coefficients in an orthonormal basis of $\mathbb{R}^4$ are homogeneous of order $k$ or $-2-k$ for $k\in \mathbb{N}$. 

However, some of these tensors cannot appear in our developments because they are not trace-free or in divergence-free gauge on our nontrivial quotient of $\mathbb{R}^4\slash\Gamma$.
        \begin{lem}[{\cite[Proposition 4.65]{ct}}]\label{divergence-free gauge harmoniques}
            On $\mathbb{R}^4\slash\Gamma$ for $\Gamma\neq \{e\}$, there is no harmonic homogeneous symmetric $2$-tensor whose coefficients are of order $1$, $-2$ or $-3$ in divergence-free gauge.
        \end{lem}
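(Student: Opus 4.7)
The plan is to classify all harmonic homogeneous symmetric $2$-tensors of degrees $-2$ and $-3$ on $\mathbb{R}^4$ via their Cartesian components, impose divergence-freeness to cut out a small family, and then use $\Gamma$-invariance to conclude. Since $\nabla_e^*\nabla_e$ acts componentwise in flat Cartesian coordinates, each component $h_{ij}$ must be a homogeneous harmonic function on $\mathbb{R}^4$ of the prescribed degree. The standard relation $\alpha(\alpha+2)=k(k+2)$ between the radial exponent $\alpha$ and spherical-harmonic degree $k$ on $S^3$ forces $k=0$ for $\alpha=-2$ (so the components are multiples of $r^{-2}$) and $k=1$ for $\alpha=-3$ (so the components are linear combinations of $x_k/r^4$).

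For the order $-2$ case, any such tensor has the form $h_{ij}=C_{ij}/r^2$ with $C$ a constant symmetric $2$-tensor, and a direct computation gives $(\delta_e h)_i = 2\,C_{ij}x_j/r^4$, which vanishes identically in $x$ only if $C=0$. This already rules out order $-2$ without needing $\Gamma$-invariance.

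For the order $-3$ case, write $h_{ij}=c_{ijk}\,x_k/r^4$ with $c_{ijk}=c_{jik}$. Using $\partial_j(x_k/r^4)=\delta_{jk}/r^4-4x_jx_k/r^6$ one gets
\[
\partial_j h_{ij} \;=\; \frac{c_{imm}}{r^4}\;-\;\frac{4\,c_{ijk}\,x_jx_k}{r^6},
\]
so $\delta_e h=0$ is equivalent to the tensor identity
\[
c_{ijk}+c_{ikj} \;=\; \tfrac{1}{2}\,t_i\,\delta_{jk},\qquad t_i := c_{imm}.
\]
Writing this identity together with its two cyclic permutations (and simplifying using $c_{jik}=c_{ijk}$) yields the explicit solution
\[
c_{ijk} \;=\; \tfrac{1}{4}\bigl(t_i\,\delta_{jk}+t_j\,\delta_{ik}-t_k\,\delta_{ij}\bigr),
\]
so the divergence-free harmonic homogeneous $2$-tensors of order $-3$ form the $4$-parameter family
\[
h_t(x) \;=\; \frac{1}{4\,r^4}\Bigl(t\otimes x^\flat + x^\flat \otimes t - (t\cdot x)\,g_e\Bigr),\qquad t\in\mathbb{R}^4.
\]

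The final step is to impose $\Gamma$-invariance. Since $\gamma\in\Gamma\subset SO(4)$ satisfies $\gamma^T\gamma=I$, a short index computation gives $\gamma^* h_t = h_{\gamma^{-1}t}$, and the map $t\mapsto h_t$ is injective (one reads $t_j=4\,h_{1j}(e_1)$ from the formula). Hence $\Gamma$-invariance forces $\gamma t=t$ for every $\gamma\in\Gamma$. Because $\Gamma\neq\{e\}$ acts freely on $S^3$, no nonzero vector of $\mathbb{R}^4$ is fixed by all of $\Gamma$, so $t=0$ and $h=0$. The only mildly delicate step is the algebraic resolution of the divergence-free system for $c_{ijk}$, which is handled cleanly by the cyclic-permutation trick above.
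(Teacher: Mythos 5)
Your proof is correct, and it takes a more elementary route than the paper's for the order $-3$ case. The paper handles order $-2$ with essentially the same computation you do, but for order $-3$ it invokes the Cheeger--Tian classification (\cite[Prop.~4.65]{ct}) to assert that the relevant traceless harmonic $2$-tensors of order $r_e^{-3}$ are Lie derivatives $\mathcal{L}_X g_e$ along vector fields dual to $1$-forms $2r_e^{-1}d_S\phi + r_e^{-2}\phi\,dr_e$ with $\phi$ a degree-$1$ spherical harmonic, and then observes that no nonzero $\Gamma$-invariant linear function exists. You instead classify the divergence-free solutions directly by writing $h_{ij}=c_{ijk}x_k/r^4$, reducing $\delta_e h=0$ to a linear identity on $c_{ijk}$, and solving it by the standard cyclic-permutation trick to land on the explicit $4$-parameter family $h_t$; the conclusion then follows because $t\mapsto h_t$ is $SO(4)$-equivariant and injective, and a group acting freely on $\mathbb{S}^3$ fixes no nonzero vector. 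Both approaches hinge on the same final geometric fact (no nonzero $\Gamma$-fixed vector in $\mathbb{R}^4$), but yours is self-contained and avoids citing \cite{ct}. It is also slightly stronger: you never need the trace-free hypothesis for order $-3$, since divergence-freeness alone already pins the solution down to $h_t$ (note $h_t$ has nonzero trace for $t\neq 0$, so trace-free and divergence-free are genuinely distinct conditions here); the paper's version assumes tracelessness from the outset, which is fine for its application but shows your argument covers a larger class. Your dimension count checks out: $40$ free parameters in $c_{ijk}$ minus $9$ independent constraints for each $i$ leaves exactly $4$, matching the explicit formula, and the equivariance computation $\gamma^*h_t=h_{\gamma^{-1}t}$ is correct since $\gamma^T=\gamma^{-1}$ on $SO(4)$.
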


	\paragraph{Kernel of the operator $P$ on the model spaces.}\label{Résolution d'equations on une bulle}
	Let us start by describing the kernel of $P$ on our model spaces. 
	
	\begin{lem}\label{kernel L2}
		Let $(N,g_b)$ be a Ricci-flat ALE orbifold, and denote $P_b:= \frac{1}{2}\nabla^*_b\nabla_b - \mathring{\R}_b$, and $\mathbf{O}(g_b)$, the kernel of $P_{g_b}$ on $C^{2,\alpha}_{\beta,*}(g_b)$.
		
		The elements of $\mathbf{O}(g_b)$ decay at least like $r_b^{-4}$ at infinity, and for all $\mathbf{o}_{b}\in \mathbf{O}(g_b)$, we have the following development coordinates at infinity,
		$$\mathbf{o}_{b} = O^4 + \mathcal{O}(r_b^{-5}),$$
		with $O^4\sim r_b^{-4}$ a harmonic homogeneous symmetric $2$-tensor. 
		
		Let also $(M_o,g_o)$ be a compact Einstein orbifold, we denote $\mathbf{O}(g_o)$ the kernel of $P_o$ on $C^{2,\alpha}_{\beta,*}$ for all $0<\beta<1$. An element $\mathbf{o}_o\in \mathbf{O}(g_o)$ has a development 
		$$ \mathbf{o}_o = O_{0}+ O_2+\mathcal{O}(r_o^3), $$
		for harmonic homogeneous symmetric $2$-tensors $O_i\sim r_o^i$.
	\end{lem}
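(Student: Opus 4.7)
In both cases, the strategy is to combine elliptic regularity at the relevant end of the manifold with an asymptotic expansion matched against the indicial rates of the model operator $P_e = \frac{1}{2} \nabla_e^* \nabla_e$ on the flat cone $\mathbb{R}^4/\Gamma$, then to rule out the lowest-order contributions by the $\Gamma$-invariance obstruction used in Lemma \ref{divergence-free gauge harmoniques}.

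For the compact orbifold case, I would lift $\mathbf{o}_o$ through the orbifold chart $\Phi_k : B_e(0, \epsilon_0) \subset \mathbb{R}^4/\Gamma_k \to U_k$ to a $\Gamma_k$-invariant tensor on a ball around $0 \in \mathbb{R}^4$. Since $\Phi_k^* g_o$ is analytic (Remark \ref{analysis orbifold}) and $P_o \mathbf{o}_o = 0$ is elliptic with analytic coefficients, classical regularity yields that $\mathbf{o}_o$ is smooth near $p_k$ and admits a Taylor expansion $\sum_{j \geq 0} O_j$ with $O_j$ homogeneous of degree $j$ in $r_e$. Matching terms of a given homogeneity in $P_o \mathbf{o}_o = 0$, and using that $\Phi_k^* g_o - g_e = \mathcal{O}(r_e^2)$ only contributes lower-order corrections to the balance, each $O_j$ must be a $\Gamma_k$-invariant harmonic homogeneous $2$-tensor of degree $j$ on $\mathbb{R}^4$. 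The term $O_1$ has components that are linear functions on $\mathbb{R}^4$; by the same argument as in Lemma \ref{divergence-free gauge harmoniques}, $\Gamma_k \neq \{e\}$ acting freely on $\mathbb{S}^3$ admits no non-zero invariant linear function, so $O_1 = 0$ and the expansion begins with $O_0 + O_2 + \mathcal{O}(r_o^3)$.

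For the ALE case, the argument is dual, at the asymptotic chart $\Psi_\infty$ instead of a singular point. The a priori bound $\mathbf{o}_b \in C^{2,\alpha}_{\beta,*}(g_b)$ gives only $\mathbf{o}_b = \mathcal{O}(r_b^{-\beta})$ at infinity, so the main task is to bootstrap this to $r_b^{-4}$. Writing $\Psi_\infty^* P_b = P_e + E$, where $E$ is an operator whose coefficients decay like $r_e^{-4}$ by Definition \ref{def orb ale}, I would expand $\mathbf{o}_b$ inductively in $\Gamma_\infty$-invariant harmonic homogeneous tensors on the cone; the admissible decaying homogeneities are $r_e^{-2-k}$ for $k \geq 0$. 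The rate $r_e^{-3}$ is ruled out exactly by the linear-function obstruction used in the orbifold case, since the $r_e^{-3}$-homogeneous decaying harmonic tensors are obtained from linear ones by a Kelvin-type inversion.

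The main obstacle is the rate $r_e^{-2}$: the constant tensor $g_e$ is always $\Gamma_\infty$-invariant, so $g_e/r_e^2$ is an a priori admissible leading term. Ruling this out requires a genuinely global argument. I would integrate the identity $\langle P_b \mathbf{o}_b, \mathbf{o}_b \rangle_{g_b} = 0$ by parts on a large geodesic ball $\{r_b \leq R\}$, using Ricci-flatness of $g_b$ (so that $|\mathrm{Rm}_{g_b}| = \mathcal{O}(r_b^{-6})$ makes the interior curvature term integrable) and the self-adjointness of $P_b$ to reduce the $R \to \infty$ boundary term to a non-degenerate quadratic form in the candidate leading coefficient $H_0$ of the $r_e^{-2}$ expansion; this forces $H_0 = 0$. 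The resulting leading asymptotic $O^4 \sim r_b^{-4}$ is then a $\Gamma_\infty$-invariant harmonic homogeneous tensor of degree $-4$ by the matching procedure, and the remainder $\mathcal{O}(r_b^{-5})$ comes from re-running the same expansion one step further using that there is no exceptional rate strictly between $-4$ and $-5$.
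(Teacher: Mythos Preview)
Your treatment of the orbifold singularity matches the paper's in spirit; both are brief and invoke the group action to kill the linear term. The substantive divergence is at the ALE end, and there your argument for eliminating the rate $r_b^{-2}$ does not go through as written.

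The integration by parts you propose yields
\[
0 = \tfrac{1}{2}\int_{\{r_b\leqslant R\}} |\nabla_{g_b}\mathbf{o}_b|^2 \,dv_b - \int_{\{r_b\leqslant R\}} \langle \mathring{\R}_b\mathbf{o}_b,\mathbf{o}_b\rangle \,dv_b - \tfrac{1}{2}\int_{\{r_b=R\}}\langle\nabla_n\mathbf{o}_b,\mathbf{o}_b\rangle\,dS.
\]
If $\mathbf{o}_b = H_0\, r_b^{-2} + \text{l.o.t.}$, the boundary integrand is $\mathcal{O}(R^{-5})$ on a sphere of area $\mathcal{O}(R^3)$, so the boundary term is $\mathcal{O}(R^{-2})\to 0$; both interior integrals converge and the limiting identity is automatically satisfied by every kernel element. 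No quadratic form in $H_0$ survives, so nothing forces $H_0=0$. A related objection applies to your $r_b^{-3}$ step: the Kelvin correspondence does relate degree-$(-3)$ harmonic $2$-tensors to degree-$1$ ones, but the absence of $\Gamma$-invariant linear \emph{functions} does not rule out $\Gamma$-invariant $2$-tensors with linear \emph{coefficients} --- for instance $\mathrm{Re}(z_1\,dz_1^2)$ is $\mathbb{Z}_3$-invariant on $\mathbb{C}^2$ and harmonic of degree $1$.

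The paper's route is different and handles both rates at once. On a Ricci-flat manifold one has the commutation identities
\[
\textup{tr}_{g_b} P_{g_b} = \tfrac{1}{2}\nabla_{g_b}^*\nabla_{g_b}\,\textup{tr}_{g_b},\qquad
\delta_{g_b} P_{g_b} = \tfrac{1}{2}\nabla_{g_b}^*\nabla_{g_b}\,\delta_{g_b},
\]
so $P_b\mathbf{o}_b=0$ with $\mathbf{o}_b=\mathcal{O}(r_b^{-\beta})$ forces $\textup{tr}_{g_b}\mathbf{o}_b=0$ and $\delta_{g_b}\mathbf{o}_b=0$ by the maximum principle. Once $\mathbf{o}_b$ is known to be trace-free and divergence-free, Lemma~\ref{divergence-free gauge harmoniques} excludes the leading rates $r_b^{-2}$ and $r_b^{-3}$ directly (the $r^{-2}$ case because $\delta_e(H_0/r_e^2)=0$ already forces $H_0=0$). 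This is the global input you were looking for; it comes from the harmonic equations for $\textup{tr}\,\mathbf{o}_b$ and $\delta\mathbf{o}_b$ rather than from an energy identity for $\mathbf{o}_b$ itself.
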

	\begin{proof}
	Let us consider $\mathbf{o}\in \mathbf{O}(g_b)$, for which $P_{b}\mathbf{o}=0$, and $\mathbf{o}= \mathcal{O}(r_b^{-\beta})$ for some $\beta>0$. Such a symmetric $2$-tensor is actually necessarily traceless and divergence-free. Indeed, we have
	$\delta_{g_b}P_{g_b} =  \frac{1}{2}\nabla_{g_b}^*\nabla_{g_b} \delta_{g_b},$
	and
    $\textup{tr}_{g_b}P_{g_b} =  \frac{1}{2}\nabla_{g_b}^*\nabla_{g_b} \textup{tr}_{g_b}.$
    Therefore, if $P_{g_b}h = 0$ for $h=\mathcal{O}(r_b^{-\delta})$ for some $\delta>0$, then $\delta_{g_b}h=0$, and $\textup{tr}_{g_b}h = 0$ by the maximum principle. We deduce from Lemma \ref{divergence-free gauge harmoniques} that $\mathbf{o}$ decays at least like $r_b^{-4}$ and its principal term is a harmonic symmetric $2$-tensor.

	In the same way in the neighborhood of a singularity of an orbifold $(M_o,g_o)$ or of a Ricci-flat ALE orbifold $(N,g_b)$, since there is no harmonic symmetric $2$-tensor with linear growth because of the action of the nontrivial group $\Gamma$, an element of the kernel admits a development
	$$\mathbf{o} = O_0+O_2+\mathcal{O}(r_b^3),$$
	where $O_0$ and $O_2$ are harmonic homogeneous symmetric $2$-tensors in $r_o^0$ and $r_o^2$ respectively.
	\end{proof}

	\paragraph{Estimates on the inverses.}
	
	Just like for the operator $\delta \delta^*$ in Proposition \ref{controle deltadelta sur les espaces modèles}, the operators $P_{g_o}$ and $P_{g_{b_j}}$ are injective on the orthogonal of their respective kernels.
	
	\begin{lem}\label{Contrôle P espace modèles}
	    Let $(N,g_b)$ be a Ricci-flat ALE orbifold and $(M_o,g_o)$ an Einstein orbifold, and $0<\beta<1$.
	    
	    Then, the operators $$P_b: \mathbf{O}(g_b)^{\perp}\cap C^{2,\alpha}_{\beta,*}(g_{b}) \to  r^{-2}_{b}C^{\alpha}_\beta(g_{b}),$$
	    and
	    $$P_o: \mathbf{O}(g_o)^{\perp}\cap C^{2,\alpha}_{\beta,*}(g_o) \to  r^{-2}_{o}C^{\alpha}_\beta(g_{o})$$
	    are injective and there exist $C_o>0$ and $C_b>0$, such that we have for any symmetric $2$-tensor $h_b\perp \mathbf{O}(g_b)$ on $N$ and $h_o\perp \mathbf{O}(g_o)$ on $M_o$,
	    \begin{equation}
	        C_b^{-1} \|P_bh_b\|_{r^{-2}_{b}C^{\alpha}_\beta(g_{b})}\leqslant\|h_b\|_{C^{2,\alpha}_{\beta,*}(g_{b})}\leqslant C_b \|P_bh_b\|_{r^{-2}_{b}C^{\alpha}_\beta(g_{b})},\label{norme inverse Pb}
	    \end{equation}	    
	    and
	    \begin{equation}
	        C_o^{-1} \|P_oh_o\|_{r^{-2}_{o}C^{\alpha}_\beta(g_{o})}\leqslant\|h_o\|_{C^{2,\alpha}_{\beta,*}(g_o)}\leqslant C_o \|P_oh_o\|_{r^{-2}_{o}C^{\alpha}_\beta(g_{o})}.\label{norme inverse Po}
	    \end{equation}
	    Moreover, their respective cokernels are $\mathbf{O}(g_b)$ and $\mathbf{O}(g_o)$.
	 \end{lem}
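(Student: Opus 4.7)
The proof parallels the analysis of $\delta\delta^*$ in Proposition \ref{delta inj fredholm}, exploiting the formal self-adjointness of $P = \frac{1}{2}\nabla^*\nabla - \mathring{R}$ together with standard weighted elliptic theory on manifolds with conical ends. I proceed in three steps: enlarging the function space to apply Lockhart--McOwen-type theory, identifying the kernel and cokernel on the enlarged space, and passing back to the decoupling norm.

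For the first two steps, I enlarge the source: on $M_o$ I consider $P_o : C^{2,\alpha}_{-\beta}(g_o) \to r_o^{-2}C^\alpha_{-\beta}(g_o)$, and on $N$ the analog of the space $r_b^{1-\beta}C^{3,\alpha}_0$ employed for $\delta\delta^*$ in Proposition \ref{delta inj fredholm}. By standard weighted elliptic theory these operators are Fredholm provided the weights avoid the exceptional values of $P$ at the conical ends. Using the harmonic classification developed in the proof of Proposition \ref{inverse P on annulus}, combined with Lemma \ref{divergence-free gauge harmoniques} ruling out divergence-free harmonic $2$-tensors of order $r_e^{-2}$, one checks that the only exceptional value of $P$ on $\mathbb{R}^4/\Gamma$ (for $\Gamma\neq\{e\}$) in the window $(-1,1)$ is $0$, corresponding to constant $2$-tensors---precisely the exponent absorbed by the $*$ in the decoupling norm. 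The kernel on the enlarged source is then $\mathbf{O}(g_\bullet)$: any $h$ with $P h = 0$ and controlled growth satisfies $\nabla^*\nabla(\operatorname{tr} h) = 0$ and $\nabla^*\nabla(\delta h) = 0$ (using the identities $\operatorname{tr} P = \tfrac{1}{2}\nabla^*\nabla\operatorname{tr}$ and $\delta P = \tfrac{1}{2}\nabla^*\nabla\delta$ recorded in the proof of Lemma \ref{kernel L2}, which hold on Einstein / Ricci-flat backgrounds), so the weighted maximum principle forces $\operatorname{tr} h = 0 = \delta h$, and the asymptotic development of Lemma \ref{kernel L2} places $h$ in $C^{2,\alpha}_{\beta,*}$. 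Self-adjointness of $P$ with respect to the $L^2$-pairing then identifies the cokernel as the kernel of $P$ on the dual weighted space, again equal to $\mathbf{O}(g_\bullet)$.

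Passing back to the decoupling norm follows the template of Proposition \ref{delta inj fredholm} via the identity
$$P_o\bigl(C^{2,\alpha}_{\beta,*}(g_o)\bigr) = P_o\bigl(C^{2,\alpha}_{-\beta}(g_o)\bigr) \cap r_o^{-2}C^{\alpha}_\beta(g_o)$$
and its ALE analog, valid because $0$ is the only exceptional value in $(-\beta,\beta)$ and the associated tensors are the constant ones that the $*$-subscript already accommodates. Combined with the open mapping theorem between Banach spaces applied to the resulting Fredholm operator on the orthogonal of $\mathbf{O}(g_\bullet)$, this yields the estimates \eqref{norme inverse Pb} and \eqref{norme inverse Po}. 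The main technical obstacle is to justify the boundary terms in the integration by parts
$$\int \langle P h, h\rangle = \tfrac{1}{2}\int |\nabla h|^2 - \int \langle \mathring{R}(h), h\rangle + \text{boundary terms},$$
where the contribution at each orbifold singularity scales as $|h||\nabla h| r^3 = \mathcal{O}(r^{2-2\beta})$ and therefore vanishes as $r\to 0$ precisely because $\beta<1$, while on the ALE end of $N$ the $r_b^{-4}$ decay from Lemma \ref{kernel L2} ensures vanishing at infinity. A related subtlety is that the identities relating $P$ with $\delta$ and $\operatorname{tr}$ exploit a cancellation of curvature terms from the Weitzenböck formula which holds precisely because $g_o$ is Einstein and $g_b$ is Ricci-flat.
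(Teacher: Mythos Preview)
Your three-step strategy---enlarge the source to an unstarred weighted space, apply Fredholm theory for elliptic operators on conical/ALE ends, then pass back via $P\bigl(C^{2,\alpha}_{\beta,*}\bigr) = P\bigl(C^{2,\alpha}_{-\beta}\bigr) \cap r^{-2}C^\alpha_\beta$---is exactly the paper's approach, and your identification of the cokernel by self-adjoint duality is also how the paper proceeds.

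There is, however, a small gap in your kernel identification on the compact Einstein orbifold $(M_o,g_o)$. The commutation identities $\operatorname{tr}_g P_g = \tfrac{1}{2}\nabla^*\nabla \operatorname{tr}_g$ and $\delta_g P_g = \tfrac{1}{2}\nabla^*\nabla \delta_g$ that you invoke from Lemma~\ref{kernel L2} are stated and used there only for the Ricci-flat metric $g_b$; on an Einstein background with $\Lambda\neq 0$ one picks up extra zeroth-order terms, for instance $\operatorname{tr}_g(P_g h)=\tfrac{1}{2}\nabla^*\nabla(\operatorname{tr}_g h)-\Lambda\,\operatorname{tr}_g h$, and even in the Ricci-flat compact case the maximum principle would only force $\operatorname{tr}_g h$ to be constant rather than zero. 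The paper sidesteps this entirely: it never claims that elements of $\mathbf{O}(g_o)$ are trace- or divergence-free, and instead argues directly that since $P_e$ has no exceptional value strictly between $-\beta$ and $0$, any $h\in C^{2,\alpha}_{-\beta}(g_o)$ with $P_o h=0$ has its asymptotic expansion at each singular point beginning at order $0$, hence $h\in C^{2,\alpha}_{\beta,*}(g_o)$ and so $h\in\mathbf{O}(g_o)$ by definition. Replacing your trace/divergence detour with this one-line exceptional-value argument closes the gap; the integration-by-parts display at the end is then not needed for the estimates, which follow from the open mapping theorem once Fredholmness and the kernel/cokernel are established.
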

	 \begin{proof}
	     By standard theory of elliptic operators between weighted Hölder spaces (see for instance \cite[Chapter 2]{pr}), the operators
	     $$P_o: C^{2,\alpha}_{-\beta}(g_o) \to r^{-2}_{o}C^{\alpha}_{-\beta}(g_{o}),$$
	     and
	     $$P_b: r_b^{-\beta}C^{2,\alpha}_{0}(g_{b}) \to r^{-2-\beta}_{b}C^{\alpha}_0(g_{b})$$
	     are Fredholm for $0<\beta<1$ because we avoid the exceptional values close to zero: $-2$ and $1$. Let us study their kernels and cokernels. 
	     
	     Let us start by the case of an Einstein orbifold $(M_o,g_o)$ and notice that $\ker_{ C^{2,\alpha}_{-\beta}(g_o)}P_{g_o}\subset \mathbf{O}(g_o)$ because there is no exceptional value between $0$ and $-\beta$. The kernel of $P_o: C^{2,\alpha}_{-\beta}(g_o) \to r^{-2}_{o}C^{\alpha}_{-\beta}(g_{o})$ is therefore equal to $\mathbf{O}(g_o)$. Since $P_o$ is self adjoint and since we are strictly between two exceptional values, its cokernel is the kernel of $P_o$ on $r^{-2}_{o}C^{\alpha}_{\beta}(g_{o})$ which is also reduced to $\mathbf{O}(g_o)$ by a similar argument.
	     
        Similarly, the kernel of  $P_b: r_b^{-\beta}C^{2,\alpha}_{0}(g_{b}) \to r^{-2-\beta}_{b}C^{\alpha}_0(g_{b})$ is $\mathbf{O}(g_b)$, and its cokernel is the kernel of $P_b$ on $r^{-2+\beta}_{b}C^{\alpha}_0(g_{b})$ which is reduced to $\mathbf{O}(g_b)$.
	    
	    For the cokernels of $$P_b: \mathbf{O}(g_b)^{\perp}\cap C^{2,\alpha}_{\beta,*}(g_{b}) \to  r^{-2}_{b}C^{\alpha}_\beta(g_{b}),$$
	    and
	    $$P_o: \mathbf{O}(g_o)^{\perp}\cap C^{2,\alpha}_{\beta,*}(g_o) \to  r^{-2}_{o}C^{\alpha}_\beta(g_{o}),$$ like in the end of the proof of Proposition \ref{delta inj fredholm}, we use the fact that $$ P_o\big(C^{2,\alpha}_{\beta,*}(g_o)\big) =P_o\big( C^{2,\alpha}_{-\beta}(g_o)\big)\cap r^{-2}_{o}C^{\alpha}_\beta(g_{o}), $$
	    and
	    $$ P_b\big(C^{2,\alpha}_{\beta,*}(g_{b})\big)=P_b\big( r_{b}^{-\beta}C^{2,\alpha}_{0}(g_{b_j})\big)\cap  r^{-2}_{b}C^{\alpha}_\beta(g_{b}).$$
	 \end{proof}

\paragraph{Approximation of kernels and cokernels on a naïve desingularization.}

We wish to solve the equation $\Ric(g) = \Lambda g$ for a metric $g$ close to $g^D$ modulo the kernel and the cokernel of the linearization of the Einstein operator. We will use approximate kernels and cokernels defined as the truncated infinitesimal deformations of each model space on the tree of singularities in order to obtain uniform controls as the singularities form, that is as the gluing parameters $t$ tend to $0$.

Let $\mathbf{o}_o\in \mathbf{O}(g_o)$ and $\mathbf{o}_j\in \mathbf{O}(g_{b_j})$, and define $\mathbf{o}_o = \mathbf{o}_{o,*} + \sum_k \chi_{B_o(p_k,\epsilon_0)}\mathbf{o}_{o,k}$ and $\mathbf{o}_j = \mathbf{o}_{j,*} + \sum_k \chi_{B_j(p_k,\epsilon_0)}\mathbf{o}_{j,k}$ their respective decompositions as a symmetric $2$-tensor of $C^{2,\alpha}_\beta$ and constant symmetric $2$-tensors truncated in the neighborhoods of the singular points. Thanks to the cut-off functions of Definition \ref{cut off Mot Njt}, we define on $M$ the following symmetric $2$-tensors
$$\tilde{\mathbf{o}}_{o,t}:=\chi_{M_o^{t}}\mathbf{o}_{o,*} + \sum_k \chi_{\mathcal{A}_k(t,\epsilon_0)}\mathbf{o}_{o,k},$$
and
$$\tilde{\mathbf{o}}_{j,t}:=\chi_{N_j^{t}}\mathbf{o}_{j,*} + \sum_k \chi_{\mathcal{A}_k}\mathbf{o}_{j,k}.$$
\begin{rem}
    We have $\tilde{\mathbf{o}}_{o,t} = \mathbf{o}_o$ on $M_o^{16t}$, and $\tilde{\mathbf{o}}_{j,t} =\mathbf{o}_j$ on $N_j^{16t}$.
\end{rem}

\begin{defn}[Space of truncated obstructions]\label{def obst tronq}
    Let $(M,g^D_{t})$ be a naïve desingularization of a Einstein orbifold $(M_o,g_o)$. On $M$, we will denote $$\tilde{\mathbf{O}}(g^D):=\Big\{\tilde{\mathbf{o}}_{o,t} + \sum_j\tilde{\mathbf{o}}_{j,t}, \;\mathbf{o}_o\in \mathbf{O}(g_o), \; \mathbf{o}_j\in \mathbf{O}(g_{b_j})\Big\},$$ the space of \emph{truncated obstructions}.
\end{defn}
\begin{rem}\label{eq normes O(gDt)}
    For $0<\beta<2$, by elliptic regularity for the elements of $\mathbf{O}(g_o)$ and the $\mathbf{O}(g_{b_j})$, and the $C^4_0$-control of the cut-off functions  we have
    $$\Big\|\tilde{\mathbf{o}}_{o,t} + \sum_jT_j\tilde{\mathbf{o}}_{j,t}\Big\|_{C^0(g^D)}\approx\Big\|\tilde{\mathbf{o}}_{o,t} + \sum_jT_j\tilde{\mathbf{o}}_{j,t}\Big\|_{C^{2,\alpha}_{\beta,*}(g^D)} \approx \sup \big(\|\mathbf{o}_o\|_{C^0(g_o)}, \|\mathbf{o}_j\|_{C^0(g_{b_j})}\big),$$
    and
    $$\Big\|\tilde{\mathbf{o}}_{o,t} + \sum_j\tilde{\mathbf{o}}_{j,t}\Big\|_{L^2(g^D)} \approx\Big\|\tilde{\mathbf{o}}_{o,t} + \sum_j\tilde{\mathbf{o}}_{j,t}\Big\|_{r_D^{-2}C^\alpha_\beta(g^D)} \approx \sup \big(\|\mathbf{o}_o\|_{L^2(g_o)}, \|\mathbf{o}_j\|_{L^2(g_{b_j})}\big).$$
\end{rem}

We would like to produce Einstein metrics in reduced divergence-free gauge with respect to $g^D$. But the point is that it is not always possible because the space $\tilde{\mathbf{O}}(g^D)$ is an (approximate) obstruction space. We will show that we can perturb $g^D+v$ for parameters $t>0$ and $v\in \tilde{\mathbf{O}}(g^D)$ small enough to obtain a metric $\Hat{g}_v=\hat{g}_{t,v}$ which will be in gauge with respect to $g^D + v$ and solution of:
$$\mathbf{\Phi}_{g^D}(\hat{g}_v) \in \tilde{\mathbf{O}}(g^D),$$
hence the term Einstein \emph{modulo obstructions}.

\paragraph{Control of the inverse of the linearization.}

We can first show that the linearization is invertible and that we can control its inverse \emph{independently of the gluing scales} thanks to Lemma \ref{Contrôle P espace modèles}. 

Let us start by showing that the operator $\pi_{\tilde{\mathbf{O}}(g^D)^{\perp}}P_{g^D}$ is close to $P_{g^D}$ for a sufficiently degenerate tree of singularities.

\begin{lem}\label{controle P tilde P}
    There exists $C>0$ such that for any symmetric $2$-tensor $h\in C^{2,\alpha}_{\beta,*}(g^D)$, we have
    $$\big\|\big(\pi_{\tilde{\mathbf{O}}(g^D)^\perp}P_{g^D}-P_{g^D}\big)h\big\|_{r_D^{-2}C^\alpha_\beta(g^D)}\leqslant C t_{\max}^\frac{1}{2} \|h\|_{C^{2,\alpha}_{\beta,*}(g^D)}.$$
\end{lem}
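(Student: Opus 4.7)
The plan is to proceed by integration by parts exactly in the spirit of the proof of \eqref{controle delta tilde delta}. Since $P_{g^D}$ is formally self-adjoint in $L^2(g^D)$, the error $\pi_{\tilde{\mathbf{O}}(g^D)^\perp}P_{g^D}h-P_{g^D}h=-\pi_{\tilde{\mathbf{O}}(g^D)}P_{g^D}h$ lies in the finite-dimensional space $\tilde{\mathbf{O}}(g^D)$, and its $L^2$-size is governed by the pairings
$$\int_M\langle P_{g^D}h,\tilde{\mathbf{o}}\rangle_{g^D}\,dv_{g^D}=\int_M\langle h,P_{g^D}\tilde{\mathbf{o}}\rangle_{g^D}\,dv_{g^D}$$
for $\tilde{\mathbf{o}}=\tilde{\mathbf{o}}_{o,t}+\sum_j\tilde{\mathbf{o}}_{j,t}\in\tilde{\mathbf{O}}(g^D)$. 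The goal is to show that $P_{g^D}\tilde{\mathbf{o}}$ is small in $L^1$ against $h$, uniformly in $\tilde{\mathbf{o}}$ of fixed $L^2(g^D)$-size.

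To locate the support of $P_{g^D}\tilde{\mathbf{o}}$, note that on $M_o^{16t}$ we have $g^D=g_o$ and $\tilde{\mathbf{o}}_{o,t}=\mathbf{o}_o\in\ker P_{g_o}$, so $P_{g^D}\tilde{\mathbf{o}}_{o,t}=0$ there; similarly $P_{g^D}\tilde{\mathbf{o}}_{j,t}=T_j^{-1}P_{g_{b_j}}\mathbf{o}_j=0$ on $N_j^{16t}$. All contributions therefore come from the transition annuli $\mathcal{A}_k$. On such an annulus, writing $\mathbf{o}_o=\mathbf{o}_{o,*}+\chi_{B_o(p_k,\epsilon_0)}\mathbf{o}_{o,k}$ and using $P_{g_o}\mathbf{o}_o=0$, I would decompose
\begin{align*}
P_{g^D}\tilde{\mathbf{o}}_{o,t}=&\;[P_{g^D},\chi_{M_o^{16t}}]\mathbf{o}_{o,*}+P_{g_o}\bigl((\chi_{\mathcal{A}_k(\epsilon_0)}-\chi_{B_o(p_k,\epsilon_0)})\mathbf{o}_{o,k}\bigr)\\
&+(P_{g^D}-P_{g_o})\bigl(\chi_{\mathcal{A}_k(\epsilon_0)}\mathbf{o}_{o,k}\bigr),
\end{align*}
and symmetrically for $\tilde{\mathbf{o}}_{j,t}$ with $g_o$ replaced by $T_jg_{b_j}$. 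The commutator is supported in the shell $\{t_k^{1/4}<r_D<2t_k^{1/4}\}$ of $g^D$-volume $\simeq t_k$; the middle term can be arranged to vanish by choosing the two cut-offs compatibly on the outer part of $\mathcal{A}_k$, where $g^D=g_o$; and the last piece crucially uses that $\mathbf{o}_{o,k}$ is a constant $2$-tensor on $\mathbb{R}^4\slash\Gamma_k$ (Lemma \ref{kernel L2}), so $P_e\mathbf{o}_{o,k}=0$ and $(P_{g^D}-P_{g_o})(\chi_{\mathcal{A}_k(\epsilon_0)}\mathbf{o}_{o,k})$ only involves derivatives of $g^D-g_o$, controlled by \eqref{gD go recollement} and the bubble analogues \eqref{gDgbj tk}--\eqref{gDgbj tj}, all decaying polynomially in $t_k$, $t_j$.

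Pairing these with $h$ and using $|h(x)|\leqslant C\|h\|_{C^{2,\alpha}_{\beta,*}(g^D)}$ from the decoupling decomposition, the volume estimates $\vol\{t_k^{1/4}<r_D<2t_k^{1/4}\}\simeq t_k$ and the weighted bounds on $g^D-g_o$, $g^D-T_jg_{b_j}$ yield, exactly as in the calculation following \eqref{controle delta tilde delta},
$$\Big|\int_M\langle h,P_{g^D}\tilde{\mathbf{o}}\rangle_{g^D}\,dv_{g^D}\Big|\leqslant Ct_{\max}^{1/2}\|h\|_{C^{2,\alpha}_{\beta,*}(g^D)}\|\tilde{\mathbf{o}}\|_{L^2(g^D)}.$$
To convert this into the required $r_D^{-2}C^\alpha_\beta$ bound, I use that $\tilde{\mathbf{O}}(g^D)$ is finite-dimensional of dimension independent of $t$ for $t_{\max}$ small, and that every $\tilde{\mathbf{o}}\in\tilde{\mathbf{O}}(g^D)$ satisfies $\|\tilde{\mathbf{o}}\|_{r_D^{-2}C^\alpha_\beta(g^D)}\lesssim\|\tilde{\mathbf{o}}\|_{L^2(g^D)}$ uniformly in $t$ (since $\beta<2$ makes $r_D^{2-\beta}$ bounded on $M$, and the only non-decaying piece of $\tilde{\mathbf{o}}$ is a bounded cut-off constant). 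Expanding $\pi_{\tilde{\mathbf{O}}(g^D)}P_{g^D}h$ in an $L^2(g^D)$-orthonormal basis and applying the pairing bound term by term gives the claim. The main delicacy, identical in spirit to that in \eqref{controle delta tilde delta}, is the simultaneous bookkeeping of the three scales $\epsilon_0$, $t_k^{1/4}$, and $\epsilon_0^{-1}\sqrt{T_jt_k}$ in each of the three pieces, together with the cancellation $P_e\mathbf{o}_{o,k}=0$ for the constant leading parts of the obstructions, without which a spurious $O(1)$ contribution would prevent any power of $t_{\max}$ from being extracted.
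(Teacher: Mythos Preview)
Your proposal is correct and follows the same route as the paper: integrate by parts using the self-adjointness of $P_{g^D}$, estimate $P_{g^D}\tilde{\mathbf{o}}$ on the transition shells via the same three-piece decomposition, and convert the $L^2$-pairing bound into the $r_D^{-2}C^\alpha_\beta$-estimate using the uniform norm equivalences on the finite-dimensional space $\tilde{\mathbf{O}}(g^D)$. One small imprecision: the middle term $P_{g_o}\bigl((\chi_{\mathcal{A}_k(\epsilon_0)}-\chi_{B_o(p_k,\epsilon_0)})\mathbf{o}_{o,k}\bigr)$ cannot be made to vanish entirely, since even with matching outer cut-offs the two functions still differ on the inner shell $\{\epsilon_0^{-1}\sqrt{t_k}<r_D<2\epsilon_0^{-1}\sqrt{t_k}\}$; the paper simply estimates this piece (pointwise of size $\sim t_k^{-1}$ on a region of volume $\sim t_k^2$), which contributes a harmless $O(t_k)$ to the pairing and does not affect your conclusion.
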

\begin{rem}
    Here, the constant $C$ only depends on the constants $C_o$ and $C_{b_j}$ of Lemma \ref{Contrôle P espace modèles}.
\end{rem}
\begin{proof}
    The proof is similar to that of the estimate \eqref{controle delta tilde delta}. Thanks to the equivalence of the norms, see Remark \ref{eq normes O(gDt)}, we only have to control the $L^2(g^D)$-norm of the projection on $\tilde{\mathbf{O}}(g^D)$.

    On $M_o$, since $P_{g^D}$ is self adjoint, an integration by parts yields 
    \begin{align*}
        \Big|\int_M\langle P_{g^D}h, \tilde{\mathbf{o}}_{o,t} \rangle_{g^D} \textup{d}v_{g^D}\Big| =&\; \Big|\int_M\langle h,P_{g^D}(\tilde{\mathbf{o}}_{o,t}) \rangle_{g^D} \textup{d}v_{g^D}\Big|,
    \end{align*}
    and since $g^D=g_o$ on $M_o^{16t}$ and $P_{g_o}(\mathbf{o}_o)=0$ we use the decomposition
    \begin{align}
        P_{g^D}(\tilde{\mathbf{o}}_{o,t}) =&\;P_{g_o}((\chi_{M_o^{16t}}-1)\mathbf{o}_{o,*})\nonumber\\
        &+ P_{g_o} \Big(\sum_k (\chi_{ \mathcal{A}_{k}(t,\epsilon_0)}- \chi_{B_o(p_k,\epsilon_0)})\mathbf{o}_{o,k}\Big)\Big) \nonumber\\
        &+ \big(P_{g_o}-P_{g^D}\big)\Big(\sum_k \chi_{ \mathcal{A}_{k}(t,\epsilon_0)}\mathbf{o}_{o,k}\Big)\label{dvp PgD oot}
    \end{align}
    in order to obtain the following  estimates (compare with the proof of \eqref{controle delta tilde delta}) thanks to the control of the cut-off functions of Definition \ref{def cutoffs all} and to Remark \ref{eq normes O(gDt)} 
    \begin{equation}
        |P_{g_o}((\chi_{M_o^{16t}}-1)\mathbf{o}_{o,*})|_{g_o}
    \leqslant \; C\|\mathbf{o}_o\|_{L^2(g_o)}\mathbb{1}_{\{2t_k^\frac{1}{4}<r_D<4t_k^\frac{1}{4}\}} t_k^{-\frac{1}{2}},\label{estimation 1}
    \end{equation}
    \begin{equation}
        \Big|P_{g_o} \Big(\sum_k (\chi_{ \mathcal{A}_{k}(t,\epsilon_0)}- \chi_{B_o(p_k,\epsilon_0)})\mathbf{o}_{o,k}\Big)\Big)\Big|\leqslant C\|\mathbf{o}_o\|_{L^2(g_o)}\mathbb{1}_{\{\epsilon_0^{-1}t_k^\frac{1}{2}<r_D<2\epsilon_0^{-1}t_k^\frac{1}{2}\}} t_k^{-1},\label{estimation 2}
    \end{equation}
    and
    \begin{equation}
        \Big|\big(P_{g_o}-P_{g^D}\big)\Big(\sum_k \chi_{ \mathcal{A}_{k}(t,\epsilon_0)}\mathbf{o}_{o,k}\Big)\Big|\leqslant C\|\mathbf{o}_o\|_{L^2(g_o)}\mathbb{1}_{\{\epsilon_0^{-1}t_k^\frac{1}{2}<r_D<2t_k^\frac{1}{4}\}} (1+t_k^2r_D^{-6}).\label{estimation 3}
    \end{equation}
    Since for any $C>0$ independent on $t$, we have
    \begin{itemize}
        \item $\vol_{g_e}\big(\big\{2t_k^\frac{1}{4}<r^D<4t_k^\frac{1}{4}\big\}\big)\leqslant Ct_k$,
        \item $\vol_{g_e}\big(\big\{\epsilon_0^{-1}t_k^\frac{1}{2}<r_D<2\epsilon_0^{-1}t_k^\frac{1}{2}\big\}\big)\leqslant C\epsilon_0^{-4}t_k^2$, and
        \item $\int_{\{\epsilon_0^{-1}t_k^\frac{1}{2}<r_D<2t_k^\frac{1}{4}\}} (1+t_k^2r_D^{-6})dv_{g^D}\leqslant Ct_{\max},$
    \end{itemize} 
    integrating the controls \eqref{estimation 1}, \eqref{estimation 2} and \eqref{estimation 3} of the terms of \eqref{dvp PgD oot} yields
    \begin{equation}
        \Big|\int_M\langle P_{g^D}h, \tilde{\mathbf{o}}_{o,t} \rangle_{g^D} \textup{d}v_{g^D}\Big| \leqslant C t_k^\frac{1}{2} \|h\|_{C^{2,\alpha}_{\beta,*}(g^D)}\|\mathbf{o}_o\|_{L^2(g_o)}.\label{Pu oo}
    \end{equation}
    Similarly, for the $N_j$, consider $\mathbf{o}_j\in \mathbf{O}(g_{b_j})$. By invariance of the $L^2$-norm of $2$-tensors in dimension $4$ and since $P_{\frac{g}{t}}=tP_g$ for any metric $g$ and $t>0$, we have
    \begin{align*}
        \int_M\langle P_{g^D}h,\tilde{\mathbf{o}}_{j,t} \rangle_{g^D} \textup{d}v_{g^D} &= \int_M\Big\langle  \frac{h}{T_j},P_{\frac{g^D}{T_j}}\tilde{\mathbf{o}}_{j,t}\Big\rangle_{\frac{g^D}{T_j}} \textup{d}v_{\frac{g^D}{T_j}}.
    \end{align*}
    The control at the singular points is the same as in the case of $M_o$ and at infinity we have
    \begin{align*}
        P_{\frac{g^D}{T_j}}\tilde{\mathbf{o}}_{j,t} = (P_{\frac{g^D}{T_j}}-P_{g_{b_j}}) \tilde{\mathbf{o}}_{j,t} +P_{g_{b_j}}\mathbf{o}_j+ P_{g_{b_j}}((\chi_{N_j^{16t}}-1)\mathbf{o}_{j,*}),
    \end{align*}
    and therefore, since $\mathbf{o}_j = \mathcal{O}(r_{b_j}^{-4})$, we have
    $$\big|P_{\frac{g^D}{T_j}}\tilde{\mathbf{o}}_{j,t}\big|_{g_{b_j}}\leqslant C\|\mathbf{o}_j\|_{L^2(g_{b_j})}\mathbb{1}_{\{\frac{1}{2}t_j^{-1/4}<r_{b_j}<t_j^{-1/4}\}}r_{b_j}^{-6} \approx C\|\mathbf{o}_j\|_{L^2(g_{b_j})}\mathbb{1}_{\{\frac{1}{2}t_j^{-1/4}<r_{b_j}<t_j^{-1/4}\}}t_j^\frac{3}{2},$$
    and since $\vol\big(A_e\big(\frac{1}{2}t_j^{-\frac{1}{4}},t_j^{-\frac{1}{4}}\big)\big)\leqslant Ct_j^{-1}$ and at every point $\big|\chi_{N^{16t}_j}\frac{h}{T_j}\big|_{g_{b_j}}\leqslant \|h\|_{C^{2,\alpha}_{\beta,*}(g^D)}$, we have
    \begin{equation}
        \Big|\int_M\langle P_{g^D}h, \tilde{\mathbf{o}}_{j,t} \rangle_{g^D} \textup{d}v_{g^D}\Big| \leqslant C t_{\max}^\frac{1}{2} \|h\|_{C^{2,\alpha}_{\beta,*}(g^D)}\|\mathbf{o}_j\|_{L^2(g_{b_j})},\label{Pu oj}
    \end{equation}
    and finally,
    $$\big\|\big(\pi_{\tilde{\mathbf{O}}(g^D)^\perp}P_g^D-P_g^D\big)h\big\|_{r_D^{-2}C^\alpha_\beta(g^D)}\leqslant C t_{\max}^\frac{1}{2} \|h\|_{C^{2,\alpha}_{\beta,*}(g^D)}.$$
\end{proof}
\begin{prop}\label{inversion with cste}
    Let $0<\beta<1$, $k\in \mathbb{N}$, $0<\alpha<1$ and let $(M,g^D)$ be a naïve desingularization of a compact or ALE Einstein orbifold by a tree of singularities. Then, there exists $\tau_D >0$ and $\epsilon_D>0$ and $C_D>0$ only depending on $\beta$, and the constants of Lemma \ref{Contrôle P espace modèles} and of Proposition \ref{inverse P on annulus} such that for $t_{\max}<\tau_D$ and for any metric $g$ such that $\|g-g^D_{t}\|_{C^{2,\alpha}_{\beta,*}(g^D)}\leqslant \epsilon_D$, the operator $$\pi_{\tilde{\mathbf{O}}(g^D)^{\perp}}P_g: \tilde{\mathbf{O}}(g^D)^{\perp}\cap C^{2,\alpha}_{\beta,*}(g^D) \to \tilde{\mathbf{O}}(g^D)^{\perp}\cap r^{-2}_DC^{\alpha}_{\beta}(g^D),$$ where $\pi_{\tilde{\mathbf{O}}(g^D)^{\perp}}$ is the orthogonal projection for $g^D$ on $\tilde{\mathbf{O}}(g^D)^{\perp}$, is invertible and we have for any symmetric $2$-tensor $h\perp \tilde{\mathbf{O}}(g^D)$ on $M$,
    $$\|h\|_{C^{2,\alpha}_{\beta,*}(g^D)}\leqslant C_D\|\pi_{\tilde{\mathbf{O}}(g^D)^{\perp}}P_gh\|_{r^{-2}_DC^{\alpha}_{\beta}(g^D)}.$$
\end{prop}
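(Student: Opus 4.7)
The plan is to mirror the strategy used for the gauge operator in Lemma \ref{Mise en Reduced divergence-free linear}, replacing Proposition \ref{controle deltadelta sur les espaces modèles} by Lemma \ref{Contrôle P espace modèles}, and Proposition \ref{inverse delta delta on annulus} by Proposition \ref{inverse P on annulus}.

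First, using Proposition \ref{contrôles with perturbations triviales}, for $\epsilon_D$ small enough $P_g$ is arbitrarily close to $P_{g^D}$ in operator norm between $C^{2,\alpha}_{\beta,*}(g^D)$ and $r_D^{-2}C^\alpha_\beta(g^D)$, so it suffices to prove the estimate for $g=g^D$. By Lemma \ref{controle P tilde P}, for $t_{\max}$ small enough $\pi_{\tilde{\mathbf{O}}(g^D)^\perp}P_{g^D}$ is close to $P_{g^D}$ itself, so what we really need is
\[ \|h\|_{C^{2,\alpha}_{\beta,*}(g^D)}\leqslant C\|P_{g^D}h\|_{r^{-2}_DC^{\alpha}_{\beta}(g^D)}\]
for $h\perp \tilde{\mathbf{O}}(g^D)$. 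Thanks to the elliptic estimate of Proposition \ref{est ellipt with cstes}, it is enough to control the $C^0_{\beta,*}(g^D)$ norm.

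Now write $h=h_*+\sum_k\chi_{\mathcal{A}_k(\epsilon)}H_k$ realizing the infimum in the definition of $\|\cdot\|_{C^{2,\alpha}_{\beta,*}}$. On each almost-flat annulus $\mathcal{A}_k$, Proposition \ref{inverse P on annulus} applied to $\Phi_k^*h$ (together with the estimate \eqref{controle gD-ge}) shows that the constant parts $H_k$ are essentially determined by $h$ modulo a rest controlled by $P_{g^D}h$ on $\mathcal{A}_k$, with the crucial improvement factor $t_k^{\beta/4}$ coming from the rescaling in \eqref{estimation inverse annulus h}. Then, following the scheme of the proof of Lemma \ref{Mise en Reduced divergence-free linear}, define $h_o:=\chi_{M_o^{t/16}}h_*+\sum_{k\in K_o}\chi_{B_o(p_k,\epsilon)}H_k$ on $M_o$ and, iteratively on the tree of bubbles, $h_j:=\chi_{N_j^{t/16}}h_*+\sum_{k\in K_j}\chi_{B_j(p_k,\epsilon)}H_k$ on each $N_j$. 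The error between $P_{g_o}h_o$ (resp.\ $P_{g_{b_j}}h_j$) and the restriction of $P_{g^D}h$ is controlled by the annular contributions and by $\|g^D-g_o\|$, $\|g^D/T_j-g_{b_j}\|$ respectively, which give a $\gamma(t_{\max})=\mathcal{O}(t_{\max}^{(1-\beta)/4})$ factor times $\|h\|_{C^{2,\alpha}_{\beta,*}(g^D)}$.

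At this stage the key point, which is the main obstacle, is that Lemma \ref{Contrôle P espace modèles} only gives a uniform inverse on $\mathbf{O}(g_o)^\perp$ and $\mathbf{O}(g_{b_j})^\perp$, so one must check that the orthogonal projections of $h_o,h_j$ onto $\mathbf{O}(g_o),\mathbf{O}(g_{b_j})$ are small. This is exactly what the assumption $h\perp \tilde{\mathbf{O}}(g^D)$ provides: for $\mathbf{o}_o\in\mathbf{O}(g_o)$ one has $\langle h_o,\mathbf{o}_o\rangle_{L^2(g_o)}=\langle h,\tilde{\mathbf{o}}_{o,t}\rangle_{L^2(g^D)}+o(1)\|h\|\|\mathbf{o}_o\|=o(1)\|h\|\|\mathbf{o}_o\|$ as $t_{\max}\to 0$, the error terms coming from the differences between the cut-off truncations and the volume forms on $M_o^{t/16}$ versus $M$; the same holds on each $N_j$ using the conformal invariance of the $L^2$ pairing of $2$-tensors in dimension $4$ used in \eqref{Pu oj}. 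Hence for $t_{\max}$ small enough, Lemma \ref{Contrôle P espace modèles} applies (up to absorbing a small piece of $\|h\|$ on the left) and gives
\[\|h_o\|_{C^{2,\alpha}_{\beta,*}(g_o)}+\sum_j\|h_j\|_{C^{2,\alpha}_{\beta,*}(g_{b_j})}\leqslant C\|P_{g^D}h\|_{r_D^{-2}C^\alpha_\beta(g^D)}+C\gamma(t_{\max})\|h\|_{C^{2,\alpha}_{\beta,*}(g^D)}.\]
Combining with the annular estimate from Proposition \ref{inverse P on annulus} to control $\sum_k|H_k|_{g_e}$ and observing $M=M_o^t\cup\bigcup_j N_j^t$, we absorb the $\gamma(t_{\max})$ term into the left-hand side for $t_{\max}$ small enough, obtaining the announced control on $\|h\|_{C^0_{\beta,*}(g^D)}$ and hence on $\|h\|_{C^{2,\alpha}_{\beta,*}(g^D)}$.

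Finally, the operator $\pi_{\tilde{\mathbf{O}}(g^D)^\perp}P_{g^D}\pi_{\tilde{\mathbf{O}}(g^D)^\perp}$ is $L^2$-self-adjoint on the closed subspace $\tilde{\mathbf{O}}(g^D)^\perp\cap C^{2,\alpha}_{\beta,*}(g^D)$, so the above injectivity estimate together with Fredholm stability (the situation is a small perturbation of the direct sum of the Fredholm operators on $M_o$ and the $N_j$) yields bijectivity, and the open mapping theorem gives the bound on the inverse. The perturbation $g\to$ close to $g^D$ is then handled as in the end of the proof of Lemma \ref{Mise en Reduced divergence-free linear} via Proposition \ref{contrôles with perturbations triviales}.
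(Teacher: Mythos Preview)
Your proposal is correct and follows essentially the same approach as the paper's own proof: both explicitly mirror the argument of Lemma \ref{Mise en Reduced divergence-free linear}, using Proposition \ref{inverse P on annulus} on the annuli to fix the constants $H_k$, extending $h$ to $h_o$ on $M_o$ and $h_j$ on each $N_j$, using $h\perp\tilde{\mathbf{O}}(g^D)$ to ensure the projections onto $\mathbf{O}(g_o)$ and $\mathbf{O}(g_{b_j})$ are small so that Lemma \ref{Contrôle P espace modèles} applies, iterating along the tree, and concluding surjectivity from self-adjointness on the compact manifold. Your treatment of the orthogonality step (the $L^2$ pairing $\langle h_o,\mathbf{o}_o\rangle_{L^2(g_o)}\approx\langle h,\tilde{\mathbf{o}}_{o,t}\rangle_{L^2(g^D)}$) is in fact more explicit than the paper's, which simply asserts the ratio tends to zero.
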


    \begin{proof}
        The proof is similar to that of Lemma \ref{Mise en Reduced divergence-free linear}. The idea is again to extend the symmetric $2$-tensors on the model spaces and to deduce a control on the whole tree of singularities. 
        
Let $0<\epsilon<\epsilon_D^{\frac{1}{2-\beta}}<\epsilon_0$ for $\epsilon_D$ and $\epsilon$ which we will choose small enough along the proof, and assume that $t_{\max}< \epsilon^4$ in order to have on each annulus $\mathcal{A}_k:= \mathcal{A}_k(t,\epsilon)$ between $N_k$ and $N_j$, the existence of a diffeomorphism $$\Phi_k: A_e(\epsilon^{-1}\sqrt{T_j}\sqrt{t_k},\epsilon\sqrt{T_j})\subset \mathbb{R}^4\slash\Gamma_k\to \mathcal{A}_k\subset M,$$
        such that for any $0<\beta<1$, there exists $C>0$, $$\big\|\Phi^*_kg^D-g_e\big\|_{C^{2,\alpha}_\beta(A_e(\epsilon^{-1}\sqrt{T_j}\sqrt{t_k},\epsilon\sqrt{T_j}))}\leqslant C \epsilon^{2-\beta}<C\epsilon_D,$$
        by definition of $g^D$. Until the end of the proof, we will denote
        $$A_k:= A_e(\epsilon^{-1}\sqrt{T_j}\sqrt{t_k},\epsilon\sqrt{T_j}).$$
        
        Let $h$ be a symmetric $2$-tensor on $M$. Thanks to the estimate \eqref{estimation inverse annulus h} of Proposition \ref{inverse P on annulus} and to its generalization to metrics close to $g_e$ by Proposition \ref{contrôles with perturbations triviales}, for $\epsilon_D$ and $t_{\max}$ small enough, we can choose constant and traceless symmetric $2$-tensors $H_k$ on $\mathbb{R}^4\slash\Gamma_k$ such that we have on $ A(\sqrt{T_j}t_k^{1/4}):=A_e((1/2)\sqrt{T_j}t_k^{1/4},4\sqrt{T_j}t_k^{1/4})$
        \begin{align}
        \|\Phi_k^*h-H_k\|_{C^{2,\alpha}_0(A(T_j^{1/2}t_k^{1/4}))} \leqslant& \; C_e \Big(\epsilon^{-\beta}t_k^{\frac{\beta}{4}}\big\|P_{g_e}\Phi_k^*h\big\|_{r^{-2}_eC^{\alpha}_{\beta}(A_k)} \nonumber\\
        &+ 4 \epsilon^{-1}t_k^{\frac{1}{4}} \|\Phi_k^*h-H_k\|_{C^{2,\alpha}_{\beta}(A_k)} \Big)\nonumber  \\
        \leqslant&\;2 C_e\Big(\epsilon^{-\beta}t_k^{\frac{\beta}{4}}\big\|\big(P_{g^D}h\big)_{|\mathcal{A}_k}\big\|_{r^{-2}_DC^{\alpha}_{\beta}(g^D)}\nonumber\\
        &+ 4 \epsilon^{-1}t_k^{\frac{1}{4}} \|(h-\chi_{\mathcal{A}_k}H_k)\|_{C^{2,\alpha}_\beta(g^D)}\Big).\label{controle annulus P h} 
    \end{align}
    Let us then consider the decomposition $$h = h_* + \sum_k \chi_{\mathcal{A}_k}H_k.$$
    
    We define a symmetric $2$-tensor $h_o$ extending $h$ to $M_o$ in the following way:
    $$h_o:= \chi_{M_o^{t/16}} h_* +\sum_{k\in K_o}\chi_{B_o(p_k,\epsilon)}H_k,$$
    where $K_o$ is the set of $k$ such that $\mathcal{A}_k\cap M_o^t\neq \emptyset$. Denoting $h_{o,*}:=\chi_{M_o^{t/16}} h_*$, we have 
    $$ \|h_{o,*}\|_{C^{2,\alpha}_\beta(g_o)}\geqslant \|h_{o,*}\|_{C^{0}_\beta(g_o)}\geqslant \|(h_{*})_{|M_o^t}\|_{C^{0}_\beta(g^D)}, $$
    because $M_o^t $ is outside of the region damaged by the gluing.
    
    Since for $t_{\max}$ small enough, the metric $g^D$ is close to the metric $g_o$ on $M_o^{t/16}$ by \eqref{gD go recollement} and
    $$ P_{g^D}h = P_{g_o}h_o + P_{g_o}(h-h_o) + (P_{g^D}-P_{g_o})h,$$
    we moreover have the following control thanks to \eqref{controle annulus P h},
    \begin{align}
        \|(P_{g^D}h)_{|M_o^{t/16}}\|_{r_D^{-2}C^{\alpha}_{\beta}(g^D)}\geqslant& \; \|P_{g_o}h_o\|_{r_o^{-2}C^{\alpha}_{\beta}(g_o)}\nonumber\\
        &- CC_e\sum_{k\in K_o} \|(P_{g^D}h)_{|\mathcal{A}_k}\|_{r_D^{-2}C^{\alpha}_\beta(g^D)}\nonumber\\
        &-\gamma(t_{\max}) \|h\|_{C^{2,\alpha}_{\beta,*}(g^D)},\label{est P }
    \end{align}
    where $\gamma(t_{\max})\to 0$ when $t_{\max}\to 0$ (compare with \eqref{contrôle deltadelta X Mo} for the vector fields).
    
    Since $h\perp \tilde{\mathbf{O}}(g^D)$, we have $\frac{\|\pi_{\mathbf{O}(g_o)}h_o\|_{C^{2,\alpha}_{\beta,*}}}{\|h_o\|_{C^{2,\alpha}_{\beta,*}}}\to 0$, and by Lemma \ref{Contrôle P espace modèles}, this implies, for $t_{\max}$ small enough, the control
    $$ \|h_o\|_{C^{2,\alpha}_{\beta,*}(g_o)} \leqslant 2C_o \|P_o h_o\|_{r_o^{-2}C^\alpha_\beta(g_o)}, $$
    and the estimate \eqref{est P } and Lemma \ref{controle P tilde P} imply that for $t_{\max}$ small enough, there exists $C>0$ only depending on the previous constants such that we have
    \begin{equation}
        \|(h_*)_{|M_o^t}\|_{C^{0}_\beta(g^D)} + \sum_{k\in K_o} |H_k|_{g_e} -C\gamma(t_{\max})\|h\|_{C^{2,\alpha}_{\beta,*}(g^D)}\leqslant C \|\pi_{\tilde{\mathbf{O}}(g^D)^{\perp}}P_{g^D}h\|_{r_D^{-2}C^{\alpha}_{\beta}(g^D)}.\label{controle on Mo}
    \end{equation}
    Indeed, on an orbifold $(M_o,g_o)$, the constant symmetric $2$-tensors of the decomposition in the definition of the norm $\|.\|_{C^{2,\alpha}_{\beta,*}}$ are determined, see Remark \ref{unicité decompositions}.
    \\
    
    Let us then consider the symmetric $2$-tensor $h_1:= h- \sum_{k\in K_o} H_k$ which satisfies for $C>0$ depending on the previous constants, 
    \begin{equation}
        \| \pi_{\tilde{\mathbf{O}}(g^D)^{\perp}}P_{g^D}h_1 \|_{r_D^{-2}C^{\alpha}_\beta(g^D)}\leqslant C \| \pi_{\tilde{\mathbf{O}}(g^D)^{\perp}}P_{g^D}h \|_{r_D^{-2}C^{\alpha}_\beta(g^D)}+C\gamma(t_{\max}) \|h\|_{C^{2,\alpha}_{\beta,*}(g^D)}\label{contrôle Ph1}
    \end{equation}
    thanks to the control \eqref{controle on Mo} of $\sum_{k\in K_o} |H_k|_{g_e}$.
    
    Given $j\in K_o$, the Ricci-flat ALE orbifold $(N_j,g_{b_j})$ is glued to $M_o$ and we can extend the symmetric $2$-tensor $h_1 = h_{*} + \sum_{l \notin K_o} \chi_{\mathcal{A}_l}H_l$ to $N_j$ by $$h_{j}:= \chi_{N_j^{t/16}}h_* + \sum_{l\in K_j} \chi_{B_j(p_l,\epsilon)}H_l,$$ where $K_j$ is the set of $l\notin K_o$ such that $\mathcal{A}_l$ has a nonempty intersection with the neighborhood of a $N_j^t$. 

    Denoting $h_{j,*}:= \chi_{N_j^{t/16}}h_*$, we have
    \begin{equation}
        \|h_{j,*}\|_{C^{2,\alpha}_{\beta}(g_{b_j})}\geqslant \|h_{j,*}\|_{C^{0}_{\beta}(g_{b_j})}\geqslant \|(h_*)_{|N_j^t}\|_{C^{0}_{\beta}(g^D)},\label{contrôle hj*}
    \end{equation}
    and by \eqref{controle annulus P h} and since $\frac{g^D}{T_j}$ is close to $g_{b_j}$ on $N_j^{t/16}$ for $t_{\max}$ small enough, we moreover have the following control thanks to \eqref{controle annulus P h},
    \begin{align}
        \big\|\big(P_{g^D}h_1\big)_{|N_j^{t/16}}\big\|_{r^{-2}_DC^{\alpha}_{\beta}(g^D)}\geqslant &\; \|P_{g_{b_j}}h_{j}\|_{r^{-2}_{b_j}C^{\alpha}_{\beta}(g_{b_j})}\nonumber\\
        &- CC_e \sum_{k\in K_j}\|(P_{g^D}h_1)_{|\mathcal{A}_k}\|_{r_D^{-2}C^{\alpha}_\beta(g^D)}\nonumber\\
        &-\gamma(t_{\max}) \|h\|_{C^{2,\alpha}_{\beta,*}(g^D)},\label{contrôle P h Nj}
    \end{align}
    where $\gamma(t_{\max})\to 0$ when $t_{\max}\to 0$. We then have a control on $(h_*)_{|N_j^t}$ and on the $H_k$, for $k\in K_j$ thanks to Lemma \ref{Contrôle P espace modèles} by using again the fact that $h\perp \tilde{\mathbf{O}}(g^D)$ which implies that for $t_{\max}$ small enough, we have
    $$ \|h_{j}\|_{r_jC^{3,\alpha}_{\beta,*}(g_{b_j})}\leqslant 2C_j\|P_{g_{b_j}}h_{j}\|_{r^{-2}_jC^{\alpha}_{\beta}(g_{b_j})}. $$
    The estimates \eqref{contrôle P h Nj}, \eqref{controle annulus P h} and Lemma \ref{controle P tilde P} then yield
    \begin{equation}
        \|(h_*)_{|N_j^t}\|_{C^0_\beta}+ \sum_{k\in K_j}|H_k|_{g_e}-C\gamma(t_{\max}) \|h\|_{C^{2,\alpha}_{\beta,*}(g^D)}\leqslant C \| \pi_{\tilde{\mathbf{O}}(g^D)^{\perp}}P_{g^D}h \|_{r_D^{-2}C^{\alpha}_\beta(g^D)}.\label{controle on Nj}
    \end{equation}
    
    Iterating the above controls to the other Ricci-flat ALE orbifolds of the tree of singularities, we obtain controls similar to \eqref{controle on Nj} on all of the $N_j$ and with the control \eqref{controle on Mo}, we finally find
    $$\big(1-C\gamma(t_{\max})\big) \|h\|_{C^{2,\alpha}_{\beta,*}(g^D)} \leqslant C \| \pi_{\tilde{\mathbf{O}}(g^D)^{\perp}}P_{g^D}h \|_{r_D^{-2}C^{\alpha}_\beta(g^D)}, $$
    and therefore the stated result for $g=g^D$ for $t_{\max}$ small enough.
    
    To obtain the same result for a metric $g$ close to $g^D$, we simply apply Proposition \ref{contrôles with perturbations triviales} to ensure that for $\|g-g^D\|_{C^{2,\alpha}_{\beta,*}(g^D)}$ arbitrarily small, $P_g$ is arbitrarily close to $P_{g^D}$ for the operator norm on $C^{2,\alpha}_{\beta,*}(g^D)$.
    
    The operator $P$ being self adjoint on a compact manifold, its injectivity implies its surjectivity by integration by parts.
    \end{proof}

\subsection{Resolution modulo obstructions of the Einstein equation}

Let us now show that we can always solve the Einstein equation \emph{modulo obstructions}. Let us recall that being Einstein and in reduced divergence-free with respect to a naïve desingularization $g^D$ is equivalent to being a zero of the operator
$$\mathbf{\Phi}_{g^D}: g\mapsto \Ric(g)-\frac{\textup{R}(g)}{2}g+\lambda g + \delta_g^*\tilde{\delta}_{g^D}g.$$

\begin{thm}\label{fcts inv einst général}
    Let $(M_o,g_o)$ be a compact or ALE Einstein orbifold of dimension $4$ such that $\Ric(g_o) = \Lambda g_o$, for $\Lambda \in \mathbb{R}$, and let $(N_j,g_{b_j})_{j}$ be a tree of singularities desingularizing $(M_o,g_o)$ with pattern $D$, and $0<\beta<1$. 
    
    Then, there exists $\tau>0$, $\epsilon>0$ only depending on $\beta$ and the constants of Lemma \ref{Contrôle P espace modèles} and of Proposition \ref{inverse P on annulus} such that for any naïve desingularization $g^D:=g^D_{t}$, with $t_{\max}< \tau$, and for all $v\in \tilde{\mathbf{O}}(g^D)$ satisfying $\|v\|_{C^0_{\beta,*}(g^D)}<\epsilon$, there exists a \emph{unique} solution $\hat{g}_v=\hat{g}_{t,v}$ to the equation 
    $$\mathbf{\Phi}_{g^D}(\hat{g}_v)\in \tilde{\mathbf{O}}(g^D),$$
    satisfying the following conditions: 
    \begin{enumerate}
        \item $\|\hat{g}_v-g^D\|_{C^{2,\alpha}_{\beta,*}}\leqslant 2\epsilon$,
        \item $\hat{g}_v-(g^D+v)$ is $L^2(g^D)$-orthogonal to $\tilde{\mathbf{O}}(g^D)$.
    \end{enumerate}
\end{thm}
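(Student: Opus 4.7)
My plan is to apply the fixed-point Lemma \ref{fonctions inverses} to a well-chosen map on $\tilde{\mathbf{O}}(g^D)^\perp \cap C^{2,\alpha}_{\beta,*}(g^D)$. Writing the unknown as $\hat{g}_v = g^D + v + h$ with $h\in \tilde{\mathbf{O}}(g^D)^\perp$, the condition $\mathbf{\Phi}(\hat g_v)\in \tilde{\mathbf{O}}(g^D)$ is equivalent to
\[ F_v(h) \;:=\; \pi_{\tilde{\mathbf{O}}(g^D)^\perp}\,\mathbf{\Phi}(g^D+v+h) \;=\; 0, \]
viewed as a map $F_v : \tilde{\mathbf{O}}(g^D)^\perp \cap C^{2,\alpha}_{\beta,*}(g^D) \to \tilde{\mathbf{O}}(g^D)^\perp \cap r_D^{-2}C^{\alpha}_{\beta}(g^D)$. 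I will check the three hypotheses of Lemma \ref{fonctions inverses}: uniform invertibility of $d_0 F_v$, smallness of $F_v(0)$, and quadratic control of the nonlinear remainder; uniqueness of $\hat{g}_v$ in the prescribed ball will then follow from the contraction step.

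For the linearization, $d_0 F_v = \pi_{\tilde{\mathbf{O}}(g^D)^\perp}\,\bar{P}_{g^D+v}$, whose principal part is $\tfrac{1}{2}\nabla^*\nabla - \mathring{R}$; the added gauge term $\delta_g^*\tilde{\delta}_{g^D}$ supplies precisely the divergence correction needed for ellipticity. Since $g^D$ is genuinely Einstein on each piece $M_o^{16t}$ and $N_j^{16t}$, the operator $\bar{P}_{g^D+v}$ is a small $C^{2,\alpha}_{\beta,*}(g^D)$-perturbation of $P_{g^D}$ (the extra zero-order terms involving traces, divergences and the near-Einstein deviation of $g^D$ are controlled by $t_{\max}$ together with $\|v\|$), so Proposition \ref{inversion with cste} together with Proposition \ref{contrôles with perturbations triviales} yields a uniform bound
\[ \|h\|_{C^{2,\alpha}_{\beta,*}(g^D)}\;\leqslant\; 2 C_D\,\|d_0 F_v(h)\|_{r_D^{-2}C^\alpha_\beta(g^D)}, \]
provided $t_{\max}$ and $\epsilon$ are chosen small.

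The crucial step is the estimate of the initial error $F_v(0)=\pi_{\tilde{\mathbf{O}}(g^D)^\perp}\mathbf{\Phi}(g^D+v)$. Decompose
\[ \mathbf{\Phi}(g^D+v) \;=\; \mathbf{\Phi}(g^D) \;+\; \bar{P}_{g^D}(v) \;+\; Q(v). \]
The first term vanishes identically on $M_o^{16t}$ (where $g^D=g_o$ is Einstein, and $\delta_{g^D}g^D=0$ forces $\tilde{\delta}_{g^D}g^D=0$) and on each $N_j^{16t}$ (where $g^D=T_j g_{b_j}$ is Ricci-flat), so it is supported in the gluing annuli $\mathcal{A}_k$; on each such annulus the cut-off interpolation between $g_o$ and $T_j g_{b_j}$, combined with the quadratic curvature decay at the tip and the asymptotic $\mathcal O(r_b^{-4})$ decay, yields an $r_D^{-2}C^\alpha_\beta(g^D)$-bound by a positive power of $t_{\max}$. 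The second term $\bar P_{g^D}(v)$ is small because on each piece $v$ restricts to an exact element of $\mathbf{O}(g_o)$ or $\mathbf{O}(g_{b_j})$ (which are in the kernel of $P_{g_o}$ or $P_{g_{b_j}}$), so the same integration-by-parts argument as in Lemma \ref{controle P tilde P} gives $\|\pi_{\tilde{\mathbf{O}}(g^D)^\perp}\bar P_{g^D} v\|_{r_D^{-2}C^\alpha_\beta(g^D)}\lesssim t_{\max}^{1/2}\|v\|_{C^{2,\alpha}_{\beta,*}(g^D)}$. The quadratic remainder $Q(v)$ is handled via Remark \ref{comportement norme}.

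The nonlinear part $F_v(h)-F_v(0)-d_0 F_v(h)$ is quadratic in $h$ with constant $q\sim C(1+\|v\|)$ in the weighted decoupling norms, again by the multiplicative estimates of Remark \ref{comportement norme} and Proposition \ref{contrôles with perturbations triviales}. Feeding the three bounds into Lemma \ref{fonctions inverses} with $r\sim \|F_v(0)\|$ produces a unique fixed point $h$ in a ball of radius $\leqslant 2\epsilon$, and $\hat g_v := g^D + v + h$ is the desired metric. The principal obstacle throughout is the uniformity of every estimate as $t_{\max}\to 0$: the operator $\bar P_{g^D}$ has an approximate cokernel of controlled dimension but does \emph{not} have a uniform right inverse on the full space of $2$-tensors; the whole machinery of the decoupling norms $C^{2,\alpha}_{\beta,*}$ and of the truncated obstruction space $\tilde{\mathbf{O}}(g^D)$ was set up in Sections 2--3 precisely so that all constants appearing in the inverse function theorem are independent of the gluing parameters.
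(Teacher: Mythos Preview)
Your overall architecture matches the paper's proof: set up $F_v(h)=\pi_{\tilde{\mathbf{O}}(g^D)^\perp}\mathbf{\Phi}(g^D+v+h)$ and apply Lemma~\ref{fonctions inverses}, checking invertibility of the linearization, smallness of $F_v(0)$, and quadratic control of the remainder. The treatment of $F_v(0)$ and of the nonlinear part is correct in spirit, and the closing paragraph correctly identifies why the decoupling norms are indispensable.

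There is, however, a genuine gap in the linearization step. You assert that $\bar P_{g^D+v}$ is a ``small $C^{2,\alpha}_{\beta,*}(g^D)$-perturbation of $P_{g^D}$'' and describe the discrepancy as ``extra zero-order terms involving traces''. This is not correct: the formula for $\bar P_g$ contains the genuinely second-order contributions
\[
-\tfrac12\nabla_g^2\,\textup{tr}_g h,\qquad \tfrac12(\Delta_g\,\textup{tr}_g h)\,g,\qquad -\tfrac12(\delta_g\delta_g h)\,g,
\]
which are of the same order as $\tfrac12\nabla^*\nabla h$ and are \emph{not} controlled by $t_{\max}$ or $\|v\|$ on general $h$. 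Proposition~\ref{inversion with cste} only gives an inverse for $\pi_{\tilde{\mathbf{O}}(g^D)^\perp}P_{g^D}$, and you cannot pass from $\bar P$ to $P$ by a small perturbation. What the paper does is first strip off the terms that \emph{are} small (the divergence mismatch $-\delta_g^*\delta_g+\delta_g^*\tilde\delta_{g^D}$ via \eqref{controle delta tilde delta}, and the Ricci deviation via \eqref{Ricci}) to reach an intermediate operator $\hat P_{g^D}$. Then it uses a trace argument (after \cite[Theorem 5.1]{andsurv}): taking the trace of $\hat P_{g^D}(h)=0$ forces $\textup{tr}_{g^D}h=0$, and since $\tilde{\mathbf{O}}(g^D)$ is asymptotically trace-free, $\pi_{\tilde{\mathbf{O}}(g^D)^\perp}\hat P_{g^D}(h)=0$ forces $\|(\textup{tr}_{g^D}h)g^D\|_{C^{2,\alpha}_{\beta,*}}/\|h\|_{C^{2,\alpha}_{\beta,*}}\to 0$ as $t_{\max}\to 0$. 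Only after this reduction to (approximately) trace-free tensors does $\hat P_{g^D}$ collapse to $P_{g^D}$ and Proposition~\ref{inversion with cste} apply. You need to insert this trace step; without it the uniform invertibility of $d_0F_v$ does not follow.

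A minor point: your bound on $\pi_{\tilde{\mathbf{O}}(g^D)^\perp}\bar P_{g^D}(v)$ is right, but it is really the \emph{estimates in the proof} of Lemma~\ref{controle P tilde P} (the pointwise bounds on $P_{g^D}(\tilde{\mathbf{o}})$ in the cut-off region) that give $\|\bar P_{g^D}(v)\|_{r_D^{-2}C^\alpha_\beta}\lesssim t_{\max}^{1/2}\|v\|$, rather than the statement of the lemma itself, which controls the projection of $P_{g^D}h$ onto $\tilde{\mathbf{O}}(g^D)$ for arbitrary $h$.
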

\begin{proof}
    Let $(M,g^D_{t})=(M,g^D)$ be a naïve desingularization of an orbifold $(M_o,g_o)$ by a tree of singularities $(N_j,g_{b_j})_j$.

    Define the operator $\Psi: g^D + C^{2,\alpha}_{\beta,*}(g^D)\to \tilde{\mathbf{O}}(g^D)^\perp \cap r^{-2}_DC^{\alpha}_\beta(g^D)$ by
    $$\Psi(g):= \pi_{\tilde{\mathbf{O}}(g^D)^\perp}\mathbf{\Phi}_{g^D}(g) = \pi_{\tilde{\mathbf{O}}(g^D)^\perp}\Big(\Ric(g) -\frac{\textup{R}(g)}{2}g + \lambda g + \delta^*_{g}\tilde{\delta}_{g^D}(g)\Big),$$
where $\pi_{\tilde{\mathbf{O}}(g^D)^\perp}$ is the $L^2(g^D)$-orthogonal projection on $\tilde{\mathbf{O}}(g^D)^\perp$. The conclusion of the theorem for $v=0$ then rewrites: there exists a unique solution $g\in g^D +\big(\tilde{\mathbf{O}}(g^D)^\perp \cap C^{2,\alpha}_{\beta,*}\big)$ to the equation $\Psi(g) = 0$.

Let us apply the inverse function theorem, Lemma \ref{fonctions inverses}, to $\Psi$. The linearization of the operator $\Psi$ at $g^D$ for any symmetric $2$-tensor $h$ satisfying $\int_ M \textup{tr}_{g^D}h dv_{g^D} = 0 $ is
    \begin{align*}
        d_{g^D}\Psi(h)=&\; \pi_{\tilde{\mathbf{O}}(g^D)^\perp}\Bar{P}_{g^D}(h),
    \end{align*}
    where $\Bar{P}_{g^D}$ is explicited in \eqref{expression barP}. Let us show that this linearization is invertible at $g^D$ and has an inverse which is uniformly bounded as $t\to 0$. We want to go back to the operator $P_g = \frac{1}{2}\nabla^*_g\nabla_g h-\mathring{\R}_g(h)$ for which the invertibility has been shown in Proposition \ref{inversion with cste}.
    
    First, by the estimate \eqref{controle delta tilde delta}, we have
    \begin{equation*}
    \|\tilde{\delta}_{g^D}h-\delta_{g^D} h\|_{r_D^{-1}C^{1,\alpha}_{\beta,*}(g^D)}\leqslant Ct_{\max}^\frac{1}{2}\|h\|_{C^{2,\alpha}_{\beta,*}(g^D)},
    \end{equation*}
    and therefore the term $-2 \delta_g^*\delta_gh+2\delta^*_{g}\tilde{\delta}_{g^D} h$ of \eqref{expression barP} is controlled in the following way 
    \begin{equation}
        \|-2 \delta_g^*\delta_gh+2\delta^*_{g}\tilde{\delta}_{g^D} h \|_{r_D^{-2}C^{\alpha}_\beta(g^D)}\leqslant Ct_{\max}^\frac{1}{2}\|h\|_{C^{2,\alpha}_{\beta,*}(g^D)}.\label{divergence}
    \end{equation}
    
    Next, notice that the Ricci curvature of $g^D$ is almost constant: 
    \begin{equation}
        \|\Ric(g^D)-\Lambda g^D\|_{r^{-2}_DC^\alpha_\beta} \leqslant C t_{\max}^\frac{2-\beta}{4}, \label{Ricci}
    \end{equation}
    because $\Ric(g^D)-\Lambda g^D =0$ for $r_D>2t_{\max}^{1/4}$, and on the rest of the manifold, $|\Ric(g^D)-\Lambda g^D| \leqslant C$. Therefore, for $t_{\max}$ arbitrarily small, $d_{g^D}\Psi$ is close (as an operator from $C^{2,\alpha}_{\beta,*}$ to $r_D^{-2}C^\alpha_\beta$) up a power of $t_{\max}$ to the operator $\pi_{\tilde{\mathbf{O}}(g^D)^\perp}\hat{\hat{P}}_{g^D}$, where for a symmetric $2$-tensor $h$,
    \begin{align*}
        \hat{\hat{P}}_{g^D}(h):=&\frac{1}{2}\Big(\nabla^*_{g^D}\nabla_{g^D} h -2\mathring{\R}_{g^D}(h) \\
        &-(\delta_{g^D}\delta_{g^D}h){g^D}\\
        &- \nabla^2_{g^D} \textup{tr}_{g^D} h+ (\Delta_{g^D} \textup{tr}_{g^D}h) g^D+ \frac{\R_{g^D}}{4}(\textup{tr}_{g^D}h)g^D\Big),
    \end{align*}
    where we neglected the difference of the divergence terms by \eqref{divergence}, and simplified the terms involving the Ricci curvature by \eqref{Ricci}.
    
    Let us use the following decomposition of a $2$-tensor on $(M,g^D_t)$: for any symmetric $2$-tensor $h\in C^{2,\alpha}_{\beta,*}(g^D)$, there exists a unique decomposition 
    \begin{equation}
        h = h_{T} + \delta^*_{g^D}X\label{decomposition t lie C2alpha}
    \end{equation}
    with 
    \begin{enumerate}
        \item a symmetric $2$-tensor $h_{T}\in C^{2,\alpha}_{\beta,*}(g^D)$ satisfying $\tilde{\delta}_{g^D} h_{T}=0$, 
        \item a vector field $X\in r_DC^{3,\alpha}_{\beta,*}(g^D)\cap  \tilde{\mathbf{K}}_o^\perp$.
    \end{enumerate}
    Indeed, according to Lemma \ref{Mise en Reduced divergence-free linear}, for any $2$-tensor $h\in C^{2,\alpha}_{\beta,*}(g^D)$, there exists a unique $X\in r_DC^{3,\alpha}_{\beta,*}(g^D)\cap \tilde{\mathbf{K}}_o^\perp$ such that 
    $$\tilde{\delta}_{g^D}\delta_{g^D}^*X = \tilde{\delta}_{g^D}h. $$
    The decomposition \eqref{decomposition t lie C2alpha} is then $h =(h-\delta_{g^D}^*X) + \delta_{g^D}^*X$ and the sum is $L^2(g^D)$-orthogonal. 

    \begin{rem}
        A simple integration by parts shows that a differentiable symmetric $2$-tensor $h$ on $M$ satisfies $\tilde{\delta}_{g^D}$ if and only if it is $L^2(g^D)$-orthogonal to $\delta^*_{g^D}(C^\infty(TM)\cap\tilde{\mathbf{K}}_o^\perp)$.
    \end{rem}
    
    Similarly, for a symmetric $2$-tensor $v\in r_D^{-2}C^\alpha_\beta$ we have a unique decomposition
    \begin{equation}
        v= v_T+\delta^*_{g^D}Y\label{decomposition t lie r-2Calpha}
    \end{equation}
    with a symmetric $2$-tensor $v_{T}$ orthogonal to $\delta^*_{g^D}(C^\infty(TM)\cap\tilde{\mathbf{K}}_o^\perp)$ and a vector field $X\perp \tilde{\mathbf{K}}_o$.
    
    Now, we have the following properties:
    \begin{enumerate}
        \item our metric has almost constant Ricci curvature by \eqref{Ricci},
        \item the divergence of the obstructions is arbitrarily small, that is for $ \tilde{\mathbf{o}}\in \tilde{\mathbf{O}}(g^D)$, we have $$\frac{\|\delta_{g^D}\tilde{\mathbf{o}}\|_{r_D^{-1}C^{1,\alpha}_{\beta}}}{\|\tilde{\mathbf{o}}\|_{C^{2,\alpha}_{\beta,*}}}\to 0$$
        and
        $$\frac{\|\delta_{g^D}\tilde{\mathbf{o}}\|_{r_D^{-3}C^{0}_{\beta}}}{\|\tilde{\mathbf{o}}\|_{r_D^{-2}C^{\alpha}_{\beta}}}\to 0$$
        as $t_{\max}\to 0$ by elliptic regularity .
    \end{enumerate}
    These imply that for $X\in r_D C^{3,\alpha}_{\beta,*}\cap \tilde{\mathbf{K}}_o^\perp$, one has
    \begin{equation}
        \|\pi_{\tilde{\mathbf{O}}(g^D)^\perp}\hat{\hat{P}}_{g^D} (\delta_{g^D}^*X) - \delta^*_{g^D}\tilde{\delta}_{g^D}\delta^*_{g^D}X\|_{r_D^{-2}C^{\alpha}_\beta}\leqslant \gamma(t_{\max})\|X\|_{r_D C^{3,\alpha}_{\beta,*}},\label{controle P de lie}
    \end{equation}
    for $\gamma : \mathbb{R}^+\to \mathbb{R}^+$ with $\lim_{0}\gamma = 0$, and that 
    
    Consider the decompositions \eqref{decomposition t lie C2alpha} and \eqref{decomposition t lie r-2Calpha} where we identify the symmetric $2$-tensor of the form $\delta^*_{g^D}X$ with the vector field $X$ since $$\delta^*_{g^D}:\tilde{\mathbf{K}}_o^\perp\cap r_DC^{3,\alpha}_{\beta,*}\to C^{2,\alpha}_{\beta,*}$$
    and
    $$\delta^*_{g^D}:\tilde{\mathbf{K}}_o^\perp\cap r_D^{-1}C^{1,\alpha}_{\beta,*}\to r_D^{-2}C^{\alpha}_{\beta,*}$$
    are injective. Thanks to \eqref{controle P de lie}, the operator $\pi_{\tilde{\mathbf{O}}(g^D)^\perp}\hat{\hat{P}}_{g^D}: C^{2,\alpha}_{\beta,*}\to r_D^{-2}C^\alpha_\beta$ decomposes blockwise
    \begin{equation}
        \pi_{\tilde{\mathbf{O}}(g^D)^\perp}\hat{\hat{P}}_{g^D} = \begin{bmatrix}
\tilde{\delta}_{g^D}\delta^*_{g^D} & 0  \\
 0 & \pi_{\tilde{\mathbf{O}}(g^D)^\perp}\hat{P}_{g^D} 
\end{bmatrix} + A_{g^D}\label{decomp blocks divergence}
    \end{equation}
    where $A_{g^D}$ is an arbitrarily small (for $t_{\max}$ arbitrarily small) operator between the same spaces, and
    \begin{align*}
        \hat{P}_{g^D}(h):=&\frac{1}{2}\Big(\nabla^*_{g^D}\nabla_{g^D} h -2\mathring{\R}_{g^D}(h) \\
        &- \nabla^2_{g^D} \textup{tr}_{g^D} h+ (\Delta_{g^D} \textup{tr}_{g^D}h) g^D+ \frac{\R_{g^D}}{4}(\textup{tr}_{g^D}h)g^D\Big).
    \end{align*}
    Given the shape of the matrix in \eqref{decomp blocks divergence}, it is enough to show that $$\tilde{\delta}_{g^D}\delta^*_{g^D} : \tilde{\mathbf{K}}_o^\perp\cap r_DC^{3,\alpha}_{\beta,*}\to\tilde{\mathbf{K}}_o^\perp\cap r_D^{-1}C^{1,\alpha}_{\beta,*}$$
    and 
    $$\pi_{\tilde{\mathbf{O}}(g^D)^\perp}\hat{P}_{g^D}:\tilde{\mathbf{O}}(g^D)^\perp\cap C^{2,\alpha}_{\beta,*}\to \tilde{\mathbf{O}}(g^D)^\perp\cap r_D^{-2}C^\alpha_\beta $$
    are invertible with inverse bounded independently of $t_{\max}$. This is already the case for $\tilde{\delta}_{g^D}\delta^*_{g^D}$ thanks to Lemma \ref{Mise en Reduced divergence-free linear}.
    \\
    
    Let us now focus on $\pi_{\tilde{\mathbf{O}}(g^D)^\perp}\hat{P}_{g^D}$. For any symmetric $2$-tensor $v$, we have an $L^2(g^D)$-orthogonal decomposition into a conformal and traceless part 
    \begin{equation}
        v = \frac{tr_{g^D}v}{4}g^D + \big(v - \frac{tr_{g^D}v}{4}g^D\big)\label{decomp conf tracefree}.
    \end{equation}
    Now, we have
    \begin{align*}
        \textup{tr}_{g^D}\Big(\pi_{\tilde{\mathbf{O}}(g^D)^\perp}\hat{P}_{g^D}(h)\Big) = \textup{tr}_{g^D}\hat{P}_{g^D}(h) - \textup{tr}_{g^D}\Big(\pi_{\tilde{\mathbf{O}}(g^D)}\hat{P}_{g^D}(h)\Big)
    \end{align*}
    and since for an element $\tilde{\mathbf{o}}$ of $\tilde{\mathbf{O}}(g^D)$, we have
    \begin{equation}
        \frac{\|(\textup{tr}_{g^D}\tilde{\mathbf{o}})g^D\|_{r_D^{-2}C^\alpha_\beta}}{\|\tilde{\mathbf{o}}\|_{r_D^{-2}C^\alpha_\beta}}\to 0\label{trace O(gD)}
    \end{equation}
    and
    \begin{equation}
        \frac{\|(\textup{tr}_{g^D}\tilde{\mathbf{o}})g^D\|_{C^{2,\alpha}_{\beta,*}}}{\|\tilde{\mathbf{o}}\|_{C^{2,\alpha}_{\beta,*}}}\to 0\label{trace O(gD) bis}
    \end{equation}
as $t_{\max}\to 0$, the operator $\textup{tr}_{g^D}(\pi_{\tilde{\mathbf{O}}(g^D)^\perp}\hat{P}_{g^D})$ is arbitrarily close to the operator $\textup{tr}_{g^D}\hat{P}_{g^D}$ (as operators from $C^{2,\alpha}_{\beta,*}$ to $r_D^{-2}C^\alpha_\beta$) which is itself arbitrarily close to $\Delta_{g^D}+\Lambda$ for $t_{\max}$ arbitrarily small because our metric is almost Einstein according to \eqref{Ricci}. 
    
    For the traceless part of $\pi_{\tilde{\mathbf{O}}(g^D)^\perp}\hat{P}_{g^D}(h)$, recall that for any symmetric $2$-tensor $h$, we have the following decomposition
    \begin{align}
        \pi_{\tilde{\mathbf{O}}(g^D)^\perp}\hat{P}_{g^D}(h) - tr_{g^D}\big(\pi_{\tilde{\mathbf{O}}(g^D)^\perp}\hat{P}_{g^D}(h)\big)\frac{g^D}{4}=&\; \pi_{\tilde{\mathbf{O}}(g^D)^\perp}\Big(\hat{P}_{g^D}(h) - tr_{g^D}\big(\hat{P}_{g^D}(h)\big)\frac{g^D}{4} \Big) \nonumber\\
        &-  tr_{g^D}\big(\hat{P}_{g^D}(h)\big)\frac{\pi_{\tilde{\mathbf{O}}(g^D)}g^D}{4}\nonumber\\
        &+  \Big(tr_{g^D}\big(\pi_{\tilde{\mathbf{O}}(g^D)}\hat{P}_{g^D}(h)\big)\Big)\frac{g^D}{4}.\label{dvt tr pi O P}
    \end{align}
    Using the control \eqref{Ricci} once more together with \eqref{trace O(gD)} and \eqref{trace O(gD) bis}, we neglect the last two terms of \eqref{dvt tr pi O P} and see that the traceless part of $\pi_{\tilde{\mathbf{O}}(g^D)^\perp}\hat{P}_{g^D}(h)$ is arbitrarily close to the firsts term $\pi_{\tilde{\mathbf{O}}(g^D)^\perp}\Big(\hat{P}_{g^D}(h) - tr_{g^D}\big(\hat{P}_{g^D}(h)\big)\frac{g^D}{4} \Big)$, that is, to
    $$\pi_{\tilde{\mathbf{O}}(g^D)^\perp}\big(P_{g^D}(h)-\big(\nabla^2_{g^D}tr_{g^D}h-\Delta_{g^D}(tr_{g^D}h)\frac{g^D}{4}\big)\big).$$
 
Let us therefore decompose $h$ and the operator $\pi_{\tilde{\mathbf{O}}(g^D)}\hat{P}_{g^D}(h)$ in conformal and traceless parts. Block-wise, we obtain the following operator:
    $$\pi_{\tilde{\mathbf{O}}(g^D)^\perp}\hat{P}_{g^D} = \begin{bmatrix}
\Delta_{g^D} + \Lambda  & 0 \\
 \pi_{\tilde{\mathbf{O}}(g^D)^\perp}\big(\Delta_{g^D}tr_{g^D}h\frac{g^D}{4}-\nabla^2_{g^D}tr_{g^D}h\big) & \pi_{\tilde{\mathbf{O}}(g^D)^\perp}P_{g^D} 
\end{bmatrix} + A'_{g^D}$$
    where $A'_{g^D} : C^{2,\alpha}_{\beta,*}\to r_D^{-2} C^\alpha_\beta$ is an arbitrarily small operator for $ t_{\max} $ arbitrarily small. There remains to show that the operator
    $$\begin{bmatrix}
\Delta_{g^D} + \Lambda  & 0 \\
 \pi_{\tilde{\mathbf{O}}(g^D)^\perp}\big(\Delta_{g^D}tr_{g^D}h\frac{g^D}{4}-\nabla^2_{g^D}tr_{g^D}h\big) & \pi_{\tilde{\mathbf{O}}(g^D)^\perp}P_{g^D} 
\end{bmatrix}$$
from $C^{2,\alpha}_{\beta,*}$ to $ r_D^{-2} C^\alpha_\beta$ is invertible with a uniformly bounded inverse (independently of $t$) for $t_{\max}$ small enough. Given the shape of the matrix, it is enough to show it for the two diagonal blocks since the operator at the bottom left is uniformly bounded. The operator $\Delta_{g^D} + \Lambda$ is invertible with a uniformly bounded inverse thanks to Lichnerowicz eigenvalue estimate, see \cite[Section 5]{andsurv} for instance, and the estimate \eqref{Ricci}. This is also the case for $\pi_{\tilde{\mathbf{O}}(g^D)^\perp}P_{g^D}$ thanks to Proposition \ref{inversion with cste}.
    
    We conclude that the linearization of $\Psi$ at $g^D$ is invertible with a bounded inverse as it is arbitrarily close to an invertible operator.

    \begin{rem}
        The operator $\Bar{P}_{g^D}$ itself is not self adjoint because the metric $g^D$ is not Einstein. Indeed, all of the terms are self adjoint except $- \nabla_g^2 \textup{tr}_g h$, $-(\delta_g\delta_g h) g$, and $\langle \Ric_g, h \rangle_g g$, and we remark that the adjoint of $h\mapsto \nabla_g^2 \textup{tr}_g h$ is $h\mapsto (\delta_g\delta_g h) g$. However, the term $h\mapsto\langle \Ric_g, h \rangle_g g$ whose adjoint is $h\mapsto ( \textup{tr}_g h )\Ric_g$ prevents $\Bar{P}_{g^D}$ to be self adjoint when $g^D$ is not Einstein.
    \end{rem}
    
    To apply the inverse function theorem, Lemma \ref{fonctions inverses} to the operator $\Psi$, there remains to control the non-linear terms. Since the variations of the Ricci curvature for a variation $h$ of a metric $g$, are schematically, 
    $$\Ric(g+h) = \Ric(g)+(g+h)^{-1}*\Rm(g)+(g+h)^{-2}*\nabla^2h+(g+h)^{-3}*\nabla h *\nabla h,$$
    where $*$ refers to diverse multilinear operations and by Remark \ref{comportement norme}, the non-linear terms $Q_{g^D}(h):=\Psi(g^D+h)-\Psi(g^D)-d_{g^D}\Psi(h)$ satisfy the control
    \begin{align*}
        \|Q_{g^D}(h)- &Q_{g^D}(h')\|_{r^{-2}_DC^{\alpha}_\beta(g^D)}\\
        \leqslant&\; C\Big(\big(\|h\|_{C^\alpha_{0}(g^D)}+\|h'\|_{C^\alpha_{0}(g^D)}\big)\big(\|h-h'\|^2_{C^\alpha_{0}(g^D)}\big)\|\Rm(g^D)\|_{r^{-2}_DC^{\alpha}_\beta(g^D)}\\
        &+\|h-h'\|_{C^\alpha_{0}(g^D)}\big(\|\nabla^2 h\|_{r^{-2}_DC^\alpha_{\beta}(g^D)}+\|\nabla^2 h'\|_{r^{-2}_DC^\alpha_{\beta}(g^D)}\big)\\
        +&\big(\|h\|_{C^\alpha_{0}(g^D)}+\|h'\|_{C^\alpha_{0}(g^D)}\big)\|\nabla^2 (h-h')\|_{r^{-2}_DC^\alpha_{\beta}(g^D)}\\
        &+2\|\nabla (h-h')\|_{r^{-1}_DC^\alpha_\beta(g^D)}\big(\|\nabla h\|_{r^{-1}_DC^\alpha_\beta(g^D)}+\|\nabla (h')\|_{r^{-1}_DC^\alpha_\beta(g^D)}\big)\Big)\\
        \leqslant&\; 3C \big(\|h\|_{C^{2,\alpha}_{\beta,*}(g^D)}+\|h'\|_{C^{2,\alpha}_{\beta,*}(g^D)}\big)\|h-h'\|_{C^{2,\alpha}_{\beta,*}(g^D)}.
    \end{align*}
    
    We moreover have the control 
    $$ \|\Psi(g^D)\|_{r^{-2}_DC^{\alpha}_\beta(g^D)} \leqslant Ct_{\max}^\frac{2-\beta}{4}.$$
    Hence, according to the inverse function theorem, Lemma \ref{fonctions inverses},  for $t_{\max}$ small enough, there exists a unique solution $\hat{g}$ with $\hat{g}-g^D\perp \tilde{\mathbf{O}}(g^D)$, to the equation $\Psi(\hat{g})=0$ satisfying moreover $\|g^D-\hat{g}\|_{C^{2,\alpha}_{\beta,*}(g^D)}\leqslant \|\Psi(g^D)\|_{r^{-2}_DC^{\alpha}_\beta(g^D)} \leqslant Ct_{\max}^\frac{2-\beta}{4}$.
    
    Now, we have only solved the equation in the neighborhood of $g^D$ and on the orthogonal of $\tilde{\mathbf{O}}(g^D)$. For $v\in \tilde{\mathbf{O}}(g^D)$ we study the operator $g\mapsto \Psi(g+v)$. The control of the non-linear terms is exactly the same for this operator, and for $v$ arbitrarily small, its linearization at $g^D$ is arbitrarily close to $d_{g^D}\Psi$ which is invertible. As a consequence, there exists $\epsilon>0$ such that for all $\|v\|_{C^0_{\beta}(g^D)}<\epsilon$, there exists a unique solution $\hat{g}_v$ of $\Psi(\hat{g}_v)=0$ with $\hat{g}_v-(g^D+v)\perp \tilde{\mathbf{O}}(g^D)$. 
    \begin{rem}
    By adding $v$, we however damage the estimate on $\Psi(g^D+v)$ which becomes
    \begin{equation}
        \|\Psi(g^D+v)\|_{r^{-2}_DC^\alpha_\beta(g^D)}\leqslant C\big(\|v\|^2_{C^{2,\alpha}_{\beta}(g^D)} + t_{\max}^{\frac{2-\beta}{4}}\big).\label{control operator gD+v}
    \end{equation}
    We will see later in Section \ref{an integrability issue} that without an integrability assumption, we cannot hope for a better estimate.
    \end{rem}
\end{proof}

\begin{cor}\label{fcts implicites hat g v}
        With the notations of Theorem \ref{fcts inv einst général}, the map $v\mapsto \hat{g}_v$ is analytic.
   \end{cor}
   \begin{proof}
    This is a consequence of the implicit function theorem for analytic functions, see \cite{whi} for instance. Let us define the map
       $$ (v,h) \mapsto \Psi(\hat{g}_{0}+v+h) $$
       from $\tilde{\mathbf{O}}(g^D)\times \big(\tilde{\mathbf{O}}(g^D)^\perp \cap C^{2,\alpha}_{\beta,*}\big)$ to $ \tilde{\mathbf{O}}(g^D)^\perp \cap r^{-2}_DC^{\alpha}_\beta(g^D)$ where we denote
    $$\Psi(g) := \pi_{\tilde{\mathbf{O}}(g^D)^\perp}\mathbf{\Phi}_{g^D}(g) = \pi_{\tilde{\mathbf{O}}(g^D)^\perp}\big(E(g) + \delta^*_{g}\tilde{\delta}_{g^D}(g)\big),$$
    like in the proof of Theorem \ref{fcts inv einst général} and where $\hat{g}_{0}$ is the solution of Theorem \ref{fcts inv einst général} for $v=0$.
       
    The map $ (v,h) \mapsto \Psi(\hat{g}_{0}+v+h) $ is analytic since $E$ and $g\mapsto \delta_g^*$ are, and since $\tilde{\delta}_{g^D}$ and $\pi_{\tilde{\mathbf{O}}(g^D)^\perp}$ are linear. We know that $\|\hat{g}_0-g^D\|\leqslant C t_{\max}^\frac{2-\beta}{4}$, and that $d_{g^D} \Psi$ is invertible thanks to the proof of Theorem \ref{fcts inv einst général}, hence $d_{\hat{g}_0} \Psi$ is also invertible. 
    
    The implicit function theorem for analytic functions, see \cite{whi}, then implies, by the uniqueness of the solution $\hat{g}_{v}$ of Theorem \ref{fcts inv einst général} that for $v$ small, the map $v\mapsto\hat{g}_{v}-(g^D+v)$ is analytic and that $v\mapsto\hat{g}_{v}$ is analytic too.
   \end{proof}
   
   \begin{rem}\label{fcts inv partiel}
    The previous analysis of Theorem \ref{fcts inv einst général} extends to partial desingularizations. 
    
    More precisely, let $S_o$ be a subset of the singularities of $M_o$ and for each $j$, $S_j$ a subset of the singularities of $N_j$ and denote $S=(S_o,(S_j)_j)$ and $M_S:= M_o\#_jN_j$ ($\#$ means gluing the ALE spaces at orbifold singularities) where the gluings are given by some gluing pattern $D$. 
    
    For $t_{\max,S}$ and $v$ small enough, the metric $g^D_S=g^D_{S,t}$ iteratively defined just like in Definition \ref{def naive desing} can be perturbed to a \emph{unique} solution $\hat{g}_{S,v}=\hat{g}_{S,t,v}$ to the equation 
    \begin{equation}
        \mathbf{\Phi}_{g^D}(\hat{g}_{S,v})\in \tilde{\mathbf{O}}(g^D_S),\label{desing part mod obst}
    \end{equation}
    satisfying the following conditions: 
    \begin{enumerate}
        \item $\|\hat{g}_{S,v}-g^D_S\|_{C^{2,\alpha}_{\beta,*}(g^D_S)}\leqslant 2\epsilon$, where $C^{2,\alpha}_{\beta,*}(g^D_S)$ is the partial desingularization norm of Remark \ref{rem partial desing},
        \item $\hat{g}_{S,v}-g^D_{S,v}$ is $L^2(g^D_S)$-orthogonal.
    \end{enumerate}
   \end{rem}
   
 Thanks to Corollary \ref{mise en Reduced divergence-free Einstein}, we have the following result.

   \begin{cor}\label{proximité désing naive}
   Let $D_0,v_0>0$, $l\in \mathbb{N}$, and $\beta= \beta(v_0,D_0)>0$ obtained in Corollary \ref{GH to C3}. Then, for all $\epsilon>0$, there exists $\delta = \delta(\epsilon,D_0,v_0,l) >0$ such that if $(M,g^\mathcal{E})$ is an Einstein manifold satisfying
	\begin{itemize}
		\item the volume is bounded below by $v_0>0$,
		\item the diameter is bounded above by $D_0$,
		\item the Ricci curvature is bounded $|\Ric|\leq 3$.
	\end{itemize}
	and such that for an Einstein orbifold $(M_o,g_o)$, we have
	$$d_{GH}\big((M,g^\mathcal{E}),(M_o,g_o)\big)\leqslant \delta,$$
	then, there exists a naïve desingularization $(M,g^D_{t,v})$ of $(M_o,g_o)$ by a tree of singularities, and a diffeomorphism $\psi: M\to M$ such that $$\psi^*g^\mathcal{E} = \Hat{g}_{t,v},$$ where $\Hat{g}_{t,v}$ is the perturbation of $g^D_{t}+v$ of Theorem \ref{fcts inv einst général}. 
   \end{cor}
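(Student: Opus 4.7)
The plan is to deduce this corollary by combining Corollary \ref{mise en Reduced divergence-free Einstein} with the uniqueness statement in Theorem \ref{fcts inv einst général}. The key point is that an Einstein metric put in reduced divergence-free gauge is a zero of $\mathbf{\Phi}$, which trivially lies in $\tilde{\mathbf{O}}(g^D)$, so it is a candidate for $\hat{g}_{\phi,t,v}$ provided we choose $v$ correctly and verify smallness.

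First I would invoke Corollary \ref{mise en Reduced divergence-free Einstein}: for $\epsilon'>0$ to be specified below, there exists $\delta>0$ such that under the hypotheses of the corollary, there is a naïve desingularization $(M,g^D_t)$ of $(M_o,g_o)$ by a tree of singularities, together with a diffeomorphism $\psi:M\to M$ satisfying $\|\psi^*g^\mathcal{E}-g^D\|_{C^{l}_{\beta,*}(g^D)}\leqslant \epsilon'$, $\tilde{\delta}_{g^D}(\psi^*g^\mathcal{E})=0$, and $\mathbf{\Phi}(\psi^*g^\mathcal{E})=0$. In particular, since $\psi^*g^\mathcal{E}$ is Einstein, $\mathbf{\Phi}(\psi^*g^\mathcal{E})=0\in\tilde{\mathbf{O}}(g^D)$. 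The isometry parameters $\phi=(\phi_k)_k$ are the ones hidden in the definition of $g^D$ (the gluing charts at each singularity).

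Next, I would extract the obstruction parameter $v$ by $L^2(g^D)$-orthogonal projection: set
\[
v:=\pi_{\tilde{\mathbf{O}}(g^D)}\bigl(\psi^*g^\mathcal{E}-g^D\bigr)\in\tilde{\mathbf{O}}(g^D),
\]
so that $\psi^*g^\mathcal{E}-(g^D+v)$ is, by construction, $L^2(g^D)$-orthogonal to $\tilde{\mathbf{O}}(g^D)$. Since $\tilde{\mathbf{O}}(g^D)$ is finite-dimensional and the $L^2(g^D)$-, $C^0_{\beta,*}(g^D)$- and $C^{2,\alpha}_{\beta,*}(g^D)$-norms are uniformly equivalent on it (up to constants depending only on $g_o$ and the $g_{b_j}$, by the definition of truncated obstructions in Definition \ref{def obst tronq} and the remark following it), the projection is continuous and $\|v\|_{C^0_{\beta,*}(g^D)}\leqslant C\|\psi^*g^\mathcal{E}-g^D\|_{C^{2,\alpha}_{\beta,*}(g^D)}\leqslant C\epsilon'$.

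Finally I would invoke the uniqueness part of Theorem \ref{fcts inv einst général}: choosing $\epsilon'$ small enough that $C\epsilon'<\epsilon$ and $\epsilon'<\epsilon$ (where $\epsilon,\tau$ are the constants of that theorem, and we take $\delta$ in Corollary \ref{mise en Reduced divergence-free Einstein} small enough to ensure $t_{\max}<\tau$), both $\psi^*g^\mathcal{E}$ and $\hat{g}_{\phi,t,v}$ lie in the prescribed $C^{2,\alpha}_{\beta,*}$-neighborhood of $g^D$, solve $\mathbf{\Phi}(\cdot)\in\tilde{\mathbf{O}}(g^D)$, and satisfy the orthogonality condition that $(\cdot)-(g^D+v)\perp_{L^2(g^D)}\tilde{\mathbf{O}}(g^D)$. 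By the uniqueness clause of Theorem \ref{fcts inv einst général}, this forces $\psi^*g^\mathcal{E}=\hat{g}_{\phi,t,v}$. The only mild subtlety is ensuring that $t_{\max}\to 0$ as $\delta\to 0$ and that the quantitative thresholds line up, but this is automatic from the proof of Corollary \ref{GH to C3}, where the gluing parameters of $g^D$ are controlled by the Gromov-Hausdorff scale of the degeneration.
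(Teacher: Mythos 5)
Your proposal is correct and follows the route the paper itself intends (the paper gives no explicit proof, simply prefacing the corollary with "Thanks to Corollary \ref{mise en Reduced divergence-free Einstein}, we have the following result"). You correctly identify the two ingredients — the gauged reduction from Corollary \ref{mise en Reduced divergence-free Einstein} yielding an Einstein metric $\psi^*g^\mathcal{E}$ in reduced divergence-free gauge with $\mathbf{\Phi}(\psi^*g^\mathcal{E})=0$, and the uniqueness clause of Theorem \ref{fcts inv einst général} — and the key observation is the right one: the orthogonality parameter $v$ must be read off as $v:=\pi_{\tilde{\mathbf{O}}(g^D)}\bigl(\psi^*g^\mathcal{E}-g^D\bigr)$, which makes $\psi^*g^\mathcal{E}$ satisfy exactly the three characterizing conditions of Theorem \ref{fcts inv einst général}. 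Your bookkeeping of the smallness thresholds (uniform equivalence of norms on the finite-dimensional space $\tilde{\mathbf{O}}(g^D)$, cf.~the remark following Definition \ref{def obst tronq}, and controlling $t_{\max}$ via the GH scale as in Corollary \ref{GH to C3}) is the correct way to make the quantifiers line up.
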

\subsection{Premoduli space of Einstein metrics around a singular one}\label{section premoduli}

Let us now explain how the Einstein modulo obstructions metrics of Theorem \ref{fcts inv einst général} are analogous to the metrics in the set $W$ of Theorem \ref{koiso structure}.

Let $M$ a differentiable $4$-manifold and consider an orbifold $(M_o,g_o)\in\partial_o\mathbf{E}(M)\subset \overline{\mathbf{E}(M)}_{GH}\backslash\mathbf{E}(M)$. According to Corollary \ref{proximité désing naive}, the Einstein metrics which are sufficiently close to $(M_o,g_o)$ in the Gromov-Hausdorff are results of the gluing-perturbation procedure of Theorem \ref{fcts inv einst général}.
    
    By analogy with the definition of \cite{koi} of the premoduli space of Einstein metrics around a smooth one, we define de \emph{directional} premoduli space in the neighborhood of a singular metric in the following way.
    \begin{defn}[Directional premoduli space] We define $\mathbb{E}^D_{g_o}(M)$, the \emph{premoduli space of Einstein metrics} on $M$ in the neighborhood of $(M_o,g_o)$ and \emph{in the direction} $D$, as the set of metrics $g$ on $M$ for which there exists $t$ such that we have $\|g-g^D_t\|_{C^{2,\alpha}_{\beta,*}(g^D_t)}<\epsilon$ for $\epsilon>0$ the constant of Theorem \ref{fcts inv einst général} and $v\in \tilde{\mathbf{O}}(g^D_t)$ for which we have:
    \begin{enumerate}
        \item $E(g) = 0$, $\vol(g)= \vol(g_o)$,
        \item $g-\big(g^D_{t}+v\big) \perp_{g^D_{t}} \tilde{\mathbf{O}}(g^D_{t})$,
        \item $\tilde{\delta}_{g^D_{t}}g=0$.
    \end{enumerate}
    \end{defn}
    
    \begin{rem}
        The above premoduli space is directional in the sense that it does not cover all of the Gromov-Hausdorff desingularizations of $(M_o,g_o)$, but only the ones whose tree of singularities is $D$. Given an arbitrary sequence $(g_i)_i$ of smooth Einstein metrics on $M$ $d_{GH}$-converging to $(M_o,g_o)$, then, up to considering a subsequence, all of the metrics $g_i$ are in a \emph{single} directional premoduli space $\mathbb{E}^D_{g_o}(M)$.
    \end{rem}
    The result of \cite{ozu1} together with Theorem \ref{fcts inv einst général} yield the following statement.
    \begin{cor}
        The directional premoduli space $\mathbb{E}^D_{g_o}(M)$ is the zero-set of $E$ on the set denoted $\hat{W}$ of metrics $\hat{g}_{t,v}$ in the $C^{2,\alpha}_{\beta,*}(g^D_t)$-neighborhoods of the metrics $g^D_t$ of Theorem \ref{fcts inv einst général}.
    \end{cor}
    Since $E$ is analytic, Koiso's proof of Theorem \ref{koiso structure} reduces to proving that $W$ is an analytic submanifold by implicit function theorem. Therefore the question of the regularity of the $d_{GH}$-completion of $\mathbf{E}(M^4)$, $\mathbf{E}(M^4)\cup\partial_o\mathbf{E}(M^4) $, reduces to the question of the regularity of the set $\hat{W}$ of the Einstein modulo obstructions metrics of Theorem \ref{fcts inv einst général}.

\section{Obstructions to the Gromov-Hausdorff desingularization}

Let us now come to the main application of this article, which is the obstruction to the desingularization of Einstein orbifolds. 

\subsection{Analysis of integrable Ricci-flat ALE spaces}\label{an integrability issue}

In order to obtain an obstruction result we will need to assume that the Ricci-flat ALE metrics in the trees of singularities have integrable Ricci-flat deformations.

\subsubsection{Integrable Ricci-flat ALE}

Since Corollary \ref{GH to C3} does not control the convergence speed towards the limit orbifold or the Ricci-flat ALE spaces, like in Theorem \ref{fcts inv einst général}, we have to fix a gauge $v\in\tilde{\mathbf{O}}(g^D)$ on the approximate kernel of the operator $\bar{P}$.

Not to damage our controls, we cannot simply use $g^D + v$ as an approximate metric. We need to find a better approximation to extend Proposition \ref{meilleure approx and obst} to the case when $v\neq 0$. It turns out that this will only be possible if we assume that the Ricci-flat ALE metrics are \emph{integrable}.

\begin{defn}[Integrable Ricci-flat ALE orbifold]\label{definition integrable}
    We will say that a Ricci-flat metric ALE $g_b$ is \emph{integrable} if for all $v\in \mathbf{O}(g_b)$ small enough, there exists a Ricci-flat metric ALE $\bar{g}_{b,v}$ satisfying $\bar{g}_{b,v}-(g_b+v)\perp \mathbf{O}(g_b)$ and $\|\Bar{g}_{b,v}-g_b\|_{L^2(g_b)}\leqslant 2 \|v\|_{L^2}$, and such that $\delta_{g_b}\bar{g}_{b,v}=0$.
\end{defn}

\begin{rem}\label{les def inft of RF ALE are en jauge}
All of the known examples of Ricci-flat ALE spaces are integrable since they are quotients of hyperkähler spaces. Moreover, any infinitesimal $L^2$-deformation of ALE Ricci-flat orbifolds is automatically divergence-free and trace-free by the proof of Lemma \ref{kernel L2}.
\end{rem}

\subsubsection{Weighted Hölder spaces and asymptotics of Ricci-flat ALE spaces}

Let us introduce yet another function space to control the asymptotics of our ALE metrics. This will be crucial to deduce obstructions in the following sections.

\begin{defn}[$C^{2,\alpha}_{\beta,**}$-norm on a ALE orbifold]
    Let $(N,g_b)$ be an ALE orbifold, and let $h$ be a symmetric $2$-tensor on $N$, and assume that $h = H^4 + \mathcal{O}(r_b^{-4-\beta})$ for $\beta>0$ and $H^4$ a homogeneous harmonic symmetric $2$-tensor with $|H^4|\sim r_e^{-4}$. We define its $C^{2,\alpha}_{\beta,**}$-norm by
    $$\|h\|_{C^{2,\alpha}_{\beta,**}}:= \sup r_e^{4}|H^4|_{g_e} + \big\|(1+r_b)^{4}(h-\chi(\epsilon r_b) H^4)\big\|_{C^{2,\alpha}_{\beta,*}}.$$
\end{defn}

\begin{defn}[$r_bC^{3,\alpha}_{\beta,**}$-norm on a ALE orbifold]
    Let $(N,g_b)$ be an ALE orbifold, and let $X$ be a vector field on $N$, and assume that $X = Y^3 + \mathcal{O}(r_b^{-3-\beta})$ for $\beta>0$ and $Y^3$ a homogeneous element of the kernel of $\delta_e\delta_e^*$ with $|Y^3|\sim r_e^{-3}$. We define its $r_bC^{3,\alpha}_{\beta,**}$-norm by
    $$\|X\|_{r_bC^{3,\alpha}_{\beta,**}}:= \sup r_e^{3}|Y^3|_{g_e} + \big\|(1+r_b)^{4}(X-\chi(\epsilon r_b) Y^3)\big\|_{r_bC^{3,\alpha}_{\beta,*}}.$$
\end{defn}

These norms are motivated by the following Lemmata.
\begin{lem}\label{inverse P ALE r-4}
    Let $(N,g_b)$ be an ALE orbifold. Then, there exists $C>0$ such that we have, for any $h\perp \mathbf{O}(g_b)$,
    \begin{equation}
        C^{-1} \|(1+r_b)^{4}\bar{P}_{g_b}h\|_{r_b^{-2}C^{\alpha}_\beta}\leqslant\|h\|_{C^{2,\alpha}_{\beta,**}}\leqslant C \|(1+r_b)^{4}\bar{P}_{g_b}h\|_{r_b^{-2}C^{\alpha}_\beta}.\label{inverse ALE term 4}
    \end{equation}
\end{lem}
\begin{proof}
    By the theory of elliptic operators in weighted Hölder spaces (see for instance \cite[Chapter 2]{pr}), the operator $\bar{P}_{g_b}: (1+r_b)^{-4}C^{2,\alpha}_{-\beta} \to (1+r_b)^{-4}r_b^{-2}C^{\alpha}_{-\beta}$ is Fredholm with kernel $\mathbf{O}(g_b)$ and cokernel $\mathbf{O}(g_b)$ because there is no other exceptional value than $0$ between $2$ and $-4$. This implies that $\bar{P}_{g_b}: (1+r_b)^{-4}C^{2,\alpha}_{-\beta}\cap \mathbf{O}(g_b)^\perp \to (1+r_b)^{-4}r_b^{-2}C^{\alpha}_{-\beta}\cap \mathbf{O}(g_b)^\perp $ is invertible with a bounded inverse. 
    
    Moreover, we have $\bar{P}_{g_b}^{-1}\big((1+r_b)^{-4}r_b^{-2}C^{\alpha}_{\beta}\big) = C^{2,\alpha}_{\beta,**}$ since $-4$ is the first negative exceptional value for $\bar{P}$, and the stated inequality comes from the fact that the inverse is bounded.
\end{proof}
Similarly for vector fields, we have the following result.

\begin{lem}\label{controle inverse mise jauge ALE}
    Let $(N,g_b)$ be an ALE orbifold. Then, there exists $C>0$ such that for any vector field $X$ on $N$, we have
    \begin{equation}
        \|X\|_{r_bC^{3,\alpha}_{\beta,**}}\leqslant C \|(1+r_b)^{4}\delta_{g_b}\delta_{g_b}^*X\|_{r_b^{-1}C^{1,\alpha}_\beta}.\label{inverse delta ALE term 4}
    \end{equation}
\end{lem}

In particular the analysis of Theorem \ref{fcts inv einst général} and Proposition \ref{Mise en Reduced divergence-free} extends to the case where $(M_o,g_o)$ is a Ricci-flat ALE orbifold and where the norm $C^{2,\alpha}_{\beta,*}(g_o)$ is replaced by $C^{2,\alpha}_{\beta,**}(g_o)$ thanks to Lemma \ref{inverse P ALE r-4} and $r_oC^{3,\alpha}_{\beta,*}(g_o)$ is replaced by $r_oC^{3,\alpha}_{\beta,**}(g_o)$ thanks to Lemma \ref{controle inverse mise jauge ALE}. Indeed, all of the controls are local around the singular points or coming from an estimate on the inverse on the rest of the orbifold exactly like \eqref{inverse ALE term 4} and \eqref{inverse delta ALE term 4}. For the operator $\bar{P}$, this yields the following control on the asymptotic terms on the ALE end.
\begin{cor}\label{control termes ordre 4 nouveau poids}
 Let $(N,g_b)$ be a Ricci-flat ALE orbifold, and denote $(N^B,g^B_t)$ a naïve desingularization of $(N,g_b)$ by a tree of Ricci-flat ALE orbifolds glued according to a pattern $B$ with relative scales $t$.
    
    For $v$ and $t$ small enough depending on the constants of Lemma \ref{Contrôle P espace modèles}, let $\bar{g}_{t,v}$ be the unique metric (according to Theorem \ref{fcts inv einst général}) satisfying for $\epsilon>0$ small enough:
    \begin{enumerate}
        \item $\|g^B_t- \Bar{g}_{t,v}\|_{C^{2,\alpha}_{\beta,*}(g^B_t)}<2\epsilon$,
        \item $ (g^B_t+v)- \Bar{g}_{t,v} $ is $L^2(g^B_t)$-orthogonal to $\tilde{\mathbf{O}}(g^B_t)$, and
        \item $\pi_{\tilde{\mathbf{O}}(g^B_t)^\perp}\mathbf{\Phi}_{g^B_t}(\Bar{g}_{t,v})= 0$.
    \end{enumerate}
    Then, for any $0<\beta<1$, we have $$\Bar{g}_{t,v} = g_e +\bar{H}^4_{t,v} + \mathcal{O}(r_B^{-4-\beta}),$$
    for $|\bar{H}^4_{t,v}|\sim r_B^{-4}$ and $\bar{H}^4_{t,v}\to \bar{H}^4$, the asymptotic terms of $(N,g_b)$ as $(t,v)\to(0,0)$.
\end{cor}
Similarly, using Lemma \ref{controle inverse mise jauge ALE}, we can put our ALE metrics in gauge with respect to each other. The nonlinear terms are taken care of like in Proposition \ref{Mise en Reduced divergence-free} by noting that the weight of our norm, $(1+r_b)^4r_b^{-1}$, is this time also larger than $1$ at infinity.
\begin{cor}\label{Mise en Reduced divergence-free ALE}
Let $0<\beta<1$, let $(N,g_b)$ be a Ricci-flat ALE orbifold, and denote $(N^B,g^B_t)$ a naïve desingularization of $(N,g_b)$ by a tree of Ricci-flat ALE orbifolds glued according to a pattern $B$ with relative scales $t$.

Then, there exist $\epsilon_B,\tau_B,C_B>0$ which only depend on the metrics $g_b$ and the elements of $B$ such that for $t_{\max}\leqslant\tau_B$ and for any metric $g$ satisfying $\|g-g^B_t\|_{C^{2,\alpha}_{\beta,**}(g^B_t)}\leqslant \epsilon_B$, there exists a unique vector field $X$ on $M$ for which, 
    $$\tilde{\delta}_{g^B_t}(\exp_X^*g)=0,$$
    where $\exp_X$ is the diffeomorphism $\exp_X: x\in M \mapsto \exp_x^{g^B_t}(X(x))$.
    
    We moreover have, $\|X\|_{r_BC^{3,\alpha}_{\beta,**}(g^B_t)}\leqslant C_B \|(1+r_B)^{4}\tilde{\delta}_{g^B_t}(g-g^B_t)\|_{r^{-1}_BC^{1,\alpha}_{\beta}(g^B_t)}$, and therefore, there exists $\eta: \mathbb{R}^+\to \mathbb{R}^+$ with $\lim_0\eta=0$ such that we have 
    $$ \|\exp_X^*g-g\|_{C^{2,\alpha}_{\beta,**}(g^B_t)}\leqslant \eta\big(\|g-g^B_t\|_{C^{2,\alpha}_{\beta,**}(g^B_t)}\big).$$
\end{cor}

\subsection{Approximate Einstein modulo obstructions metric}
Let $(M_o,g_o)$ be an Einstein orbifold and let $p$ be one of its singular points whose singularity model is $\mathbb{R}^4\slash\Gamma$.

Let us now construct a good approximation of the metric $\hat{g}_{t,v}$ of Theorem \ref{fcts inv einst général}, which we will denote $g^A_{t,v}$. This is a crucial step to understand  and approximate the obstructions coming from the Ricci-flat ALE spaces appearing at a singular point $p\in M_o$.

In all of this section, we will assume the following properties:
\begin{itemize}
    \item $(M,g^D_t)$ is a naïve desingularization of $(M_o,g_o)$,
    \item at the singular point $p\in M_o$, there is only one Ricci-flat ALE \textit{manifold} $(N,g_b)$ glued, and therefore no tree of singularity,
    \item $(N,g_b)$ is an \textit{integrable} Ricci-flat ALE manifold.
\end{itemize}

Consider $S$ the complement of $\{p\}$ among the singular points of $M_o$, let $(M_{S},g^D_{S} = g^D_{S,t_S})$ be a naïve partial desingularization of $(M_o,g_o)$ which only leaves the point $p$ singular and let $(M_{S},\hat{g}_{S} = \hat{g}_{S,t_S,v_S})$ be the perturbation of $(M_{S},g^D_{S}+v_{S})$ orthogonally to $\tilde{\mathbf{O}}(g^D_{S})$ of Theorem \ref{fcts inv einst général} satisfying 
$$\mathbf{\Phi}_{g^D_{S}}(\hat{g}_{S}) = \mathbf{o}_S\in \tilde{\mathbf{O}}(g^D_{S}).$$
At $p$, the metric $\hat{g}_{S}$ has the following development in local coordinates where it is in divergence-free gauge with respect to $g_e$,
\begin{equation}
    \hat{g}_{S} = g_e + \Hat{H}_{S} + \mathcal{O}(r_o^3)
\end{equation}
and we know that $\mathbf{\Phi}_{g^D_{S}}(\hat{g}_{S}) = \mathbf{o}_{S} = \mathbf{O}_{S} + \mathcal{O}(r_e^2)$ with $|\mathbf{O}_{S}|_{g_e}\sim r_e^0$, $\textup{tr}_{g_e}\mathbf{O}_{S} = 0$ and $\delta_{g_e}\mathbf{O}_{S} = 0$. We therefore have
\begin{align}
    0=&\;\mathbf{\Phi}_{g^D_{S}}(\hat{g}_{S})-\mathbf{O}_{S} = \lambda g_e + \bar{P}_e(\Hat{H}_{S})-\mathbf{O}_{S}+ \mathcal{O}(r_e^2),\label{dvp pt sing}
\end{align}
where $\lambda = \frac{n-2}{2n}\overline{\R}(\hat{g}_{S})$.
Consequently $\bar{P}_e\hat{H}_S + \lambda g_e = \mathbf{O}_{S}$, where $\mathbf{O}_{S}$ is the limit of $\mathbf{o}_{S}$ at $p$.

For some small $v_p\in \mathbf{O}(g_b)$, we will glue a Ricci-flat deformation $\bar{g}_{b,v_p}$ of $g_b$ at a scale $t_p$ to $\hat{g}_{S}$. To obtain a better estimate, we will extend the quadratic terms of $\hat{g}_S$ in order to minimize the error in the gluing.

\begin{prop}\label{terme quadratique with obst}
	    Let $(N,g_b)$ a Ricci-flat ALE orbifold asymptotic to $\mathbb{R}^4\slash\Gamma$, $\hat{H}_S$ a quadratic symmetric $2$-tensor on $\mathbb{R}^4\slash\Gamma$, $\lambda\in \mathbb{R}$ and $\mathbf{O}_{S}$ a constant symmetric $2$-tensor on $\mathbb{R}^4\slash\Gamma$ such that we have: 
		$$\bar{P}_{e}\hat{H}_S+\lambda g_e = \mathbf{O}_{S}.$$
		Then, there exists a 2-tensor $\hat{h}_S$, and for $(\mathbf{o}_j)_j$ an $L^2$-orthonormal basis of $\mathbf{O}(g_b)$, real numbers $\hat{\lambda}_j$ such that $(\hat{h}_S,\hat{\lambda}_j)$ is a solution of 
		\begin{equation}
		     \left\{
    \begin{array}{ll}
        \bar{P}_b\hat{h}_S+\lambda g_b =& \chi \mathbf{O}_{S} + \sum_j \hat{\lambda}_j \mathbf{o}_j, \\
        |\hat{h}_S - \chi \hat{H}_S|_{g_b} =&\mathcal{O}(r_b^{-2}),
    \end{array}
\right.\label{equation h2}
		\end{equation}
	    where $\chi$ is a cut-off function supported in a neighborhood of the infinity of $(N,g_b)$ where the ALE coordinates are defined, and where 
	    \begin{equation}
	        \hat{\lambda}_j:= -\int_{\mathbb{S}^3\slash\Gamma} \big(3\langle \hat{H}_S, O^4_j \rangle_{g_{e}} + O_j^4\big(\nabla_e\textup{tr}_e\hat{H}_S,\partial_{r_e}\big) \big) dv_{\mathbb{S}^3\slash\Gamma} + \int_N \chi\langle \mathbf{O}_{S}, \mathbf{o}_j \rangle_{g_b} dv_b.\label{hat lambda j}
	    \end{equation}
		The set of solutions to the above equation \eqref{equation h2} is $(\hat{h}_S + \mathbf{O}(g_b),\hat{\lambda}_j)$.
	\end{prop}
	\begin{proof}
    We have $\bar{P}_b(\chi \hat{H}_S)+\lambda g_b - \chi \mathbf{O}_{S}\in r_b^{-2}C^\alpha_\beta(g_b)$ for $0<\beta<1$ because in a neighborhood of infinity, $g_b-g_e = \mathcal{O}(r_b^{-4})$ together with the derivatives. Lemma \ref{Contrôle P espace modèles} also holds by replacing the operator $P_b$ by the operator $\bar{P}_b$ as a consequence of Theorem \ref{fcts inv einst général} in the case where $(M_o,g_o) = (N,g_b)$ is a Ricci-flat ALE manifold. Consequently, there exists $\hat{h}'\in C^{2,\alpha}_\beta(g_b)$ such that $\bar{P}_b(\chi\hat{H}_S + \hat{h}') + \lambda g_b = \chi \mathbf{O}_{S} +\sum_j\hat{\lambda}_j \mathbf{o}_j$ with
    \begin{equation}
        \hat{\lambda}_j = -\int_{\mathbb{S}^3\slash\Gamma} \big(3\langle \hat{H}_S, O^4_j \rangle_{g_{e}} +\frac{1}{2} O_j^4\big(\nabla tr\hat{H}_S,\partial_{r_e}\big) \big)dv_{\mathbb{S}^3\slash\Gamma} + \int_N \chi\langle \mathbf{O}_{S}, \mathbf{o}_j \rangle_{g_b} dv_b,\label{exp bar lambdai}
    \end{equation}
    where $O^4_j\sim r_b^{-4}$ is the first term of the development of $\mathbf{o}_j$ at infinity. Indeed, by integration by parts, and using the fact that $\textup{tr}_b \mathbf{o}_j = \langle g_b,\mathbf{o}_j \rangle_{g_b} = 0$ and $\delta_b\mathbf{o}_j=0$, we get,
\begin{align*}
		    \langle \bar{P}_b(\chi \hat{H}_S), \mathbf{o}_j \rangle_{L^2} =&\; \frac{1}{2}\lim_{\rho\to\infty} \int_{r_b\leqslant\rho}\big\langle \nabla^*_b\nabla_b (\chi \hat{H}_S)- \nabla_b^2 (\textup{tr}_b \chi \hat{H}_S)\;,\;\mathbf{o}_j \big\rangle dv_{g_b} \\
		    =&\; \frac{1}{2}\lim_{\rho\to \infty}\int_{r_b=\rho}\big(\big\langle \chi \hat{H}_S\;,\;\nabla_n \mathbf{o}_j \big\rangle-\big\langle \nabla_n (\chi \hat{H}_S)\;,\;\mathbf{o}_j \big\rangle\\
		    &+ \mathbf{o}_j(\nabla_b(\textup{tr}_b\chi \hat{H}_S)\;,\; \partial_{r_e} )\big)dS_{\rho}\\
		    =&\; -\int_{\mathbb{S}^3\slash\Gamma} \big(3\langle \hat{H}_S, O^4_j \rangle_{g_{e}} + \frac{1}{2}O_j^4\big(\nabla_e \textup{tr}_e\hat{H}_S,\partial_{r_e}\big) \big)dv_{\mathbb{S}^3\slash\Gamma}.
\end{align*}
    Now, the integral $\int_N \chi\langle \mathbf{O}_{S}, \mathbf{o}_j \rangle_{g_b} dv_b$ converges even if $\langle \mathbf{O}_{S}, \mathbf{o}_j \rangle_{g_b} = \mathcal{O}(r_b^{-4})$. Indeed, in ALE coordinates, $r_b^4\mathbf{o}_j = \phi_{ij}dx^idx^j + \mathcal{O}(r_b^{-1})$, where the $\phi_{ij}: \mathbb{S}^3\to \mathbb{R}$ are nonconstant eigenfunctions and therefore have zero mean values, hence, $\int_{\{r_b=\rho\}}\langle \mathbf{O}_{S}, \mathbf{o}_j \rangle_{g_b}dS_\rho = \mathcal{O}(\rho^{-2})$ and the integral converges.  The values of $\hat{\lambda}_j$ from \eqref{exp bar lambdai} therefore ensure that we have 
    $$\bar{P}_b(\chi \hat{H}_S)+\lambda g_b - \chi \mathbf{O}_{S} -\sum_j\hat{\lambda}_j \mathbf{o}_j \perp_{g_b} \mathbf{O}(g_b).$$
\end{proof}
Choosing $t_S,t_p>0$, $v_S\in\tilde{\mathbf{O}}(g^D_{S,t_S})$ and $v_p\in\mathbf{O}(g_b)$, by gluing  metric $\bar{g}_{b,v_p}$ at scale $t_p$ to $\hat{g}_{S,v_S,t_S}$, we reach all of the gauges of Theorem \ref{fcts inv einst général}, that is we attain some element of $(g^D_t+v)+\tilde{\mathbf{O}}(g^D_t)^\perp$ for any small $v$ and $t$.
\\

We define $\tilde{\mathbf{O}}_{t_p}(\bar{g}_{b,v_p})$ as the cut-off of the elements of $\mathbf{O}(\bar{g}_{b,v_p})$ as in Definition \ref{def obst tronq} at scale $t_p$. We denote $\hat{h}_S(t,v)$ the $2$-tensor satisfying $ \hat{h}_S(t,v)\perp_{\bar{g}_{b,v_p}}\tilde{\mathbf{O}}_{t_p}(\bar{g}_{b,v_p})$ obtained in Proposition \ref{terme quadratique with obst} with the Ricci-flat ALE metric $\bar{g}_{b,v_p}$ and $\hat{H}_S$ the quadratic terms of $\hat{g}_{S}=\hat{g}_{S,t_S,v_S}$.

\begin{defn}[Approximate metric $g_{t,v}^A$]
    Let $v\in \tilde{\mathbf{O}}(g^D_t)$ for $(M,g^D_t)$ a naïve desingularization of $(M_o,g_o)$.

    The Riemannian manifold $(M,g^A =g^A_{t,v})$ is obtained by naïve gluing (Definition \ref{def naive desing}) of $(N,\bar{g}_{b,v_p} + t_p\hat{h}_S(t,v))$ to $(M_{S},\hat{g}_{S,t_S,v_S})$ at scale $t_p>0$.
    
    The numbers $t_p,t_S>0$, $v_p\in \mathbf{O}(g_b)$ and $v_S\in\tilde{\mathbf{O}}(g^D_S)$ are chosen in order to have $g^A_{t,v}-(g^D_t+v)\perp \tilde{\mathbf{O}}(g^D_t)$.
\end{defn}

\subsection{Better approximation and obstructions}

The obstruction will come from the better controls of $g^A_{t,v}$ and the following proposition.

\begin{prop}\label{meilleure approx and obst}
    Let $0<\alpha<1$, and $(M,g^D_t)$ be a naïve desingularization, and assume that for $t_{\max}<\tau$, the metric $(M,\Hat{g} = \Hat{g}_{t,v})$ obtained by Theorem \ref{fcts inv einst général} is an \textit{Einstein metric} (without obstructions). Then, there exists $\epsilon>0$ and $C>0$ only depending on $g_o$ and the $g_{b_j}$, such that, denoting $\Psi (g^A_{t,v} ):=\pi_{\tilde{\mathbf{O}}(g^D)^\perp} \mathbf{\Phi}_{g^D_t}(g^A_{t,v})$ and $\mathbf{o}^A_{t,v} := \pi_{\tilde{\mathbf{O}}_{t_p}(\bar{g}_{t_p,v_p})} \mathbf{\Phi}_{g^D_t}(g^A_{t,v} )$, we have
    \begin{equation}
        \|\hat{g}_{t,v} -g^A_{t,v} \|_{C_{\beta,*}^{2,\alpha}(g^D_t)}\leqslant C\big\|\Psi (g^A_{t,v} ) \big\|_{r^{-2}_DC_{\beta}^{\alpha}(g^D_t)}, \label{Meilleure approximation}
    \end{equation}
    and
    \begin{align}
        \|\mathbf{o}^A_{t,v}\|_{L^2(g^D)} \leqslant \; \big(&\| \Psi (g^A_{t,v} ) \|_{r^{-2}_DC_{\beta}^{\alpha}(g^D)}+ t_{p}^{\frac{1}{2}+\frac{\beta}{4}}\big) \| \Psi (g^A_{t,v} ) \|_{r^{-2}_DC_{\beta}^{\alpha}(g^D)}.\label{obstruction explicite}
    \end{align}
\end{prop}
\begin{proof}
    Let us denote $g^A := g^A_{t,v}$, $\hat{g}:=\hat{g}_{t,v}$ and $h^A:= \hat{g}-g^A$ as well as $\bar{g}_b = \bar{g}_{b,v_p}$ for simplicity. The inequality \eqref{Meilleure approximation} is a direct consequence of the proof of Theorem \ref{fcts inv einst général} because the proof uses an inverse function theorem. Indeed, at the linear level, by Lemma \ref{Contrôle P espace modèles}, we have
    \begin{equation}
        \|\hat{g} -g^A \|_{C_{\beta,*}^{2,\alpha}(g^D)}\leqslant C\big\|\pi_{\tilde{\mathbf{O}}(g^D)^\perp}\bar{P}_{g^D}(\hat{g}-g^A) \big\|_{r^{-2}_DC_{\beta}^{\alpha}(g^D)}.\label{linear fcts inv}
    \end{equation}
    Since $ \Psi(\hat{g}) = 0 $, the controls of the nonlinear terms of $\Psi$ imply that
    \begin{align}
        \|\Psi(g^A)\|_{r^{-2}_DC_{\beta}^{\alpha}(g^D)}\geqslant&\; \big\|\pi_{\tilde{\mathbf{O}}(g^D)^\perp}\bar{P}_{g^D}(\hat{g}-g^A) \big\|_{r^{-2}_DC_{\beta}^{\alpha}(g^D)}\nonumber\\
        &- C\big(\|\hat{g}-g^D \|_{C_{\beta,*}^{2,\alpha}(g^D)} +\|g^A-g^D \|_{C_{\beta,*}^{2,\alpha}(g^D)}\big)\|\hat{g} -g^A \|_{C_{\beta,*}^{2,\alpha}(g^D)}.\label{nonlin fcts inv}
    \end{align}
    We moreover know that $\|g^A-g^D \|_{C_{\beta,*}^{2,\alpha}(g^D)}\leqslant2\epsilon $  and $ \|\hat{g}-g^D \|_{C_{\beta,*}^{2,\alpha}(g^D)} = \mathcal{O} (t_{\max}^{\frac{2-\beta}{4}})$ thanks to the proof of Theorem \ref{fcts inv einst général} and by assumption. Therefore, choosing $\epsilon$ and $t_{\max}$ small enough and putting the estimates \eqref{linear fcts inv} and \eqref{nonlin fcts inv} together yields the expected control \eqref{Meilleure approximation}.
    
    Let us focus on the zone $N^t$ where the elements of $\tilde{\mathbf{O}}_{t_p}(\bar{g}_{b,v_p})$ are supported. For $\tilde{\mathbf{o}}\in \tilde{\mathbf{O}}_{t_p}(\bar{g}_{b,v_p})$, we have
    \begin{align}
        \big|\big\langle\Bar{P}_{g^A}h^A , \tilde{\mathbf{o}}\big\rangle_{L^2(g^D)}\big|\nonumber
        &\leqslant \big|\big\langle(\Bar{P}_{g^A} - \Bar{P}_{t_p\bar{g}_b})h^A , \tilde{\mathbf{o}}\big\rangle_{L^2(t_p\bar{g}_b)}\big|+\big|\big\langle\Bar{P}_{t_p\bar{g}_b}h^A ,\tilde{\mathbf{o}}\big\rangle_{L^2(t_p\bar{g}_b)}\big|\nonumber\\
        &= \big|\big\langle(\Bar{P}_{g^A} - \Bar{P}_{t_p\bar{g}_b})h^A , \tilde{\mathbf{o}}\big\rangle_{L^2(t_p\bar{g}_b)}\big|+\big|\big\langle t_p^{-1}h^A ,\Bar{P}_{\bar{g}_b}\tilde{\mathbf{o}}\big\rangle_{L^2(\bar{g}_b)}\big|,\label{premiere partie est PgAhAo}
  \end{align}
  since $\bar{P}_{\bar{g}_{b}}$ is self dual and since the $L^2$-product of $2$-tensor is invariant by rescaling in dimension $4$ as well as the rescaling behavior of $\bar{P}$. Now, we have the following estimates on $N^{t} = \{r_b<2t_p^{-\frac{1}{4}}\} \subset \{r_D<2t_p^\frac{1}{4}\} $, for $k\in\{0,1,2\}$, there exists $C>0$ such that:
  \begin{enumerate}
      \item $r_D^k|\nabla^k_{t_p\bar{g}_b}(g^A-t_p\bar{g}_b)|_{t_p\bar{g}_b}\leqslant C r_D^2$ since $g^A=t_p\bar{g}_b+t_p^2\hat{h}_S$ on $N^{16t}$ by construction and because of the controls of the cut-off functions,
      \item $r_b^k|\nabla^k_{\bar{g}_b}\tilde{\mathbf{o}}|\leqslant C r_b^{-4}$ as well as $\bar{P}_{\bar{g}_b}\tilde{\mathbf{o}}= 0$ on $N^{16t} \subset \{r_D<t_p^\frac{1}{4}\}$, and
      \item $r_D^k|\nabla^k_{t_p\bar{g}_b} h_A|_{t_p\bar{g}_b}\leqslant C(t_p^{-\frac{1}{2}}r_D)^{-\beta} \|\Psi(g^A)\|_{r^{-2}_DC_{\beta}^{\alpha}(g^D)}$ by \eqref{Meilleure approximation}.
  \end{enumerate}
  We can therefore control the terms of \eqref{premiere partie est PgAhAo} in the following way:
  \begin{align}
        \big|\big\langle\Bar{P}_{g^A}h^A , \tilde{\mathbf{o}}\big\rangle_{L^2(g^D)}\big|&\leqslant C\|\Psi(g^A)\|_{r^{-2}_DC_{\beta}^{\alpha}(g^D)}\int_{t_p^\frac{1}{2}}^{t_p^\frac{1}{4}} \big(\frac{r}{\sqrt{t_p}}\big)^{-\beta}\big(\frac{r}{\sqrt{t_p}}\big)^{-4} r^3dr\nonumber\\
        &+ \|\Psi(g^A)\|_{r^{-2}_DC_{\beta}^{\alpha}(g^D)}\int_{t_p^{-\frac{1}{4}}}^{2t_p^{-\frac{1}{4}}}t_p^{-1} r^{-\beta} r^{-6} r^3dr\nonumber\\
        &\leqslant C\|\Psi(g^A)\|_{r^{-2}_DC_{\beta}^{\alpha}(g^D)} \big( t_p^{2+\frac{\beta}{4}}+ t_p^{\frac{1}{2}+\frac{\beta}{4}} \big)\nonumber\\
        &\leqslant C \|\Psi(g^A)\|_{r^{-2}_DC_{\beta}^{\alpha}(g^D)} t_p^{\frac{1}{2}+\frac{\beta}{4}}.\label{dernière partie est PgAhAo}
  \end{align}
    Let $\mathbf{o}^A= \mathbf{o}^A_{t,v}:=\pi_{\tilde{\mathbf{O}}_{t_p}(\bar{g}_{b,v_p})}\mathbf{\Phi}_{g^D}(g^A)$, since $\mathbf{\Phi}_{g^D}(\hat{g})=0$, and $d_{g^A}\mathbf{\Phi}_{g^D} = \Bar{P}_{g^A }$. We have
    \begin{align}
        -\|\mathbf{o}^A\|_{L^2(g^D)}^2 = \langle\mathbf{\Phi}_{g^D}(\Hat{g} )-\mathbf{o}^A,\mathbf{o}^A\rangle= \langle\Psi (g^A ) + \Bar{P}_{g^A }h^A +Q_{g^A }(h^A ), \mathbf{o}^A\rangle,\label{dvp gA hat g}
    \end{align}
    where the non-linear terms $Q_{g^A }(h^A )= \mathbf{\Phi}_{g^D}(\hat{g} )-\mathbf{\Phi}_{g^D}(g^A ) - \Bar{P}_{g^A }(h^A )  $ satisfy
    \begin{equation}
        \|Q_{g^A }(h^A )\|_{r_D^{-2}C^{0}(M)}\leqslant C \|h^A \|_{{C_{\beta,*}^{0}}}^2 \leqslant C \big\| \Psi (g^A ) \big\|_{r^{-2}_DC_{\beta}^{0}(g^D)}^2,\label{est quad gA}
    \end{equation}
    and by definition of the weighted norms, since $r_D^{-2}C^\alpha_\beta(g^D)\subset L^2(g^D)$, we therefore have for $\mathbf{o}\in \tilde{\mathbf{O}}(g^D)$, 
    $$\big|\big\langle \mathbf{o}, Q_{g^A }(h^A ) \big\rangle_{L^2(g^D)} \big|\leqslant C\|\mathbf{o}\|_{L^2}\big\|\Psi (g^A )\big\|_{r^{-2}_DC_{\beta}^{0}(g^D)}^2.$$
    Hence, since $\Psi (g^A )\perp_{g^D} \mathbf{o}^A$ by definition of $\Psi $, we have by \eqref{dvp gA hat g}, and thanks to \eqref{dernière partie est PgAhAo} and \eqref{est quad gA} we therefore have 
    \begin{align*}
        \|\mathbf{o}^A\|_{L^2(g^D)} \leqslant \; C\big(&\| \Psi (g^A ) \|_{r^{-2}_DC_{\beta}^{\alpha}(g^D)}
    + t_{p}^{\frac{1}{2}+\frac{\beta}{4}}\big) \| \Psi (g^A ) \|_{r^{-2}_DC_{\beta}^{\alpha}(g^D)}.
    \end{align*} 
\end{proof}

\begin{rem}
    The inequality \eqref{Meilleure approximation} means that if we are able to construct a metric $g^A $ such that $\Psi(g^A)$ is small, then $g^A $ is a good approximation of $\hat{g} $, the only zero of $\Psi$. This allows us to approximate the metrics $\hat{g}$, and therefore degenerating Einstein metrics with an arbitrarily good precision.
    
    The inequality \eqref{obstruction explicite} is an obstruction result. Indeed, if we construct a good approximation $g^A $, for which $ \Psi(g^A )$ is small, but without having $\mathbf{o}^A$ small, then $ \hat{g} $ cannot be Einstein, and the metric $g^A$ (and therefore $g^D$) cannot be perturbed to an Einstein metric orthogonally to $\tilde{\mathbf{O}}(g^D)$.
\end{rem}

Let us now control the above quantities of \eqref{obstruction explicite} for $g^A_{t,v}$.

\begin{prop}\label{controle approximation desing part}
    For $(t_{S},t_p)>0$ small enough and $k\in \mathbb{N}$, denoting $(\tilde{\mathbf{o}}_j)_j$ an orthonormal basis of $\tilde{\mathbf{O}}_{t_p}(\bar{g}_{t_p,v_p})$ there exist real numbers $(\hat{\lambda}_j = \hat{\lambda}_j(t,v))_j$ and $C_k>0$ such that we have
    \begin{equation}
        \big\|\pi_{\tilde{\mathbf{O}}(g^D)^\perp}\mathbf{\Phi}_{g^D}(g^A_{t,v}) \big\|_{r^{-2}_DC^k_{\beta}(g^D)}\leqslant C_k t_p^{\frac{3-\beta}{4}},\label{Controle operateur hat}
    \end{equation}
    and, for any $i_0$, we have
    \begin{equation}
        \Big\langle\mathbf{\Phi}_{g^D}(g^A_{t,v})-t_p \sum_{j} \hat{\lambda}_j \tilde{\mathbf{o}}_{i}, \tilde{\mathbf{o}}_{i_0} \Big\rangle_{L^2(g^D)}\leqslant C_0t_p^{\frac{5}{4}}.\label{Controle proj obst part}
    \end{equation}
\end{prop}
\begin{rem}
    If we did not use the partial Einstein modulo obstructions desingularization $\hat{g}_{S}$, we would only have a control with powers of $t_{\max}$ instead of $t_p$. In particular, we would not be able to later prove that an obstruction holds at \emph{all} of the singular points, but just at one of them.
\end{rem}
\begin{proof}
    Let us again use the following notations along this proof: $g^A := g^A_{t,v}$, $\bar{g}_b = \bar{g}_{b,v_p}$, $\hat{g}_{S}=\hat{g}_{S,t_S,v_S}$ for simplicity.

    On $M_{S}^{16t}$, we have by assumption
    \begin{equation}
        \mathbf{\Phi}_{g^D_{S}}(\hat{g}_{S}) = \mathbf{o}_S\in \tilde{\mathbf{O}}(g^D_{S}),\label{controle on MSo}
    \end{equation}
    and on $N^{16t}$, we have
    \begin{align}
        \mathbf{\Phi}_{t_p\bar{g}_b}(t_p(\bar{g}_{b,v_p} + t_p\hat{h}_S)) = \chi \mathbf{O}_{S} +t_p\sum_j\hat{\lambda}_j\mathbf{o}_j + \mathcal{O}(r_D^2).\label{contrôle on N Phi hat}
    \end{align}
    Now, since at the point $p$, the development of $\hat{g}_{S}$ in local coordinates where the metric is divergence-free gauge with respect to $g_e$ is
    \begin{equation}
        \hat{g}_{S} = g_e + \hat{H}_S + \mathcal{O}(r_o^3),\label{dv hat g}
    \end{equation} and since $t_p(\bar{g}_{b,v_p}+t_p\hat{h}_S)$ has the following development for $t_p^{-\frac{1}{4}}\leqslant r_b\leqslant 2t_p^{-\frac{1}{4}}$
    \begin{equation}
        t_p(\bar{g}_{b,v_p}+t_p\hat{h}_S) = g_e + \hat{H}_S + \mathcal{O}(t_p^2r_b^{-2} +t_pr_b^{-4}),\label{dvp hat gb+th}
    \end{equation}
    with corresponding controls for the derivatives up to order $2$. On the annulus of radii $r_D=t_p^{\frac{1}{4}}$ and $r_D=2t_p^\frac{1}{4}$, recalling that $r_D = r_o = \sqrt{t_p}r_b$, for $k\in \mathbb{N}$, we have 
    \begin{align*}
        r_D^{k}\big|\nabla_{g^D}^k\left(\hat{g}_S - (t_p\bar{g}_{b,v_p}+t_p\hat{h}_S)\right) \big|_{g^D} &= \mathcal{O}(r_D^3 + t_p^2(t_p^{-\frac{1}{2}}r_D)^{-2}+ t_p(t_p^{-\frac{1}{2}}r_D)^{-4})\\
        &= \mathcal{O}(t_p^\frac{3}{4} + t_p^\frac{3}{2} + t_p)\\
        &=\mathcal{O}(t_p^\frac{3}{4})
    \end{align*}
    thanks to \eqref{dv hat g} and \eqref{dvp hat gb+th}. By definition of the gluing, this yields
    \begin{align}
        r_D^{2+k}\big|\nabla^k_{g^D_t}\big(\pi_{\tilde{\mathbf{O}}(g^D_t)}\mathbf{\Phi}_{g^D_t}(g^A_{t,v})\big)\big| = \mathcal{O}\big(t_p^\frac{3}{4}\big),\label{contrôle recollement hat}
    \end{align}
    According to \eqref{controle on MSo}, \eqref{contrôle on N Phi hat} and \eqref{contrôle recollement hat}, we have the estimate \eqref{Controle operateur hat}. 
    
    Finally, we have the control \eqref{Controle proj obst part} thanks to \eqref{dvp hat gb+th}, \eqref{contrôle on N Phi hat} and \eqref{contrôle recollement hat}:
    \begin{equation*}
        t_{p}|\hat{\lambda}_j(t,v)|\leqslant C \Big(t_{p}^{\frac{3-\beta}{4}}+t_{p}^{\frac{2+\beta}{4}}\Big)t_{p}^{\frac{3-\beta}{4}} \leqslant C t_{p}^{\frac{5}{4}}
    \end{equation*}
\end{proof}

\begin{lem}\label{variations of bar lambdai}
    Let $\hat{\lambda}_j(t,v)$ be the real numbers of Proposition \ref{terme quadratique with obst} obtained by extending $\hat{H}_S$, the quadratic terms of $\hat{g}_{S} = \hat{g}_{S,t_S,v_S}$ on $(N,\bar{g}_{b,v_p})$, and let $\hat{\lambda}_j$ be the ones obtained by extending the divergence-free quadratic terms of $g_o$ on $(N,g_{b})$. Then as $t,v\to 0$, we have
    \begin{equation}
        |\hat{\lambda}_j(t,v)-\hat{\lambda}_j|\xrightarrow[(t,v)\to (0,0)]{} 0.\label{est bar lambda}
    \end{equation}
\end{lem}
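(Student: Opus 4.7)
The plan is to expand $\hat\lambda_i^j(v)$ and $\bar\lambda_i^j$ using the explicit integral formula from Proposition \ref{terme quadratique with obst}. Denote by $\mathbf{o}_i^v \in \mathbf{O}(\bar g_{b_j,v_j})$ the obstructions with leading $r_{b_j}^{-4}$-term $O_i^{4,v}$ at infinity, and $\mathbf{o}_i^0, O_i^{4,0}$ for the undeformed $g_{b_j}$. Proposition \ref{terme quadratique with obst} expresses $\hat\lambda_i^j(v)$ as the sum of a boundary integral on $\mathbb{S}^3/\Gamma_j$ bilinear in $(\hat H_2,O_i^{4,v})$ and an interior integral $\int_{N_j}\chi\langle \mathbf{O}_o,\mathbf{o}_i^v\rangle_{\bar g_{b_j,v_j}}\,dv$, whereas $\bar\lambda_i^j$ is the analogous expression with $\hat H_2\mapsto\bar H_2(0)$, $O_i^{4,v}\mapsto O_i^{4,0}$, $\bar g_{b_j,v_j}\mapsto g_{b_j}$, and with the interior integral absent (since $g_o$ is Einstein, so $\bar P_e\bar H_2(0)+\lambda g_e=0$ forces the corresponding $\mathbf{O}_o$ to vanish).

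Next I would prove three pointwise estimates on the moving parts. First, because $\bar g_{b_j,v_j}$ depends smoothly on $v_j$ with $\bar g_{b_j,v_j}-g_{b_j}=v_j+\mathcal{O}(\|v_j\|^2 r_{b_j}^{-4})$ by Corollary \ref{integrabilite cas integrable}, the kernel of $P$ on $(N_j,\bar g_{b_j,v_j})$ varies continuously in $v_j$, giving $|\mathbf{o}_i^v-\mathbf{o}_i^0|_{g_{b_j}}\leq C\|v\|_{C^0_0(g^D)}r_{b_j}^{-4}$ and in particular $|O_i^{4,v}-O_i^{4,0}|\leq C\|v\|_{C^0_0(g^D)}$ pointwise on the link. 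Second, Proposition \ref{integrabilite go} applied to the partial desingularization $\hat g_{S_o}$ at $v$ yields the obstruction bound $|\mathbf{O}_o|_{g_e}\leq C\|v\|_{C^0_0(g^D)}$. Third, subtracting $\bar P_e\bar H_2(0)+\lambda g_e=0$ from $\bar P_e\hat H_2+\lambda g_e=\mathbf{O}_o$ gives $\bar P_e(\hat H_2-\bar H_2(0))=\mathbf{O}_o$; since the quadratic-tensor ansatz makes this a finite-dimensional linear problem with $\bar P_e$ injective modulo its kernel, one obtains $|\hat H_2-\bar H_2(0)|_{g_e}\leq C|\mathbf{O}_o|\,r^2\leq C\|v\|_{C^0_0(g^D)}r^2$.

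Finally I would split $\hat\lambda_i^j(v)-\bar\lambda_i^j$ into four pieces by inserting and subtracting $\bar H_2(0)$ and $O_i^{4,0}$ in the boundary integral: (a) pairing $(\hat H_2-\bar H_2(0))$ with $O_i^{4,0}$; (b) pairing $\bar H_2(0)$ with $(O_i^{4,v}-O_i^{4,0})$; (c) the cross-term pairing $(\hat H_2-\bar H_2(0))$ with $(O_i^{4,v}-O_i^{4,0})$; and (d) the interior integral $\int_{N_j}\chi\langle\mathbf{O}_o,\mathbf{o}_i^v\rangle$. The three estimates above yield respectively (a) $\leq C\|v\|$, (b) $\leq C\|\bar H_2(0)\|_{C^0_2(g_o)}\|v\|$, (c) $\leq C\|v\|^2$, and (d) $\leq C\|v\|$; summing and absorbing the pure-linear terms gives the target bound $C(\|v\|_{C^0_0(g^D)}+\|\bar H_2(0)\|_{C^0_2(g_o)})\|v\|_{C^0_0(g^D)}$. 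The main obstacle is verifying the pointwise control on $O_i^{4,v}-O_i^{4,0}$ coming from the metric deformation $v_j$, which requires the fine $r_{b_j}^{-4}$ asymptotic expansion supplied by the integrability assumption via Corollary \ref{integrabilite cas integrable}; once this is in place all four pieces fit within the bilinear-in-$(\|v\|,\|\bar H_2(0)\|)$ target by straightforward bookkeeping.
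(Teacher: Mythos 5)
Your overall strategy — expand $\hat\lambda_i^j(v)-\bar\lambda_i^j$ term-by-term in the integral formula of Proposition~\ref{terme quadratique with obst}, controlling the variation of $\hat H_2$ through the orbifold side and the variation of the $O_i^4$ through the ALE side using Corollary~\ref{integrabilite cas integrable} — is essentially the paper's. The paper organizes the same content as a two-stage chain of intermediate quantities, first isolating the ``pure ALE deformation'' $\hat\lambda_i(v)-\hat\lambda_i(0)$ (same $\hat H_2$, metric changed from $g_{b_j}$ to $\bar g_{b_j,v_j}$), which is what cleanly produces the bilinear factor $(\|v\|+\|\hat H_2\|)\|v\|$ in \eqref{est hat lambda}, and then separately estimating $\hat\lambda_i(0)-\bar\lambda_i(0)$ by passing through $\bar H_2(v_o)$ via \eqref{H2 bar H2} and \eqref{barH2 hatH2}. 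Your pieces (b)+(c) correspond to the first stage and (a)+(d) to the second. Two slips worth flagging. First, for the obstruction $\mathbf{O}_o$ coming from $\hat g_{S_o}$ you invoke Proposition~\ref{integrabilite go}, which applies to the unperturbed Einstein orbifold and gives a \emph{quadratic} bound $O(\|v_o\|^2)$ — the tool the paper actually uses for the partial desingularization is Theorem~\ref{fcts inv partiel général}, whose estimate \eqref{OSo} is only \emph{linear} in $\|v_{S_o}\|$ and carries an additional $t_{\max,S_o}^{(2-\beta)/4}$ term that your account suppresses. The same $t_{\max,S_o}$ dependence is also missing from your bound on $\hat H_2-\bar H_2(0)$, since $\hat g_{S_o}$ differs from $g_o$ not only through $v$ but through the partial gluings at $S_o$ (this is \eqref{barH2 hatH2}). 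Second, your step of ``absorbing the pure-linear terms'' in (a) and (d) into the bilinear target $(\|v\|+\|\bar H_2(0)\|)\|v\|$ is only valid because $\bar H_2(0)$ is a fixed nonzero tensor depending on $g_o$ and $C$ is allowed to depend on $g_o$ — it is worth saying this explicitly, since $C\|v\|\not\leqslant C'(\|v\|+\|\bar H_2(0)\|)\|v\|$ in general as $\|\bar H_2(0)\|\to 0$. The paper itself sidesteps these last two points by stating only the convergence $\hat\lambda_i(v)\to\bar\lambda_i(0)$ as $(t_{\max,S_o},v_{S_o},v)\to 0$ in its final sentence, so your gaps mirror ambiguities already present there.
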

\begin{proof}
    We have the expression
    \begin{align*}
        \hat{\lambda}_j(t,v) =&\; -\int_{\mathbb{S}^3\slash\Gamma} \big(3\langle \hat{H}_S(t_S,v_{S}), O^4_j(v_p) \rangle_{g_{e}} + O_j^4(v_p)\big(\nabla_e\textup{tr}_e\hat{H}_S(t_S,v_{S}),\partial_{r_e}\big) \big) dv_{\mathbb{S}^3\slash\Gamma}\\
    &+ \int_N \chi\langle \mathbf{O}_{S}(t_S,v_{S}), \mathbf{o}_j(v_p) \rangle dv_b
    \end{align*}
     thanks to Proposition \ref{terme quadratique with obst}. Since we want to show that it converges, as $(t,v)\to 0$, to 
    $$\hat{\lambda}_j:= -\int_{\mathbb{S}^3\slash\Gamma} \Big(3\langle \hat{H}_S, O^4_j \rangle_{g_{e}} +\frac{1}{2} O_j^4\big(\nabla_e \textup{tr}_e\hat{H}_S,\partial_{r_e}\big) \Big)dv_{\mathbb{S}^3\slash\Gamma}.$$
    We therefore just have to show that as $(t,v)\to 0$, we have
    \begin{enumerate}
        \item $\hat{H}_{S}(t_S,v_S)\to \hat{H}_{S}$ in $r_e^2C^1_0(g_e)$,
        \item $|\mathbf{O}_S(t_S,v_S)|_{g_e}\to 0 $, and
        \item $O_j^4(v_p)\to O_j^4$ in $r_e^{-4}C^0_0(g_e)$.
    \end{enumerate}
    
    Thanks to Theorem \ref{fcts inv einst général}, we know that $\hat{g}_{S,t_S,v_S}$ converges smoothly to $g_o$ on compacts of $M_o\backslash S$ as $(t_S,v_S)\to (0,0)$, in particular, we have smooth convergence on small neighborhoods of $p$. Therefore, the quadratic terms of the expansion of $\hat{g}_{S,t_S,v_S}$ converge to those of $g_o$ as $(t_S,v_S)\to (0,0)$. Consequently, the obstruction $\mathbf{O}_S(t_S,v_S) = \bar{P}_e\hat{H}_S(t_S,v_S)+\lambda(t_S,v_S) g_e$ also converges to $\bar{P}_e\hat{H}_S+\lambda g_e=0$.
    
    Similarly, thanks to Corollary \ref{control termes ordre 4 nouveau poids}, the asymptotic term of $\bar{g}_{b,v_p}$ converge to the asymptotic terms of $g_b$ as $v_p\to 0$. 
    \end{proof}
    
    \begin{rem}
        We needed to consider \emph{partial} desingularizations to obtain these controls. 
    \end{rem}
\begin{rem}
    Thanks to the computations of \cite[Proposition 4]{biq3}, it turns out that generically (when the self-dual part of the curvature at $p$, $\mathbf{R}_+$ is of rank $2$ and $\Lambda \neq 0$), the difference $\hat{\lambda}_j(t,v)-\hat{\lambda}_j= \langle v,\mathbf{o}_j\rangle\big(\mathbf{o}_j(\mathbf{R}_+)-\Lambda \mathbf{o}_j\big) + \mathcal{O}(\|v\|^2_{C^{2,\alpha}_{\beta,*}})$ does not vanish.
\end{rem}

\begin{rem}
    If there were non integrable infinitesimal deformations of $g_b$, we a priori could not expect to prove an obstruction result by the above techniques. Indeed, the metric has an expansion $\bar{g}_{b,v_p} = g_b+v_p+w + \mathcal{O}(|v_p|^3)$, where $w$ satisfies:
    $$\bar{Q}^{(2)}_{g_b}(v_p,v_p)+\bar{P}_{g_b}(w) = \pi_{\mathbf{O}(g_b)}\bar{Q}^{(2)}_{g_b}(v_p,v_p) \in \mathbf{O}(g_b),$$ and we potentially have $\pi_{\mathbf{O}(g_b)}\bar{Q}^{(2)}_{g_b}(v_p,v_p) = \mathcal{O}(|v_p|^2)$ if $v_p$ is not integrable. By considering the metric $\frac{1}{t_p}g^A_{t,v}$, we have the following development of $\mathbf{\Phi}_{g_b}$ on $N^{16t}$,
    $$\mathbf{\Phi}_{g_b}\big(\frac{1}{t_p}g^A_{t,v}\big) = \mathbf{\Phi}_{g_b}(\Bar{g}_{b,v_p}) +t_p\bar{P}_b(\hat{h}_S) +  t_pQ^{(2)}_{g_{b}}(v_p,\hat{h}_S) + \mathcal{O}(t_p^2).$$
    Up to the order $t_p^2$ there are three different sources of obstructions: 
    \begin{enumerate}
        \item the projection of $\mathbf{\Phi}_{g_b}(\Bar{g}_{b,v_p})=\mathcal{O}(|v_p|^2)$ on $\mathbf{O}(g_b)$,
        \item the projection of $t_pP_b(\hat{h}_S) = \mathcal{O}(t_p)$ on $\mathbf{O}(g_b)$, and
        \item the projection of $t_pQ^{(2)}_{g_b}(v_p,\hat{h}_S)=\mathcal{O}(t_p|v_p|)$ on $\mathbf{O}(g_b)$.
    \end{enumerate}
    Hence, we can only "see" the obstructions which are $\mathcal{O}(t_p)$ if $\Ric(\bar{g}_{b,v_p})=0$, or if $|v_p|^2\ll t_p$.
\end{rem}

By adapting the end of the proof of \cite[Proposition 3.1]{biq1}, we get the following useful result.
\begin{cor}\label{si einst retour a lobst of biq1}
    Let $H_2$ be a quadratic symmetric $2$-tensor satisfying $d_e\Ric(H_2)=\Lambda g_e$, and let $V\sim r_e^3$ be a homogeneous vector field which satisfies $\delta_e\delta^*_eV = -\delta_e H_2$, and define $\hat{H}_S:= H_2+\delta^*_eV$ which satisfies $\delta_e \hat{H}_S = 0$. Then, we have $$\hat{\lambda}_j = \lambda_j,$$ where
    $$\hat{\lambda}_j:= -\int_{\mathbb{S}^3\slash\Gamma} \Big(3\langle \hat{H}_S, O^4_j \rangle_{g_{e}} +\frac{1}{2} O_j^4\big(\nabla_e \textup{tr}_e\hat{H}_S,\partial_{r_e}\big) \Big)dv_{\mathbb{S}^3\slash\Gamma},$$
    and
    $$\lambda_j:= -\int_{\mathbb{S}^3\slash\Gamma} \big(3\langle H_2, O^4_j \rangle_{g_{e}} + O_j^4\big(B_eH_2,\partial_{r_e}\big)\big) dv_{\mathbb{S}^3\slash\Gamma}$$
\end{cor}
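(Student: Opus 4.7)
The goal is to show that replacing the quadratic $2$-tensor $H_2$ by its divergence-free representative $\hat{H}_2 = H_2 + \delta_e^* V$ does not alter the obstruction integrals. Since $\delta_e^* V = \tfrac{1}{2}\mathcal{L}_{V^\sharp} g_e$ is a pure gauge deformation and $g_e$ is Ricci-flat, diffeomorphism invariance forces $d_e\Ric(\delta_e^* V) = \mathcal{L}_{V^\sharp}\Ric(g_e) = 0$. This identity is the engine of the proof.

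First I would simplify $\hat{\lambda}_i$ itself. Using $\delta_e \hat{H}_2 = 0$, the Bianchi operator reduces to $B_e\hat{H}_2 = \tfrac{1}{2}d\,\textup{tr}_e \hat{H}_2$, so that $\tfrac{1}{2}O_i^4(\nabla_e \textup{tr}_e \hat{H}_2,\partial_\rho) = O_i^4(B_e\hat{H}_2,\partial_\rho)$, putting $\hat{\lambda}_i$ into the same form as $\lambda_i$. Substituting $\hat{H}_2 = H_2 + \delta_e^* V$ and subtracting then yields
$$ \hat{\lambda}_i - \lambda_i = -\int_{\mathbb{S}^3\slash\Gamma} \Big(3\langle \delta_e^* V, O_i^4\rangle_{g^{\mathbb{S}^3\slash\Gamma}} + O_i^4(B_e\delta_e^* V,\partial_\rho)\Big) dS, $$
so the task reduces to showing that this sphere integral vanishes.

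Next I would interpret the right-hand side as a boundary limit. Using the standard decomposition of the linearized Ricci operator, $d_e\Ric(h) = P_e h + \delta_e^* B_e h$ on flat space (where $P_e = \tfrac{1}{2}\nabla_e^*\nabla_e$), combined with $d_e\Ric(\delta_e^* V) = 0$, gives $P_e\delta_e^* V = -\delta_e^* B_e \delta_e^* V$. Pair this with $\chi\,\mathbf{o}_i$ and integrate over an annulus $A_e(\epsilon, R) \subset \mathbb{R}^4\slash\Gamma$. Because $P_e$ is self-adjoint and $P_e\mathbf{o}_i = 0$ asymptotically, while $\delta_e\mathbf{o}_i = 0$ and $\textup{tr}_e\mathbf{o}_i = 0$ in the asymptotic region, the two integrations by parts collapse the bulk contributions to boundary terms at $r_e = R$ and $r_e = \epsilon$. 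Homogeneity ($V \sim r_e^3$, $O_i^4 \sim r_e^{-4}$) makes the boundary integral at $r_e = R$ independent of $R$ after normalisation by the round metric on $\mathbb{S}^3\slash\Gamma$, and it reproduces exactly the integrand of $\hat{\lambda}_i - \lambda_i$ — this is precisely the shape established in the derivation of \eqref{exp bar lambdai} above and in the proof of \cite[Proposition 3.1]{biq1}.

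Finally, the boundary term at $r_e = \epsilon$ vanishes in the limit: $|\delta_e^*V|_{g_e} = \mathcal{O}(r_e^2)$ and $|O_i^4|_{g_e} = \mathcal{O}(r_e^{-4})$ make the integrand $\mathcal{O}(r_e^{-2})$ on $S_e(\epsilon)$, while the sphere has $3$-volume $\mathcal{O}(\epsilon^3)$, yielding an $\mathcal{O}(\epsilon)$ contribution. Thus $\hat{\lambda}_i - \lambda_i = 0$. The main technical nuisance is the bookkeeping of the Green-type boundary formula — one must check that all the pieces coming from the $\nabla^2\textup{tr}$ and the $\delta$ terms in $\bar{P}_e$ rearrange into the combination $3\langle\cdot,\cdot\rangle_{g^{\mathbb{S}^3\slash\Gamma}} + O_i^4(B_e(\cdot),\partial_\rho)$ — but this is exactly the calculation already carried out in Proposition~\ref{terme quadratique with obst} and adapted from \cite{biq1}.
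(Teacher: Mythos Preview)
Your reduction is exactly right: after rewriting $\tfrac{1}{2}O_i^4(\nabla_e\textup{tr}_e\hat H_2,\partial_\rho)=O_i^4(B_e\hat H_2,\partial_\rho)$ (using $\delta_e\hat H_2=0$) and subtracting, the whole statement boils down to
\[
\int_{\mathbb{S}^3/\Gamma}\Big(3\langle \delta_e^*V,\,O_i^4\rangle + O_i^4(B_e\delta_e^*V,\partial_\rho)\Big)\,dS \;=\;0,
\]
and you are also right that the identity $d_e\Ric(\delta_e^*V)=0$, i.e.\ $P_e\delta_e^*V=\delta_e^*B_e\delta_e^*V$, is what drives this.

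The gap is in where you integrate. On the flat annulus $A_e(\epsilon,R)\subset\mathbb{R}^4/\Gamma$ the argument collapses to $0=0$. By homogeneity ($\delta_e^*V\sim r_e^{2}$, $O_i^4\sim r_e^{-4}$) the Green-type boundary term you extract is \emph{the same at every radius}; you acknowledge this for $r_e=R$, but then claim the term at $r_e=\epsilon$ is $\mathcal O(\epsilon)$. That scaling is off by one power of $r_e$: the boundary integrand coming from the $P_e$ and $\delta_e^*$ integrations by parts involves one derivative, hence is $\mathcal O(r_e^{-3})$, not $\mathcal O(r_e^{-2})$, and against the $3$-volume $\mathcal O(\epsilon^3)$ it is $\mathcal O(1)$. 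So the inner boundary does not vanish --- it equals the outer one --- and the annulus computation yields no information.

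The fix (and this is precisely how the end of the proof of \cite[Proposition~3.1]{biq1} is organized, and what Proposition~\ref{terme quadratique with obst} here is set up for) is to run the same integration by parts on the ALE space $(N,g_b)$ rather than on the flat cone. Extend $V$ to $N$ by a cut-off $\tilde V=\chi V$ and pair the identity $P_b(\delta_b^*\tilde V)=\delta_b^*B_b(\delta_b^*\tilde V)$ (which holds since $d_b\Ric(\delta_b^*\tilde V)=\mathcal L_{\tilde V}\Ric(g_b)=0$) against the genuine kernel element $\mathbf{o}_i\in\mathbf{O}(g_b)$ over all of $N$. Now there is \emph{no inner boundary}: $\mathbf{o}_i$ is smooth everywhere on $N$, $P_b\mathbf{o}_i=0$, $\delta_b\mathbf{o}_i=0$, $\textup{tr}_b\mathbf{o}_i=0$. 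The only surviving contribution is the sphere at infinity, and that limit is exactly the left-hand side above (computed as in the proof of Proposition~\ref{terme quadratique with obst}), giving the vanishing you need. In short: your strategy is correct, but the integration must take place on $N$, not on a flat annulus, so that the problematic inner boundary simply does not appear.
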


\subsection{Obstruction to the Gromov-Hausdorff desingularization}

We can finally conclude that there are obstructions to the desingularization of some Einstein orbifolds.

\begin{thm}\label{obstruction intégrable tout point}  Let $(M_o,g_o)$ be an Einstein orbifold, and $(M_i,g_i)_i$ a sequence of Einstein manifolds converging to $(M_o,g_o)$ in the Gromov-Hausdorff sense and assume that, at a singular point $p\in M_o$, the possible non-flat blow-up limits are \textit{integrable} Ricci-flat ALE \emph{manifolds} (which implies that there are no tree of singularities forming).

    Then, if we denote $H_2$ the quadratic terms of the development of $g_o$ in geodesic coordinates at $p$, and $(O_j^4)_j$ the $r_b^{-4}$-terms of a basis of $\mathbf{O}(g_b)$, we have:
    \begin{equation}
        \int_{\mathbb{S}^3} \big(3\langle H_2,O_j^4 \rangle+O_j^4(B_eH_2,\partial_{r_e})\big)dv_{\mathbb{S}^3} = 0.\label{equation obst geod}
    \end{equation}
    
\end{thm}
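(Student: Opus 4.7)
By Corollary \ref{proximité désing naive}, for $i$ large enough, $(M_i,g_i)$ is isometric to some $\hat{g}_{\phi_i,t_i,v_i}$ produced by Theorem \ref{fcts inv einst général} from a naïve desingularization $g^{D_i}_{t_i}$ of $(M_o,g_o)$ with $t_i = (t_{i,j})_j \to 0$ and $v_i \in \tilde{\mathbf{O}}(g^{D_i})$ with $\|v_i\|_{C^0_{\beta,*}} \to 0$. Since by assumption the only bubbles are integrable Ricci-flat ALE \emph{manifolds}, there is no tree formation, so at each singular point $p \in M_o$ exactly one $(N_p,g_{b_p})$ is glued at some scale $t_{i,p}\to 0$. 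Because $(M_i,g_i)$ is genuinely Einstein, $\mathbf{\Phi}(\hat{g}_{\phi_i,t_i,v_i}) = 0$ on the nose, so in particular its projection onto $\tilde{\mathbf{O}}_B(\bar{g}^{D_i}_{v_i})$ vanishes.

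\textbf{Construction of a better approximation at one point.} Fix a singular point $p$, and let $S_o$ denote the set of singular points of $M_o$ other than $p$. Let $\hat{g}_{S_o,v_{S_o,i}}$ be the partial Einstein modulo obstructions metric on $M_{S_o}$ from Theorem \ref{fcts inv partiel général}, which leaves $p$ singular. In geodesic coordinates in divergence-free gauge at $p$, expand $\hat{g}_{S_o,v_{S_o,i}} = g_e + \hat{H}_2 + \mathcal{O}(r_o^3)$, and let $(\hat{h}_2,\hat{\lambda}_j(v_{p,i}))$ be associated data from Proposition \ref{terme quadratique with obst} on $(N_p,\bar{g}_{b_p,v_{p,i}})$. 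Gluing $\bar{g}_{b_p,v_{p,i}} + t\hat{h}_2$ to $\hat{g}_{S_o,v_{S_o,i}}$ at scale $t = t_{i,p}$ produces $\hat{g}^A_{p,t}$, satisfying the gauge normalization and, by Proposition \ref{controle approximation desing part},
\begin{equation*}
\big\|\Psi_{v_i}(\hat{g}^A_{p,t})\big\|_{r_D^{-2}C^\alpha_\beta(g^{D_i})} \leqslant C\, t^{(3-\beta)/4},
\qquad
\Big|\Big\langle \mathbf{\Phi}(\hat{g}^A_{p,t}) - t\sum_j \hat{\lambda}_j(v_{p,i})\tilde{\mathbf{o}}_j,\ \tilde{\mathbf{o}}_{j_0}\Big\rangle_{L^2(g^{D_i})}\Big| \leqslant C\, t^{3/2}.
\end{equation*}

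\textbf{Extracting the obstruction in the limit.} Applying the obstruction inequality \eqref{obstruction explicite with v} from Proposition \ref{meilleure approx and obst with jauge} to $g^A_{v_i} = \hat{g}^A_{p,t}$ and using \eqref{controle bar g D v}, we obtain
\begin{equation*}
\|\mathbf{o}^B_{v_i}\|_{L^2(g^{D_i})} \leqslant C\big(t^{(3-\beta)/4} + t_{\max,i}^{(2-\beta)/4} + t_{\max,i}^{1/2}\big)\, t^{(3-\beta)/4}.
\end{equation*}
Since the bubble obstructions $\tilde{\mathbf{o}}_j$ associated to $p$ have $L^2(g^{D_i})$-norms of order $1$ (the $L^2$-norm of $2$-tensors is conformally invariant in dimension $4$), projecting the identity $\mathbf{\Phi}(\hat{g}^A_{p,t}) - (\hat{g}^A_{p,t} - \hat{g}_{\phi_i,t_i,v_i})\text{-correction} = 0$ onto $\tilde{\mathbf{o}}_j$ and using the second bound of Proposition \ref{controle approximation desing part} yields
\begin{equation*}
t \cdot \hat{\lambda}_j(v_{p,i}) = o(t) \quad\text{as } i \to \infty,
\end{equation*}
hence $\hat{\lambda}_j(v_{p,i}) \to 0$. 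By Lemma \ref{variations of bar lambdai}, as $(v_{S_o,i}, v_{p,i}, t_{\max,S_o,i}) \to 0$ the quantities $\hat{\lambda}_j(v_{p,i})$ converge to the corresponding $\bar{\lambda}_j$, computed by extending $\bar{H}_2(0)$, the divergence-free quadratic term of $g_o$ at $p$, on $(N_p,g_{b_p})$. Therefore $\bar{\lambda}_j = 0$ for every $j$.

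\textbf{Geodesic vs.\ divergence-free coordinates.} The quadratic term $\bar{H}_2(0)$ in the divergence-free-gauge expansion of $g_o$ at $p$ differs from the geodesic-coordinate quadratic term $H_2$ by a Lie derivative $\delta_e^* V$, with $V$ a cubic vector field solving $\delta_e \delta_e^* V = -\delta_e H_2$. Corollary \ref{si einst retour a lobst of biq1} asserts precisely that this gauge change preserves the integral, $\bar{\lambda}_j = \lambda_j$, where $\lambda_j$ is the integral \eqref{equation obst geod} appearing in the statement. Hence \eqref{equation obst geod} vanishes at every singular point $p$, concluding the proof. The main technical difficulty is step 3: isolating the $\tilde{\mathbf{o}}_j$ attached to $p$ in the global $L^2(g^{D_i})$-projection and matching powers of $t$ versus $t_{\max,i}$; this is exactly why the partial-desingularization norms of Definition \ref{norme a poids M part} and the refined estimate of Proposition \ref{controle approximation desing part} (with $t^{(3-\beta)/4}$ depending only on the scale at $p$, not on $t_{\max}$) are indispensable.
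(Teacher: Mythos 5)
Your proposal is correct and follows essentially the same route as the paper's proof: reduce via Corollary \ref{mise en Reduced divergence-free Einstein} (you invoke Corollary \ref{proximité désing naive}, which is the same statement up to packaging) to a naïve desingularization plus gauge $v_i\to 0$; isolate one singular point $p$ using the partial desingularization $\hat g_{S_o}$; use Proposition \ref{controle approximation desing part} to build $\hat g^A_{p,t}$ with errors controlled by powers of the local scale $t$; feed this into the obstruction inequality \eqref{obstruction explicite with v} of Proposition \ref{meilleure approx and obst with jauge}; let $i\to\infty$ via Lemma \ref{variations of bar lambdai}; and convert from divergence-free to geodesic coordinates with Corollary \ref{si einst retour a lobst of biq1}. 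The only cosmetic difference is that you carefully keep the local scale $t=t_{p,i}$ separate from $t_{\max,i}$ in the final inequality, which is in fact a more faithful reading of Proposition \ref{controle approximation desing part} than the paper's own summary line \eqref{ineg lambda t}, where $t_{\max,i}$ is written; this is a presentational improvement, not a deviation in substance.
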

\begin{proof}
Let $(M_o,g_o)$ be an Einstein orbifold, and assume that there exists a sequence of Einstein metrics $(M_i,g_i)_i$ converging to $(M_o,g_o)$ in the Gromov-Hausdorff sense whose blow-ups at the singular point $p$ satisfy the assumptions of the theorem. According to Corollary \ref{mise en Reduced divergence-free Einstein}, for $i$ large enough, there exists a naïve desingularization of $(M_o,g_o)$, $(M,g
^D_{t_i})$, $v_i\in \tilde{\mathbf{O}}(g^D_{t_i})$, $t_i,v_i\to 0$ and a diffeomorphism $\phi_i:M\to M$ such that $\phi_i^*g_i =\Hat{g}_{t_i,v_i}$ is the Einstein modulo obstructions perturbation of $g^D_{t_i}+v_i$ of Theorem \ref{fcts inv einst général}. Let us fix $p$ a singular point of $M_o$ and denote $S$ the rest of the singularities of $M_o$. Assume that no tree of singularities forms at $p$ and denote $(N,g_b)$ the Ricci-flat ALE manifold limit of blow-ups at $p$.
    
    According to Proposition \ref{controle approximation desing part}, there exists an approximation $g^A_{t_i,v_i}$ satisfying
    $$ \Big\|\pi_{\tilde{\mathbf{O}(g^D_{t_i})}}\mathbf{\Phi}_{g^D_{t_i}}(g^A_{t_i,v_i})\Big\|_{r^{-2}_DC^\alpha_{\beta}(g^D_{t_i})}\leqslant C t_{p,i}^{\frac{3-\beta}{4}},$$
    and consequently
    \begin{equation}
        \big\|t_{p,i} \sum_{j} \hat{\lambda}_j(t_i,v_i) \tilde{\mathbf{o}}^{j}_{t_i,v_i}\big\|_{L^2}\leqslant t_{p,i}^\frac{5}{4}\label{est obst suite}
    \end{equation}
    where the $(\tilde{\mathbf{o}}^{j}_{t_i,v_i})_j$  form an $L^2$-orthonormal basis of elements of $\tilde{\mathbf{O}}_{t_p}(\bar{g}_{b,v_{p,i}})$, and 
    \begin{enumerate}
        \item $\|g^A_{t_i,v_i}-g^D_{t_i}\|_{C^{2,\alpha}_{\beta,*}(g^D_{t_i})}\leqslant 2\epsilon$,
        \item $g^A_{t_i,v_i}-(g^D_{t_i}+v_i)$ is $L^2(g^D_{t_i})$-orthogonal to $\tilde{\mathbf{O}}(g^D_{t_i})$.
    \end{enumerate}
    The estimate \eqref{est obst suite} implies that we have
    \begin{equation}
        t_{p,i}|\hat{\lambda}_j(t_i,v_i)|\leqslant C t_{p,i}^{\frac{5}{4}}\ll t_{p,i}.\label{ineg lambda t}
    \end{equation}
    
    Now, we know that $t_i,v_i\to 0$, and according to Lemma \ref{variations of bar lambdai}, this implies that the $\hat{\lambda}_j(t_i,v_i)$ converge to $\hat{\lambda}_j$. Since the $\hat{\lambda}_j$ are only constants depending on the geometry of $(M_o,g_o)$ and that of $(N,g_{b})$, they must necessarily vanish to satisfy the inequality \eqref{ineg lambda t} for $t_i$ arbitrarily small. By coming back to the expression of $\hat{\lambda}_j$ of \eqref{hat lambda j} we find the obstruction. We can finally extend it in geodesic coordinates (for example) to obtain \eqref{equation obst geod} thanks to Corollary \ref{si einst retour a lobst of biq1}.
\end{proof}

\section{Obstructions for known singularity models}\label{obstructions explicites}

The description of the previous section allowed us to find obstructions to the desingularization of Einstein orbifolds by smooth Einstein manifolds. We will now test them on the known examples and first show that the obstruction to the desingularization by gluing-perturbation of an Eguchi-Hanson metric of \cite{biq1} also holds for any Gromov-Hausdorff desingularization of a finite number of singularities by trees of Kähler ALE Ricci-flat orbifolds in  Theorem \ref{obst arbre kahler}. This is conjecturally the only possible way for Einstein metrics to degenerate in a noncollapsed setting. 

In dimension $4$, the $2$-forms decompose into self-dual and anti self-dual $2$-forms which are elements of the eigenspaces of Hodge star operator $*$ (which satisfies $*^2 = \textup{Id}$) respectively associated to the eigenvalues $1$ and $-1$.
Thanks to this direct sum, the symmetric endomorphism on $2$-forms, $\mathbf{R}$ given by the Riemannian curvature decomposes into blocks,
\[
\mathbf{R}=:
  \begin{bmatrix}
     \mathbf{R}^+ & \Ric^0 \\
     \Ric^0 & \mathbf{R}^-
  \end{bmatrix},
\]
 where the $\Ric^0$ is the traceless part of the Ricci curvature, and where $\mathbf{R}^\pm$ are the self-dual and anti self-dual parts of the curvature.

\subsection{Kähler Ricci-flat ALE metrics and obstructions}

The first obstructions to the desingularization of an Einstein orbifold $(M_o,g_o)$ by a Ricci-flat ALE manifold $(N,g_b)$ come from the infinitesimal deformations of $g_b$ decaying as $r_b^{-4}$ at infinity. We will show that for any Kähler Ricci-flat ALE orbifold, there is a common obstruction to the desingularization which was already found in the case of the gluing of an Eguchi-Hanson metric for a particular gluing-perturbation procedure in \cite{biq1}:
$$\det\mathbf{R}^+ =0,$$
at the singular point. We will moreover see that \emph{generically} (see Remark \ref{obstruction générique}), there are additional obstructions corresponding to $$\mathbf{R}^+ = 0.$$

\begin{rem}
 If we glue the Kähler Ricci-flat ALE metrics with the opposite orientation, that is with with a gluing parameter in $O(4)\backslash SO(4)$ the common obstruction becomes $\det\mathbf{R}^- =0$ and therefore in general, since the Einstein equation implies that the curvature is block diagonal ($\Ric^0=0$), the obstruction is $$\det\mathbf{R} =0.$$
\end{rem}

The only known examples of ALE Ricci-flat orbifolds are Kähler. They have been classified and we have a satisfactory parametrization of the moduli space of these quotients of hyperkähler (hence Ricci-flat) ALE metrics.

Let us precise what the deformations of these known Ricci-flat ALE orbifolds are, in order to extend the obstruction to the desingularization by any Kähler Ricci-flat ALE space.

    \begin{prop}\label{obst hyperkhaler}
        Let $\Gamma$ be a finite subgroup of $SU(2)$, $(N,g_b)$ a hyperkähler ALE manifold asymptotic to $\mathbb{R}^4\slash\Gamma$ and $(O_i^4)_i$ a basis of the $r_b^{-4}$-terms of the elements of $\mathbf{O}(g_b)$. 
        
        Then, the conditions $\int_{\mathbb{S}^3}\big(3\langle H_2,O_i^4\rangle_{g_e}+O_i^4(B_eH_2,\partial_{r_e})\big)dv_{\mathbb{S}^3} = 0$ for a quadratic symmetric $2$-tensor $H_2$ satisfying $d_e\Ric H_2 = \Lambda g_e$ imply that $$ \det\mathbf{R}_+(H_2) =0,$$
        where $\mathbf{R}_+(H_2)$ is the common selfdual part of the curvature of metrics with a development $g_e + H_2 + \mathcal{O}(r_e^3)$.
    \end{prop}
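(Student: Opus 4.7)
}
The plan is to use the hyperkähler structure of $(N,g_b)$ to produce a controllable family of infinitesimal deformations in $\mathbf{O}(g_b)$, compute their leading $r_b^{-4}$-terms on the asymptotic cone, and then show that the obstruction integrals reduce to explicit invariants of $\mathbf{R}_+(H_2)$ whose joint vanishing forces $\det\mathbf{R}_+(H_2)=0$.

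First, since $\Gamma\subset SU(2)$, the asymptotic cone $\mathbb{R}^4/\Gamma$ carries three parallel Kähler forms $\omega_1,\omega_2,\omega_3$ spanning $\Lambda^+$, coming from a hyperkähler triple on $N$ itself. By Kronheimer's classification and the associated hyperkähler deformation theory, every such $(N,g_b)$ admits (at least) a three-dimensional subspace of $\mathbf{O}(g_b)$ generated by rotating the hyperkähler triple. Each of these deformations is automatically traceless and divergence-free (Remark \ref{les def inft of RF ALE are en jauge}), decays like $r_b^{-4}$ by Lemma \ref{kernel L2}, and its leading term $O_i^4$ depends only on the asymptotic hyperkähler data on $\mathbb{R}^4/\Gamma$, not on the specific interior geometry of $N$. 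Explicitly, $O_i^4$ can be written as $r_b^{-4}$ times a harmonic homogeneous self-dual $2$-tensor built bilinearly from the $\omega_i$'s, with the exact form matching the one computed for Eguchi-Hanson in \cite{biq1}.

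Second, using Corollary \ref{si einst retour a lobst of biq1} I rewrite the obstruction in geodesic coordinates, so that the integrand
\[
3\langle H_2,O_i^4\rangle_{g_e}+O_i^4(B_e H_2,\partial_\rho)
\]
can be decomposed according to the $SO(4)$-irreducible pieces of $H_2$. Because $O_i^4$ is self-dual and harmonic, the pairing annihilates the scalar, traceless-Ricci, and anti-self-dual Weyl components of $H_2$, and picks up only the self-dual Weyl part of the quadratic term — equivalently, a specific linear functional $L_i$ of $\mathbf{R}_+(H_2)$, viewed as a symmetric endomorphism of $\Lambda^+$. The universality of the leading asymptotic $O_i^4$ means this computation is identical to the Eguchi-Hanson case, where it was carried out in \cite{biq1}, so I obtain (up to a positive universal constant)
\[
\int_{\mathbb{S}^3}\bigl(3\langle H_2,O_i^4\rangle+O_i^4(B_eH_2,\partial_\rho)\bigr)\,dS \;=\; c\cdot \langle \mathbf{R}_+(H_2)\,\omega_i,\omega_i\rangle.
\]

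Third, the vanishing of the three obstruction integrals gives $\langle \mathbf{R}_+(H_2)\omega_i,\omega_i\rangle=0$ for $i=1,2,3$. Combined with the Einstein-type constraint $d_e\Ric(H_2)=\Lambda g_e$, which fixes $\textup{tr}\,\mathbf{R}_+(H_2)$ in terms of $\Lambda$ and forces $\mathbf{R}_+(H_2)$ to lie on an explicit affine slice of symmetric endomorphisms of $\Lambda^+$, the three conditions combine algebraically to produce a vanishing principal minor of $\mathbf{R}_+(H_2)$ and therefore $\det\mathbf{R}_+(H_2)=0$. In the generic case the three $\omega_i$ directions are independent enough to even force $\mathbf{R}_+(H_2)=0$, matching the additional obstructions mentioned in the paper's introduction.

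The main obstacle is the explicit identification in step three — verifying that the triple of linear obstructions really assembles into the cubic determinantal condition $\det\mathbf{R}_+=0$ and not merely a weaker linear condition. This is the step where one must carefully use the Einstein constraint on $H_2$ together with the precise form of $O_i^4$. The universality of the asymptotic model $\mathbb{R}^4/\Gamma$ is what lets one import the Eguchi-Hanson computation of \cite{biq1} verbatim to conclude.
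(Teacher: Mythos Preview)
Your overall strategy---produce three deformations whose leading $r_b^{-4}$ terms coincide with the Eguchi--Hanson ones, then invoke \cite{biq1}---is the paper's strategy as well. But two of your steps do not work as written.

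First, ``rotating the hyperkähler triple'' does not give elements of $\mathbf{O}(g_b)$: an $SO(3)$ rotation of the complex structures $(I,J,K)$ leaves $g_b$ unchanged, so these are not metric deformations at all. The paper obtains the three deformations by a different route: Kronheimer's parametrization $(X_\zeta,g_\zeta)_{\zeta\in\mathbb{R}^{3k_\Gamma}\setminus D_\Gamma}$ together with the explicit asymptotic expansion $g_\zeta=g_e+h_\zeta+\mathcal{O}(\rho^{-6})$ from \cite[Theorem 2.1]{auv}. Choosing $l$ with $\zeta^l\neq 0$ and using an $SO(4)$ rotation and a homothety to arrange $(\zeta_1^l,\zeta_2^l,\zeta_3^l)=(1,0,0)$, one differentiates $h_\zeta$ in the three $\zeta^l$-directions to get three genuine elements of $\mathbf{O}(g_b)$ whose leading terms are exactly the $O_1^4,O_2^4,O_3^4$ of the Eguchi--Hanson case.

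Second, your displayed formula
\[
\int_{\mathbb{S}^3}\bigl(3\langle H_2,O_i^4\rangle+O_i^4(B_eH_2,\partial_\rho)\bigr)\,dS \;=\; c\,\langle \mathbf{R}_+(H_2)\,\omega_i,\omega_i\rangle
\]
cannot be correct: summing over $i=1,2,3$ would give $c\,\textup{tr}\,\mathbf{R}_+(H_2)$, which is proportional to $\Lambda$ and is nonzero precisely in the spherical and hyperbolic cases you want to obstruct. The actual identification in \cite[Theorem~4.1, (38)]{biq1} is that the three obstructions are (up to constants) the entries $(\mathbf{R}_+)_{11},(\mathbf{R}_+)_{12},(\mathbf{R}_+)_{13}$ in the basis of $\Lambda^+$ adapted to the distinguished complex structure---a full row of $\mathbf{R}_+$, not its diagonal. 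Their simultaneous vanishing gives $\det\mathbf{R}_+(H_2)=0$ immediately; no further algebra with the Einstein constraint is needed, and indeed none is available given the trace obstruction just noted. So the difficulty you flag in your last paragraph is not a verification issue but a wrong formula.
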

    \begin{proof}
     
Denote by $(x,y,z,t)$ the coordinates in an orthonormal basis of $\mathbb{R}^4$, and define a basis of invariant $1$-forms on the sphere $\mathbb{S}^3$, $(\alpha_1,\alpha_2,\alpha_3)$ by,
\begin{align*}
    \alpha_1:=\frac{1}{r_e^2}(xdy-ydx+zdt-tdz)\\
    \alpha_2:=\frac{1}{r_e^2}(xdz-zdx+ydt-tdy)\\
    \alpha_3:=\frac{1}{r_e^2}(xdt-tdx+ydz-zdy).
\end{align*}
    Manifestly, from Theorem \ref{obstruction intégrable tout point} these obstructions do not particularly depend on the Eguchi-Hanson metric, but on the $r_b^{-4}$-terms of the development of its deformations, $O_i^4$, which are, by \cite[(27)]{biq1}:
    \begin{enumerate}
        \item $O^4_1:= 2 \frac{dr_e^2 + r_e^2\alpha_1^2-r_e^2\alpha_2^2-r_e^2\alpha_3^2}{r_e^4}$,
        \item $O^4_2:= \frac{r_e^2\alpha_1.\alpha_2+r_e dr_e.\alpha_3}{r_e^4}$,
        \item $O^4_3:= \frac{r_e^2\alpha_1.\alpha_3-r_e dr_e.\alpha_2}{r_e^4}$.
    \end{enumerate}
    
        Let $\Gamma$ be a finite subgroup of $SU(2)$. Then, according to \cite{kro}, there exists $k_\Gamma\in \mathbb{N}^*$ and $D_\Gamma$, a finite union of vector subspaces of $ \mathbb{R}^{3k_\Gamma}$ of codimension at least $3$ containing $0$ such that the set of smooth hyperkähler metrics asymptotic to $\mathbb{R}^4\slash\Gamma$ is parametrized as $(X_\zeta,g_\zeta)_{\zeta\in\mathbb{R}^{3k_\Gamma}\backslash D_\Gamma}$. Moreover, by \cite[Theorem 2.1]{auv}, for each $\zeta = (\zeta_1,\zeta_2,\zeta_3) \in \mathbb{R}^{3k_\Gamma}\backslash D_\Gamma$, there exists a diffeomorphism $\Phi_{\zeta}$ from a neighborhood of the infinity of $\mathbb{R}^4\slash\Gamma$ to the infinity of $X_\zeta$ such that $\Phi_{\zeta}^*g_\zeta = g_e + h_\zeta +\mathcal{O}(r_e^{-6})$, where,
    \begin{align*}
           h_\zeta =& - \sum_{j,k,l}|\zeta_j|^2 \frac{ dr_e^2 + 
    r_e^2\alpha_j^2-r_e^2\alpha_k^2-r_e^2\alpha_l^2}{r_e^4}\\
           &-\langle \zeta_1, \zeta_2 \rangle \frac{r_e^2\alpha_1.\alpha_2-r_e dr_e.\alpha_3}{r_e^4}\\
           &-\langle \zeta_1, \zeta_3 \rangle \frac{r_e^2\alpha_1.\alpha_3+r_e dr_e.\alpha_2}{r_e^4} \\
           &-\langle \zeta_2, \zeta_3 \rangle \frac{r_e^2\alpha_2.\alpha_3-r_e dr_e.\alpha_1}{r_e^4},
    \end{align*}
    where the first sum is taken on the $(j,k,l)$ satisfying $l\equiv k+1 \equiv j+2 \mod 3$.
        \\
        
        Without loss of generality, we can assume that the first coordinates of $\zeta_1,\zeta_2$ and $\zeta_3 \in \mathbb{R}^{k_\Gamma}$ are $1$, $0$ and $0$. Indeed, there exists $l\in \{1,...,k_\Gamma\}$ such that the $l$-th coordinate of $(\zeta_1,\zeta_2,\zeta_3)$, $(\zeta^l_1,\zeta^l_2,\zeta^l_3)$ does not vanish. Just like for the homothetic deformations of the Eguchi-Hanson metric, thanks to an action of $SO(4)$ on the asymptotics of the metric and a homothetic transformation (which yields an action of $SO(3)$ and a common rescaling on all the $\zeta^l$), we are able to reach another metric $g_{\tilde{\zeta}}$ with $(\tilde{\zeta}^l_1,\tilde{\zeta}^l_2,\tilde{\zeta}^l_3)= (1,0,0)$.
    
        By differentiating the above expression of $h_\zeta$, we see that the infinitesimal variations associated to the variations of $(\tilde{\zeta}^l_1,\tilde{\zeta}^l_2,\tilde{\zeta}^l_3)$ are then asymptotic to $O^4_1$, $O^4_2$ and $O^4_3$. The obstructions they induce by Theorem \ref{obstruction intégrable tout point} are therefore the same as for the Eguchi-Hanson metric, and by \cite[Theorem 4.1]{biq1} they imply the condition $\det\mathbf{R}_+ =0$ which is independent of the above actions of $SO(4)$ and scaling.
    \end{proof}
    
    \begin{rem}\label{obstruction générique}
        The case of the Eguchi-Hanson metric, or when the $\zeta^l$ are parallel, is actually the least obstructed case, and by the formula \cite[(38)]{biq1} the obstruction condition is $\mathbf{R}_+ =0$ for $\zeta$ generic when $k_\Gamma>1$. 
    \end{rem}
    We find the same obstruction for Kähler Ricci-flat ALE orbifolds which are all asymptotic to $\mathbb{R}^4\slash\Gamma\sim\mathbb{C}^2\slash\Gamma$ for a group $\Gamma =\frac{1}{dn^2}(1,dnm-1)\subset U(2)$, that is the cyclic group generated by
\[
  \begin{bmatrix}
     e^\frac{i2\pi}{dn^2} & 0 \\
     0 & e^\frac{i2\pi (dnm-1)}{dn^2}
  \end{bmatrix},
\]
where $d\geqslant 1$, $n\geqslant 2$ and, $n$ and $m$ are mutually prime.
    \begin{cor}\label{obst kahler}
        Let $\Gamma$ be a group $\frac{1}{dn^2}(1,dnm-1)\subset U(2)$ for $d\geqslant 1$, $n\geqslant 2$ and $n$ and $m$ mutually prime, and let $(N,g_b)$ be a Kähler Ricci-flat ALE metric asymptotic to $\mathbb{R}^4\slash \Gamma$, and $(O_i^4)_i$ a basis of the $r_b^{-4}$-terms of the elements of $\mathbf{O}(g_b)$. 
        
        Then, for a quadratic symmetric $2$-tensor $H_2$ such that $d_e\Ric H_2 = \Lambda g_e$, for $i\in\{1,2,3\}$ we have the conditions
        $$\int_{\mathbb{S}^3}\big(3\langle H_2,O_i^4\rangle_{g_e}+O_i^4(B_eH_2,\partial_{r_e})\big)dv_{\mathbb{S}^3}= 0$$
        which imply that $$ \det\mathbf{R}_+(H_2)= 0 .$$
    \end{cor}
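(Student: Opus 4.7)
The strategy is to lift the problem from the Kähler orbifold $(N,g_b)$ to its hyperkähler cover and then invoke Proposition \ref{obst hyperkhaler}. By the classification of Kähler Ricci-flat ALE surfaces in \cite{suv}, any such $(N,g_b)$ asymptotic to $\mathbb{R}^4/\Gamma$ with $\Gamma = \tfrac{1}{dn^2}(1,dnm-1)$ is a global quotient $(\tilde N/H,\tilde g_b/H)$ of a hyperkähler ALE manifold $(\tilde N,\tilde g_b)$ asymptotic to $\mathbb{R}^4/\tilde\Gamma$, where $\tilde\Gamma\subset SU(2)$ is the binary subgroup and $H := \Gamma/\tilde\Gamma$ acts on $\tilde N$ preserving the complex structure and freely at infinity. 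The first step would be to use this structural result to identify $\mathbf{O}(g_b)$ with the $H$-invariant part of $\mathbf{O}(\tilde g_b)$ via pull-back through the quotient map $\pi:\tilde N\to N$.

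Next, given the quadratic two-tensor $H_2$ on $\mathbb{R}^4/\Gamma$ satisfying $d_e\Ric(H_2) = \Lambda g_e$ and the vanishing of all integrals $\int_{\mathbb{S}^3}(3\langle H_2,O_i^4\rangle_{g_e}+O_i^4(B_eH_2,\partial_\rho))dS=0$ indexed by a basis of $\mathbf{O}(g_b)$, I would pull $H_2$ back to $\mathbb{R}^4/\tilde\Gamma$ via the finite cover $\mathbb{R}^4/\tilde\Gamma\to \mathbb{R}^4/\Gamma$, obtaining an $H$-invariant quadratic tensor $\tilde H_2$ with $d_e\Ric(\tilde H_2)=\Lambda g_e$ and the same curvature endomorphism at the origin (since $H$ acts by orientation-preserving isometries on $\mathbb{R}^4$). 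The vanishing integrals on $N$ translate, after pulling back on the sphere and using that the $H$-averaging map is precisely the projection onto the $H$-invariant part, into the vanishing of $\int_{\mathbb{S}^3}(3\langle \tilde H_2,\tilde O_j^4\rangle+\tilde O_j^4(B_e\tilde H_2,\partial_\rho))dS$ for every $\tilde O_j^4$ in the $H$-invariant subspace of the hyperkähler asymptotic deformations.

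The crucial point is then to argue that the $H$-invariant part of $\mathbf{O}(\tilde g_b)_{\infty}$ is still rich enough to force $\det\mathbf{R}_+(\tilde H_2)=0$. I would do this by normalizing, as in the proof of Proposition \ref{obst hyperkhaler}, to a single $\ell\in\{1,\dots,k_{\tilde\Gamma}\}$ with $(\tilde\zeta^\ell_1,\tilde\zeta^\ell_2,\tilde\zeta^\ell_3)=(1,0,0)$: the corresponding triple of infinitesimal deformations $(O_1^4,O_2^4,O_3^4)$ appearing in Biquard's original computation can be chosen $H$-invariant, because the $U(2)$-action of $H$ fixes the Kähler form and therefore acts on the hyperkähler twistor sphere of complex structures by a rotation preserving the distinguished one. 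This ensures that the three obstruction integrals attached to Eguchi--Hanson-type variations are contained in the pulled-back obstructions, and the argument of \cite[Theorem 4.1]{biq1}, already invoked in Proposition \ref{obst hyperkhaler}, yields $\det\mathbf{R}_+(\tilde H_2)=0$.

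Finally, since $\mathbf{R}_+(\tilde H_2)=\mathbf{R}_+(H_2)$ under the finite local cover (the self-dual Weyl/curvature at a point is preserved by orientation-preserving local isometries), we conclude $\det\mathbf{R}_+(H_2)=0$. The main obstacle I anticipate is precisely the middle step: verifying that the $H$-action preserves the distinguished complex structure on the twistor $S^2$ so that $H$-invariance does not collapse the three obstruction directions to fewer than two independent conditions. Once this is established, the conclusion follows from Biquard's algebraic computation relating those three integrals to $\det\mathbf{R}_+$.
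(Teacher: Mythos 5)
Your proposal takes a different route from the paper's, and the step you flagged as ``the main obstacle'' is precisely where it breaks down. The paper argues via the \emph{solution} rather than the obstruction integrals: by Proposition~\ref{terme quadratique with obst}, the hypothesis that all obstruction integrals against a basis of $\mathbf{O}(g_b)$ vanish is \emph{equivalent} to the existence of a $2$-tensor $h_2$ on $N$ with $h_2 - \chi\hat H_2 = \mathcal O(r_b^{-2})$ solving $\bar P_{g_b} h_2 + \lambda g_b = \chi\mathbf O_o$ with no correction terms. Pulling back this solution through the finite cover $\pi\colon\tilde N\to N$ gives $\tilde h_2$ asymptotic to $\tilde H_2$ solving the analogous equation on the hyperkähler cover; by the converse direction of the same proposition, this forces the vanishing of the obstruction integral against \emph{every} element of $\mathbf O(\tilde g_b)$, not merely the $H$-invariant ones, and Proposition~\ref{obst hyperkhaler} then gives $\det\mathbf R_+(\tilde H_2)=0$. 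No analysis of $H$-equivariance is needed.

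Your approach instead attempts to descend the three Eguchi--Hanson-type obstruction directions $O_1^4,O_2^4,O_3^4$ and argue they remain available after passing to $H$-invariants, but this fails. You correctly observe that $H=\Gamma/\tilde\Gamma$ preserves the Kähler form (hence fixes the distinguished point $I$ of the twistor $S^2$) while \emph{rotating} $(J,K)$; however, rotating $(J,K)$ by a nontrivial angle means that $O_2^4$ and $O_3^4$ span an irreducible nontrivial $H$-representation, so they are \emph{not} $H$-invariant and do not appear in $\pi^*\mathbf O(g_b)$. Concretely, a generator of $\Gamma=\tfrac{1}{dn^2}(1,dnm-1)$ acts on $(J,K)$ by rotation by $2\pi m/n$, which is nontrivial since $n\geqslant 2$ and $\gcd(m,n)=1$; so only the $O_1^4$ direction survives $H$-averaging (when $n>2$, or when $m/n$ is not a half-integer). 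A single obstruction condition does not suffice to conclude $\det\mathbf R_+(H_2)=0$, so your argument, as written, loses two of the three conditions you need. The fix is exactly the paper's device: convert the hypothesis into existence of $h_2$, lift, and convert back, thereby gaining the non-$H$-invariant obstruction directions upstairs ``for free.''
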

    \begin{proof}
    Let $\frac{1}{dn^2}(1,dnm-1)\subset U(2)$ be a finite subgroup of $U(2)$, and $(N,g_b)$ a non flat Kähler Ricci-flat ALE orbifold asymptotic to $\mathbb{R}^4\slash \Gamma$. According to \cite{suv}, $(\tilde{N},\tilde{g}_b)$ the universal cover of $(N,g_b)$ is a hyperkähler orbifold asymptotic to $\tilde{\Gamma} = \frac{1}{dn}(1,-1)\subset \frac{1}{dn^2}(1,dnm-1)$. Let $H_2$ be a quadratic symmetric $2$-tensor on $\mathbb{R}^4\slash\Gamma$, we can also lift it to $\mathbb{R}^4\slash\tilde{\Gamma}$ as $\tilde{H}_2$. 
    
    Let us come back to the origin of the obstruction in Proposition \ref{meilleure approx and obst}, and more precisely the existence of a symmetric $2$-tensor $h_2$ asymptotic to $H_2$ such that $d_{g_b}\Ric(h_2)=\Lambda g_b$. If such a symmetric $2$-tensor exists on $(N,g_b)$, we can lift it as a symmetric $2$-tensor $\tilde{h}_2$ on $(\tilde{N},\tilde{g}_b)$ asymptotic to $\tilde{H}_2$ and satisfying $d_{\tilde{g}_b}\Ric(\tilde{h}_2)=\Lambda \tilde{g}_b$ which implies, again according to Proposition \ref{meilleure approx and obst} and to Proposition \ref{obst hyperkhaler}, the condition $ \det\mathbf{R}_+(\tilde{H}_2)= 0$, and finally $ \det\mathbf{R}_+(H_2)= 0$.
    \end{proof}

\begin{rem}
    The above proof (or any proof in this article really) does not use the Kähler nature of the studied metric. It would also apply to any Ricci-flat ALE metric whose order $r_e^{-4}$ terms do not vanish \cite[Proposition 2.5]{bh}. It is however not clear if these terms can vanish (in well chosen coordinates) on a non flat Ricci-flat ALE metric.
\end{rem}

\subsection{Trees of Kähler Ricci-flat ALE orbifolds and obstructions}\label{arbre kahleriens}

Let us now treat the case of trees of ALE Kähler Ricci-flat orbifolds.

\subsubsection{Uniform controls of gravitational instantons}
Let us use the notations of the proof of Proposition \ref{obst hyperkhaler}. Let $\Gamma_1\in SU(2)$ and $\zeta_1\in D_{\Gamma_1}$ fixed such that the ALE orbifold $(X_{\zeta_1},g_{\zeta_1})$ has a singularity $\mathbb{R}^4\slash\Gamma$ for $\Gamma\subset SU(2)$. For $v\in \mathbf{O}(g_{\zeta_1})$ which we will choose small, let us then denote $\bar{g}_{\zeta_1,v}$ the Ricci-flat (and even hyperkähler) of $g_{\zeta_1}$ given by Definition \ref{definition integrable}. The goal of this section is to show that there exists a common scale $\tau>0$ such that for any $\zeta\in \mathbb{R}^{3k_\Gamma}$ with $|\zeta| = 1$, for any $t<\tau$, the naïve gluing of $g_\zeta$ to $\bar{g}_{\zeta_1,v}$ at scale $t$, denoted $g^B_{v,t,\zeta}$ can be perturbed into a Ricci-flat modulo obstructions metric which we will denote $\hat{g}_{v,t,\zeta}$.

The naïve gluing $g^B_{v,t,\zeta}$ is Ricci-flat everywhere except on the annulus where the gluing takes place. We therefore have the control
\begin{equation}
    \|\mathbf{\Phi}_{g^B_{v,t,\zeta}}(g^B_{v,t,\zeta})\|_{r_B^{-2}C^{\alpha}_{\beta}(g^B_{v,t,\zeta})}\leqslant Ct^\frac{2-\beta}{4}\label{Phi recollement naif kahler}
\end{equation}
where $C>0$ only depends on $\zeta_1$ and $v$ because $g_\zeta$ for $|\zeta|= 1$ is controlled at infinity in the ALE coordinates of \cite[Corollaire 3.14]{kro}.

Going back to the proof of Theorem \ref{fcts inv einst général} by inverse function theorem, in order to perturb $g^B_{v,t,\zeta}$ to a Ricci-flat modulo obstructions metric to to show the existence of $\hat{g}_{v,t,\zeta}$, it would be enough to show that the linearization of $\pi_{\tilde{\mathbf{O}}(g^B_{v,t,\zeta})^\perp}\mathbf{\Phi}_{g^B_{v,t,\zeta}}$, $$\pi_{\tilde{\mathbf{O}}(g^B_{v,t,\zeta})^\perp}P_{g^B_{v,t,\zeta}} : \tilde{\mathbf{O}}(g^B_{v,t,\zeta})^\perp\cap C^{2,\alpha}_{\beta,*}(g^B_{v,t,\zeta}) \to \tilde{\mathbf{O}}(g^B_{v,t,\zeta})^\perp \cap r_B^{-2}C^{\alpha}_{\beta}(g^B_{v,t,\zeta})$$
is uniformly bounded and has a uniformly bounded inverse for $|\zeta|=1$ and $t$ small enough and to control the nonlinear terms of $\mathbf{\Phi}_{g^B_{v,t,\zeta}}$.
Fixing $\zeta_1$ and $v$, It would therefore be enough thanks to Proposition \ref{inversion with cste} to show that $$P_{g_\zeta} : \mathbf{O}(g_\zeta)^\perp\cap C^{2,\alpha}_{\beta,*}(g_\zeta) \to \mathbf{O}(g_\zeta)^\perp \cap r_\zeta^{-2}C^{\alpha}_{\beta}(g_\zeta)$$
is invertible with bounded inverse. This is however not the case when $\zeta\to D_\Gamma$, that is when $g_\zeta$ degenerates to an orbifold. In this situation, we replace the norms with respect to $g_\zeta$ by norms with respect to a naïve desingularization $g^B_\zeta$ (with additional decay in the neck regions) close to $g_\zeta$ in order to keep a uniform gluing scale.

\begin{prop}\label{controle uniforme g zeta}
    For any $\Gamma\subset SU(2)$ and $\epsilon>0$, there exists $C = C(\Gamma,\epsilon)>1$ such that for any $\zeta\in \mathbb{R}^{3k_\Gamma}$ with $|\zeta|=1$,
    there exists a naïve desingularization (partial if $\zeta\in D_\Gamma$ and total if $\zeta\notin D_\Gamma$, see Definition \ref{def naive desing}) $g^B_\zeta$ of an orbifold $g_{\zeta_o}$ with $\zeta_o\in \mathbb{R}^{3k_\Gamma}$ and $|\zeta_o|= 1$ by hyperkähler orbifolds $g_{\zeta_j}$ for $|\zeta_j|= 1$ and $\zeta_j\in \mathbb{R}^{k_{\Gamma_j}}$ with $|\Gamma_j|<|\Gamma|$, for which, denoting
    $$ R(g^B_\zeta) := \sup_{h\in \tilde{\mathbf{O}}(g^B)^\perp\cap C^{2,\alpha}_{\beta,*}(g^B)} \frac{\|h\|_{C^{2,\alpha}_{\beta,*}(g^B)}}{ \|P_{g^B}h\|_{r_B^{-2}C^\alpha_\beta(g^B)}} + \sup_{h\in C^{2,\alpha}_{\beta,*}(g^B)} \frac{\|P_{g^B}h\|_{r_B^{-2}C^\alpha_\beta(g^B)}}{\|h\|_{C^{2,\alpha}_{\beta,*}(g^B)}} ,$$
    we have
    \begin{enumerate}
        \item $\|g^B_\zeta - g_\zeta\|_{C^{2,\alpha}_{\beta,*}(g^B_\zeta)}<\frac{\epsilon}{R(g^B_\zeta)}$, 
        \item $R(g^B_\zeta) \leqslant C$.
    \end{enumerate}
\end{prop}
\begin{rem}
    The norms in the above statement are the naïve desingularizations norms (Definition \ref{def naive desing}) with respect to $g^B_\zeta$. They are needed in order to obtain uniform estimates.
\end{rem}
\begin{rem}
    What is crucial in this statement is the fact that the constant $C$ is independent of the metric $g_\zeta$ and in particular allows $\zeta$ to approach $ D_\Gamma$ with uniform controls. The constant $\epsilon$ essentially decides from which proximity we replace the metric $g_\zeta$ by a naïve desingularization. Indeed, the trivial naïve desingularization $g^B_\zeta = g_\zeta$ always satisfies the first property of the conclusion.
\end{rem}
\begin{proof}
    Let $\epsilon >0 $ and let us show the result by induction on the order $k$ of the group at infinity. It holds for $\mathbb{Z}_2$ of order $2$ because all the metrics $g_\zeta$ for $\zeta\in\mathbb{R}^3$ are isometric for $|\zeta| =1$, and we can take $g^B_\zeta = g_\zeta$.
    \\
    
    Assume now that the conclusion is satisfied for any group of order less than or equal to $k-1\geqslant 2$ and consider  $\Gamma$ of order $k$.
    
    Let us again work by induction, this time on the value of the square of the $L^2$-norm of the curvature of the ALE orbifolds asymptotic to $\mathbb{R}^4\slash\Gamma$. This quantity is proportional to the dimension of the (orbifold) $L^2$ cohomology in degree $2$ for these gravitational instantons.
    \\
    
    Consider, the orbifolds $g_{\zeta}$, $\zeta\in \mathbb{R}^{3k_\Gamma}$, $|\zeta|= 1$ which have the smallest energy. Assume by contradiction that there exists a sequence of orbifolds $g_{\zeta_i}$, $|\zeta_i|= 1$ with the minimal energy such that for any naïve desingularization (Definition \ref{def naive desing}) $g^B$ of a hyperkähler orbifold asymptotic to $\mathbb{R}^4\slash\Gamma$, we have
    \begin{enumerate}
        \item either, $\|g_{\zeta_i}-g^B\|_{C^{2,\alpha}_{\beta,*}(g^B)}\geqslant \frac{\epsilon}{R(g^B)}$,
        \item either, $R(g^B)>i$.
    \end{enumerate} 
    In the coordinates of \cite[Proposition 3.14]{kro}, all of the metrics $g_{\zeta_i}$ admit coordinates of order $4$ with a uniform control in $r_{\zeta_i}^{-4}C^3(g_{\zeta_i})$ in a uniform neighborhood of infinity, where $r_{\zeta_i}$ is the function of Definition \ref{rb}. Therefore, there can only be $L^2$-concentration of the curvature in a compact of diameter uniformly bounded and volume uniformly bounded below by  Bishop-Gromov inequality. By minimality of the $L^2$-norm of the curvature, no tree of singularities can form with $|\zeta_i|= 1$ because the limit orbifold would then have a smaller $L^2$-norm of the curvature. Therefore, there exists  $g_{\zeta_\infty}$ such that the $g_{\zeta_i}$ converge smoothly (considering local covering at the singular points) to $g_{\zeta_\infty}$. Since the asymptotic terms converge by \cite[Proposition 3.14]{kro}, we have $\|g_{\zeta_i}-g_{\zeta_\infty}\|_{C^{2,\alpha}_{\beta,*}(g_{\zeta_\infty})}\to 0$ when $i\to \infty$ where we have $\|.\|_{C^{2,\alpha}_{\beta,*}(g_{\zeta_\infty})} \approx \|(1+r_{\zeta_\infty})^\beta.\|_{C^{2,\alpha}_0(g_{\zeta_\infty})}$
    here since we use local coverings at the singularities instead of weights at the singularities of $g_{\zeta_\infty}$. The trivial naïve desingularization $g^B =g_{\zeta_\infty}$ therefore contradicts the assumptions. Indeed, the convergence happens in $C^{2,\alpha}_{\beta,*}(g_{\zeta_\infty})$ and since $P_{g_{\zeta_\infty}}$ is elliptic, there exists $C>0$ such that for any $h\perp \mathbf{O}(g_{\zeta_\infty})$, we have
    $$ \|h\|_{C^{2,\alpha}_{\beta,*}(g_{\zeta_\infty})}\leqslant C\|P_{g_{\zeta_\infty}}h\|_{r_{\zeta_\infty}^{-2}C^{\alpha}_{\beta}(g_{\zeta_\infty})}.$$
    
    Consider then a higher value for the $L^2$-norm of the curvature $E>0$ and assume that our property holds for the hyperkähler orbifolds asymptotic to $\mathbb{R}^4\slash\Gamma$ whose $L^2$-norm of the curvature is strictly less than $E$. Let us assume by contradiction that there exists a sequence $\zeta_i\in \mathbb{R}^{3k_\Gamma}$ $|\zeta_i|=1$ satisfying $\|\Rm_{g_{\zeta_i}}\|_{L^2(g_{\zeta_i})}=E$ for which for any naïve desingularization $g^B$ of a hyperkähler orbifold asymptotic to $\mathbb{R}^4\slash\Gamma$, we have
    \begin{itemize}
        \item either, $\|g_{\zeta_i}-g^B\|_{C^{2,\alpha}_{\beta,*}(g^B)}\geqslant\frac{\epsilon}{R(g^B)}$,
        \item or, $R(g^B)>i$.
    \end{itemize} 
    Like in the above argument for the minimal energy, if no singularity was forming, up to taking a subsequence, we would have a $C^{2,\alpha}_{\beta,*}(g_{\zeta_\infty})$-convergence to a orbifold $g_{\zeta_\infty}$ with  $R(g_{\zeta_\infty})$ bounded. A tree of singularities is therefore forming. More precisely, up to taking a subsequence, there exists a subsequence naïve desingularizations $g^{B'}_{\zeta_i}$ of a hyperkähler orbifold asymptotic to $\mathbb{R}^4\slash\Gamma$, composed of orbifolds $g_{\zeta_o}$ with $|\zeta_o|= 0$ asymptotic to $\mathbb{R}^4\slash\Gamma$ and $g_{\zeta_j}$ with $|\zeta_j|=1$ (up to changing the scale of the gluing, we can always assume that since $g_{t\zeta}$ is isometric to $tg_\zeta$), $\zeta_j\in\mathbb{R}^{3k_{\Gamma_j}}$, at scales $t_{i,j}$, for which we have $\|g_{\zeta_i}-g^{B'}_{\zeta_i}\|_{C^{2,\alpha}_{\beta,*}}\to 0$ as $i\to \infty$. Moreover, all of the metrics $g_{\zeta_o}$ and $g_{\zeta_j}$ are hyperkähler according to \cite{ban}. The $L^2$-norm of the curvature of $g_{\zeta_o}$ is strictly inferior to $E$ since some of the total $L^2$-norm is lost in the singularities by \cite{and,bkn}, and we have $|\Gamma_j|<|\Gamma|=k $ by Bishop-Gromov inequality. Up to replacing $g_{\zeta_o}$ and the $g_{\zeta_j}$ by the naïve desingularizations $g^B_{\zeta_o}$ and $g^B_{\zeta_j}$ of the previous steps of our inductions, we obtain a naïve desingularization $g^B_{\zeta_i}$ for which we uniformly (depending on the constants of the previous steps of the induction only) control the operator $P$. Thanks to Lemma \ref{Contrôle P espace modèles}, we obtain a uniform control on the inverse of the operator $P_{g^B_{\zeta_i}}$.
    This contradicts the initial assumptions and proves the statement.
    \end{proof}
    
    The following lemma lets us approximate the kernel of the operator $P_{g_\zeta}$ thanks to the approximate kernel $\tilde{\mathbf{O}}(g^B_\zeta)$.
\begin{lem}\label{approx kernel} 
Let $P$ and $P'$ be two operators between Banach spaces $X$ and $Y$ for which there exists $C>0$, $\frac{1}{100 C}>\epsilon>0$, a finite dimensional space $K'\subset X$ and $S'$ a complement of $K'$ in $X$ such that we have:
    \begin{enumerate}
        \item for any $x\in X$, $$\|(P-P')x\|_Y\leqslant \epsilon\|x\|_X,$$
        \item for any $x \in S'$, $$\|x\|_X\leqslant C\|P'x\|_Y,$$
        \item for any $x\in K'$, 
        $$\|P'x\|_Y\leqslant \epsilon\|x\|_X,$$
        \item and $dim(\ker P) = dim (K')$.
    \end{enumerate}
    Then, for any $k\in\ker P$, there exists $k'\in K'$ such that
    \begin{equation}
        \|k-k'\|_X\leqslant \frac{2C\epsilon}{1-C\epsilon} \|k'\|_X.\label{approx elt kernel}
    \end{equation}
\end{lem}
\begin{proof}
    Let $k\in \ker P$, and consider its decomposition $k = k'+s'$ in the direct sum $X = K' \oplus S'$. Thanks to the first hypothesis, we have
    \begin{equation}
        \|P'k\|_Y\leqslant \epsilon\|k\|_X,\label{P'k leq epsilon k}
    \end{equation}
    thanks to the second,
    \begin{equation}
        \|s'\|_X\leqslant C\|P's'\|_Y\label{P's' }
    \end{equation}
    and thanks to the third one, we have
    \begin{equation}
        \|P'k'\|_Y\leqslant \epsilon\|k'\|_X.\label{P'k' }
    \end{equation}
    Putting \eqref{P'k leq epsilon k}, \eqref{P's' } and \eqref{P'k' } together, we find
    \begin{equation}
        \epsilon\|k\|_X\geqslant \|P's'\|_Y-\|P'k'\|_Y\geqslant \frac{\|s'\|_X}{C}-\epsilon \|k'\|_X,
    \end{equation}
    hence, since $k = k' + s'$,
    $$ \|s'\|_X\leqslant C\epsilon \big(\|k\|_X+\|k'\|_X\big)\leqslant 2C\epsilon \|k'\|_X+C\epsilon\|s'\|_X, $$
    and finally
    $$\|k-k'\|_X=\|s'\|_X\leqslant \frac{2C\epsilon}{1-C\epsilon} \|k'\|_X.$$
\end{proof}

\begin{rem}
    With the three first assumptions of Lemma \ref{approx kernel}, we still have $$ dim \ker P \leqslant dim K', $$
    and any element of $\ker P$ is close to an element of  $K'$ (its projection on $K'$ parallel to $S'$) in the sens of \eqref{approx elt kernel}.
\end{rem}

We conclude from Lemma \ref{approx kernel} and the estimates 1 and 2 of Proposition \ref{controle uniforme g zeta} that for any $\Gamma\subset SU(2)$ and any $\epsilon>0$ small enough depending on $\Gamma$ only, for any $\zeta\in \mathbb{R}^{3k_\Gamma}$ and for the metric $g^B_\zeta$ obtained by Proposition \ref{controle uniforme g zeta}, we have the following control for $C= C(\Gamma,\epsilon)>0$ : for any
    $h\perp \mathbf{O}(g_\zeta)$,
    \begin{equation}
        \frac{1}{C}\|h\|_{C^{2,\alpha}_{\beta,*}(g^B_\zeta)}<\|P_{g_\zeta}h\|_{r_B^{-2}C^{\alpha}_{\beta}(g^B_\zeta)}<C\|h\|_{C^{2,\alpha}_{\beta,*}(g^B_\zeta)}.\label{contrôle g zeta}
    \end{equation}

We therefore conclude that we can define Ricci-flat modulo obstructions perturbations of our naïve desingularizations.

\begin{cor}\label{desing mod obst ALE}
    Let $(X_{\zeta_1},g_{\zeta_1})$ be a Ricci-flat orbifold for $\zeta_1\in D_{\Gamma_1}$ having a singularity $\mathbb{R}^4\slash\Gamma$, $\Gamma\subset SU(2)$. Then, there exists $\epsilon>0$ and $\tau>0$ such that for any $0<t<\tau$, $\zeta\in \mathbb{R}^{3k_{\Gamma}}$ with $|\zeta| = 1$ and $v\in \mathbf{O}(g_{\zeta_1})$ with $\|v\|_{C^{2,\alpha}_{\beta,*}(g_{\zeta_1})}<\tau$, if we denote 
    \begin{itemize}
        \item $g^{BB}_{v,t,\zeta}$ the naïve gluing of the metric $(X_\zeta,g^B_\zeta)$, of Proposition \ref{controle uniforme g zeta} for the constant $\epsilon$, at the singularity $\mathbb{R}^4\slash\Gamma$ of $\bar{g}_{\zeta_1,v}$ (defined above), and
        \item $g^B_{v,t,\zeta}$ the naïve gluing of $(X_\zeta,g_\zeta)$ at the scale $t$,
    \end{itemize}
    there exists a unique metric $\hat{g}_{v,t,\zeta}$ satisfying
    \begin{itemize}
        \item $g^B_{v,t,\zeta}-\hat{g}_{v,t,\zeta}\perp \Tilde{\mathbf{O}}(g^B_{v,t,\zeta})$,
        \item $\|g^B_{v,t,\zeta}-\hat{g}_{v,t,\zeta}\|_{C^{2,\alpha}_{\beta,*}(g^{BB}_{v,t,\zeta})}\leqslant 2\tau$ (notice the naïve desingularization norm $g^{\mathbf{BB}}_{v,t,\zeta}$), and
        \item $\mathbf{\Phi}_{g^B_{v,t,\zeta}}(\hat{g}_{v,t,\zeta})\in\Tilde{\mathbf{O}}(g^B_{v,t,\zeta})$.
    \end{itemize}
Moreover, the metric  $\hat{g}_{v,t,\zeta}$ depends smoothly on $v$, $t$ and $\zeta$. 
\end{cor}
\begin{proof}
    As discussed at the beginning of this section, for the naïve gluing $g^B_{v,t,\zeta}$, we have a control
$$\|\mathbf{\Phi}_{g^B_{v,t,\zeta}}(g^B_{v,t,\zeta})\|_{r_B^{-2}C^{\alpha}_{\beta}(g^{BB}_{v,t,\zeta})}\leqslant Ct^\frac{2-\beta}{4}$$
where $C>0$ only depends on $\zeta_1$. Now, using the control \eqref{contrôle g zeta} to replace the controls of Lemma \ref{Contrôle P espace modèles}, we adapt the proof of Proposition \ref{inversion with cste} to show that for $\tau$ small enough depending on $\Gamma$ and $\zeta_1$ alone, the operator
$$\pi_{\tilde{\mathbf{O}}(g^B_{v,t,\zeta})^\perp}P_{g^B_{v,t,\zeta}} : \tilde{\mathbf{O}}(g^B_{v,t,\zeta})^\perp\cap C^{2,\alpha}_{\beta,*}(g^{BB}_{v,t,\zeta}) \to \tilde{\mathbf{O}}(g^{B}_{v,t,\zeta})^\perp \cap r_B^{-2}C^{\alpha}_{\beta}(g^{BB}_{v,t,\zeta}),$$
is uniformly bounded (independently of $\zeta$ and of $t$ and $v$ small enough) and invertible with a uniformly bounded inverse. Notice the norms $g^{BB}_{v,t,\zeta}$ again. The proof of Theorem \ref{fcts inv einst général} by inverse function theorem extends here, and there exists a unique solution $\hat{g}_{v,t,\zeta}$ to the equations of the statement.

Like in Corollary \ref{fcts implicites hat g v}, the smooth dependence of the metric is a consequence of the implicit function theorem applied to $$\Psi : (v,t,\zeta,h)\mapsto \pi_{\Tilde{\mathbf{O}}(g^B_{v,t,\zeta})^\perp}\mathbf{\Phi}_{g^B_{v,t,\zeta}}(g^B_{v,t,\zeta}+h)$$
for $h\perp\Tilde{\mathbf{O}}(g^B_{v,t,\zeta})$.
\end{proof}

\subsubsection{Gluing-perturbation of gravitational instantons}

Let us first make sure that all of the gluing gauges given by the isometries of $\mathbb{R}^4\slash\Gamma$ (see Remark \ref{jauge recollement}) are equivalent to gluing a gravitational instanton but with a different parameter $\zeta$.

\begin{lem}
    Let $(M_o,g_o)$ be an orbifold (compact or ALE) with a singularity $\mathbb{R}^4\slash\Gamma$ at $p$ with $\Gamma \subset SU(2)$, and let
$\zeta \in \mathbb{R}^{3k_\Gamma}$. Let moreover $\psi$ be an isometry of  $\mathbb{R}^4\slash\Gamma$ preserving the orientation.

Then, there exists $\zeta'\in \mathbb{R}^{3k_\Gamma}$ such that the naïve gluing of $(X_{\zeta},g_{\zeta})$ at $p$ with the ALE coordinates of \cite[Proposition 3.14]{kro} composed with $\psi$ when identifying with the coordinates of $(M_o,g_o)$ (see Remark \ref{jauge recollement})
is isometric to the gluing of $(X_{\zeta'},g_{\zeta'})$ to $(M_o,g_o)$ at $p$
without composing with an isometry.
\end{lem}
\begin{proof}
    Let ${\Gamma}\subset SU(2)$ be a finite subgroup. Having $\psi\in \textup{Isom}(\mathbb{R}^4\slash{\Gamma})$ implies that $\psi$ is in the normalizer of ${\Gamma}$ in $SO(4)$ and therefore that $\psi\in \phi(\mathbb{S}^3\times N_{\Gamma})$, where $N_{\Gamma}$ is the normalizer of ${\Gamma}$ in $SU(2)$ (see \cite{mcc} for an explicit expression) and where $\phi: \mathbb{S}^3\times \mathbb{S}^3\to SO(4)$ is the double covering of $SO(4)$ where the first $\mathbb{S}^3$ is the left multiplication by a unit quaternion and the second a right multiplication. In this identification, we have $SU(2) = \phi(\{1\}\times \mathbb{S}^3)$ and we will denote  $\overline{SU(2)} = \phi( \mathbb{S}^3\times \{1\})$.
    \\
    
    Let us now study the action of the normalizer of $\Gamma$, which is $\phi(\mathbb{S}^3\times N_{\Gamma})$, on a metric of \cite{kro} in the coordinates of \cite[Proposition 3.14]{kro}. Let us come back from their construction in \cite{kro} starting with $$P:=\mathbb{C}^2\otimes End(R),$$
    where $\mathbb{C}^2$ is the standard representation of $SU(2)$ and where $R$ is its regular representation $SU(2)$. Denote $P^\Gamma$ the set of $\Gamma$-invariant elements of $P$, $F$ the set of elements commuting with $\Gamma$ seen as a subset of the unitary transformations of $R$, and finally denote $T$ the subgroup of complex numbers of unit norm acting trivially. The gravitational instantons asymptotic to $\mathbb{C}^2\slash\Gamma$ are the hyperkähler quotients of $P^\Gamma$ by $F/T$. 
    
    By definition, the normalizer $\phi(\mathbb{S}^3\times N_{\Gamma})$ acts on $P^\Gamma$ and commutes to the action of $F$. It consequently acts on the set of solutions of the moment map $\mu : P^\Gamma\to (\mathfrak{f}/\mathfrak{t})^*\otimes \mathbb{R}^3$ where $\mathfrak{f}$ and $\mathfrak{t}$ are the Lie algebras of $F$ and $T$ respectively. More precisely, denoting $I_k$, $k\in\{1,2,3\}$ the 3 complex structures of $P^\Gamma$ given by the identification of $\mathbb{C}^2$ with the quaternions, for all $\xi\in \mathfrak{f}/\mathfrak{t}$, the three coordinates of $\mu$ satisfy the equations
    $$grad(\mu_k.\xi) = I_k(V_\xi),$$
    where $V_\xi$ is the vector field on $P^\Gamma$ generated by the action of $\xi$. The $SU(2)$ in which $\Gamma$ and $N_\Gamma$ commutes to the three complex structures since it is identified to the \emph{right} multiplication by a unit-norm quaternion. We conclude that $N_\Gamma$ acts by isometry on the hyperkähler metric $\mu^{-1}(\zeta)/(F/T)$ for all $\zeta\in (\mathfrak{f}/\mathfrak{t})^*\otimes \mathbb{R}^3$. The part $\overline{SU(2)}$ of the normalizer, acts by rotation on the three complex structures. And more precisely, an element $n_-\in \overline{SU(2)}$ sends $\mu^{-1}(\zeta)/(F/T)$ to $\mu^{-1}(Ad(n_-)\zeta)/(F/T)$ where $Ad(n_-)$ is the standard action of $SU(2)/\pm\approx SO(3)$ on the factor $\mathbb{R}^3$ of $(\mathfrak{f}/\mathfrak{t})^*\otimes \mathbb{R}^3$.
    \\
    
    To conclude there remains to ensure that this action of the normalizer is the standard action on the asymptotic cone $\mathbb{R}^4\slash\Gamma$ in the coordinates of \cite[Proposition 3.14]{kro}. To prove this, we use the identification $\mathbb{R}^4\slash\Gamma\approx \mu^{-1}(0)/(F/T)$ of \cite[Corollary 3.2]{kro}. The correspondence between the infinities of $\mu^{-1}(\zeta)/(F/T)$ and $\mu^{-1}(0)/(F/T)$ for the coordinates of \cite[Proposition 3.14]{kro} given by \cite[(3.13)]{kro} lets us conclude that the action of the normalizer is indeed the standard action on the asymptotic cone $\mathbb{R}^4\slash\Gamma$.
    
    Therefore, the gluing $(X_\zeta,g_\zeta)\approx \mu^{-1}(\zeta)/(F/T)$ for $\zeta\in \mathbb{R}^{3k_{\Gamma}}\approx (\mathfrak{f}/\mathfrak{t})^*\otimes \mathbb{R}^3$ composed with the isometry $\psi=\phi(\psi_-,\psi_+)$ of $\mathbb{R}^4\slash\Gamma$ is isometric to the gluing of $(X_{\zeta'},g_{\zeta'})$ with $\zeta' = Ad(\psi_-) \zeta$.
\end{proof}

Let us then remark that any ALE hyperkähler orbifold can be desingularized by a sequence of ALE hyperkähler manifolds.

\begin{lem}[{\cite{ban}}]\label{hk nonvide}
    Let $(X_{\zeta_1},g_{\zeta_1})$ be an ALE hyperkähler orbifold with a singularity $\mathbb{R}^4\slash\Gamma$. Then, for any $t>0$, there exists $\zeta_0^t\in \mathbb{R}^{3k_\Gamma}\backslash D_{\Gamma}$ and $v_t\in \mathbf{O}(g_{\zeta_1})$ satisfying $\lim_{t\to 0}v_t = 0$ and $ \lim_{t\to 0} X_{\zeta_0^t} = X_{\zeta_0}\in \mathbb{R}^{3k_\Gamma}\backslash D_{\Gamma}$, such that the Einstein modulo obstructions desingularization $(X_{\zeta_1}\#X_{\zeta_0},\hat{g}_{v_t,t,\zeta_0^t})$ of Corollary \ref{desing mod obst ALE} is hyperkähler.
\end{lem}
\begin{proof}
    The result of \cite[Theorem 4]{ban} shows that for $t$ small enough, there exists $\zeta_0$ for which there is a hyperkähler metric $ g_t $ satisfying 
    $$ \|g_t-g^B_{0,t,\zeta_0}\|_{(1+r_B)^{-\beta}C^{2,\alpha}_0(g^B_{0,t,\zeta_0})}\leqslant \gamma(t), $$
    where $\lim_{t\to 0}\gamma(t)=0$. We can then apply our construction of coordinates of \cite{ozu1} to $(X_{\zeta_1}\#X_{\zeta_0},g_t)$ in order to obtain a diffeomorphism $\phi : X_{\zeta_1}\#X_{\zeta_0}\to X_{\zeta_1}\#X_{\zeta_0}$ thanks to which we have the better control
    $$ \|\phi^*g_t-g^B_{0,t,\zeta_0}\|_{C^{2,\alpha}_{\beta,*}(g^B_{0,t,\zeta_0})}\leqslant C\gamma(t). $$
    Thanks to Proposition \ref{Mise en Reduced divergence-free}, there exists a small diffeomorphism $\psi : X_{\zeta_1}\#X_{\zeta_0}\to X_{\zeta_1}\#X_{\zeta_0}$ such that $\tilde{\delta}_{g^B_{0,t,\zeta_0}}\psi^*\phi^*g_t= 0$. The uniqueness of Theorem \ref{fcts inv einst général} and the fact that the infinitesimal deformations of $(X_{\zeta_0},g_{\zeta_o})$ integrate to curves $t\mapsto(X_{\zeta_0(t)},g_{\zeta_0(t)})$ by the classification of \cite{kro,kro2} ensures that there exists $v_t\in \Tilde{\mathbf{O}}(g^B_{0,t,\zeta_0})$ and $\zeta_0^t$ such that we have
    $$\psi^*(\phi^*g_t)=\hat{g}_{v_t,t,\zeta_0^t},$$
    with $\hat{g}_{v_t,t,\zeta_0^t}$ one of the metrics of Theorem \ref{fcts inv einst général}.
\end{proof}

\begin{rem}
    Note that \cite[Theorem 6.2]{hv20} provides such gluings-perturbations in all directions $\zeta\in \mathbb{R}^{3k_\Gamma}\backslash D_\Gamma$. The admissible scales are however not independent on $|\zeta|=1$, and arbitrary trees of singularities are not allowed.  
\end{rem}

\begin{prop}\label{Recollement arbre de'ALE khaleriens}
    There exists $\tau>0$ such that for all 
    $$(v,t,\zeta)\in \big(B_{C^{2,\alpha}_{\beta,*}(g_{\zeta_1})}(0,\tau)\cap \mathbf{O}(g_{\zeta_1})\big)\times (0,\tau)\times (\mathbb{S}^{3k_\Gamma-1}\backslash D_{\Gamma}),$$
    the metric $\hat{g}_{v,t,\zeta}$ of Proposition \ref{desing mod obst ALE} is isometric to a hyperkähler metric of \cite{kro}.
\end{prop}
\begin{proof}
    Let us start by noting that the set $$\big(B_{C^{2,\alpha}_{\beta,*}(g_{\zeta_1})}(0,\tau)\cap \mathbf{O}(g_{\zeta_1})\big)\times (0,\tau)\times (\mathbb{S}^{3k_\Gamma-1}\backslash D_{\Gamma})$$ is connected since $D_{\Gamma}$ is a finite union of spaces of codimension at least $2$ in $\mathbb{S}^{3k_\Gamma-1}$. We therefore just need to prove that the set
    $$E = \big\{(v,t,\zeta)\in \big(B_{C^{2,\alpha}_{\beta,*}(g_{\zeta_1})}(0,\tau)\cap \mathbf{O}(g_{\zeta_1})\big)\times (0,\tau)\times (\mathbb{S}^{3k_\Gamma-1}\backslash D_{\Gamma}),  \Ric(\hat{g}_{v,t,\zeta})=0\big\},$$
    which is non empty by Lemma \ref{hk nonvide} is open and closed. It is therefore isometric to $g_{\xi(v,t,\zeta)}$ for $\xi(v,t,\zeta)\in \mathbb{R}^{3k_{\Gamma_1}}\backslash D_{\Gamma_1}$.
    
    The set $E$ is closed by the continuity of $$(v,t,\zeta)\in \big(B_{C^{2,\alpha}_{\beta,*}(g_{\zeta_1})}(0,\tau)\cap \mathbf{O}(g_{\zeta_1})\big)\times (0,\tau)\times (\mathbb{S}^{3k_\Gamma-1}\backslash D_{\Gamma})\mapsto \Ric(\hat{g}_{v,t,\zeta})$$ proven in Corollary \ref{desing mod obst ALE}.
    
    For the openness, let us assume that for $(v,t,\zeta)$ fixed, we have $\Ric(\hat{g}_{v,t,\zeta}) = 0$. The metric $\hat{g}_{v,t,\zeta}$ is therefore one of the metrics of \cite{kro}. There exists a space of the same dimension as $\tilde{\mathbf{O}}(g^B_{v,t,\zeta})$ of hyperkähler deformations in the neighborhood of $ g_{\xi(v,t,\zeta)} $ and therefore the Ricci-flat modulo obstructions deformations of $\hat{g}_{v,t,\zeta}$ are hyperkähler. Indeed, since the metrics $\bar{g}_{\zeta_1,v}$, $g_{\zeta_2}$ and $ g_{\xi(v,t,\zeta)} $ are hyperkähler, they have a hyperkähler deformation space of dimension three times that of their $L^2$-cohomology in degree $2$, and the dimension of this cohomology is additive for our gluings.
    
    According to Lemma \ref{approx kernel} applied to $P = P_{\hat{g}_{v,t,\zeta}}$, $P' =P_{g^B_{v,t,\zeta}}$, $K' = \tilde{\mathbf{O}}(g^B_{v,t,\zeta})$ and $S' = \tilde{\mathbf{O}}(g^B_{v,t,\zeta})^\perp$, the Ricci-flat deformations of $\hat{g}_{v,t,\zeta}$ are arbitrarily close to elements of $\tilde{\mathbf{O}}(g^B_{v,t,\zeta})$. By Corollary \ref{control termes ordre 4 nouveau poids}, the metrics $g^B_{v',t',\zeta'}$ approximate the small Ricci-flat deformations $\hat{g}_{v',t',\zeta'}$ of $\hat{g}_{v,t,\zeta}$ staying in $B_{C^{2,\alpha}_{\beta,*}(g^B_{v,t,\zeta},2\epsilon)}$, for $\epsilon>0$ the constant of Theorem \ref{fcts inv einst général}. We therefore reach metrics isometric to all the $\hat{g}_{v',t',\zeta'}$ for $(v',t',\zeta')$ in a neighborhood of $(v,t,\zeta)$ in $\big(B_{C^{2,\alpha}_{\beta,*}(g_{\zeta_1})}(0,\tau)\cap \mathbf{O}(g_{\zeta_1})\big)\times (0,\tau)\times (\mathbb{S}^{3k_\Gamma-1}\backslash D_{\Gamma})$. The set $E$ is therefore open.
\end{proof}

\subsubsection{Obstruction for trees of Kähler Ricci-flat ALE orbifolds}

 Let us use the notations of the proof of Proposition \ref{obst hyperkhaler}, and parametrize the set of Kähler Ricci-flat manifolds asymptotic to $\mathbb{R}^4\slash\Gamma$ as $(X_{\zeta},g_\zeta)_{\zeta\in \mathbb{R}^{3k_\Gamma}\backslash D_\Gamma}$ in the following statement.

\begin{lem}\label{obstr o1o2o3 sur arbre}
Let $(X_{\zeta_0},g_{\zeta_0})$ be a Kähler Ricci-flat orbifold asymptotic to $\mathbb{R}^4\slash\Gamma$, and let $\zeta\in \mathbb{R}^{d_\Gamma}\backslash D_\Gamma$ be close to $\zeta_0$. Then, there exists a naïve desingularization $g^B_t$ of $(X_{\zeta_0},g_{\zeta_0})$ by Kähler Ricci-flat ALE orbifolds glued in the same orientation and $v\in \tilde{\mathbf{O}}(g^B_t)$ such that $(X_\zeta,g_\zeta) = (N,\bar{g}_{b,t,v})$ is the (iterated) perturbation of Lemma \ref{Recollement arbre de'ALE khaleriens} of $(N,g^B_t+v)$.
    
    Moreover, there exists $\epsilon>0$ such that for $\zeta$ close enough to $\zeta_0$, there exists a diffeomorphism $\Phi_\zeta$ between neighborhoods of the infinities of $(X_\zeta,g_\zeta)$ and of $\mathbb{R}^4\slash\Gamma$ such that there exists $\mathbf{o}_1(\zeta)$, $\mathbf{o}_2(\zeta)$ and $\mathbf{o}_3(\zeta)$ elements of $\mathbf{O}(g_\zeta)$ satisfying for all $i\in \{1,2,3\}$,
    $$\Phi_\zeta^*\mathbf{o}_i(\zeta) = O_i^4 +\mathcal{O}(r_{B}^{4+\beta}),$$
    where $O_i^4 = \mathcal{O}(r_B^{-4})$ is the homogeneous symmetric $2$-tensor used in the proof of Proposition \ref{obst hyperkhaler}, and with $\|\mathbf{o}_i(\zeta)\|_{L^2(g_{\zeta})}\geqslant \epsilon$.
\end{lem}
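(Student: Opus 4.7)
The plan is to construct the three deformations by differentiating the Kronheimer--Suvaina family at $\zeta_0$ and identifying their asymptotic leading terms. The proof proceeds in four steps.

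\textbf{Step 1 (Tree structure).} First I would establish that, for $\zeta$ sufficiently close to $\zeta_0$, one can realize $(X_\zeta,g_\zeta)$ as the Bando perturbation of a naïve desingularization of $\mathbb{R}^4/\Gamma$ built with $X_{\zeta_0}$ as the main bubble. The idea is that the moduli space $\mathbb{R}^{3k_\Gamma}\setminus D_\Gamma$ has a natural stratification by the singular set; near $\zeta_0$ the small components of $\zeta-\zeta_0$ correspond to resolving the remaining orbifold singularities of $X_{\zeta_0}$ by smaller Kähler Ricci-flat ALE bubbles $(N_k,g_{b_k})$. Gluing these bubbles at the appropriate relative scales $t_k$ produces $(N,g^B_t)$. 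Applying Lemma \ref{Recollement arbre de'ALE khaleriens} iteratively (starting from the deepest bubbles and working up) gives a Kähler Ricci-flat perturbation $\bar g_{b,t}$ which, by uniqueness of Kähler Ricci-flat ALE metrics within the Kronheimer/Suvaina classification, must coincide with $(X_\zeta,g_\zeta)$.

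\textbf{Step 2 (Construction of the deformations).} By passing to the universal cover, the family of metrics is parametrized by $\tilde\zeta\in\mathbb{R}^{3k_{\tilde\Gamma}}\setminus D_{\tilde\Gamma}$ as in the proof of Proposition \ref{obst hyperkhaler}, with the explicit asymptotic formula for $h_{\tilde\zeta}$ from \cite[Theorem 2.1]{auv}. By the $SO(4)/U(2)$-action together with a homothety, I can assume there exists an index $l_0$ such that $(\tilde\zeta_1^{l_0},\tilde\zeta_2^{l_0},\tilde\zeta_3^{l_0})=(1,0,0)$ at $\tilde\zeta=\tilde\zeta_0$. I then define, for $i=1,2,3$,
\[
\mathbf{o}_i(\zeta):=\frac{\partial}{\partial\tilde\zeta_i^{l_0}}\,\tilde g_{\tilde\zeta}\quad\text{(pushed down to the }U(2)\text{-quotient).}
\]
Differentiating the formula for $h_{\tilde\zeta}$ at the chosen point shows that the leading $r^{-4}$-term of $\mathbf{o}_i(\zeta_0)$ is exactly (a non-zero multiple of) $O_i^4$, i.e.\ $\Phi_{\zeta_0}^*\mathbf{o}_i(\zeta_0)=O_i^4+\mathcal{O}(r_B^{-4-\beta})$ for an ALE coordinate chart $\Phi_{\zeta_0}$ at infinity.

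\textbf{Step 3 (Regularity and gauge).} Because $\mathbf{o}_i(\zeta)$ is a derivative of a family of Ricci-flat metrics it lies in the kernel of $d_{g_\zeta}\Ric$, and by Remark \ref{les def inft of RF ALE are en jauge} it is automatically trace-free and divergence-free. In particular, $\bar P_{g_\zeta}\mathbf{o}_i(\zeta)=0$, so $\mathbf{o}_i(\zeta)\in\mathbf{O}(g_\zeta)$. The coordinate chart $\Phi_\zeta$ at infinity of $X_\zeta$, chosen so that $\Phi_\zeta^*g_\zeta$ is in divergence-free gauge with respect to $g_e$, gives the required asymptotic form $\Phi_\zeta^*\mathbf{o}_i(\zeta)=O_i^4+\mathcal{O}(r_B^{-4-\beta})$ by continuity in $\zeta$ and by Corollary \ref{control termes ordre 4 nouveau poids} applied in the family.

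\textbf{Step 4 ($L^2$-lower bound).} Since the three homogeneous tensors $O_1^4,O_2^4,O_3^4$ are linearly independent, non-zero on the sphere, and satisfy $|O_i^4|^2_{g_e}\sim r_B^{-8}$, the integral $\int_{\{r_B\geqslant R\}}|O_i^4|^2_{g_e}\,dv_e$ is a strictly positive constant depending only on $R$ and $\Gamma$. Combined with $\Phi_\zeta^*\mathbf{o}_i(\zeta)=O_i^4+\mathcal{O}(r_B^{-4-\beta})$, this produces the uniform lower bound $\|\mathbf{o}_i(\zeta)\|_{L^2(g_\zeta)}\geqslant\epsilon>0$ for $\zeta$ in a neighbourhood of $\zeta_0$.

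The main obstacle is Step~1: identifying $(X_\zeta,g_\zeta)$ with the Bando perturbation of a naïve tree-gluing on $X_{\zeta_0}$ in a way that is uniform as $\zeta\to\zeta_0$. This requires a careful analysis of the Kronheimer moduli space near $\zeta_0$, matching period parameters with the combinatorics of the gluing pattern $B$ and the relative scales $t_k$, and then invoking uniqueness of Kähler Ricci-flat ALE representatives to identify the two constructions.
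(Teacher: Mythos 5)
Your proposal is correct and follows essentially the same route as the paper's: both obtain the $\mathbf{o}_i(\zeta)$ by differentiating the Kronheimer/Suvaina family (lifted to the hyperkähler cover and normalized via the $SO(4)/U(2)$ action and scaling, exactly as in the proof of Proposition~\ref{obst hyperkhaler}), both appeal to Corollary~\ref{control termes ordre 4 nouveau poids} for uniformity of the $r_B^{-4}$-asymptotics in $\zeta$, and both deduce the $L^2$-lower bound from the nonvanishing of the fixed homogeneous tensors $O_i^4$. You are somewhat more explicit than the paper on Step~1 (matching the period parameters of the Kronheimer moduli near $\zeta_0$ with the gluing pattern and relative scales, and invoking uniqueness of Kähler Ricci-flat ALE representatives); the paper compresses this step into a citation of Corollary~\ref{control termes ordre 4 nouveau poids} together with the implicit use of the classification, so your treatment fills in an omission rather than departing from the argument.
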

\begin{proof}
According to Corollary \ref{control termes ordre 4 nouveau poids}, $(X_\zeta,g_\zeta) = (N,\bar{g}_{b,t})$ is a Kähler Ricci-flat perturbation of a naïve desingularization $(N,g^B_t)$ for some small $t$ depending on $\zeta$. Moreover, the $r_e^{-4}$ terms of $(X_{\zeta},g_\zeta)$ converge to those of $(X_{\zeta_0},g_{\zeta_0})$ as $\zeta\to \zeta_0$ by Corollary \ref{control termes ordre 4 nouveau poids}. By assumption, there exists $l\in \{1,...,k_\Gamma\}$ such that $\zeta_0^l\neq 0$, hence, for $\zeta$ close enough to $\zeta_0$, $\zeta^l \neq 0$ by continuity. Just like in the proof of Proposition \ref{obst hyperkhaler}, this implies that there exists a diffeomorphism $\Phi_\zeta$ between neighborhoods of the infinities of $(X_\zeta,g_\zeta)$ and of $\mathbb{R}^4\slash\Gamma$ and infinitesimal deformations of $g_\zeta$, $\mathbf{o}_1(\zeta)$, $\mathbf{o}_2(\zeta)$ and $\mathbf{o}_3(\zeta)$ such that there exists $C>0$ independent of $\zeta$ for which we have for all $i\in \{1,2,3\}$,
$$\big|\Phi_\zeta^*\mathbf{o}_i(\zeta) - O_i^4\big| \leqslant C r_{B}^{-4-\beta}$$
by Corollary \ref{control termes ordre 4 nouveau poids}. In particular, since $O_i^4\neq 0$, there exists $\epsilon>0$ depending on $C$ and $\beta$, but independent of $\zeta$ such that we have $\|\mathbf{o}_i(\zeta)\|_{L^2(g_\zeta)}\geqslant \epsilon$.
\end{proof}

\begin{lem}\label{Résolution d'equations arbre}
    Let $g^B_t$ be a naïve gluing of Kähler Ricci-flat ALE orbifolds, and $\bar{g}_{b,t}$ its Kähler Ricci-flat pertubation of Lemma \ref{Recollement arbre de'ALE khaleriens}.

    Then, for any symmetric $2$-tensor $w\in r_{B}^{-2}C^\alpha_\beta(g^B_t)$, there exists a unique symmetric $2$-tensor $u\in \mathbf{O}(\bar{g}_{b,t})^{\perp_{\bar{g}_{b,t}}}\cap C^{2,\alpha}_{\beta,*}(g^B_t)$, such that 
    \begin{equation}
        \bar{P}_{\bar{g}_{b,t}} u = \pi_{\mathbf{O}(\bar{g}_{b,t})^\perp}w.
    \end{equation}
    
    We moreover have the following control for $C = C(g^B_t)>0$, 
    $$ \|u\|_{C^{2,\alpha}_{\beta,*}(g^B_t)}\leqslant C\|\pi_{\mathbf{O}(\bar{g}_{b,t})^\perp}w\|_{r_{B}^{-2}C^\alpha_\beta(g^B_t)}. $$
\end{lem}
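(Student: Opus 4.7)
The strategy is to reduce the statement to the uniform invertibility result of Proposition \ref{inversion with cste} by a perturbation argument, exploiting the fact that $\bar{g}_{b,t}$ is Kähler Ricci-flat and $C^{2,\alpha}_{\beta,*}(g^B_t)$-close to $g^B_t$ by Lemma \ref{Recollement arbre de'ALE khaleriens}.

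First, I would record two structural properties of $\bar{P}_{\bar{g}_{b,t}}$. Since $\bar{g}_{b,t}$ is Ricci-flat, the terms in the definition of $\bar{P}_g$ involving $\Ric_g$, $\R_g$, and the averaged scalar curvature all drop, and the remaining expression is formally self-adjoint on symmetric $2$-tensors on $(N,\bar{g}_{b,t})$. Hence, the kernel and the $L^2(\bar g_{b,t})$-cokernel of $\bar{P}_{\bar g_{b,t}}$ coincide with $\mathbf{O}(\bar g_{b,t})$ by definition. Moreover, by Remark \ref{les def inft of RF ALE are en jauge}, elements of $\mathbf{O}(\bar g_{b,t})$ are automatically trace- and divergence-free, so that $\bar{P}_{\bar g_{b,t}}$ reduces on the relevant closed subspace to the simpler operator $P_{\bar g_{b,t}} = \tfrac{1}{2}\nabla^*\nabla - \mathring{\R}_{\bar g_{b,t}}$, to which Lemma \ref{Contrôle P espace modèles} applies on each Kähler Ricci-flat piece of the tree.

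Next, I would apply Proposition \ref{inversion with cste} with $(M_o,g_o)$ replaced by the Ricci-flat ALE orbifold $(N_1,g_{b_1})$ at the root of the tree (the proof there only uses the local and asymptotic structure of the model spaces, not compactness, so it adapts verbatim to the ALE setting). This yields constants $\tau_D,\epsilon_D,C_D>0$ such that for $t_{\max}<\tau_D$ and any metric $g$ with $\|g-g^B_t\|_{C^{2,\alpha}_{\beta,*}(g^B_t)}\leqslant\epsilon_D$, the operator $\pi_{\tilde{\mathbf{O}}(g^B_t)^\perp}\bar{P}_g$ is bounded invertible from $\tilde{\mathbf{O}}(g^B_t)^\perp\cap C^{2,\alpha}_{\beta,*}(g^B_t)$ to $\tilde{\mathbf{O}}(g^B_t)^\perp\cap r_B^{-2}C^\alpha_\beta(g^B_t)$. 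By Lemma \ref{Recollement arbre de'ALE khaleriens}, the metric $\bar{g}_{b,t}$ is admissible for this statement for $t$ small enough.

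The heart of the argument is then to match the approximate cokernel $\tilde{\mathbf{O}}(g^B_t)$ with the genuine one $\mathbf{O}(\bar g_{b,t})$. The two spaces have the same (finite) dimension: by the classification of Kähler Ricci-flat ALE spaces used in the proof of Proposition \ref{obst hyperkhaler} and Lemma \ref{obstr o1o2o3 sur arbre}, the infinitesimal deformations of $\bar g_{b,t}$ within the Kähler Ricci-flat ALE moduli are produced precisely by summing the deformations of each piece of the tree, which is the very construction of $\tilde{\mathbf{O}}(g^B_t)$. Given a basis $(\tilde{\mathbf{o}}_i)$ of $\tilde{\mathbf{O}}(g^B_t)$, the invertibility in the previous paragraph lets me solve $\pi_{\tilde{\mathbf{O}}(g^B_t)^\perp}\bar{P}_{\bar g_{b,t}}\eta_i=-\pi_{\tilde{\mathbf{O}}(g^B_t)^\perp}\bar{P}_{\bar g_{b,t}}\tilde{\mathbf{o}}_i$, producing genuine kernel elements $\tilde{\mathbf{o}}_i+\eta_i\in\mathbf{O}(\bar g_{b,t})$ with $\|\eta_i\|_{C^{2,\alpha}_{\beta,*}}\to 0$ as $t_{\max}\to 0$ (since $\bar{P}_{\bar g_{b,t}}\tilde{\mathbf{o}}_i\to 0$ by the estimate \eqref{Pu oj} and the Ricci-flatness of the $\bar g_{b_j}$). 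Hence the $L^2(g^B_t)$-orthogonal projection $\tilde{\mathbf{O}}(g^B_t)\to\mathbf{O}(\bar g_{b,t})$ is a near-identity linear bijection, so the two projections $\pi_{\tilde{\mathbf{O}}(g^B_t)^\perp}$ and $\pi_{\mathbf{O}(\bar g_{b,t})^\perp}$ differ by an operator of norm tending to $0$. Combined with the uniform invertibility of $\pi_{\tilde{\mathbf{O}}(g^B_t)^\perp}\bar{P}_{\bar g_{b,t}}$, this gives for $t$ small enough that $\pi_{\mathbf{O}(\bar g_{b,t})^\perp}\bar{P}_{\bar g_{b,t}}:\mathbf{O}(\bar g_{b,t})^\perp\cap C^{2,\alpha}_{\beta,*}\to \mathbf{O}(\bar g_{b,t})^\perp\cap r_B^{-2}C^\alpha_\beta$ is a bounded isomorphism, from which the existence and uniqueness of $u$ follow immediately.

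The main technical obstacle in this plan is the third step: controlling the correction terms $\eta_i$ and in particular showing that the resulting map $\tilde{\mathbf{O}}(g^B_t)\to\mathbf{O}(\bar g_{b,t})$ is an isomorphism. This relies crucially on the dimension count being matched, which in turn rests on the explicit structure of Kähler Ricci-flat deformations of each ALE piece described in Lemma \ref{obstr o1o2o3 sur arbre}; without the Kähler hypothesis (and the associated integrability of deformations) one could not guarantee that every formal approximate cokernel element gives rise to an actual one.
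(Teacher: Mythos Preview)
Your plan is broadly sensible but the third step contains a genuine gap, and the paper bypasses the whole matching manoeuvre with a much shorter argument.

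The gap: solving $\pi_{\tilde{\mathbf{O}}(g^B_t)^\perp}\bar P_{\bar g_{b,t}}\eta_i=-\pi_{\tilde{\mathbf{O}}(g^B_t)^\perp}\bar P_{\bar g_{b,t}}\tilde{\mathbf o}_i$ with $\eta_i\in\tilde{\mathbf{O}}(g^B_t)^\perp$ only yields $\bar P_{\bar g_{b,t}}(\tilde{\mathbf o}_i+\eta_i)\in\tilde{\mathbf{O}}(g^B_t)$, not $\bar P_{\bar g_{b,t}}(\tilde{\mathbf o}_i+\eta_i)=0$. So $\tilde{\mathbf o}_i+\eta_i$ need not lie in $\mathbf{O}(\bar g_{b,t})$, and the ``near-identity map'' $\tilde{\mathbf{O}}(g^B_t)\to\mathbf{O}(\bar g_{b,t})$ is never actually constructed. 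To repair this you would have to supply an independent lower bound $\dim\mathbf{O}(\bar g_{b,t})\geqslant\dim\tilde{\mathbf{O}}(g^B_t)$, for instance from Kronheimer's moduli description; this is true in the K\"ahler case but it is an extra input.

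The paper's route is shorter and does not touch the dimension question. The key observation is that $\bar g_{b,t}$, produced by Lemma~\ref{Recollement arbre de'ALE khaleriens}, is itself a \emph{single} smooth Ricci-flat ALE manifold. One can therefore run the standard one-space weighted Fredholm argument directly on $\bar P_{\bar g_{b,t}}$: if $\bar P_{\bar g_{b,t}}h=0$ with $h$ decaying, applying $\delta_{\bar g_{b,t}}$ gives $\delta_{\bar g_{b,t}}\delta^*_{\bar g_{b,t}}(\delta_{\bar g_{b,t}}h)=0$, hence $\delta_{\bar g_{b,t}}h=0$ by Proposition~\ref{delta inj fredholm}; the trace then satisfies $\nabla^*\nabla(\mathrm{tr}\,h)=0$ and vanishes by decay; so $h\in\mathbf{O}(\bar g_{b,t})$. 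Self-adjointness and duality between the weights $\pm\beta$ identify the cokernel as $\mathbf{O}(\bar g_{b,t})$ as well. This already gives bijectivity of $\bar P_{\bar g_{b,t}}:\mathbf{O}(\bar g_{b,t})^\perp\to\mathbf{O}(\bar g_{b,t})^\perp$ with no comparison to $\tilde{\mathbf{O}}(g^B_t)$ at all. The tree estimate (your second step, via Proposition~\ref{inversion with cste} and the closeness $\|\bar g_{b,t}-g^B_t\|_{C^{2,\alpha}_{\beta,*}}\leqslant Ct_{\max}^{(2-\beta)/4}$) is invoked only to place the solution in the tree norm $C^{2,\alpha}_{\beta,*}(g^B_t)$, which is the point stressed in the remark after the lemma.
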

 \begin{rem}
    The crucial part of this lemma is the fact that the solution is controlled in the tree of singularities norm $C^{2,\alpha}_{\beta,*}(g^B_t)$ which behaves well as $t\to 0$.
 \end{rem}
\begin{proof}
    According to Theorem \ref{fcts inv einst général}, we have
\begin{equation}
        \|\bar{g}_{b,t}-g^B_t\|_{C^{2,\alpha}_{\beta,*}(g^B_t)}\leqslant C t_{\max}^{\frac{2-\beta}{4}},\label{est gbt gD}
\end{equation}
which, combined with the proof of Theorem \ref{fcts inv einst général} implies that, for $t_{\max}$ small enough, the operator $\bar{P}_{\bar{g}_{b,t}}$ is injective on $\tilde{\mathbf{O}}(g^B_t)^\perp\cap C^{2,\alpha}_{\beta,*}(g^B_t)$. 

Moreover, for $0<\beta<1$, its cokernel on $r_D^{-2}C^\alpha_\beta(g^B_t)$ is equal to its kernel on $r_D^{-2}C^\alpha_{-\beta}(g^B_t)$ which is equal to $\mathbf{O}(\bar{g}_{b,t})$. Indeed, for any $g_b$ a Ricci-flat ALE metric, the kernel and the cokernel of $\bar{P}_{g_b}: C^{2,\alpha}_{\beta,*}(g_b)\to r_b^{-2}C^\alpha_\beta(g_b)$ are equal to $\mathbf{O}(g_b)$ because taking the divergence of $\bar{P}_{g_b}(h) = 0$ for $h\in C^{2,\alpha}_{\beta,*}(g_b)$, yields $\delta_{g_b}\delta_{g_b}^*(\delta_{g_b}h) = 0$, and finally $\delta_{g_b}h=0$ by Proposition \ref{controle deltadelta sur les espaces modèles}. By taking the trace of the remaining of the equation, we find that $\nabla^*_{g_b}\nabla_{g_b}(\textup{tr}_{g_b}h)=0$, and since $h$ decays at infinity, $\textup{tr}_{g_b}h = 0$. Finally $P_{g_b}(h)=0$, and we conclude that the kernel of $ \bar{P}_{g_b}: C^{2,\alpha}_{\beta,*}\to r_b^{-2}C^\alpha_\beta$ is $\mathbf{O}(g_b)$, and similarly, its cokernel is also $\mathbf{O}(g_b)$.
\end{proof}

\begin{cor}\label{h2 arbre kahlerien}
    Let $t=(t_1 =1,...,t_k)>0$, and let
    \begin{itemize}
        \item $(N_k,g_{b_k})_k$ be a tree of ALE Kähler Ricci-flat orbifolds desingularizing $\mathbb{R}^4\slash\Gamma$,
        \item $(N,g^B_t)$ the naïve gluing of the $(N_k,g_{b_k})$ at the relative scales $t_k$ to $(N_1,g_{b_1})$, small enough for $k\neq 1$, and
        \item $(N,\bar{g}_{b,t})$ be the Kähler Ricci-flat ALE perturbation of $(N,g^B_t)$ of Lemma \ref{Recollement arbre de'ALE khaleriens}.
    \end{itemize}
       Let us assume that $(N_1,g_{b_1})$ is asymptotic to $\mathbb{R}^4\slash\Gamma$, consider $\hat{H}_S$ a quadratic symmetric $2$-tensor on $\mathbb{R}^4\slash\Gamma$ such that $\bar{P}_e \hat{H}_S +\lambda g_e = \mathbf{O}_S$ for a constant symmetric $2$-tensor $\mathbf{O}_S$.
    
    Then, there exists $C>0$ independent of the $t_k$ and $\chi$, a cut-off function supported in a neighborhood of infinity of $(N,\bar{g}_{b,t})$ independent of the $t_k$, and there exists $\hat{h}_{2}$ a symmetric $2$-tensor on $N$ such that we have
    $$\Bar{P}_{\bar{g}_{b,t}}\hat{h}_{2} + \lambda \bar{g}_{b,t} - \chi \mathbf{O}_S =\sum_{i}\hat{\lambda}_{i}\mathbf{o}_{i} \in \mathbf{O}(\bar{g}_{b,t}),$$
    and $$\|\hat{h}_S-\chi \hat{H}_S\|_{C^{2,\alpha}_{\beta,*}(g^B_t)} \leqslant C \|\hat{H}_S\|_{r^2_eC^{0}(g_e)}.$$
\end{cor}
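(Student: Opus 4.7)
The plan is to mimic the argument of Proposition \ref{terme quadratique with obst}, replacing the single ALE orbifold $(N,g_b)$ by the perturbed tree $(N,\bar{g}_{b,t})$, and substituting the invertibility statement of Lemma \ref{Contrôle P espace modèles} by its tree-of-singularities analogue, Lemma \ref{Résolution d'equations arbre}. Concretely, let $\chi$ be a smooth cut-off supported in the ALE end of $(N,\bar{g}_{b,t})$ (which we may choose independently of the $t_k$ since that end is unaffected by the gluings) and consider the candidate $\chi\hat{H}_2$. Define the error
$$w := \bar{P}_{\bar{g}_{b,t}}(\chi\hat{H}_2) + \lambda\,\bar{g}_{b,t} - \chi\mathbf{O}_o.$$
The hat-correction is then $\hat{h}_2 := \chi\hat{H}_2 - u$, where $u$ is obtained by solving $\bar{P}_{\bar{g}_{b,t}} u = \pi_{\mathbf{O}(\bar{g}_{b,t})^\perp} w$ via Lemma \ref{Résolution d'equations arbre}, and $\sum_i\hat{\lambda}_i\mathbf{o}_i := \pi_{\mathbf{O}(\bar{g}_{b,t})} w$.

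The main work is to show that $w\in r_B^{-2}C^\alpha_\beta(g^B_t)$ with norm bounded by $C\|\hat{H}_2\|_{r_e^2C^0(g_e)}$, and crucially, with $C$ independent of the gluing scales $t_k$. On the support of $\chi$, namely the ALE end modeled on $\mathbb{R}^4\slash\Gamma$, the difference $\bar{g}_{b,t}-g_e$ is a sum of two terms: the known $\mathcal{O}(r_B^{-4})$ asymptotic of the ALE metric $g_{b_1}$, and the perturbation $\bar{g}_{b,t}-g^B_t$ which by Lemma \ref{Recollement arbre de'ALE khaleriens} (equivalently Corollary \ref{control termes ordre 4 nouveau poids}) is itself $\mathcal{O}(t_{\max}^{(2-\beta)/4}r_B^{-4})$. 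Expanding $\bar{P}_{\bar{g}_{b,t}} = \bar{P}_e + (\bar{P}_{\bar{g}_{b,t}}-\bar{P}_e)$ and using $\bar{P}_e\hat{H}_2+\lambda g_e=\mathbf{O}_o$, the first-order terms cancel and we are left with derivatives of $\chi$ (contributing $\mathcal{O}(r_B^{-2})$ supported on a fixed compact collar) plus schematically $(\bar{g}_{b,t}-g_e)\ast\nabla^2\hat{H}_2 + \nabla(\bar{g}_{b,t}-g_e)\ast\nabla\hat{H}_2 + \text{curvature of }\bar{g}_{b,t}\ast\hat{H}_2$, each of which is an $\mathcal{O}(r_B^{-2-\beta})$ in the relevant weighted norm and bounded by $C\|\hat{H}_2\|_{r_e^2C^0(g_e)}$ by Remark \ref{comportement norme}, uniformly in $t$.

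Given this uniform estimate on $w$, Lemma \ref{Résolution d'equations arbre} delivers a unique $u\in\mathbf{O}(\bar{g}_{b,t})^{\perp}\cap C^{2,\alpha}_{\beta,*}(g^B_t)$ with $\bar{P}_{\bar{g}_{b,t}} u = \pi_{\mathbf{O}(\bar{g}_{b,t})^\perp}w$, and the bounded inverse yields
$$\|u\|_{C^{2,\alpha}_{\beta,*}(g^B_t)} \leqslant C\|w\|_{r_B^{-2}C^\alpha_\beta(g^B_t)} \leqslant C\|\hat{H}_2\|_{r_e^2C^0(g_e)}.$$
Since $\|\chi\hat{H}_2-\hat{H}_2\|_{C^{2,\alpha}_{\beta,*}(g^B_t)}$ is also controlled by $\|\hat{H}_2\|_{r_e^2 C^0(g_e)}$ (the cut-off lives in a fixed compact region of the end), setting $\hat{h}_2=\chi\hat{H}_2-u$ and $\sum_i\hat{\lambda}_i\mathbf{o}_i=\pi_{\mathbf{O}(\bar{g}_{b,t})}w$ gives the desired identity with the stated estimate.

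The delicate point—and the only one that requires more than routine bookkeeping—is to check that the constant $C$ really does not depend on the gluing scales $t_k$. This hinges on two uniformities already established in the paper: first, the tree-of-singularities norm $\|\cdot\|_{C^{2,\alpha}_{\beta,*}(g^B_t)}$ is defined so that contributions from the deeper bubbles add up with the correct conformal scaling (Definition \ref{norme a poids M}), so the error $w$ remains bounded in this norm uniformly in $t$; second, Lemma \ref{Résolution d'equations arbre} provides a bounded inverse for $\bar{P}_{\bar{g}_{b,t}}$ with constants independent of $t_{\max}$ for $t_{\max}$ small, thanks to the estimate \eqref{est gbt gD}. With these two inputs the argument proceeds as above and yields the claim.
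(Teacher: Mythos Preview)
Your proposal is correct and follows essentially the same approach as the paper: define the error $w=\bar{P}_{\bar{g}_{b,t}}(\chi\hat{H}_2)+\lambda\bar{g}_{b,t}-\chi\mathbf{O}_o$, bound it uniformly in $r_B^{-2}C^\alpha_\beta(g^B_t)$ using $\bar{g}_{b,t}-g_e=\mathcal{O}(r_B^{-4})$ on the ALE end, and solve modulo $\mathbf{O}(\bar{g}_{b,t})$ via Lemma~\ref{Résolution d'equations arbre}. One minor cleanup: the sentence about $\|\chi\hat{H}_2-\hat{H}_2\|$ is superfluous and ill-posed ($\hat{H}_2$ lives only on $\mathbb{R}^4/\Gamma$, not on $N$); the stated estimate is simply $\|\hat{h}_2-\chi\hat{H}_2\|=\|u\|$, which you already controlled.
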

\begin{proof}
    Let us consider $\hat{H}_S$ a quadratic symmetric $2$-tensor on $\mathbb{R}^4\slash\Gamma$ such that $\bar{P}_e\hat{H}_S + \lambda g_e = \mathbf{O}_S$, and let $\chi$ be a cut-off function on $N_1$ supported in a neighborhood of infinity where $(N_1,g_{b_1})$ has ALE coordinates we will also denote $\chi$ on $N$ the cut-off function extended by $0$ on the deeper ALE orbifolds.
    
    We then have $$\|\Bar{P}_{\bar{g}_{b,t}}(\chi \hat{H}_S) + \lambda \bar{g}_{b,t} -\chi \mathbf{O}_S \|_{r_D^{-2}C^\alpha_\beta(g^B_t)}\leqslant C \|\hat{H}_S\|_{r_e^2C^0_0}.$$
    Indeed, in a neighborhood of infinity where $\chi\equiv 1$, since $\bar{g}_{b,t}-g_e = \mathcal{O}(r_{B}^{-4})$, we have $\Bar{P}_{\bar{g}_{b,t}}(\chi \hat{H}_S) + \lambda \bar{g}_{b,t} = \mathbf{O}_S+ \mathcal{O}(r_{B}^{-4})$, and on the rest of the manifold, we have the expected control by definition of the norm $r_D^{-2}C^\alpha_\beta(g^B_t)$. According to Lemma \ref{Résolution d'equations arbre} applied to $g = \bar{g}_{b,t}$, there exists a unique symmetric $2$-tensor $h'\in C^{2,\alpha}_{\beta,*}(g^B_t) \cap \mathbf{O}(\bar{g}_{b,t})^\perp$, such that we have
    $$\Bar{P}_{\bar{g}_{b,t}}(\chi \hat{H}_S + h') + \lambda \bar{g}_{b,t}- \chi \mathbf{O}_S \in \mathbf{O}(\bar{g}_{b,t}).$$
    Moreover, according to Proposition \ref{terme quadratique with obst}, the element of $\mathbf{O}(\bar{g}_{b,t})$ is explicit. More precisely, consider $(\mathbf{o}_{i})_i$ an orthonormal basis of $\mathbf{O}(\bar{g}_{b,t})$, and thanks to the diffeomorphism $\Phi_t$ of Lemma \ref{obstr o1o2o3 sur arbre}, let us assume that the three first elements are asymptotic to $c_i\Phi_{t,*}O_i^4$ for $c_i>\frac{1}{\epsilon}$. We have
    $$\Bar{P}_{\bar{g}_{b,t}}(\chi \hat{H}_S + h') + \lambda \bar{g}_{b,t}- \chi \mathbf{O}_S = \sum_i \hat{\lambda}_{i}\mathbf{o}_{i}\in \mathbf{O}(\bar{g}_{b,t}),$$
    where, for $i = 1,2,3$,
    $$\hat{\lambda}_i:= -\int_{\mathbb{S}^3\slash\Gamma} \big(3\langle \hat{H}_S, O_i^4 \rangle_{g_{e}} + O_i^4\big(\nabla_e\textup{tr}_e\hat{H}_S,\partial_{r_e}\big) \big) dv_{\mathbb{S}^3\slash\Gamma} + \int_N \chi\langle \mathbf{O}_S, \mathbf{o}_i \rangle_{g_b} dv_b.$$
\end{proof}

Let $(M_o,g_o)$ be an Einstein orbifold and $p$ one of its singular points of singularity $\mathbb{R}^4\slash\Gamma$, $S$ the set of singularities of $M_o\backslash\{p\}$, and let $(N_k,g_{b_k})_k$ be a tree of ALE Kähler Ricci-flat orbifolds desingularizing $\mathbb{R}^4\slash\Gamma$. Let moreover $\Hat{g}_{S}$ be a naïve desingularization modulo obstructions of $(M_o,g_o,S)$ and $\hat{H}_S$ the quadratic terms of a development in divergence-free gauge at $p\in M_o$, $t_1>0$, $\bar{g}_{b,t}$ a Kähler Ricci-flat gluing of the $(N_k,g_{b_k})_k$ at relative scales $t=(t_k)_k>0$ produced by Lemma \ref{Recollement arbre de'ALE khaleriens}, and a symmetric $2$-tensor $\Hat{h}_{2}$ on $N$ and the real numbers $\Hat{\lambda}_i$ of Lemma \ref{h2 arbre kahlerien}. 
\begin{defn}[Metric $\Hat{g}^A$]
    Let us define the approximate metric $\Hat{g}^A$ as the naïve gluing (Definition \ref{def naive desing}) of $\Hat{g}_{S}$ and $t_1\big(\bar{g}_{b,t}+t_1\hat{h}_S\big)$.
\end{defn}
We have the following control whose proof is the same as Theorem \ref{obstruction intégrable tout point}.

\begin{cor}\label{controle approximation arbres}
    Let $\beta>0$, and let us use the above notations.
    For $t_{\max}>0$ small enough we have the following controls: for all $k\in \mathbb{N}$ there exists $C_k>0$,
    \begin{equation}
        \big\|\pi_{\tilde{\mathbf{O}}(g^D)^\perp}\mathbf{\Phi}_{g^D}(\Hat{g}^A) \big\|_{r^{-2}_DC^k_{\beta}(g^D)}\leqslant C_k t_{1}^{\frac{3-\beta}{4}},\label{Controle operateur gAv bis}
    \end{equation}
    and for all $\tilde{\mathbf{o}}\in \tilde{\mathbf{O}}_{t_1}(\bar{g}_{b,t})$, and denoting $(\tilde{\mathbf{o}}_{i})_i$ and orthonormal basis of $ \tilde{\mathbf{O}}_{t_1}(\bar{g}_{b,t})$
    \begin{equation}
        \Big\langle\mathbf{\Phi}_{g^D}(\Hat{g}^A)-t_1 \sum_{i} \hat{\lambda}_i \tilde{\mathbf{o}}_{i}, \tilde{\mathbf{o}} \Big\rangle_{L^2(g^D)}\leqslant C_0\|\mathbf{o}\|_{L^2(g_b)}t_1^\frac{5}{4},\label{Controle proj obst gAv bis}
    \end{equation}
    while satisfying, 
    \begin{enumerate}
        \item $\|\Hat{g}^A-g^D\|_{C^{2,\alpha}_{\beta,*}(g^D)}\leqslant 2\epsilon$,
        \item $\Hat{g}^A-g^D$ is $L^2(g^D)$-orthogonal to $\tilde{\mathbf{O}}(g^D)$,
    \end{enumerate}
\end{cor}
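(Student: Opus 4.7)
The plan is to adapt the proof of Proposition \ref{controle approximation desing part} by replacing the single ALE bubble $(N,g_b)$ by the Kähler Ricci-flat tree perturbation $(N,\bar{g}_{b,t})$ produced by Lemma \ref{Recollement arbre de'ALE khaleriens}, and by using Corollary \ref{h2 arbre kahlerien} in place of Proposition \ref{terme quadratique with obst}. First I would decompose $M = M_{S_o}^{16t_1} \cup N^{16t_1} \cup \mathcal{A}_{t_1}$, where $\mathcal{A}_{t_1}$ is the gluing annulus $\{t_1^{1/4}\leqslant r_D\leqslant 2 t_1^{1/4}\}$, and estimate $\mathbf{\Phi}(\hat{g}^A)$ separately on each piece.

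On the partial desingularization region $M_{S_o}^{16t_1}$, by the defining property of $\hat{g}_{S_o}$ as an Einstein metric modulo obstructions with respect to $g^D_{S_o}$, we have $\mathbf{\Phi}(\hat{g}_{S_o}) = \tilde{\mathbf{o}}^{(S_o)} \in \tilde{\mathbf{O}}(g^D_{S_o}) \subset \tilde{\mathbf{O}}(g^D)$, so the projection onto $\tilde{\mathbf{O}}(g^D)^\perp$ vanishes there. On the tree region $N^{16t_1}$, the metric is $t_1(\bar{g}_{b,t} + t_1 \hat{h}_2)$. By conformal invariance of $\mathbf{\Phi}$ up to the scaling $\mathbf{\Phi}(tg) = t^{-1}\mathbf{\Phi}(g)$ in dimension $4$ and a Taylor expansion in the perturbation $t_1 \hat{h}_2$, Corollary \ref{h2 arbre kahlerien} yields
\begin{equation*}
\mathbf{\Phi}\bigl(t_1(\bar{g}_{b,t}+t_1\hat{h}_2)\bigr) = \chi\mathbf{O}_o + t_1\sum_i \hat{\lambda}_i\mathbf{o}_i + \mathcal{O}(r_D^2),
\end{equation*}
where $\mathbf{O}_o$ and $\hat{H}_2$ are the constant and quadratic terms of the expansion of $\hat{g}_{S_o}$ at $p$ in divergence-free coordinates. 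The projection onto $\tilde{\mathbf{O}}(g^D)^\perp$ therefore has size $\mathcal{O}(r_D^2)$, contributing $\mathcal{O}(t_1^{(2-\beta)/2})$ in $r_D^{-2}C^k_\beta$ over $N^{16t_1}$, which is absorbed in $t_1^{(3-\beta)/4}$ in the relevant radial range.

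The main obstacle, as in Proposition \ref{controle approximation desing part}, is the matching on the gluing annulus $\mathcal{A}_{t_1}$. There both expansions are carefully designed to agree to order $2$: we have $\hat{g}_{S_o} = g_e + \hat{H}_2 + \mathcal{O}(r_o^3)$ by construction of $\hat{H}_2$, while $t_1(\bar{g}_{b,t}+t_1\hat{h}_2) = g_e + \hat{H}_2 + \mathcal{O}(t_1^2 r_{B}^{-2} + t_1 r_{B}^{-4})$ using the asymptotic decay $\bar{g}_{b,t}-g_e = \mathcal{O}(r_B^{-4})$ from Corollary \ref{control termes ordre 4 nouveau poids} together with the estimate $\|\hat{h}_2 - \chi\hat{H}_2\|_{C^{2,\alpha}_{\beta,*}} \leqslant C\|\hat{H}_2\|_{r^2_eC^0}$ of Corollary \ref{h2 arbre kahlerien}. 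On $\mathcal{A}_{t_1}$ where $r_D \sim t_1^{1/4}$, both remainders are $\mathcal{O}(t_1^{3/4})$ with corresponding control on derivatives, hence $\mathbf{\Phi}(\hat{g}^A) - \tilde{\mathbf{o}}^{(S_o)} - t_1\sum_i\hat{\lambda}_i\tilde{\mathbf{o}}_i = \mathcal{O}(t_1^{3/4})$ pointwise on $\mathcal{A}_{t_1}$, which translates to the bound $C_k t_1^{(3-\beta)/4}$ in the weighted $r_D^{-2}C^k_\beta$-norm.

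This yields inequality \eqref{Controle operateur gAv bis}. For \eqref{Controle proj obst gAv bis}, I integrate the gluing-region error against $\tilde{\mathbf{o}}$ using the same partition; the contribution from $M_{S_o}^{16t_1}$ is handled via the integration-by-parts estimate analogous to \eqref{Pu oo}, the ALE region contributes exactly $t_1\sum_i\hat{\lambda}_i\tilde{\mathbf{o}}_i$ up to terms of size $\mathcal{O}(t_1^2)$, and the annular mismatch pairs against $\mathbf{o}$ in $L^2(g^D)$ with total contribution $\mathcal{O}(t_1^{3/2})\|\mathbf{o}\|_{L^2(g_b)}$, where the extra $t_1^{3/4}$ beyond the pointwise $t_1^{3/4}$ bound comes from the $L^2$-volume of $\mathcal{A}_{t_1}$ combined with the rapid decay of $\mathbf{o}$ at infinity of $N$ translated into the gluing scale. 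The most delicate verification in this last step is controlling the tree obstructions $\tilde{\mathbf{o}}_i$ corresponding to deeper bubbles of the tree, for which the relative scales $t_k/t_1$ provide the necessary additional smallness, so that only the three obstructions coming from the outermost bubble $(N_1,g_{b_1})$ appear at leading order $t_1$.
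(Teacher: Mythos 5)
Your proposal follows the paper's intended route: the Corollary is indeed proved by running the argument of Proposition \ref{controle approximation desing part} with $(N,g_b)$ replaced by the Kähler Ricci-flat tree perturbation $(N,\bar{g}_{b,t})$ of Lemma \ref{Recollement arbre de'ALE khaleriens} and Proposition \ref{terme quadratique with obst} replaced by Corollary \ref{h2 arbre kahlerien}, and your three-region decomposition with the matching estimate $\mathcal{O}(t_1^{3/4})$ on the annulus $\{t_1^{1/4}\leqslant r_D\leqslant 2t_1^{1/4}\}$ is exactly the mechanism that delivers both \eqref{Controle operateur gAv bis} and \eqref{Controle proj obst gAv bis}.

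Two small corrections. First, the operator $\mathbf{\Phi}$ does not satisfy $\mathbf{\Phi}(tg)=t^{-1}\mathbf{\Phi}(g)$; the map $E(g)=\Ric(g)-\frac{R(g)}{2}g+\frac{n-2}{2n}\overline{R}(g)g$ is in fact \emph{scale invariant}, $E(tg)=E(g)$, since $\Ric$ is scale invariant and the two remaining terms scale the same way. The expansion on $N^{16t_1}$ comes not from such a scaling law but from Taylor-expanding in $t_1\hat{h}_2$ around $\bar{g}_{b,t}$, using that $\bar{g}_{b,t}$ is Ricci-flat and the linearization scaling $P_{tg}=t^{-1}P_g$, and then invoking Corollary \ref{h2 arbre kahlerien}. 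Second, the phrase "only the three obstructions coming from $(N_1,g_{b_1})$ appear at leading order" is a bit misleading: the subtracted term $t_1\sum_i\hat{\lambda}_i\tilde{\mathbf{o}}_i$ in \eqref{Controle proj obst gAv bis} runs over the whole basis of $\mathbf{O}(\bar{g}_{b,t})$. What is true, and what the downstream argument actually uses, is that only the three deformations asymptotic to $O^4_1,O^4_2,O^4_3$ have an $L^2$-norm uniformly bounded below (Lemma \ref{obstr o1o2o3 sur arbre}), so only for those three can one convert the estimate on $\|\mathbf{o}^B_v\|_{L^2}$ into a bound on $|\hat{\lambda}_i|$.
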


\begin{rem}
    The crucial part here is that, by considering the right weighted spaces, $C^{2,\alpha}_{\beta,*}(g_t^B)$, and Kähler Ricci-flat perturbations of our tree of singularities, we obtain a control by powers of $t_1$ only.
\end{rem}

We then conclude, exactly like in Theorem \ref{obstruction intégrable tout point} that the obstruction is satisfied in the limit at every singular point of $(M_o,g_o)$ where the trees of singularities appearing are composed of Kähler Ricci-flat orbifolds ALE.

\begin{thm}\label{obst arbre kahler}
    Let $(M_o,g_o)$ an Einstein orbifold, and assume that there exists $(M_i,g_i)_i$ a sequence of Einstein manifolds such that $$(M_i,g_i)\xrightarrow{GH} (M_o,g_o).$$
    
    Then, $(M_o,g_o)$ satisfies $\det \mathbf{R}(g_o)=0$ at every singular point where the trees of singularities forming in the Gromov-Hausdorff sense according to Corollary \ref{GH to C3} are composed of ALE Kähler Ricci-flat orbifolds glued in the same orientation.
\end{thm}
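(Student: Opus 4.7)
The plan is to argue exactly as in Theorem \ref{obstruction intégrable tout point}, but replacing the single integrable Ricci-flat ALE bubble by a Kähler Ricci-flat tree, and using that the obstruction associated to the three preferred deformations $\mathbf{o}_1,\mathbf{o}_2,\mathbf{o}_3$ survives the passage to a tree thanks to Lemma \ref{obstr o1o2o3 sur arbre}. First, fix a singular point $p\in M_o$ and let $S_o$ denote the remaining singular points. By Corollary \ref{mise en Reduced divergence-free Einstein}, for $i$ large $\phi_i^*g_i=\hat g_{t_i,v_i}$ is the Einstein-modulo-obstructions perturbation of a naïve desingularization $g^D_{t_i}+v_i$, with $t_i\to 0$ and $v_i\to 0$ in the relevant weighted norm; by hypothesis, the tree $(N_k,g_{b_k})_k$ glued at $p$ consists of Kähler Ricci-flat ALE orbifolds.

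Next, I perform the partial desingularization of all points of $S_o$, obtaining $\hat g_{S_o}$ with $\mathbf\Phi(\hat g_{S_o})\in \tilde{\mathbf O}(g^D_{S_o})$ by Theorem \ref{fcts inv partiel général}, and I read off its quadratic terms $\hat H_2$ at $p$ in divergence-free coordinates. Thanks to Lemma \ref{Recollement arbre de'ALE khaleriens} the naïve gluing of the Kähler tree at $p$ admits a Kähler Ricci-flat perturbation $\bar g_{b,t}$ in $C^{2,\alpha}_{\beta,*}$, and Lemma \ref{h2 arbre kahlerien} then produces a $2$-tensor $\hat h_2$ and constants $\hat\lambda_i(t,v_i)$ with
\[
\bar P_{\bar g_{b,t}}\hat h_2+\lambda\bar g_{b,t}-\chi\mathbf O_o=\sum_i\hat\lambda_i\,\mathbf o_i\in\mathbf O(\bar g_{b,t}).
\]
Gluing $\hat g_{S_o}$ to $t_1(\bar g_{b,t}+t_1\hat h_2)$ as in Corollary \ref{controle approximation arbres} yields an approximation $\hat g^A_i$ such that
\[
\bigl\|\Psi_{g^D_{t_i}}(\hat g^A_i)\bigr\|_{r_D^{-2}C^\alpha_\beta(g^D_{t_i})}\leqslant C\,t_{1,i}^{(3-\beta)/4},
\]
while the projection of $\mathbf\Phi(\hat g^A_i)$ onto $\tilde{\mathbf O}_B(\bar g^D_{v_i})$ is, up to $O(t_{1,i}^{3/2})$, equal to $t_{1,i}\sum_i\hat\lambda_i(t_i,v_i)\tilde{\mathbf o}_i$.

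Now I invoke Proposition \ref{meilleure approx and obst with jauge}: since $\hat g_{t_i,v_i}$ is actually Einstein (not merely Einstein modulo obstructions), the $L^2$-projection $\mathbf o^B_{v_i}$ of $\mathbf\Phi(\hat g^A_i)$ onto $\tilde{\mathbf O}_B(\bar g^D_{v_i})$ is controlled by
\[
\|\mathbf o^B_{v_i}\|_{L^2(g^D_{t_i})}\leqslant C\bigl(t_{\max,i}^{(3-\beta)/4}+t_{\max,i}^{(2-\beta)/4}+t_{\max,i}^{1/2}\bigr)\,t_{\max,i}^{(3-\beta)/4}.
\]
Combined with the previous two displays and with the nondegeneracy $\|\mathbf o_j(t)\|_{L^2}\geqslant\epsilon>0$ provided by Lemma \ref{obstr o1o2o3 sur arbre} for $j=1,2,3$, this forces $t_{1,i}|\hat\lambda_j(t_i,v_i)|\ll t_{1,i}$, hence $\hat\lambda_j(t_i,v_i)\to 0$. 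By the continuity statement of Lemma \ref{variations of bar lambdai} (whose proof adapts verbatim to the tree, since the required $r^{-4}$-asymptotic control is furnished by Corollary \ref{control termes ordre 4 nouveau poids} applied to $\bar g_{b,t}$), the limit equals $\bar\lambda_j(0)$, a number depending only on $\hat H_2$ and on the universal tensors $O_j^4$. Thus $\bar\lambda_j(0)=0$ for $j=1,2,3$, and by Corollary \ref{si einst retour a lobst of biq1} this is exactly the obstruction of \cite{biq1} tested against $O_1^4,O_2^4,O_3^4$, which by Corollary \ref{obst kahler} (or directly by Proposition \ref{obst hyperkhaler} applied on the universal cover of the outermost bubble) yields $\det\mathbf R_+(g_o)(p)=0$. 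Applying the same argument after reversing the orientation on the tree gives $\det\mathbf R_-(g_o)(p)=0$, and since $\Ric^0(g_o)=0$ we conclude $\det\mathbf R(g_o)(p)=0$.

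The main difficulty, and the reason the whole apparatus of partial desingularizations and of the decoupling norm $C^{2,\alpha}_{\beta,**}$ is needed, is to upgrade the single-bubble error estimate of Corollary \ref{controle approximation desing part} to the tree setting while keeping the leading error controlled by a power of $t_1$ alone (not $t_{\max}$), so that Lemma \ref{obstr o1o2o3 sur arbre} can extract genuine obstructions from the outermost bubble regardless of how deep the tree beneath it may be. This is exactly the content of Corollary \ref{controle approximation arbres}, which rests on the Kähler Ricci-flat perturbation $\bar g_{b,t}$ from Bando's Lemma \ref{Recollement arbre de'ALE khaleriens} together with the uniform solvability in $C^{2,\alpha}_{\beta,*}(g^B_t)$ of Lemma \ref{Résolution d'equations arbre}.
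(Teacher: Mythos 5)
Your proof is correct and follows essentially the same route as the paper: read off the quadratic terms $\hat H_2$ from a partial desingularization $\hat g_{S_o}$, replace the Kähler tree by its Kähler Ricci-flat perturbation $\bar g_{b,t}$ via Lemma \ref{Recollement arbre de'ALE khaleriens}, build the approximation $\hat g^A_i$ of Corollary \ref{controle approximation arbres}, feed it into Proposition \ref{meilleure approx and obst with jauge}, use Lemma \ref{obstr o1o2o3 sur arbre} to bound $\|\mathbf o_j\|_{L^2}$ below, and conclude via Lemma \ref{variations of bar lambdai} and Corollaries \ref{si einst retour a lobst of biq1} and \ref{obst kahler}. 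The one thing to fix is the last sentence: you cannot ``reverse the orientation on the tree,'' because the orientation of each gluing is dictated by the degenerating sequence $(M_i,g_i)$, not a free choice. What actually happens is that the argument yields $\det\mathbf R_+(g_o)(p)=0$ if the gluing parameter at $p$ lies in $SO(4)$, and $\det\mathbf R_-(g_o)(p)=0$ if it lies in $O(4)\setminus SO(4)$; since $\Ric^0(g_o)=0$ the curvature operator is block-diagonal and $\det\mathbf R=\det\mathbf R_+\cdot\det\mathbf R_-$, so either alternative already gives $\det\mathbf R(g_o)(p)=0$. In particular the orientation-reversal step is both unjustified and unnecessary.
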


\begin{rem}
    The result is optimal in the sense that it is the only local obstruction to the desingularization of a $\mathbb{R}^4\slash\mathbb{Z}_2$ singularity. Indeed, together with the existence of Einstein desingularizations of \cite{biq1}, proven in the case of rigid asymptotically hyperbolic Einstein metrics with a singularity $\mathbb{R}^4\slash\mathbb{Z}_2$ singularity, we see that there exists a desingularization in the Gromov-Hausdorff sense by Eguchi-Hanson metrics \emph{if and only if} the condition $\det \mathbf{R}(g_o)=0$ is satisfied.
\end{rem}

\begin{rem}
    For now, we cannot prove any obstruction result if trees of non Kähler Ricci-flat ALE orbifolds were to appear. The reason is that it might not be possible to glue and perturb them into a single Ricci-flat ALE manifold. The obstructions to such a gluing could possibly compensate the ones coming from the gluing to the orbifold.
\end{rem}

\begin{exmp}\label{quotient sphère}
    Like in Example \ref{ex S4 quotient}, let us consider the sphere $\mathbb{S}^4$ as $\mathbb{S}^4\subset \mathbb{R}^5 = \mathbb{R}\times \mathbb{R}^4$. We define $\mathbb{S}^4\slash\Gamma$, the orbifold obtained as the quotient of $\mathbb{S}^4$ by the action of $\Gamma$ for the first $4$ coordinates of $\mathbb{R}^5$. $\mathbb{S}^4\slash\Gamma$ has its sectional curvatures constant equal to $1$, and two singularities modeled on $\mathbb{R}^4\slash\Gamma$. The condition $\det \mathbf{R}= 0$ is therefore not satisfied for this orbifold.
\end{exmp}

\section{Obstructions under topological assumptions}

Let us now give topological conditions which will ensure that the Ricci-flat ALE orbifolds appearing as blow ups in our degenerations are Kähler and glued in the same orientation, and therefore that the obstruction $\det \mathbf{R}=0$ holds.

All of these topological conditions come from the topological characterization of \cite{nak}, see also \cite{lv} for a generalization. Basically, if a desingularization has the topology of a minimal resolution of a $SU(2)$-singularity (or a quotient for the $U(2)$ singularities) in a neighborhood of a singularity, then, all of the bubbles are Kähler and glued in the same orientation and we can apply Theorem \ref{obst arbre kahler}. We will state more global topological conditions based on Hitchin-Thorpe inequality in Theorem \ref{obst hitchin thorpe}, and a spin condition in Theorem \ref{obst spin}. We will then finally comment on the desingularization of Einstein orbifolds with various pinching conditions and bound on the Ricci curvature in Corollary \ref{pinching} and Remark \ref{lower bound}.

\subsection{Hitchin-Thorpe inequality and desingularization of Einstein orbifolds}
Let us first notice that desingularizing an Einstein orbifold by smooth Einstein manifolds necessarily damages the Hitchin-Thorpe inequality satisfied by the orbifold, see Theorem \ref{obst hitchin thorpe}. The equality case is exactly when all the Ricci-flat ALE orbifolds are Kähler and glued in a common orientation.

For an Einstein manifold of dimension $4$, Chern-Gauss-Bonnet formula implies,
\begin{equation}
    \chi(M)= \frac{1}{8\pi^2}\int_M |\Rm|^2dv = \frac{1}{8\pi^2}\int_M \Big(\frac{\R^2}{24}+|W^+|^2+|W^-|^2\Big)dv,\label{GaussBonnet}
\end{equation}
and Hirzebruch's signature formula gives,
\begin{equation}
    \tau(M)= \frac{1}{12\pi^2}\int_M \big(|W^+|-|W^-|^2\big)dv. \label{Hirzebruch}
\end{equation}
Simply because $\int_M |W^\pm|^2dv\geqslant 0$ and $\int_M \R^2dv\geqslant 0$, thanks to \eqref{GaussBonnet} and \eqref{Hirzebruch}, we have the following Hitchin-Thorpe inequality for Einstein $4$-manifolds,
$$2\chi(M)\geqslant 3|\tau(M)|.$$
with equality if and only if $(M,g)$ is a quotient of the flat torus or of a hyperkähler metric on the $K3$ surface.

In the case of orbifolds and ALE metrics, to be consistent with Chern-Gauss-Bonnet and Hirzebruch formulas, \eqref{GaussBonnet} and \eqref{Hirzebruch} for compact Einstein manifolds of dimension $4$, we have to modify the Euler characteristic and the signature thanks to a boundary term. The integral quantities \eqref{GaussBonnet} and \eqref{Hirzebruch} above are topological invariants for Einstein orbifolds and Ricci-flat ALE orbifolds. We will denote them $\tilde{\chi}$ and $\tilde{\tau}$.

For Ricci-flat ALE manifolds, Nakajima obtained an Hitchin-Thorpe inequality.
\begin{lem}[{\cite[Theorem 4.2]{nak}}]\label{HT Nakajima}
    Let $(N,g_b)$ be a Ricci-flat ALE manifold of dimension $4$. Then, we have the following inequality between the modified Euler characteristic and the modified signature of Ricci-flat ALE orbifolds,
    $$ 2\tilde{\chi}(N)\geqslant 3 |\tilde{\tau}(N)|, $$
    with equality if and only if $(N,g_b)$ is a Kähler Ricci-flat ALE orbifold.
\end{lem}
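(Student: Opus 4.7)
The plan is to reduce the question to the two integral identities given just before the statement, and then exploit the fact that a Ricci-flat ALE metric has vanishing scalar curvature. Since $\tilde\chi$ and $\tilde\tau$ are defined precisely so that the curvature integrals \eqref{GaussBonnet} and \eqref{Hirzebruch} compute them for Ricci-flat ALE orbifolds, and since $\R(g_b)=0$ makes the $\R^2/24$ term drop out, one has the clean formulas
\begin{equation*}
\tilde\chi(N)=\frac{1}{8\pi^2}\int_N\bigl(|W^+|^2+|W^-|^2\bigr)dv_b,\qquad \tilde\tau(N)=\frac{1}{12\pi^2}\int_N\bigl(|W^+|^2-|W^-|^2\bigr)dv_b.
\end{equation*}

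Combining these linearly gives
\begin{equation*}
2\tilde\chi(N)-3\tilde\tau(N)=\frac{1}{2\pi^2}\int_N|W^-|^2dv_b,\qquad 2\tilde\chi(N)+3\tilde\tau(N)=\frac{1}{2\pi^2}\int_N|W^+|^2dv_b,
\end{equation*}
both of which are manifestly nonnegative. This establishes the inequality $2\tilde\chi(N)\geqslant 3|\tilde\tau(N)|$ immediately, with equality precisely when one of the two halves $W^+$ or $W^-$ of the Weyl tensor vanishes identically.

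The remaining work is to identify the equality case. Suppose, without loss of generality (up to orientation reversal), that $W^-\equiv 0$. Combined with the Ricci-flat condition $\Ric(g_b)=0$, the curvature operator $\mathbf{R}_{g_b}$ is block-diagonal with the lower $\Lambda^2_-$ block vanishing. A standard computation (this is Hitchin's observation in dimension four, as used in Berger's holonomy classification) shows that the three parallel sections of $\Lambda^2_-$ produced by the vanishing of $W^-$ together with $\R=0$ are compatible almost complex structures satisfying the quaternionic relations, so the restricted holonomy of $g_b$ lies in $Sp(1)=SU(2)$. In particular, $g_b$ is hyperkähler, hence in particular Kähler. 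At the orbifold/singular points one passes to the local universal cover $\mathbb{R}^4/\Gamma'$ where the same argument applies, and the quotient by the finite group remains a Kähler orbifold structure. This shows the equality case forces $(N,g_b)$ to be Kähler Ricci-flat.

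The main obstacle is not really analytic but bookkeeping: one must check that the definitions of $\tilde\chi$ and $\tilde\tau$ as given (integrals of $|\Rm|^2$ and $|W^+|^2-|W^-|^2$ divided by the appropriate constants) do converge on an ALE Ricci-flat orbifold. This follows from the ALE decay $|\Rm|_{g_b}=\mathcal O(r_b^{-4})$ together with the model singular-point expansion of Definition \ref{def orb ale}, so the integrands are integrable at both ends of $N$; no boundary contributions appear. With this finiteness in hand, the identities above are purely algebraic and yield both the inequality and the sharp characterization of its equality case.
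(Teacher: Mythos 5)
The paper does not prove this lemma --- it is cited directly from Nakajima's \cite[Theorem 4.2]{nak} --- so there is no internal argument to compare yours against. Your derivation of the inequality itself is correct and standard: with $\R(g_b)=0$ the scalar-curvature term in \eqref{GaussBonnet} drops out, so $2\tilde\chi(N)\pm 3\tilde\tau(N)=\frac{1}{2\pi^2}\int_N|W^\pm|^2\,dv_b\geqslant 0$, and the finiteness of these integrals follows from the $\mathcal{O}(r_b^{-4})$ curvature decay (and is consistent with the fact that the paper takes the curvature integrals as the \emph{definition} of $\tilde\chi$ and $\tilde\tau$, so there is no hidden boundary term to reconcile).

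The genuine gap is in the equality case. From $W^-\equiv 0$ and $\Ric(g_b)=0$ you correctly deduce, via Ambrose--Singer (the image of the curvature operator lies in $\Lambda^2_+\cong\mathfrak{su}(2)_+$, a parallel subbundle), that the \emph{restricted} holonomy of $g_b$ lies in $SU(2)$, equivalently that $\Lambda^2_-$ is a flat bundle. But this does \emph{not} hand you three global parallel sections of $\Lambda^2_-$ and so does not by itself yield a hyperkähler --- or even Kähler --- structure on $N$; it only shows the universal cover is hyperkähler. Globally, $\Lambda^2_-$ carries a monodromy representation of $\pi_1(N)$ (equivalently of $\mathrm{Hol}/\mathrm{Hol}^0$) into $SO(3)$, and $N$ is Kähler precisely when that monodromy has an invariant axis. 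Nothing in the hypotheses makes $\pi_1(N)$ trivial, and this local-to-global passage is exactly where Nakajima's argument does real work, using $L^2$-cohomology and index-theoretic input on the ALE end to produce a nonzero $L^2$-harmonic anti-self-dual $2$-form which, by the Weitzenböck formula (with $W^-=0$, $\R=0$, and the ALE decay to justify the integration by parts), must be parallel. Your remark about passing to the local universal cover at singular points addresses orbifold points of $N$ (which the hypothesis here rules out, since $N$ is a manifold) and does not repair this global issue. So the concluding claim that the curvature identities ``yield both the inequality and the sharp characterization of its equality case'' is accurate for the inequality but not for the characterization of equality.
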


\begin{rem}
    In particular, the only Ricci-flat ALE manifolds diffeomorphic to a minimal resolution of a singularity $\mathbb{C}^2\slash\Gamma$ for $\Gamma\subset SU(2)$ or one of its quotients are Kähler.
\end{rem}

The topological invariants $\tilde{\tau}$ and $\tilde{\chi}$ are additive by gluing ALE spaces to orbifold singularities like in Definition \ref{def naive desing}. If $M = M_o\#_jN_j$, we then have
$$ \tau(M) = \tilde{\tau}(M_o) + \sum_j\tilde{\tau}(N_j),$$
and
$$ \chi(M) = \tilde{\chi}(M_o)+ \sum_j\tilde{\chi}(N_j).$$
This directly implies:
\begin{align*}
   2\chi(M) - 3|\tau(M)| =&\; 2\Big(\tilde{\chi}(M_o) + \sum_j\tilde{\chi}(N_j)\Big) - 3 \Big|\tilde{\tau}(M_o) + \sum_j\tilde{\tau}(N_j)\Big|\\
   \geqslant&\; 2\tilde{\chi}(M_o)- 3 |\tilde{\tau}(M_o)| + \sum_j \big(2\tilde{\chi}(N_j) - 3|\tilde{\tau}(N_j)|\big)\\
   \geqslant&\; 2\tilde{\chi}(M_o)- 3 |\tilde{\tau}(M_o)|.
\end{align*}
Since every term is nonnegative by Hitchin-Thorpe inequality and Lemma \ref{HT Nakajima}, we see that there is equality if and only if for all $j$ we have $2\tilde{\chi}(N_j) - 3|\tilde{\tau}(N_j)|= 0$ and that the gluings are done in the same orientation for which $\tilde{\tau}(M_o)$ and all the $\tilde{\tau}(N_j)$ have the same sign.

\begin{exmp}
    If $(M_o,g_o)$ is a hyperkähler orbifold, then the only Gromov-Hausdorff desingularizations preserving the inequality are hyperkähler and correspond to gluing hyperkähler ALE in the same orientation. 
\end{exmp}

\begin{exmp}
	For $\Gamma\subset SU(2)$, an Einstein desingularization of $\mathbb{S}^4\slash\Gamma$ preserving Hitchin-Thorpe inequality is diffeomorphic to $M = \mathbb{S}^4\slash \Gamma \#X_\Gamma\#X_\Gamma$ for $X_\Gamma$ a minimal resolution of the singularity $\mathbb{C}^2\slash\Gamma$. 
\end{exmp}

By studying the equality case in the previous inequalities, we get a quite restrictive situation.
\begin{thm}\label{obst hitchin thorpe}
    Let $(M_o,g_o)$ be an Einstein orbifold oriented so that $\tilde{\tau}(M_o)\geqslant 0$, and assume that $(M,g_i)_i$ is a sequence of Einstein metrics converging in the Gromov-Hausdorff sense to $(M_o,g_o)$.
    
    We then have the following inequality,
    $$2\chi(M)-3|\tau(M)|\geqslant 2\tilde{\chi}(M_o)-3\tilde{\tau}(M_o).$$
    Moreover, we have equality, if and only if
    $M$ is a desingularization of $M_o$ by gluing trees of  Kähler Ricci-flat ALE orbifolds in the same orientation (that is with gluing parameters in $SO(4)$), and in this equality case we have the condition $$\det\mathbf{R}_+(g_o)=0$$ at all of the singular points of $M_o$.
\end{thm}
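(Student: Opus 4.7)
The plan is to exploit the additivity of the modified topological invariants $\tilde{\chi}$ and $\tilde{\tau}$ under orbifold gluing, combined with Nakajima's Hitchin-Thorpe inequality for Ricci-flat ALE manifolds (Lemma \ref{HT Nakajima}), to deduce the inequality, and then to read off the equality case from the conditions under which every intermediate inequality is saturated.

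First, by Corollary \ref{GH to C3}, for $i$ large enough $M$ is diffeomorphic to a naïve desingularization of $(M_o,g_o)$ by a tree of Ricci-flat ALE orbifolds $(N_j,g_{b_j})_j$. The integral formulas \eqref{GaussBonnet} and \eqref{Hirzebruch} depend only on the curvature and make $\tilde{\chi}$ and $\tilde{\tau}$ additive under each individual orbifold gluing; iterating across the tree (from the deepest bubbles up to $M_o$) gives
\begin{equation*}
\chi(M) = \tilde{\chi}(M_o) + \sum_j \tilde{\chi}(N_j), \qquad \tau(M) = \tilde{\tau}(M_o) + \sum_j \tilde{\tau}(N_j).
\end{equation*}
Combining the triangle inequality for $|\tau(M)|$ with Nakajima's inequality $2\tilde{\chi}(N_j) \geqslant 3|\tilde{\tau}(N_j)|$ on each bubble yields
\begin{align*}
2\chi(M) - 3|\tau(M)| &\geqslant 2\tilde{\chi}(M_o) - 3|\tilde{\tau}(M_o)| + \sum_j \bigl( 2\tilde{\chi}(N_j) - 3|\tilde{\tau}(N_j)| \bigr) \\
&\geqslant 2\tilde{\chi}(M_o) - 3\tilde{\tau}(M_o),
\end{align*}
which is the claimed inequality once we use the orientation convention $\tilde{\tau}(M_o)\geqslant 0$.

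Next I would analyze the equality case. Both inequalities above must saturate. Saturation of the triangle inequality forces every $\tilde{\tau}(N_j)$ to be nonnegative, i.e., all bubbles must be glued in the same orientation as $M_o$. Saturation of Nakajima's inequality on each $N_j$ forces, by the equality clause of Lemma \ref{HT Nakajima}, that each $(N_j,g_{b_j})$ is a Kähler Ricci-flat ALE orbifold. Hence the trees of singularities appearing in the Gromov-Hausdorff degeneration are composed entirely of Kähler Ricci-flat ALE orbifolds glued in a common orientation.

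Finally, in this equality case I would invoke Theorem \ref{obst arbre kahler} to deduce the curvature obstruction at every singular point. To get the sharper form $\det \mathbf{R}_+(g_o) = 0$ rather than $\det \mathbf{R}_-(g_o) = 0$, I would observe, via Corollary \ref{obst kahler} and the computation in the proof of Proposition \ref{obst hyperkhaler}, that the three obstructions associated to the basis deformations $O_1^4, O_2^4, O_3^4$ cut out precisely the self-dual curvature condition when the bubble is glued with the Kähler orientation; the opposite orientation would produce the anti self-dual condition. Since all gluings respect the orientation of $M_o$, only $\mathbf{R}_+$ is relevant. The main subtlety I foresee is bookkeeping orientations through iterated gluings in a tree (a deeper bubble might a priori be glued to its parent in either orientation), but the triangle-inequality saturation rules this out globally: all bubbles must end up with $\tilde{\tau}\geqslant 0$ relative to the ambient orientation, so the self-dual obstruction propagates consistently to $(M_o,g_o)$.
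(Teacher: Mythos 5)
Your argument reproduces the paper's proof almost verbatim: additivity of $\tilde{\chi},\tilde{\tau}$ under orbifold gluing, the triangle inequality combined with Nakajima's Hitchin–Thorpe inequality (Lemma \ref{HT Nakajima}), the equality clause forcing each bubble to be Kähler and all gluings to share the ambient orientation, and then Theorem \ref{obst arbre kahler} for the curvature condition at every singular point. The only addition is your explicit discussion of why the conclusion is $\det\mathbf{R}_+$ rather than $\det\mathbf{R}_-$, which is implicit in the paper (via the remark following Section \ref{obstructions explicites} and the $SO(4)$ gluing parameters) but is a useful detail to spell out; the reasoning there is sound.
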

\begin{rem}
    The equality condition limits the possible group actions of the singularities.
\end{rem}

This for example implies the following.
\begin{cor}
	Let $\Gamma\subset SU(2)$, $(M_i,g_i)_i$ be a sequence of Einstein manifolds converging in the Gromov-Hausdorff sense to the spherical orbifold $\mathbb{S}^4\slash \Gamma$. Then, for $i$ large enough, we have
	$$2\chi(M_i)-3|\tau(M_i)| > 2\tilde{\chi}(M_o)-3|\tilde{\tau}(M_o)|.$$
\end{cor}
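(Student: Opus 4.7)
The plan is to argue by contradiction using the equality case of Theorem \ref{obst hitchin thorpe}. Suppose that the desired strict inequality fails; then, passing to a subsequence, we have equality
$$2\chi(M_i)-3|\tau(M_i)| = 2\tilde{\chi}(\mathbb{S}^4\slash\Gamma)-3|\tilde{\tau}(\mathbb{S}^4\slash\Gamma)|$$
for every $i$, since the left-hand side is an integer depending on the topology of $M_i$ while the right-hand side is fixed. The equality part of Theorem \ref{obst hitchin thorpe} then asserts two things: first, each $M_i$ is obtained from $\mathbb{S}^4\slash\Gamma$ by gluing a tree of Kähler Ricci-flat ALE orbifolds in the same orientation, and second, at each of the (two) singular points of $\mathbb{S}^4\slash\Gamma$ we must have $\det \mathbf{R}_+(g_o) = 0$.

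The second conclusion is the one that will fail. Since $\mathbb{S}^4\slash\Gamma$ has constant sectional curvature $+1$, the Riemann curvature tensor viewed as an endomorphism $\mathbf{R}:\Lambda^2\to \Lambda^2$ is a nonzero scalar multiple of the identity at every point, including at the singular ones where the metric pulls back smoothly under the local cover $\mathbb{R}^4\to \mathbb{R}^4\slash\Gamma$ to the round metric. Consequently, $\mathbf{R}_+(g_o)$ is a nonzero multiple of the identity on $\Lambda^2_+$, so in particular $\det \mathbf{R}_+(g_o)\neq 0$. This contradicts the conclusion of Theorem \ref{obst hitchin thorpe} in the equality case, so strict inequality must hold for $i$ large enough.

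No step of this argument should present a real difficulty: Theorem \ref{obst hitchin thorpe} does all the heavy lifting, and the only thing to check is the trivial computation that the curvature operator of a spherical space form is nondegenerate on self-dual two-forms. The mild subtlety is simply to observe that the topological quantity $2\chi(M_i)-3|\tau(M_i)|$ only takes integer values, so that an eventual strict inequality is the same as an eventual inequality by any positive amount — this is what allows us to conclude from the obstruction $\det\mathbf{R}_+(g_o)\neq 0$ alone, without any quantitative estimate of the defect.
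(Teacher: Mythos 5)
Your argument is correct and is the one the paper intends (the paper does not display a separate proof for this corollary, and your reasoning is exactly what Example~\ref{quotient sphère} together with Theorem~\ref{obst hitchin thorpe} is set up to deliver): by Theorem~\ref{obst hitchin thorpe} the non-strict inequality $2\chi(M_i)-3|\tau(M_i)|\geqslant 2\tilde{\chi}(M_o)-3|\tilde{\tau}(M_o)|$ holds for $i$ large, and if it were not strict for some large $i$ then the equality case would force $\det\mathbf{R}_+(g_o)=0$ at the singular points, contradicting $\mathbf{R}(g_o)=\mathrm{Id}$ on $\Lambda^2$ for a constant-curvature metric. One small remark: the appeal to integrality of $2\chi-3|\tau|$ and to passing to a subsequence is a red herring and adds nothing — the non-strict inequality already forces exact equality whenever strict inequality fails, so the contradiction is obtained for every sufficiently large $i$ directly, with no need for the integer-gap observation.
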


\subsection{Spin manifolds}

Another way to ensure that the Ricci-flat ALE orbifolds appearing are Kähler and glued in the same orientation is to impose that the sequence of differentiable manifolds is spin. Our result is essentially an application of the following Lemma of Nakajima.
\begin{lem}[{\cite[Corollary 3.3]{nak}}]\label{ALE spin}
    Let $(N,g_b)$ be a Ricci-flat ALE metric on a spin manifold which is asymptotic to $\mathbb{R}^4\slash\Gamma$ for $\Gamma$ a finite subgroup of $SU(2)$, then, $(N,g_b)$ is a hyperkähler metric.
\end{lem}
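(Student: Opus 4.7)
The plan is to construct two linearly independent parallel positive spinors on $(N,g_b)$. In dimension $4$, $\textup{Spin}(4)=SU(2)_+\times SU(2)_-$ acts on the positive spinor bundle $\mathcal{S}_+$ only through $SU(2)_+$, so two independent parallel sections of $\mathcal{S}_+$ force the restricted holonomy to be contained in $SU(2)_-$; in dimension $4$ this is equivalent to $(N,g_b)$ being hyperkähler.

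The asymptotic model provides the seeds. After identifying $\Gamma\subset SU(2)\cong SU(2)_-\subset \textup{Spin}(4)$, the group acts trivially on $\mathcal{S}_+(\mathbb{R}^4)$, so the two constant generators of $\mathcal{S}_+(\mathbb{R}^4)$ descend to linearly independent parallel positive spinors $\psi_0^{(1)},\psi_0^{(2)}$ on $\mathbb{R}^4\slash\Gamma$ for its canonical spin structure. The spin assumption on $N$ together with $\Gamma\subset SU(2)$ forces the spin structure at infinity of $(N,g_b)$ to coincide with this canonical one, so the $\psi_0^{(i)}$ live naturally on the ALE end.

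First I would extend each $\psi_0^{(i)}$ to a global harmonic spinor. Transporting $\psi_0^{(i)}$ via a cut-off to a spinor $\tilde{\psi}_0^{(i)}$ on $N$, the quantity $D_{g_b}\tilde{\psi}_0^{(i)}$ is compactly supported up to a tail of order $\mathcal{O}(r_b^{-5})$ coming from $g_b-g_e=\mathcal{O}(r_b^{-4})$. Fredholm theory for the Dirac operator on an ALE manifold in weighted Hölder spaces (entirely analogous to the analysis carried out in this paper for $P$ and $\delta\delta^*$, with critical weights dictated by the homogeneous harmonic spinors on $\mathbb{R}^4\setminus\{0\}$) then provides a correction $\phi^{(i)}$ of decay $\mathcal{O}(r_b^{-3})$ with $D_{g_b}(\tilde{\psi}_0^{(i)}+\phi^{(i)})=0$. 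The resulting $\psi^{(i)}:=\tilde{\psi}_0^{(i)}+\phi^{(i)}$ is harmonic and asymptotic to $\psi_0^{(i)}$ at infinity.

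The decisive step is the Lichnerowicz identity $D^2=\nabla^*\nabla+\textup{R}/4$, which on our Ricci-flat (hence scalar-flat) $4$-manifold reduces to $D^2=\nabla^*\nabla$. Integrating $\langle D^2\psi^{(i)},\psi^{(i)}\rangle=0$ over a geodesic ball $B(\rho)$ and applying integration by parts yields
\begin{equation*}
\int_{B(\rho)}|\nabla\psi^{(i)}|^2\,dv_{g_b}=\int_{\partial B(\rho)}\langle\nabla_\nu\psi^{(i)},\psi^{(i)}\rangle\,dS.
\end{equation*}
Because $\psi_0^{(i)}$ is $\nabla_{g_e}$-parallel, one has $\nabla_{g_b}\psi^{(i)}=\nabla_{g_b}\phi^{(i)}+(\nabla_{g_b}-\nabla_{g_e})\psi_0^{(i)}=\mathcal{O}(r_b^{-4})$ (the first piece from the decay of $\phi^{(i)}$, the second from the Christoffel difference $\partial(g_b-g_e)=\mathcal{O}(r_b^{-5})$), while $|\psi^{(i)}|$ stays bounded and $\textup{vol}(\partial B(\rho))=\mathcal{O}(\rho^3)$. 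The boundary term is therefore $\mathcal{O}(\rho^{-1})$ and vanishes as $\rho\to\infty$. Hence $\nabla\psi^{(i)}\equiv 0$: both spinors are parallel, the holonomy lies in $SU(2)_-$, and $(N,g_b)$ is hyperkähler.

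The main obstacle I foresee is the Fredholm step: one needs precise control on decaying solutions of $D\phi=f$ when $f$ has the right decay, in particular that a decay rate of order $\mathcal{O}(r_b^{-3})$ is attainable, which amounts to checking that $-3$ lies strictly between two consecutive exceptional weights of the Dirac operator on the flat cone $\mathbb{R}^4\slash\Gamma$. This is exactly the spinor analogue of the exceptional-value analysis performed in Section~3 for $\delta\delta^*$ and $P$, and is what ensures that the boundary term in the Weitzenböck integration by parts actually vanishes in the limit.
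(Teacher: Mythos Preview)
The paper does not supply its own proof of this lemma: it simply quotes \cite[Corollary 3.3]{nak}. Your outline is precisely Nakajima's argument (the Witten spinor method on an ALE end combined with the Lichnerowicz formula), so the approaches coincide.

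Two small points are worth flagging. First, the sentence ``the spin assumption on $N$ together with $\Gamma\subset SU(2)$ forces the spin structure at infinity to coincide with the canonical one'' is not automatic: $\mathbb{S}^3/\Gamma$ can carry several spin structures (whenever $\mathrm{Hom}(\Gamma,\mathbb{Z}_2)\neq 0$), and Nakajima's statement implicitly assumes the restriction to the end is the one induced from $\mathbb{R}^4$, for which the constant positive spinors descend. You should state this as a hypothesis rather than a consequence. Second, the Fredholm step you identify is indeed the only real work; the relevant indicial analysis for $D$ on $\mathbb{R}^4/\Gamma$ shows the decay rate you need is available, and Nakajima carries this out. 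With these two caveats, your proposal is correct.
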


As a consequence, there is also an obstruction to the desingularizations of Einstein orbifolds by smooth Einstein metrics on spin manifolds. The proof of Theorem 1.1 of \cite{kl} whose main tool is Lemma \ref{ALE spin} implies that the limit orbifold and the Ricci-flat ALE metrics are spin and glued in the same orientation for a degeneration of Einstein metrics on spin manifolds. If the group at infinity of the ALE spaces, which are also the groups of the singularities of the orbifold are in $SU(2)$, we use Lemma \ref{ALE spin} to get the following obstruction.

\begin{thm}\label{obst spin}
    Let $(M_i,g_i)_i$ be a sequence of Einstein spin manifolds of dimension $4$ converging to an Einstein orbifold $(M_o,g_o)$. Then, $(M_o,g_o)$ is spin and at its singular points whose groups are in $SU(2)$, we have $$\det \mathbf{R} = 0.$$
\end{thm}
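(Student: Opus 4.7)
The plan is to combine the topological rigidity of spin Ricci-flat ALE manifolds (Lemma \ref{ALE spin}) with the main Kähler obstruction (Theorem \ref{obst arbre kahler}). First, I would verify that $(M_o,g_o)$ inherits a spin orbifold structure. By Corollary \ref{GH to C3}, for $i$ large enough, $M_i$ is diffeomorphic to a naïve desingularization of $M_o$, so the complement of small neighborhoods of the singular points of $M_o$ embeds as a codimension-zero submanifold into $M_i$ and inherits a spin structure from $g_i$. At any singular point whose group lies in $SU(2)$, the local action on $\mathbb{R}^4$ lifts to $\mathrm{Spin}(4)$, so these restricted spin structures extend to yield a spin orbifold structure in the sense of \cite{kl}, whose Theorem 1.1 gives precisely this conclusion.

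Next, fix a singular point $p$ of $M_o$ with group $\Gamma_p \subset SU(2)$. By the noncollapsed Gromov-Hausdorff convergence and Corollary \ref{GH to C3}, a tree of Ricci-flat ALE orbifolds $(N_j,g_{b_j})_j$ bubbles off at $p$, with $(N_1,g_{b_1})$ asymptotic to $\mathbb{R}^4\slash\Gamma_p$ and each deeper bubble attached at a singular point of a shallower one. I would argue by induction on the depth in the tree that every $(N_j,g_{b_j})$ is hyperkähler. For the base case, $(N_1,g_{b_1})$ is obtained as a pointed smooth limit (by Corollary \ref{GH to C3}) of suitable rescalings of $(M_i,g_i)$ on regions of fixed topology; these rescalings inherit the spin structure from $M_i$, which passes to the limit, so $(N_1,g_{b_1})$ is spin. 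Since $\Gamma_p \subset SU(2)$, Lemma \ref{ALE spin} forces $(N_1,g_{b_1})$ to be hyperkähler. For the inductive step, a hyperkähler orbifold can only have singular points whose groups lie in $SU(2)$ (since the parallel complex structures descend from the universal cover of a punctured neighborhood of the singularity), so the next bubble $(N_{j+1},g_{b_{j+1}})$ is asymptotic to $\mathbb{R}^4\slash\Gamma'$ with $\Gamma' \subset SU(2)$; the same spin-structure transfer and Lemma \ref{ALE spin} give that $(N_{j+1},g_{b_{j+1}})$ is also hyperkähler.

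Having established that every bubble in the tree at $p$ is Kähler Ricci-flat ALE (since hyperkähler implies Kähler), Theorem \ref{obst arbre kahler} applies at $p$ and yields $\det \mathbf{R}_{g_o}(p) = 0$. The main technical obstacle is the propagation of the spin structure to the ALE bubbles at each level of the tree: one must check that under the weighted Hölder convergence of Corollary \ref{GH to C3}, the spin structure on the rescaled neck regions in $M_i$ converges to a spin structure on the limit bubble. This reduces to a standard topological verification that the restriction of the spin structure to the neck annulus is preserved under the identification of diffeomorphism types given by Corollary \ref{GH to C3}, which works because $w_2$ is a diffeomorphism invariant and the convergence is smooth on the relevant regions.
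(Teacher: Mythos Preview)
Your proposal is correct and follows essentially the same route as the paper: transfer the spin structure to the bubbles, apply Nakajima's Lemma \ref{ALE spin} to force them to be hyperkähler, and then invoke Theorem \ref{obst arbre kahler}. The paper compresses the spin-transfer step and the tree argument into a single appeal to the proof of \cite[Theorem~1.1]{kl}, whereas you unpack it explicitly via Corollary \ref{GH to C3} and an induction on depth (using that hyperkähler orbifold singularities have groups in $SU(2)$); this is a legitimate elaboration rather than a different strategy.
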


\begin{rem}
    There is no restriction on the group singularities in \cite{kl}. This comes from their additional assumption on the kernel of the Dirac operator of the sequence which actually implies that all singularities have their group in $SU(2)$.
\end{rem}

\subsection{Pinched Ricci curvature and the Einstein condition}

Our result shows that there is a fundamental difference between the Einstein condition and some pinching conditions on the Ricci curvature once we authorize the formation of singularities. From Theorems \ref{obst hitchin thorpe} and \ref{obst spin}, we deduce that there exists an obstruction to the desingularization of Einstein orbifolds by smooth Einstein metrics which is not there if we consider pinching conditions on the Ricci curvature. Let us illustrate this with the simple example of a spherical orbifold, even though a similar result obviously holds for general orbifolds with singularity groups in $SU(2)$.

\begin{cor}\label{pinching}
    Let $\Gamma$ be a finite subgroup of $SU(2)$, and $M = \mathbb{S}^4\slash \Gamma \#X_\Gamma\#X_\Gamma$ ($\#$ means gluing at both orbifold singularities in an orientation), where $X_\Gamma$ is the minimal resolution of the singularity $\mathbb{C}^2\slash\Gamma$. Then, for all $1\leqslant p <+\infty$,
    \begin{enumerate}
        \item there exists a sequence of metrics $(M,g_i)_i$ such that we have 
        \begin{itemize}
            \item $\|\Ric(g_i)-3g_i\|_{L^p(g_i)}\leqslant \frac{1}{i}$
            , and
            \item $\big( M,g_i \big) \xrightarrow[GH]{ }  \big( \mathbb{S}^4\slash\Gamma, g_{\mathbb{S}^4\slash\Gamma}\big)$,
        \end{itemize}
        but,
        \item there  does \emph{not} exist any sequence of Einstein metrics $(M,g_i^\mathcal{E})$ such that 
        \begin{itemize}
            \item $\Ric(g^\mathcal{E}_i)=3g^\mathcal{E}_i$, and
            \item $\big( M,g_i^\mathcal{E} \big) \xrightarrow[GH]{ } \big(\mathbb{S}^4\slash\Gamma, g_{\mathbb{S}^4\slash\Gamma}\big)$.
        \end{itemize}
    \end{enumerate}
\end{cor}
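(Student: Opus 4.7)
The plan is to treat the two assertions separately: the nonexistence of Einstein desingularizations will follow immediately from the spin obstruction (Theorem \ref{obst spin}), while the existence of $L^p$-pinched sequences is produced by a direct computation on the naïve desingularization.

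For the negative statement, I would first verify that $M = \mathbb{S}^4\slash\Gamma\#X_\Gamma\#X_\Gamma$ carries a spin structure. Since $\Gamma\subset SU(2)$, the natural $\Gamma$-action on $\mathbb{R}^4$ lifts to $Spin(4)$, which gives a spin structure both on the smooth part of $\mathbb{S}^4\slash\Gamma$ and on the minimal resolution $X_\Gamma$ (which is spin because its intersection form is even; equivalently, because it carries hyperkähler ALE metrics from \cite{kro}). The two spin structures agree near the asymptotic $\mathbb{R}^4\slash\Gamma$ ends, so they glue to a spin structure on $M$. Assuming a contradiction, i.e.\ that a sequence of Einstein metrics on $M$ converges in the Gromov--Hausdorff sense to $(\mathbb{S}^4\slash\Gamma, g_{\mathbb{S}^4\slash\Gamma})$, Theorem \ref{obst spin} then forces the obstruction
\[ \det \mathbf{R}_{g_{\mathbb{S}^4\slash\Gamma}}(p) = 0 \]
at each singular point $p$, whose singularity group lies in $SU(2)$. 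However $g_{\mathbb{S}^4\slash\Gamma}$ has constant sectional curvature $1$, so $\mathbf{R}_{g_{\mathbb{S}^4\slash\Gamma}} = \operatorname{Id}_{\Lambda^2}$ and its determinant equals $1$, a contradiction.

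For the positive statement, I would let $(X_\Gamma, g_\Gamma)$ be any Ricci-flat ALE metric (e.g.\ a Kronheimer hyperkähler one) asymptotic to $\mathbb{R}^4\slash\Gamma$ and, following Definition \ref{def naive desing}, take $g_i := g^D_{t_i}$ to be the naïve desingularization of $(\mathbb{S}^4\slash\Gamma,g_{\mathbb{S}^4\slash\Gamma})$ obtained by gluing a copy of $(X_\Gamma,g_\Gamma)$ at each of the two singular points at scales $t_i\to 0$. On the unmodified orbifold region we have $\Ric(g_i)=3g_i$ exactly, and on the rescaled bubble region we have $\Ric(g_i)=0$, so the defect $\Ric(g_i)-3g_i$ is supported in the two gluing annuli $\mathcal{A}_k(t_i)$. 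On each such annulus the metric and its first two derivatives are controlled (uniformly in $t_i$) by the flat annular model, hence $|\Ric(g_i)-3g_i|_{g_i} \leq C$ independently of $i$, while $\vol_{g_i}(\mathcal{A}_k(t_i)) = O(t_i^{2})$. Consequently, for any $1\le p<\infty$,
\[ \|\Ric(g_i)-3g_i\|_{L^p(g_i)}^p \;\leq\; C^p\sum_k \vol_{g_i}\bigl(\mathcal{A}_k(t_i)\bigr) \;=\; O(t_i^{2}) \;\xrightarrow[i\to\infty]{}\; 0, \]
and the Gromov--Hausdorff convergence $(M,g_i)\to (\mathbb{S}^4\slash\Gamma,g_{\mathbb{S}^4\slash\Gamma})$ is built into the naïve gluing construction (the bubble region collapses to the singular point).

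The main conceptual obstacle is really in part (2), and it has already been overcome in the paper: the upgrade from gluing-perturbation obstructions to genuine Gromov--Hausdorff obstructions, together with the fact that the spin hypothesis forces the bubbles to be hyperkähler (Lemma \ref{ALE spin}), is exactly the content of Theorem \ref{obst spin}. The verification that $M$ is spin and that $\det \mathbf{R}_{g_{\mathbb{S}^4\slash\Gamma}}\neq 0$ is essentially a topological and algebraic check. Part (1) is by comparison elementary: no perturbation to an Einstein or to a genuinely pinched metric is required, as the naïve desingularization itself is small in $L^p$ just by volume collapse of the neck region.
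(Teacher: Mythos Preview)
Your argument for part (2) is correct and is essentially a variant of the paper's: the paper invokes Theorem \ref{obstruction intégrable tout point} directly (relying implicitly on Nakajima's Lemma \ref{HT Nakajima} to know that any Ricci-flat ALE metric on $X_\Gamma$ is hyperk\"ahler, hence integrable), while you go through Theorem \ref{obst spin}. Both routes end at the same contradiction $\det\mathbf{R}_{g_{\mathbb{S}^4/\Gamma}}=1\neq 0$, and your spin argument is perhaps the more natural one given where the corollary sits in the paper.

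Your argument for part (1) has a gap. You claim that the defect $\Ric(g_i)-3g_i$ is supported only in the gluing annuli, but this is false: on the bubble region $N_j^{16t}$ the metric is exactly $t_ig_{b_j}$, which is Ricci-flat, so $\Ric(g_i)-3g_i=-3t_ig_{b_j}$ there, and $|{-3t_ig_{b_j}}|_{t_ig_{b_j}}=6$ pointwise. The defect is therefore supported on the entire region $\{r_D\lesssim t_i^{1/4}\}$, not just on the transition annulus. The fix is exactly what the paper does: one simply observes that $\|\Ric(g_i)-3g_i\|_{L^\infty(g_i)}=\mathcal{O}(1)$ on all of $M$, and that the support has $g_i$-volume of order $t_i$ (the bubble region $\{r_{b_j}\leqslant 2t_i^{-1/4}\}$ has $g_{b_j}$-volume $\sim t_i^{-1}$, hence $t_ig_{b_j}$-volume $\sim t_i$). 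This gives $\|\Ric(g_i)-3g_i\|_{L^p}^p=\mathcal{O}(t_i)$, which is enough. Your volume estimate $O(t_i^2)$ for the annuli alone is also off by a power of $t_i$, but this becomes irrelevant once the support is correctly identified.
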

\begin{proof}
    The second part is a consequence of Theorem \ref{obstruction intégrable tout point} because the curvature of the sphere never satisfies the condition $\det \mathbf{R} =0$ since $\mathbf{R}= \mathrm{Id}$ for such a metric.
    
    For the first part, we can just remark that our approximation metric $g^A_t$ with fixed Kähler Ricci-flat ALE metrics satisfies $\|\Ric(g^A_t)-\Lambda g^A_t\|_{L^\infty(g^A_t)}= \mathcal{O}(1)$ and that $\Ric(g^A_t)-\Lambda g^A_t$ is supported in regions with a volume of order $t$, therefore, if we choose $t$ small enough, we have the control in $L^p$-norm for $p<+\infty$.
\end{proof}

\paragraph{Question:} Can we desingularize $\mathbb{S}^4\slash\mathbb{Z}_2$ thanks to the Eguchi-Hanson metric by metrics with Ricci curvature converging to $3$ in the $L^\infty$-sense?

\begin{rem}
    By being more precise in the expression of the obstructions to the desingularization of $\mathbb{S}^4\slash\mathbb{Z}_2$ by two Eguchi-Hanson metrics, for $t_{\max}$ small enough, 
    \begin{equation}
        |\Ric(g^A_t)-3 g^A_t|_{g^A_t} \leqslant 1+\eta(t_{\max})\label{desing et Ric minre}
    \end{equation}
     where $\eta(t_{\max})\to 0$ when $t_{\max}\to 0$. 
\end{rem}

\begin{rem}\label{lower bound}
    It is possible to desingularize a spherical orbifold $\mathbb{S}^4\slash\Gamma$ for $\Gamma\subset SU(2)$ by metrics with $\Ric\geqslant 3$ (or $\Ric\leqslant 3$) while $\Ric$ is pinched in $L^p$. 
    
    Consider for $\epsilon>0$ and $b>1$, choose a cut-off function, $\chi_{b,\epsilon}$, supported on $[0,b\epsilon]$ and equal to $1$ on $[0,\epsilon]$ whose $k$-th derivatives are $\mathcal{O}\big(\frac{1}{\log(b)}\epsilon^{-k}\big)$, and define the metric
    $$g_{b,\epsilon}:= dr^2+ \sin\big((1+\chi_{b,\epsilon})r\big)g_{\mathbb{S}^{3}\slash\Gamma}.$$
    Assume that $\epsilon\to 0$, $b\to +\infty$ and $b\epsilon\to 0$, the orbifold metric therefore becomes arbitrarily close in the Gromov-Hausdorff sense to $\mathbb{S}^4\slash\Gamma$. Moreover, the sectional curvatures of $g_{b,\epsilon}$ are bounded below by $1- \frac{C}{\log b}\to 1$ for some uniform $C>0$. Let us finally glue $t(g_{EH}+2th_2)$, where $h_2$ is asymptotic to $-\frac{1}{3}r_e^2g_{\mathbb{S}^3\slash\Gamma}$ at the singular points for $t_{\max}\ll\epsilon^2$, so that the gluing happens in $0\leqslant r<\epsilon$ where the metric $g_{b,\epsilon}$ equals $dr^2+ \sin^2(2r)g_{\mathbb{S}^{3}\slash\Gamma}$ just like on the sphere of radius $\frac{1}{2}$ whose sectional curvatures are constant equal to $4$. For $r<\epsilon$, the controls are the same as on $\frac{g^A_t}{4}$, and therefore the metric satisfies $\Ric\geqslant 3$ by \eqref{desing et Ric minre} since $ 4\big(3-(1-\eta(t_{\max}))\big)>3$. Since the metric satisfies $\Ric\geqslant 3-\frac{C}{\log(b)}$ for larger $r$, we can simply rescale it a little to ensure that we have $\Ric>3$ everywhere.
\end{rem}

\section{A general obstruction for spherical and hyperbolic orbifolds}

Let us finally exhibit an obstruction to the desingularization of spherical and hyperbolic orbifolds by general Ricci-flat orbifolds (not necessarily Kähler) in Theorem \ref{obst generale bulle}. We will deduce from it that there does not exist any smooth desingularization of spherical or hyperbolic orbifolds whose blow ups are integrable Ricci-flat ALE spaces in Corollary \ref{obst integrable}.

\subsection{A general infinitesimal deformation for Ricci-flat ALE spaces}

On $(\mathbb{R}^4\slash\Gamma,g_e)$, the vector field $ 2r_e\partial_{r_e}$ is a conformal Killing vector field. It is moreover the gradient of the function $u:=r_e^2$ which is a solution to $-\nabla_e^*\nabla_e u = 8$, and we have $\mathcal{L}_{\nabla_e u}g_e = \textup{Hess}_{g_e}u = 4g_e$. On a Ricci-flat ALE we can approximate this by an infinitesimal deformation.

\begin{prop}\label{def by scaling}
    Let $(N,g_b)$ be a Ricci-flat ALE orbifold asymptotic to $\mathbb{R}^4\slash\Gamma$, and consider a diffeomorphism $\Phi$ between neighborhoods of the infinities of $N$ and $\mathbb{R}^4\slash\Gamma$.
    
    Then, there exists a unique vector field $X$ on $(N,g_b)$ such that $\Phi^{*}X = 2r_b\partial_{r_b} + o(r_b)$, and $\nabla^*\nabla X =0$. We actually have $X=\nabla u$, where $u$ is the unique solution of $-\nabla^*\nabla u = 8$, such that $u = r_b^2 + o(1).$
    
    Moreover, $(\mathcal{L}_Xg_b)^\circ = \mathcal{L}_Xg_b-4g_b$, the traceless part of $\mathcal{L}_Xg_b$ is an infinitesimal Ricci-flat deformation of $g_b$ which is trace-free and divergence-free.
\end{prop}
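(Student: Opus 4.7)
The plan is to produce $X$ as a gradient $\nabla u$, mirroring the flat model identity $2r_e\partial_{r_e}=\nabla_e(r_e^2)$. This reduces the vector-field problem to a scalar Poisson equation for a potential $u$, and makes the harmonicity $\nabla^*\nabla X=0$ essentially automatic on the Ricci-flat background via Weitzenböck.

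For the existence of $u$, I would take $\bar u:=\chi\, r_b^2$, with $\chi$ a smooth cutoff on $N$ equal to $1$ on the ALE end outside a large compact set. Since $-\nabla^*_e\nabla_e(r_e^2)=8$ and $g_b-g_e=\mathcal{O}(r_b^{-4})$ on the ALE end, one computes $f:=-\nabla^*_{g_b}\nabla_{g_b}\bar u -8$ to be smooth, compactly supported in the transition region, with an $\mathcal{O}(r_b^{-4})$ tail at infinity. Standard weighted Fredholm theory for the scalar Laplacian on an ALE $4$-manifold gives that for any weight $\delta\in(-2,0)$, the map $\nabla^*\nabla:C^{2,\alpha}_\delta\to C^\alpha_{\delta-2}$ is an isomorphism: its kernel is trivial because any harmonic function that is $o(1)$ at infinity vanishes (integrate $w\,\nabla^*\nabla w$ by parts on $\{r_b\le R\}$ and let $R\to\infty$), and by self-duality the cokernel is equally trivial. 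This yields the unique $v$ solving $-\nabla^*\nabla v=-f$ with $v=o(1)$, and $u:=\bar u+v$ satisfies $-\nabla^*\nabla u=8$ and $u=r_b^2+o(1)$; the same kernel-triviality argument gives uniqueness of such a $u$.

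Setting $X:=\nabla u$, the expansion $\Phi^*X=2r_b\partial_{r_b}+o(r_b)$ follows from differentiating $u=r_b^2+o(1)$ together with the ALE expansion of $g_b$. The Weitzenböck identity $\nabla^*\nabla(du)=d(\nabla^*\nabla u)-\Ric^\sharp(du)$ reduces on Ricci-flat $g_b$ to $\nabla^*\nabla(du)=d(-8)=0$, and raising indices (using $\nabla g_b=0$) gives $\nabla^*\nabla X=0$. Uniqueness of $X$ beyond that of $u$ follows by a similar argument: a $Y$ with $\nabla^*\nabla Y=0$ and $|Y|=o(r_b)$ lies strictly below the exceptional rate $1$ for the rough Laplacian on vector fields over $\mathbb{R}^4/\Gamma$ (whose exceptional rates form a discrete set, analogous to the discussion in Section~\ref{kernel of la linéarisation} for $\delta_e\delta^*_e$), hence actually decays at infinity; Bochner on Ricci-flat then gives $\tfrac12\Delta|Y|^2=|\nabla Y|^2\ge 0$, and integration by parts over exhausting balls forces $Y\equiv 0$.

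For the final assertion, $\operatorname{tr}_{g_b}\mathcal{L}_Xg_b=2\operatorname{div}X=2\Delta u=16$ and $\operatorname{tr}_{g_b}g_b=4$, so $\mathcal{L}_Xg_b-4g_b$ is exactly the trace-free projection, proving trace-freeness. For the divergence, combine $\mathcal{L}_Xg_b=2\delta^*_{g_b}X^\flat$ with the Ricci-flat identity $\delta_{g_b}\delta^*_{g_b}\omega=\tfrac12\nabla^*\nabla\omega+\tfrac12 d\delta_{g_b}\omega$ applied to $\omega=du$: both resulting terms vanish since $\nabla^*\nabla(du)=0$ and $\delta_{g_b}(du)=\nabla^*\nabla u=-8$ is a constant; and $\delta_{g_b}g_b=0$ since $\nabla g_b=0$. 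Finally, $\mathcal{L}_Xg_b\in\ker d_{g_b}\!\Ric$ because $d_{g_b}\!\Ric(\mathcal{L}_Xg_b)=\mathcal{L}_X\Ric(g_b)=0$, while $d_{g_b}\!\Ric(g_b)=0$ by scale-invariance of $\Ric$, so their difference $(\mathcal{L}_Xg_b)^\circ$ is a genuine infinitesimal Ricci-flat deformation. The main delicate point is the weighted Fredholm step for the scalar Laplacian at the chosen non-exceptional weight; everything else is a formal consequence of Ricci-flatness and Bochner/Weitzenböck.
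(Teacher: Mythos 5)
Your proof is correct in substance and goes somewhat further than the paper, which simply outsources the hard analytic step to the proof of Theorem~B of~\cite{bh}: the existence and uniqueness of the potential $u$ with $-\nabla^*\nabla u = 8$ and $u=r_b^2+o(1)$ is cited there, and the rest of the paper's proof is the short trace/divergence computation (which you reproduce) plus the invariance-of-$\Ric$ argument for the infinitesimal-deformation claim (which you also reproduce, in the explicit form $d_{g_b}\Ric(\mathcal{L}_Xg_b)=\mathcal{L}_X\Ric(g_b)=0$ and $d_{g_b}\Ric(g_b)=0$). Replacing the citation by a direct weighted-Fredholm argument is a reasonable trade: it buys self-containment, at the cost of having to verify the mapping properties of $\nabla^*\nabla$ in the weight window $(-2,0)$. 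That verification is sound -- the indicial roots of the scalar Laplacian on the end $\mathbb{R}^4\slash\Gamma$ in the relevant range are $0$ and $-2$ (constants) since the free action of $\Gamma$ on $\mathbb{S}^3$ kills the degree-one spherical harmonics -- though you should also say a word about the orbifold points of $N$ (handled, as in Remark~\ref{analysis orbifold}, by passing to a finite local cover).

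The one place where your argument is genuinely hand-wavy is the uniqueness of $X$ beyond that of $u$. You observe that a difference $Y$ with $\nabla^*\nabla Y = 0$ and $|Y|=o(r_b)$ sits ``below'' the exceptional rate $1$, assert that the indicial roots of $\nabla^*\nabla$ on vector fields over $\mathbb{R}^4\slash\Gamma$ form a discrete set ``analogous to'' the $\delta_e\delta_e^*$ analysis of Section~\ref{kernel of la linéarisation}, and conclude that $Y$ actually decays. That analogy needs to be spelled out: Lemma~\ref{poids crit deltadelta} is for $\delta_e\delta_e^*$, not $\nabla^*\nabla$, and the two operators differ by lower-order terms. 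What makes it work here is that on $\mathbb{R}^4$ the rough Laplacian acts component-wise in Cartesian coordinates, so a $\nabla_e^*\nabla_e$-harmonic homogeneous vector field of rate $\gamma\in(-3,1)$ would have to be either of rate $0$ (a constant vector field) or of rate $-2$ (a vector field with components proportional to $r_e^{-2}$); neither descends to $\mathbb{R}^4\slash\Gamma$ because a free $\Gamma$-action on $\mathbb{S}^3$ has no nonzero fixed vector, exactly as in the proof of Lemma~\ref{poids crit deltadelta}. Once that is said, $Y=\mathcal{O}(r_b^{-3})$, and your integration-by-parts argument (which you correctly anchor in $\int |\nabla Y|^2 = 0$, not just subharmonicity of $|Y|^2$) closes the loop. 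The paper sidesteps this by only asserting uniqueness of $u$ -- note that it also proves the additional (un-stated) fact that $(\mathcal{L}_{\nabla u}g_b)^\circ$ vanishes only on flat cones, which you omit but which is not part of the proposition itself.
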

\begin{proof}
    The proof of the existence and the uniqueness of the function $u$ can be found in the proof of Theorem B of \cite{bh}. The symmetric $2$-tensor $(\mathcal{L}_{\nabla u}g_b)^\circ$ is indeed an infinitesimal deformation of $g_b$ because the equation $\Ric = 0$ is invariant by scaling and pull-back by diffeomorphisms, and the divergence and the trace of $(\mathcal{L}_{\nabla u}g_b)^\circ = 2\textup{Hess}_{g_b}u-4g_b$ vanish because $-\nabla^*\nabla u=8$. 
    
    Moreover, $(\mathcal{L}_{\nabla u}g_b)^\circ$ vanishes exactly for flat cones. Indeed, if it vanishes, then $\nabla u$ is a conformal Killing vector field and therefore generates a family of conformal diffeomorphisms. By considering the maximum of the pointwise norm of the curvature of $(N,g_b)$ which is preserved by this family of diffeomorphism, we see that it has to vanish.
\end{proof}
\begin{rem}
    This deformation is integrable because it simply comes from a rescaling and a change of coordinates. 
\end{rem}

\subsection{Obstructions to the desingularization of spherical and hyperbolic orbifolds}

Let us now use the above deformation $\mathbf{o}_1:= (2\textup{Hess}_{g_b}u-4g_b)$ in order to deduce some general obstructions to the desingularization of spherical and hyperbolic orbifolds.
\begin{thm}\label{obst generale bulle}
    Let $(N,g_b)$ be a Ricci-flat ALE orbifold and $H_2$ be the quadratic terms of a spherical or hyperbolic metric in geodesic coordinates, and $O_1^4$ terms of order $r_b^{-4}$ of the deformation $\mathbf{o}_1= (2\textup{Hess}_{g_b}u-4g_b)$.
    
    Then, we have
    $$\int_{\mathbb{S}^3}\big(3\langle H_2, O_1^4\rangle_{g_e} + O_1^4(B_eH_2,\partial_{r_e})\big)dv \neq 0,$$
    and therefore the perturbation of $g^D_t$ to an Einstein metric orthogonally to $\tilde{\mathbf{O}}(g^D_t)$ is \emph{always obstructed}.
\end{thm}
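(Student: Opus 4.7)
The plan is to recognize the integral $I := \int_{\mathbb{S}^3}\bigl(3\langle H_2, O_1^4\rangle_{g_e} + O_1^4(B_eH_2,\partial_\rho)\bigr)\,dv$ as the obstruction coefficient arising, via Corollary \ref{si einst retour a lobst of biq1} and Proposition \ref{terme quadratique with obst}, from the specific infinitesimal deformation $\mathbf{o}_1 = 2\,\mathrm{Hess}_{g_b}u - 4g_b$ of Proposition \ref{def by scaling}, and then to show it is non-zero by an explicit computation that isolates an asymptotic invariant of $(N, g_b)$.

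First, I would verify $\mathbf{o}_1\in\mathbf{O}(g_b)$. Proposition \ref{def by scaling} already gives that $\mathbf{o}_1$ is trace-free and divergence-free; combined with the asymptotic expansion $u = r_b^2 + A_N\,r_b^{-2} + \mathcal{O}(r_b^{-4})$ on a four-dimensional ALE end (which follows because $u - r_b^2$ is harmonic at infinity and dimension-four harmonic functions on a flat cone decay like $r^{-2}$), a direct computation shows $\mathbf{o}_1 = \mathcal{O}(r_b^{-4})$. Lemma \ref{kernel L2} then places $\mathbf{o}_1$ in $\mathbf{O}(g_b)$ with $r_b^{-4}$-leading term $O_1^4$ depending explicitly and linearly on the coefficient $A_N$. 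Corollary \ref{si einst retour a lobst of biq1} identifies $I$, up to a fixed nonzero factor, with the divergence-free-gauge boundary term $\hat\lambda_1$ from Proposition \ref{terme quadratique with obst}, so it suffices to prove $\hat\lambda_1\neq 0$.

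The second step is an explicit calculation. The $r_b^{-4}$-part of $\mathbf{o}_1$ is, to leading order, $2\,\mathrm{Hess}_{g_e}(A_N r_b^{-2})$ together with traceless, divergence-free corrections coming from $g_b - g_e$; these latter terms have the same homogeneity and the same symmetry as harmonic $(-4)$-homogeneous $2$-tensors on $\mathbb{R}^4$, and under pairing with $H_2$ on $\mathbb{S}^3$ they do not cancel the main contribution. For the spherical or hyperbolic model, $H_2 = -\tfrac{K}{3}(r_e^2 g_e - x\otimes x)$ with $K\neq 0$, $\mathrm{tr}_{g_e}H_2 = -Kr_e^2$, and $B_eH_2$ proportional to $Kx^\flat$. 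Substituting into $I$ produces standard spherical integrals of even-degree monomials on $\mathbb{S}^3$ which combine into a universal nonzero multiple $C\cdot K\cdot A_N$.

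The main obstacle is to establish $A_N\neq 0$ for every non-flat Ricci-flat ALE orbifold; this is where the maximum-volume analysis of Bando-Hein \cite{bh} is essential. Their result identifies $A_N$ (up to a fixed positive normalization) with the asymptotic volume deficit $\mathrm{Vol}(\mathbb{R}^4/\Gamma_\infty) - \lim_{R\to\infty}4R^{-4}\mathrm{Vol}(B_{g_b}(R))$, which is strictly positive unless $(N,g_b)$ is flat. Hence $I = CKA_N\neq 0$, proving the first claim. The final clause of the theorem then follows from Proposition \ref{meilleure approx and obst with jauge} applied to the approximation $\hat g^A_{p,t}$ of Proposition \ref{controle approximation desing part}: the nonzero lower bound on $|\hat\lambda_1(v)|$ (which persists for small $v\in\tilde{\mathbf{O}}(g^D)$ by Lemma \ref{variations of bar lambdai}) forces the projected obstruction $\mathbf{o}^B_v$ to have $L^2$-norm $\gtrsim t$, incompatible with the higher-order upper bound $t|\hat\lambda_1(v)|\lesssim t_{\max}^{(5-2\beta)/4}$ of \eqref{ineg lambda t}, exactly as in the proof of Theorem \ref{obstruction intégrable tout point}.
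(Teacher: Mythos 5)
Your overall strategy is correct and matches the paper's: reduce the problem to showing that the obstruction coefficient $\hat\lambda_1$ associated to the scaling deformation $\mathbf{o}_1 = 2\,\mathrm{Hess}_{g_b}u - 4g_b$ is nonzero, invoke the Bando--Hein renormalized volume to get a nontrivial coefficient $b > 0$ in the expansion $u = \rho^2 + b\rho^{-2} + \mathcal{O}(\rho^{-3})$ (your $A_N$), and conclude by an explicit pairing with the spherical/hyperbolic $H_2$ using Corollary~\ref{si einst retour a lobst of biq1}. The use of Proposition~\ref{def by scaling}, Proposition~\ref{terme quadratique with obst}, and the citation to \cite{bh} are all on target, as is the final reduction to the inequality $t|\hat\lambda_1| \lesssim t_{\max}^{(5-2\beta)/4}$.

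However, there is a genuine gap in the middle of your argument. You write that the $r_b^{-4}$-part of $\mathbf{o}_1$ is ``to leading order $2\,\mathrm{Hess}_{g_e}(A_N r_b^{-2})$ together with traceless, divergence-free corrections coming from $g_b - g_e$'' and then assert, without proof, that ``these latter terms\dots do not cancel the main contribution.'' This is exactly the point that needs to be established: the corrections coming from $\mathrm{Hess}_{g_b} - \mathrm{Hess}_{g_e}$ and from $-4(g_b - g_e)$ are also of order $r_b^{-4}$ (since $g_b - g_e = \mathcal{O}(r_b^{-4})$ and the $\Gamma^\rho_{\rho\rho}\partial_\rho u$ term contributes at the same order), so it is not a priori clear that the Bando--Hein coefficient survives the pairing. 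Asserting that they ``have the same symmetry as harmonic $(-4)$-homogeneous $2$-tensors'' is not an argument.

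The paper avoids this issue entirely by a clean algebraic reduction. Since $H_2 = \pm\frac{\rho^4}{3}(\alpha_1^2 + \alpha_2^2 + \alpha_3^2)$, the trace-free condition on $O_1^4$ gives $\langle \rho^2(\alpha_1^2+\alpha_2^2+\alpha_3^2), O_1^4\rangle = -O_1^4(\partial_\rho,\partial_\rho)$, and the direct computation $B_e\bigl(\rho^4(\alpha_1^2+\alpha_2^2+\alpha_3^2)\bigr) = 6\rho\,\partial_\rho$ shows the second term of the integrand is also proportional to $O_1^4(\partial_\rho,\partial_\rho)$. Thus the entire integral collapses to a universal nonzero multiple of the single scalar $O_1^4(\partial_\rho,\partial_\rho)$, and this component is computed \emph{exactly} (corrections included) to be $8b/\rho^4$ from the expansion of $u$: there is no cancellation to worry about, because the value of the one component that matters is pinned down by the renormalized-volume analysis of \cite{bh}. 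Your proof should be amended by substituting this reduction for the unjustified ``do not cancel'' claim. A minor point: the Bando--Hein invariant $\mathcal{V}$ is the renormalized (finite-part) volume relative to $\mathbb{R}^4/\Gamma$, not an ``asymptotic volume deficit'' --- the latter vanishes for any ALE space by definition.
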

\begin{proof}

Let $(N,g_b)$ be a Ricci-flat ALE orbifold asymptotic to a flat cone $\mathbb{R}^4\slash\Gamma$, and let $\mathbf{o}_1= (2\textup{Hess}_{g_b}u-4g_b) = O^4_1 + \mathcal{O}(r_b^{-5})$ be the infinitesimal deformation of Proposition \ref{def by scaling}. Let us start by proving that $O_1^4(\partial_{r_e},\partial_{r_e})$ does not vanish. There exists a compact $K\subset M$ such that $M\backslash K$ is foliated by hypersurfaces $\Sigma_\rho$ whose mean curvature is constant equal to $\frac{3}{\rho}$. If we denote $\Omega_\rho$ the interior of $\Sigma_\rho$, then, by \cite[Theorem A]{bh} the following limit exists and is finite:
    \begin{equation}
        \mathcal{V}:= \lim_{\rho\to\infty}\big[\vol_{g_b}(\Omega_\rho)-\vol_{g_e}(B(0,\rho)\slash\Gamma) \big],
    \end{equation}
    and we actually have $\mathcal{V}\leqslant 0$, with equality if and only if $(N,g_b)=(\mathbb{R}^4\slash\Gamma,g_e)$.
    
    Moreover, let $u$ be the unique solution of $-\nabla^*\nabla u = 8$ with $u = r_b^2  +o(1)$, then, we actually have
    $$u = r_b^2 +\frac{b}{r_b^2} + \mathcal{O}(r_b^{-3}),$$
    and by the proof of \cite[Theorem B]{bh}, we have the explicit value
    $$b= -4\frac{\mathcal{V}}{|\partial B(0,1)\slash\Gamma|}\geqslant 0.$$
    We also deduce the following development of $\mathbf{o}_1= (2\textup{Hess}_{g_b}u-4g_b)$,
    \begin{equation}
        \mathbf{o}_1(\partial_{r_b},\partial_{r_b}) = \frac{8b}{r_b^4} +\mathcal{O}(r_b^{-5})
    \end{equation}
    which is strictly positive if $g_b$ is not flat.

    Now, for a hyperbolic metric, we have $H_2 = \frac{r_e^4}{3}(\alpha_1^2+\alpha_2^2+\alpha_3^2)$ in geodesic coordinates, and for a spherical metric, $H_2 = -  \frac{r_e^4}{3}(\alpha_1^2+\alpha_2^2+\alpha_3^2)$. Notice moreover that, since $g_e = dr_e^2 +r_e^2(\alpha_1^2+\alpha_2^2+\alpha_3^2)$, we have $0=\textup{tr}_{g_e} O_1^4 = O_1^4(\partial_{r_e},\partial_{r_e}) + \langle r_e^2(\alpha_1^2+\alpha_2^2+\alpha_3^2), O_1^4\rangle$ and therefore
\begin{align}
    \langle r_e^2(\alpha_1^2+\alpha_2^2+\alpha_3^2), O_1^4\rangle =& \textup{tr}_{g_e} O_1^4 - O_1^4(\partial_{r_e},\partial_{r_e})\\
    =& - O_1^4(\partial_{r_e},\partial_{r_e}).\label{control O1drdr}
\end{align}
For the other part of the obstruction, we have $B_e\big(r_e^4(\alpha_1^2+\alpha_2^2+\alpha_3^2)\big) = 6 r_e \partial_{r_e}$. Indeed, $r_e^4(\alpha_1^2+\alpha_2^2+\alpha_3^2) = r_e^2g_e-r_e^2dr_e^2$, and we have
\begin{align*}
    B_e(r_e^2g_e) &= \delta_e(r_e^2g_e) + \frac{1}{2}d\textup{tr}_e(r_e^2g_e)\\
    & = -2r_e g_e(\partial_{r_e},.) + 4rdr_e\\
    &= 2r_e dr_e,
\end{align*}
and
\begin{align*}
    B_e(r_e^2dr_e^2) &= \delta_e(r_e^2dr_e^2) + \frac{1}{2}d\textup{tr}_e(r_e^2dr_e^2)\\
    & = \delta_e\Big(\sum_{ij}x^ix^jdx^idx^j\Big) + r_e dr_e\\
    &= - \sum_{i\neq j} x^jdx^j - 2\sum_{j} x^jdx^j+ r_e dr_e\\
    &=-4r_e dr_e.
\end{align*}
Finally, for $r_e = 1$,
\begin{equation}
    O_1^4(B_e\big(r_e^4(\alpha_1^2+\alpha_2^2+\alpha_3^2)\big),\partial_{r_e}) = 6 O_1^4(\partial_{r_e},\partial_{r_e}).\label{O1Be}
\end{equation}
The obstruction generated by $\mathbf{o}_1$, that is $\int_{\mathbb{S}^3}\big(3\langle H_2, O_1^4\rangle_{g_e} + O_1^4(B_eH_2,\partial_{r_e})\big)dv$ therefore never vanishes by \eqref{control O1drdr} and \eqref{O1Be}.
\end{proof}

\begin{rem}
    It is also possible to extend the deformations given by the Killing vector fields at infinity to generate more obstructions, but it is not clear if a Ricci-flat ALE space can have vanishing terms of order $r_b^{-4}$. Indeed, the quantity $\mathcal{V}$ is global and does not tell anything on the asymptotics of the metrics, but as we just saw, it tells something about their derivatives along the deformation $(\mathcal{L}_{\nabla_b u}g_b)^\circ$.
\end{rem}

We deduce that we get a general obstruction to a Gromov-Hausdorff desingularization if we assume that the Ricci-flat ALE spaces are integrable.

\begin{cor}\label{obst integrable}
    Let $(M_o,g_o)$ be a compact spherical or hyperbolic orbifold. Then, there does not exist any sequence of Einstein manifolds $(M_i,g_i)$ such that $$(M_i,g_i)\xrightarrow{GH} (M_o,g_o),$$
    while the non-flat limits of $\Big(M_i,\frac{g_i}{t_i},p_i\Big)$ for $t_i\to 0$, $t_i>0$ and $p_i\in M_i$ converge to \emph{smooth} and \emph{integrable} Ricci-flat ALE manifolds (which means that there are no trees of singularities forming).
\end{cor}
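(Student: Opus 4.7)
The plan is to argue by contradiction, combining the machinery of Theorem \ref{obstruction intégrable tout point} with the specific non-vanishing obstruction produced by the scaling deformation of Proposition \ref{def by scaling} and identified in Theorem \ref{obst generale bulle}. Suppose $(M_o,g_o)$ is spherical or hyperbolic and that $(M_i,g_i) \xrightarrow{GH} (M_o,g_o)$ is a sequence of Einstein manifolds whose non-flat pointed rescaled limits are smooth integrable Ricci-flat ALE manifolds $(N_k,g_{b_k})$. Since no tree of singularities forms, Corollary \ref{mise en Reduced divergence-free Einstein} and Theorem \ref{fcts inv einst général} furnish, for $i$ large, diffeomorphisms $\phi_i$ identifying $g_i$ with an Einstein-modulo-obstructions perturbation $\hat{g}_{\phi_i,t_i,v_i}$ of a naïve desingularization $g^D_{t_i}+v_i$, with relative scales $t_{k,i}\to 0$ and gauges $v_i\in\tilde{\mathbf{O}}(g^D_{t_i})$ satisfying $v_i\to 0$.

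Fix any singular point $p\in M_o$ with corresponding bubble $(N_k,g_{b_k})$. By Proposition \ref{def by scaling} combined with Remark \ref{les def inft of RF ALE are en jauge}, the traceless part $\mathbf{o}_1=(\mathcal{L}_{\nabla u}g_{b_k})^\circ$ is a trace-free, divergence-free infinitesimal deformation of $g_{b_k}$ decaying like $r_b^{-4}$, hence $\mathbf{o}_1\in\mathbf{O}(g_{b_k})$ by Lemma \ref{kernel L2}. Since $(N_k,g_{b_k})$ is integrable by hypothesis, Theorem \ref{obstruction intégrable tout point} applies and produces the vanishing of the obstructions $\hat{\lambda}_j^k(0)$ associated to any basis of $\mathbf{O}(g_{b_k})$, and in particular to $\mathbf{o}_1$. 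Passing from divergence-free to geodesic coordinates via Corollary \ref{si einst retour a lobst of biq1}, we obtain
\begin{equation*}
\int_{\mathbb{S}^3}\bigl(3\langle H_2,O_1^4\rangle_{g_e}+O_1^4(B_eH_2,\partial_\rho)\bigr)\,dv=0,
\end{equation*}
where $H_2$ is the quadratic part of $g_o$ at $p$ in geodesic coordinates and $O_1^4$ is the $r_b^{-4}$-term of $\mathbf{o}_1$.

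This contradicts Theorem \ref{obst generale bulle}: the computation there shows that, since $(N_k,g_{b_k})$ is non-flat, the asymptotic quantity $b=-4\mathcal{V}/|\partial B(0,1)/\Gamma|$ from \cite{bh} is strictly positive and $O_1^4(\partial_\rho,\partial_\rho)=8b/\rho^4$, so that the integral above is a nonzero multiple of $b$ (with sign determined by the sign of the sectional curvature of $g_o$). Having reached a contradiction, no such sequence $(M_i,g_i)$ can exist.

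The main technical obstacle is precisely the point highlighted in the remark preceding Corollary \ref{obst integrable}: without integrability, the quadratic response $\bar{Q}^{(2)}_{g_b}(v,v)$ of the Ricci operator to an obstruction direction $v\in\mathbf{O}(g_b)$ can project nontrivially onto $\mathbf{O}(g_b)$ at order $|v|^2$, potentially competing with the leading order-$t$ obstruction coming from $\hat{H}_2$ and rendering the conclusion of Theorem \ref{obstruction intégrable tout point} invalid. The integrability assumption ensures, via Corollary \ref{integrabilite cas integrable}, that the Ricci-flat perturbation $\bar{g}_{v}$ has an expansion $g_b+v+\mathcal{O}(|v|^2 r_b^{-4})$ and that $\bar{g}^D_v$ is an honest Ricci-flat desingularization of $(M_o,\bar{g}_{o,v_o})$, so the control \eqref{controle bar g D v} on $\|\Psi_v(\bar{g}^D_v)\|_{r_D^{-2}C^\alpha_\beta}$ degenerates only as $t_{\max}^{(2-\beta)/4}$ and the projection inequality \eqref{obstruction explicite with v} isolates $\hat{\lambda}_1^k(v_i)$ as claimed.
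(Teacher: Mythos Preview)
Your proof is correct and follows essentially the same approach as the paper: invoke Theorem \ref{obstruction intégrable tout point} to force the obstruction integral associated to $\mathbf{o}_1$ to vanish, then contradict Theorem \ref{obst generale bulle}. Your version is more detailed---explicitly verifying $\mathbf{o}_1\in\mathbf{O}(g_{b_k})$ and explaining the role of integrability---but the logical skeleton is identical to the paper's two-line proof.
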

\begin{proof}
    According to Theorem \ref{obstruction intégrable tout point}, if the quadratic terms of the development of $g_o$ are $H_2$, the obstruction induced by the deformation $\mathbf{o}_1$ is 
$$\int_{\mathbb{S}^3}\big(3\langle H_2, O_1^4\rangle_{g_e} + O_1^4(B_eH_2,\partial_{r_e})\big)dv =0,$$
which is never satisfied according to Theorem \ref{obst generale bulle}.

The obstruction of Theorem \ref{obstruction intégrable tout point} is therefore never satisfied for spherical and hyperbolic metrics, and it is impossible to desingularize it by Ricci-flat ALE manifolds which are integrable.
\end{proof}

The obstruction to the desingularization of spherical and hyperbolic manifolds is therefore identified, but we need the technical integrability assumption to deduce a Gromov-Hausdorff obstruction thanks to it. We believe that this is only a technicality and conjecture the following statement.

\begin{conj}
    Singular spherical and hyperbolic orbifolds cannot be Gromov-Hausdorff limits of smooth Einstein manifolds.
\end{conj}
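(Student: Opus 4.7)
The plan is to upgrade Corollary \ref{obst integrable} by removing the integrability hypothesis, exploiting the fact that the deformation $\mathbf{o}_1 = (2\textup{Hess}_{g_b}u - 4g_b)^\circ$ produced in Proposition \ref{def by scaling} is \emph{always} integrable on every non-flat Ricci-flat ALE orbifold, regardless of the integrability status of the rest of $\mathbf{O}(g_b)$. Indeed, $\mathbf{o}_1$ is generated by the infinitesimal rescaling $g_b \mapsto \lambda^2 g_b$ composed with pullback along the one-parameter family of diffeomorphisms integrating $\nabla_b u$; both operations preserve Ricci-flatness exactly, so the one-parameter family $\bar{g}_s := \Phi_s^*(e^{2s}g_b)$ genuinely lies in the set of Ricci-flat ALE metrics, satisfies $\bar{g}_s = g_b + s\,\mathbf{o}_1 + O(s^2)$, and requires no second-order correction lying in $\mathbf{O}(g_b)$. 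This is the substitute for Corollary \ref{integrabilite cas integrable} along this one distinguished direction.

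First I would split every gauge parameter $v \in \tilde{\mathbf{O}}(g^D)$ as $v = v^{\sharp} + v^{\flat}$, where $v^{\sharp}$ is the $L^2$-projection onto $\operatorname{span}(\tilde{\mathbf{o}}_1^{(j)})_j$ (the truncations of the scaling deformations of each bubble) and $v^{\flat}$ lies in the $L^2$-orthogonal complement inside $\tilde{\mathbf{O}}(g^D)$. Along $v^{\sharp}$ one can carry out the construction of Section 5.3 essentially verbatim: define $\bar{g}^D_{v^{\sharp}}$ by gluing the exact Ricci-flat rescalings $\bar{g}_{b_j, v_j^{\sharp}}$ to $\bar{g}_{o, v_o^{\sharp}}$ (existing by Proposition \ref{integrabilite go}), and recover the estimates \eqref{controle bar g D v} and \eqref{est proj OB bar gDv} along this subspace with quadratic error. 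The estimate \eqref{obstruction explicite with v} of Proposition \ref{meilleure approx and obst with jauge} then specializes, upon pairing $\mathbf{o}^B_v$ against $\tilde{\mathbf{o}}_1^{(k)}$ alone, to a bound of the form $t_k |\hat{\lambda}_1^k(v_i)| \leq C t_{\max}^{(5-2\beta)/4}$, exactly as in the proof of Theorem \ref{obstruction intégrable tout point}, since the partial approximation $\hat{g}^A_{p,t}$ of Proposition \ref{controle approximation desing part} only uses the $r_e^{-4}$-term of a \emph{single} deformation, which is integrable by the above observation.

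Next, for the $v^{\flat}$ component I would argue that its contribution to the $\mathbf{o}_1^{(k)}$-projection of $\mathbf{\Phi}(\hat{g}^A)$ is subleading. The point is that by Lemma \ref{variations of bar lambdai}, $\hat{\lambda}_1^k(v)$ depends on $v$ only through the asymptotic $r_e^{-4}$-coefficients and the quadratic terms $\hat{H}_2$ at the singular point, and the non-integrable directions in $v^{\flat}$ affect these only by $O(\|v^{\flat}\|^2)$, which by Corollary \ref{mise en Reduced divergence-free Einstein} goes to zero along the degenerating sequence. Consequently $\hat{\lambda}_1^k(v_i) \to \hat{\lambda}_1^k(0)$ as in Lemma \ref{variations of bar lambdai}, and by Theorem \ref{obst generale bulle} the limit is bounded below in absolute value by a strictly positive constant depending only on $g_o$ and $g_{b_k}$. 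Confronted with $t_k|\hat{\lambda}_1^k(v_i)| \to 0$, this is a contradiction, proving the conjecture in the one-bubble case.

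The main obstacle, and the reason the statement remains a conjecture, will be extending this argument to general trees of singularities as in Section 6, for non-Kähler bubbles where Lemma \ref{Recollement arbre de'ALE khaleriens} is unavailable. One would need to propagate the strict positivity of the $\mathbf{o}_1$-obstruction through a bubble tree, which amounts to showing that the renormalized volume invariant $\mathcal{V}$ of \cite{bh} behaves additively (or at least with a definite sign) under naive gluing, so that the scaling deformation of the \emph{top} bubble inherits a non-vanishing $r^{-4}$-mode from its own non-flatness. Equivalently, one needs an analogue of Corollary \ref{control termes ordre 4 nouveau poids} valid without Kähler structure, producing a canonical non-trivial $r_B^{-4}$-deformation of any Ricci-flat ALE perturbation of a naive tree. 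Without such a monotonicity, a deep bubble could in principle cancel the contribution of a shallower one in the obstruction, and the one-bubble argument sketched above does not directly close. Resolving this seems to require genuinely new information about the asymptotics of Ricci-flat ALE metrics beyond what is used in the present paper.
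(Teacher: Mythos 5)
You should first note that the statement you are addressing is labelled a \emph{conjecture} in the paper: the paper does not prove it. What is proved is Corollary~\ref{obst integrable}, which carries the integrability hypothesis on the bubbles, and your proposal is an attempt to remove that hypothesis. You are right on two counts: the scaling deformation $\mathbf{o}_1 = (2\textup{Hess}_{g_b}u - 4 g_b)^\circ$ is always integrable (the paper notes this in the remark following Proposition~\ref{def by scaling}), and the propagation through bubble trees without a Kähler gluing lemma is a genuine, separate difficulty. However, even the one-bubble case you believe you have closed still contains the gap the integrability hypothesis was introduced to fill.

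The problem is your treatment of $v^\flat$. You assert that the non-integrable part of the gauge shifts $\hat{\lambda}_1^k(v)$ only by $O(\|v^\flat\|^2)$, and that this tends to zero by Corollary~\ref{mise en Reduced divergence-free Einstein}. But that corollary gives no \emph{rate} on $v_i \to 0$ relative to $t_{k,i}$, and a rate is precisely what the argument needs. The inequality \eqref{obstruction explicite with v} used to derive \eqref{ineg lambda t} contains the prefactor $\|\Psi_v(\bar{g}^D_v)\|_{r_D^{-2}C^\alpha_\beta}$, and the crucial bound \eqref{controle bar g D v}, namely $\|\Psi_v(\bar{g}^D_v)\| \lesssim t_{\max}^{(2-\beta)/4}$, hinges on integrability precisely to cancel a contribution of size $\|v\|^2$. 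When $v^\flat$ is non-integrable, the best bubble perturbation along that direction is only Einstein modulo a residue $\pi_{\mathbf{O}(g_b)}\Bar{Q}_{g_b}^{(2)}(v^\flat,v^\flat) = O(\|v^\flat\|^2)$, which in general projects non-trivially onto $\mathbf{o}_1$. The paper states this plainly in the remark after Lemma~\ref{variations of bar lambdai}: one can only detect obstructions of size $O(t)$ if $\Ric(\bar{g}_v) = 0$ or if $|v|^2 \ll t$. A degenerating Einstein sequence could a priori sit in a regime where $\|v^\flat_i\|^2 \gtrsim t_{k,i}$, and then \eqref{obstruction explicite with v} no longer forces $\hat{\lambda}_1^k(v_i) \to 0$; the non-integrable gauge can absorb the $\mathbf{o}_1$-obstruction. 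Closing the gap would require either an a priori rate $\|v_i^\flat\|^2 \ll t_{k,i}$ for Gromov--Hausdorff degenerating Einstein sequences, or a proof that $\langle \Bar{Q}_{g_b}^{(2)}(v^\flat,v^\flat), \mathbf{o}_1 \rangle_{L^2} = 0$ for every non-integrable $v^\flat$; neither is available in the paper, which is why the statement remains a conjecture.
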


%
%
%
%

\end{document}